\newcommand\unif{\mathrm{unif}}
\newcommand\bigzero{\makebox(0,0){\text{\huge0}}}
\newcommand\mycheck{\hat}
\numberwithin{equation}{section}
\newcommand{\trans}{\top}
\newcommand{\alg}{\fA}
\renewcommand{\vec}[1]{\boldsymbol{#1}}
\renewcommand{\subset}{\subseteq}
\newcommand\disteq{\,\sim\,}
\newcommand\THETA{\vec\theta}
\newcommand\CPC{Combinatorics, Probability and Computing}
\newcommand{\GG}{\mathbb G}
\newcommand\ALPHA{\vec\alpha}
\newcommand\BETA{\vec\beta}
\newcommand\CHI{{\vec\chi}}
\newcommand\DELTA{{\vec\Delta}}
\newcommand\GAMMA{{\vec\gamma}}
\newcommand\nix{\,\cdot\,}
\newcommand\dd{{\mathrm d}}
\newcommand\G{\vec G}
\newcommand\bemph[1]{{\bf\em #1}}
\newcommand\cA{\mathcal A}
\newcommand\cB{\mathcal B}
\newcommand\cC{\mathcal C}
\newcommand\cE{\mathcal E}
\newcommand\cI{\mathcal I}
\newcommand\cL{\mathcal L}
\newcommand\cP{\mathcal P}
\newcommand\cR{\mathcal R}
\newcommand\cS{\mathcal S}
\newcommand\cT{\mathcal T}
\newcommand\cX{\mathcal X}
\newcommand\cZ{\mathcal Z}
\newcommand\fA{\mathfrak A}
\newcommand\fB{\mathfrak B}
\newcommand\fD{\mathfrak D}
\newcommand\fE{\mathfrak E}
\newcommand\fF{\mathfrak F}
\newcommand\fI{\mathfrak I}
\newcommand\fL{\mathfrak L}
\newcommand\fM{\mathfrak M}
\newcommand\fO{\mathfrak O}
\newcommand\fP{\mathfrak P}
\newcommand\fR{\mathfrak R}
\newcommand\fS{\mathfrak S}
\newcommand\fT{\mathfrak T}
\newcommand\fX{\mathfrak X}
\newcommand\fb{\mathfrak b}
\newcommand\fd{\mathfrak d}
\newcommand\ff{\mathfrak f}
\newcommand\fk{\mathfrak k}
\newcommand\fo{\mathfrak o}
\newcommand\vA{\vec A}
\newcommand\vB{\vec B}
\newcommand\vC{\vec C}
\newcommand\vG{\vec G}
\newcommand\vM{\vec M}
\newcommand\vN{\vec N}
\newcommand\vQ{\vec Q}
\newcommand\vR{\vec R}
\newcommand\vX{\vec X}
\newcommand\vY{\vec Y}
\newcommand\vZ{\vec Z}
\newcommand\vd{\vec d}
\newcommand\vi{\vec i}
\newcommand\vk{\vec k}
\newcommand\vm{\vec m}
\newcommand\vr{\vec r}
\newcommand\vx{\vec x}
\newcommand\vy{\vec y}
\newcommand\eul{\mathrm{e}}
\newcommand\eps{\varepsilon}
\renewcommand\AA{\mathbb{A}}
\newcommand\BB{\mathbb{B}}
\newcommand\QQ{\mathbb{Q}}
\newcommand\ZZ{\mathbb{Z}}
\newcommand\FF{\mathbb{F}}
\newcommand\NN{\mathbb{N}}
\newcommand\Erw{\mathbb{E}}
\newcommand\ex{\Erw}
\newcommand{\vecone}{\mathbb{1}}
\newcommand{\Po}{{\rm Po}}
\newcommand\dTV{d_{\mathrm{TV}}}
\newcommand\bc[1]{\left({#1}\right)}
\newcommand\cbc[1]{\left\{{#1}\right\}}
\newcommand\bcfr[2]{\bc{\frac{#1}{#2}}}
\newcommand\brk[1]{\left\lbrack{#1}\right\rbrack}
\newcommand\norm[1]{\left\|{#1}\right\|}
\newcommand\abs[1]{\left|{#1}\right|}
\newcommand\RR{\mathbb{R}}
\newcommand{\Whp}{W.h.p.}
\newcommand{\whp}{w.h.p.}
\newcommand{\Komlos}{Koml\'os}
\newcommand\pr{\mathbb{P}} 
\renewcommand\Pr{\pr} 
\newcommand\Lem{Lemma}
\newcommand\Prop{Proposition}
\newcommand\Thm{Theorem}
\newcommand\Cor{Corollary}
\newcommand\Sec{Section}
\newcommand\id{\mathrm{id}}
\newcommand{\var}{\mathbb{X}}
\newtheorem{definition}{Definition}[section]
\newtheorem{claim}[definition]{Claim}
\newtheorem{example}[definition]{Example}
\newtheorem{theorem}[definition]{Theorem}
\newtheorem{lemma}[definition]{Lemma}
\newtheorem{proposition}[definition]{Proposition}
\newtheorem{corollary}[definition]{Corollary}
\newtheorem{fact}[definition]{Fact}
\newtheoremstyle{case}{}{}{}{}{}{:}{ }{}
\theoremstyle{case}
\DeclareMathOperator{\vol}{vol}
\DeclareMathOperator{\nul}{nul}
\DeclareMathOperator{\rank}{rk}
\newcommand{\rk}{\rank}
\newcommand{\supp}{{\mathrm{supp}}}
\newcommand\A{\vA}
\def\B{{\mathcal B}}
\def\pr{{\mathbb P}}
\newcommand{\remove}[1]{}
\newcommand{\be}{\begin{equation}}
\newcommand{\bel}[1]{\begin{equation}\lab{#1}\ }
\newcommand{\ee}{\end{equation}}
\newcommand{\bea}{\begin{eqnarray}}
\newcommand{\eea}{\end{eqnarray}}
\newcommand{\bean}{\begin{eqnarray*}}
\newcommand{\eean}{\end{eqnarray*}}
\begin{document}
\title{The full rank condition for sparse random matrices}
\author{Amin Coja-Oghlan, Pu Gao, Max Hahn-Klimroth, Joon Lee, Noela M\"{u}ller, Maurice Rolvien}
\thanks{Amin Coja-Oghlan is supported by DFG CO 646/3 and DFG CO 646/5. Max Hahn-Klimroth is supported by DFG CO 646/5. Noela Müller is supported by NWO Gravitation grant NETWORKS-024.002.003.}
\address{Amin Coja-Oghlan, {\tt amin.coja-oghlan@tu-dortmund.de}, TU Dortmund, Faculty of Computer Science, 12 Otto-Hahn-St, Dortmund 44227, Germany.}
\address{Pu Gao, {\tt p3gao@uwaterloo.ca}, Department of Combinatorics and Optimization, University of Waterloo, Canada.}
\address{Max Hahn-Klimroth, {\tt maximilian.hahnklimroth@tu-dortmund.de }, TU Dortmund, Faculty of Computer Science, 12 Otto-Hahn-St, Dortmund 44227, Germany.}
\address{Joon Lee, {\tt joon.lee@tu-dortmund.de}, TU Dortmund, Faculty of Computer Science, 12 Otto-Hahn-St, Dortmund 44227, Germany.}
\address{Noela M\"uller, {\tt n.s.muller@tue.nl}, Eindhoven University of Technology, Department of Mathematics and Computer Science, MetaForum MF 4.084, 5600 MB Eindhoven, the Netherlands.}
\address{Maurice Rolvien, {\tt maurice.rolvien@tu-dortmund.de}, TU Dortmund, Faculty of Computer Science, 12 Otto-Hahn-St, Dortmund 44227, Germany.}
\maketitle
\begin{abstract}
We derive a sufficient condition for a sparse random matrix with given numbers of non-zero entries in the rows and columns having full row rank.
The result covers both matrices over finite fields with independent non-zero entries and $\{0,1\}$-matrices over the rationals.
The sufficient condition is generally necessary as well.
%The proof combines statistical physics-inspired coupling arguments with a novel local limit theorem-type argument.
%
\hfill MSc: 60B20, 15B52
\end{abstract}
 
\section{Introduction}\label{Sec_intro}

\subsection{Background and motivation}\label{sec_motivation}
Few subjects in combinatorics have had as profound an impact on other disciplines as combinatorial random matrix theory.
Prominent applications include powerful error correcting codes called low-density parity check codes~\cite{RichardsonUrbanke}, data compression~\cite{AMc,Maneva} and hashing~\cite{Dietzfelbinger}.
Needless to mention, random combinatorial matrices are of keen interest to statistical physicists, too~\cite{MM}.
It therefore comes as no surprise that the subject has played a central role in probabilistic combinatorics since the early days~\cite{Kolchin,Kolchin1,Kolchin2,Komlos}.
The current state of affairs is that the theory of dense random matrices is significantly more advanced than that of sparse ones with a bounded average number of non-zero entries per row or column~\cite{Vu,Vu2}.
This is in part because concentration techniques apply more easily in the dense case.
Another reason is that the study of sparse random matrices is closely tied to the investigation of satisfiability thresholds of random constraint satisfaction problems, an area where many fundamental questions still await a satisfactory solution~\cite{ANP}.

Perhaps the most basic question to be asked about any random matrix model is whether the resulting matrix will likely have full rank.
This paper contributes a succinct sufficient condition that covers a broad range of sparse random matrix models.
As we will see, the condition is essentially necessary as well.
The main result can be seen as a satisfiability threshold theorem as the full rank property is equivalent to a random linear system of equations possessing a solution \whp\
This formulation generalises a number of prior results such as the satisfiability threshold theorem for the random $k$-XORSAT problem, one of the most intensely studied random constraint satisfaction problems (e.g.,~\cite{AchlioptasMolloy,Dietzfelbinger,DuboisMandler,Ibrahimi,PittelSorkin}).
In addition, the main theorem covers other important random matrix models, including those that low-density parity check codes rely on~\cite{RichardsonUrbanke}.

The classical approach to tackling the full rank problem is the second moment method~\cite{AM,ANP}.
This technique was pioneered in the seminal work on the $k$-XORSAT threshold of Dubois and Mandler~\cite{DuboisMandler}.
Characteristic of this approach is the emergence of complicated analytic optimisation problems that encode entropy-probability trade-offs resulting from large deviations problems.
Tackling these optimisation problems turns out to be rather challenging even in relatively simple special cases such as random $k$-XORSAT, as witnessed by the intricate calculations that Pittel and Sorkin~\cite{PittelSorkin} and Goerdt and Falke~\cite{GoerdtFalke} had to go through.
For the general model that we investigate here this proof technique thus appears futile.

We therefore pursue a totally different proof strategy, largely inspired by ideas from spin glass theory~\cite{MM,MRTZ}.
In statistical physics jargon, the second moment method constitutes an ``annealed'' computation.
This means that we effectively average over all random matrices, including atypical specimens apt to boost the average.
By contrast, the present work relies on a ``quenched'' strategy based on a coupling argument that implicitly discards such pathological events.
In effect, we will show that a truncated moment calculation confined to certain benign ``equitable'' solutions suffices to determine the satisfiability threshold.
This part of the proof is an extension of prior work of (some of) the authors on the normalised rank and variations on the random $k$-XORSAT problem~\cite{Ayre,Maurice}.
In addition, to actually compute the truncated second moment we need to determine the precise expected number of equitable solutions.
To this end, we devise a new proof ingredient that combines local limit theorem techniques with algebraic ideas, particularly the combinatorial analysis of certain integer lattices.
This technique can be seen as a generalisation of an argument of Huang~\cite{Huang} for the study of adjacency matrices of $d$-regular random graphs.

Let us proceed to present the main results of the paper.
The first theorem deals with random matrices over finite fields.
As an application we obtain a result on sparse $\{0,1\}$-matrices over the rationals.

\subsection{Results}\label{sec_results_finite}
We work with the comprehensive random matrix model from~\cite{Maurice}.
Hence, let $\vd \geq 0$, $\vk \geq 3$ be independent integer-valued random variables such that $\Erw[\vd^{2+\eta}] + \Erw\brk{\vk^{2+\eta}} < \infty$ for an arbitrarily small $\eta > 0$.
Let $(\vd_i,\vk_i)_{i\geq 1}$ be independent copies of $(\vd, \vk)$ and set $d = \Erw[\vd], k = \Erw[\vk]$.
Moreover, let $\fd$ and $\fk$ be the greatest common divisors of the support of $\vd$ and $\vk$, respectively. 
Further, let $n>0$ be an integer divisible by $\fk$ and let $\vm$ be a Poisson variable with mean $dn/k$, independent of $(\vd_i,\vk_i)_i$.
Routine arguments reveal that the event
\begin{align}\label{deg_sums}
    \sum_{i=1}^n \vd_i = \sum_{j=1}^{\vm} \vk_j
\end{align}
occurs with probability at least $\Omega(n^{-1/2})$~\cite[\Prop~1.7]{Maurice}.
Given \eqref{deg_sums} let $\GG=\GG_n(\vd,\vk)$ be a simple random bipartite graph on a set $\cbc{a_1 \ldots, a_{\vm}}$ of {\em check nodes} and a set $\cbc{x_1,\ldots ,x_n}$ of {\em variable nodes} such that the degree of $a_i$ equals $\vk_i$ and the degree of $x_j$ equals $\vd_j$ for all $i, j$. 
Following coding theory jargon, we refer to $\GG$ as the {\em Tanner graph}.
The edges of $\GG$ are going to mark the positions of the non-zero entries of the random matrix.
The entries themselves will depend on whether we deal with a finite field or the rationals.

\subsubsection{Finite fields}\label{sec_results_finite}
Suppose that $q \geq 2$ is a prime power, let $\FF_q$ signify the field with $q$ elements and let $\vec\chi$ be a random variable that takes values in the set $\FF_q^\ast = \FF_q \setminus \{0\}$ of units of $\FF_q$.
Moreover, let $(\vec\chi_{i,j})_{i,j\geq1}$ be copies of $\vec\chi$, mutually independent and independent of the $\vd_i,\vk_i$, $\vm$ and $\GG$.
Finally, let $\AA=\AA_n(\vd,\vk,\vec\chi)$ be the $\vm \times n$-matrix with entries
\begin{align*}
    \AA_{i,j} = \vecone\cbc{a_ix_j \in E(\GG)} \cdot \CHI_{i,j}.
\end{align*}
Hence, the $i$-th row of $\AA$ contains $\vk_i$ non-zero entries and the $j$-th column contains $\vd_j$ non-zero entries.

The following theorem provides a sufficient condition for $\AA$ having full row rank.
The condition comes in terms of the probability generating functions $D(x)$ and $K(x)$ of $\vd$ and $\vk$.
Since $\Erw[\vd^2]+\Erw[\vk^2]<\infty$, we may define
\begin{align}\label{eqBFE}
	\Phi:[0,1]&\to\RR,&z&\mapsto D\left(1-K'(z)/k\right)-\frac{d}{k}\bc{1-K(z)-(1-z)K'(z)}.
\end{align}

\begin{theorem}\label{thm_main}
	If $q$ and $\fd$ are coprime and 
	\begin{align}\label{eqmain}
		\Phi(z)&<\Phi(0)&\mbox{for all $0<z\leq1$},
	\end{align}
	then $\AA$ has full row rank over $\FF_q$ \whp
\end{theorem}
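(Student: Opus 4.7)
Full row rank of $\AA$ is equivalent to $\nul(\AA) = n - \vm$, hence to $|\ker(\AA)| = q^{n-\vm}$; and since any larger kernel has at least $q^{n-\vm+1}$ elements, any sufficiently tight control of the kernel size from above suffices. The direct annealed first moment of $|\ker(\AA)|$ fails because atypical vectors contribute disproportionately. Following the ``quenched'' strategy indicated in the introduction, the plan is to work instead with the count $Z_{\mathrm{eq}}$ of \emph{equitable} kernel vectors, namely those $x \in \FF_q^n$ in which each $\sigma \in \FF_q$ is taken on a number of coordinates close to $n/q$. This truncated count is tractable via a sharp first moment, a matching second moment, and the planted-model coupling of~\cite{Ayre,Maurice}.

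\textbf{First moment via a lattice local CLT.} The novel step is to compute $\Erw[Z_{\mathrm{eq}}]$ up to a $(1+o(1))$ factor, rather than merely to exponential order. For a fixed equitable $x$, the event $\AA x = 0$ factors over the rows of $\AA$; each row of degree $k$ contributes $\Pr[\sum_{\ell=1}^{k} \vec\chi_\ell x_{j_\ell} = 0]$, which on equitable $x$ equals $q^{-1}$ plus a lower-order correction. Summing over equitable $x$ demands a local central limit theorem applied not only to the aggregate sum but at the level of the integer lattice generated by the admissible configurations of the degree sequence $(\vd_j)_j$ and the units $(\vec\chi_{i,j})_{i,j}$. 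The hypothesis $\gcd(q,\fd) = 1$ is precisely what guarantees that this lattice surjects onto $\FF_q^{q-1}$ without defect; without it, an extra divisibility factor appears and the first moment misses its target. This step extends Huang's lattice argument for $d$-regular adjacency matrices to the general degree setting of the model.

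\textbf{Second moment via $\Phi$.} For $\Erw[Z_{\mathrm{eq}}^2]$, pairs $(x, x')$ of equitable vectors are indexed by a joint empirical type matrix $M \in [0,1]^{q \times q}$ whose row and column marginals equal $1/q$. After parametrising the distance of $M$ from the product type $M_{\sigma\tau} = 1/q^2$ by a scalar overlap $z \in [0,1]$, a large-deviations analysis identifies the exponential contribution from type $M$ with an expression whose maximum over $z$ is attained uniquely at $z = 0$ if and only if $\Phi(z) < \Phi(0)$ for all $0 < z \leq 1$. A Laplace-type asymptotic then yields $\Erw[Z_{\mathrm{eq}}^2] = (1 + o(1))\,\Erw[Z_{\mathrm{eq}}]^2$, with the polynomial prefactor arising from the lattice local CLT matching on both sides.

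\textbf{Conclusion and main obstacle.} The two moment estimates and Paley--Zygmund yield $Z_{\mathrm{eq}} > 0$ with positive probability; the planted-model contiguity of~\cite{Maurice} upgrades this to concentration of $Z_{\mathrm{eq}}$ around its mean whp. Since $\Erw[Z_{\mathrm{eq}}]$ is calibrated so as to match exactly the count expected from a kernel of the minimum dimension $n - \vm$, any surplus would inflate $Z_{\mathrm{eq}}$ by at least a factor of $q$ beyond its concentration window; hence $\nul(\AA) = n - \vm$ whp. The principal technical obstacle is the first-moment computation: one must track the polynomial prefactor of order $n^{-(q-1)/2}$ produced by the lattice local CLT and verify that $\gcd(q,\fd) = 1$ is the right divisibility condition for the lattice to be defect-free. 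The subsequent second-moment and coupling steps are comparatively standard within the framework of~\cite{Ayre,Maurice}, but the matching of polynomial prefactors across the two moment calculations is essential and relies intrinsically on this lattice analysis.
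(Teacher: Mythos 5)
Your sketch captures the two genuinely novel ingredients of the paper's argument — the lattice local CLT with the coprimality condition $\gcd(q,\fd)=1$ (this is exactly Propositions~\ref{prop_module} and~\ref{prop_mmt}), and the role of $\Phi$ as the controlling functional — but the overall architecture you propose differs from the paper's in ways that leave real gaps.

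\textbf{The conclusion step does not close.} You define $Z_{\mathrm{eq}}$ as the number of \emph{equitable} vectors in $\ker\AA$, establish $Z_{\mathrm{eq}}\sim\Erw[Z_{\mathrm{eq}}]$ whp, and then assert that ``any surplus would inflate $Z_{\mathrm{eq}}$ by at least a factor of $q$''. That implication is false as stated: if $\nul\AA=n-\vm+j$ with $j\geq1$, there is no reason the equitable fraction of $\ker\AA$ stays bounded away from zero. In fact, on exactly the bad events one worries about (a linear-sized frozen set, say), \emph{no} kernel vector is equitable, so $Z_{\mathrm{eq}}$ can be small while the nullity is large. The paper sidesteps this entirely by working with $\vZ$, the number of solutions of $\AA x=\vy$ for a uniformly random $\vy$ independent of $\AA$, conditioned on the degree/coefficient $\sigma$-algebra $\fA$. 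Then $\Erw_\fA[\vZ]=q^{n-\vm}$ \emph{exactly} (no polynomial prefactor to calibrate), and $\vZ>0$ whp immediately implies that $\AA x=\vy$ is solvable whp, hence full row rank, because otherwise a random $\vy$ misses the image with probability $\geq1/q$. You should not try to extract the rank from a concentration statement about $Z_{\mathrm{eq}}$; use the random-right-hand-side device.

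\textbf{The quenched step is not optional.} You mention ``planted-model coupling/contiguity of \cite{Ayre,Maurice}'' as an upgrade at the end, but in the paper the analogous machinery is the centre of the proof, not an afterthought. Propositions~\ref{prop_three} and~\ref{prop_overlap} (an Aizenman--Sims--Starr coupling for a ternary-augmented matrix, plus the pinning lemma) are what establish that the decorrelation event $\fO$ occurs whp and hence that $\Erw_\fA[\vZ\vecone\{\AA\in\fO\}]\sim\Erw_\fA[\vZ]$. Without this, the restriction to equitable configurations is not justified: the very problem described in \Sec~\ref{sec_grand_outline} (spurious maxima in the annealed optimisation) is not resolved by restricting the \emph{count} to equitable vectors; it is resolved by showing that, \emph{for typical matrices}, essentially all kernel vectors are equitable. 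Your first and second moments of $Z_{\mathrm{eq}}$ are annealed; the annealed second moment for this general model is precisely the optimisation \eqref{eqXOR11} that the paper declares intractable. Relatedly, the reduction of the joint type matrix $M$ to a scalar overlap $z\in[0,1]$ is only valid for $q=2$; for general $q$ the type has $(q-1)^2$ free parameters even after enforcing uniform marginals, so the claimed equivalence with $\Phi(z)<\Phi(0)$ is unsubstantiated along this route. (In the paper $\Phi$ arises from the replica-symmetric/Aizenman--Sims--Starr rank formula \eqref{eqMaurice}, not from a second-moment overlap optimisation.) To repair the sketch, replace the annealed second moment by the truncated second moment on $\fO$, and supply the quenched nullity argument that makes $\fO$ whp.
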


\noindent
Observe that the function $\Phi$ does not depend on $q$.
Hence, neither does \eqref{eqmain}.

The sufficient condition \eqref{eqmain} is generally necessary, too.
Indeed, \cite[\Thm~1.1]{Maurice} determines the likely value of the {\em normalised} rank of $\AA$:
\begin{align}\label{eqMaurice}
	\frac{\rk(\AA)}{n} &\stackrel{\Pr}{\longrightarrow} 1-\max_{z\in[0,1]}\Phi(z)&&\mbox{as }n\to\infty.
\end{align}
Since $\vk\geq3$, the definition \eqref{eqBFE} ensures that $\Phi(0)=1-d/k$ and thus $n\Phi(0)\sim n-\vm$ \whp\ 
Hence, \eqref{eqMaurice} implies that $\rk(\AA)\leq\vm-\Omega(n)$ \whp\ unless $\Phi(z)$ attains its maximum at $z=0$.
In other words, $\AA$ has full row rank {\em only if} $\Phi(z)\leq\Phi(0)$ for all $0<z\leq1$.
Indeed, in \Sec~\ref{sec_examples} we will discover examples that requite a strict inequality as in \eqref{eqmain}.
The condition that $q$ and $\fd$ be coprime is generally necessary as well, as we will see in Example~\ref{ex_zerorowsums} below.

Let us emphasise that \eqref{eqMaurice} does not guarantee that $\AA$ has full row rank \whp\ even if \eqref{eqmain} is satisfied.
Rather due to the normalisation on the l.h.s.\ \eqref{eqMaurice} only implies the much weaker statement $\rk(\AA)=\vm-o(n)$ \whp\
Hence, in the case that \eqref{eqmain} is satisfied, \Thm~\ref{thm_main} improves over the asymptotic estimate~\eqref{eqMaurice} rather substantially.
Unsurprisingly, this stronger result also requires a more delicate proof strategy.

\subsubsection{Zero-one matrices over the rationals}\label{sec_results_rational}
Apart from matrices over finite fields, the rational rank of sparse random $\{0,1\}$-matrices has received a great deal of attention~\cite{Vu,Vu2}.
The random graph $\GG$ naturally induces a $\{0,1\}$-matrix, namely the $\vm\times n$-biadjacency matrix $\BB=\BB(\GG)$.
Explicitly, $\BB_{ij}=\vecone\{a_ix_j\in E(\GG)\}$.
As an application of \Thm~\ref{thm_main} we obtain the following result.

\begin{corollary}\label{thm_Q}
	If \eqref{eqmain} is satisfied then the random matrix $\BB$ has full row rank over $\QQ$ \whp\
\end{corollary}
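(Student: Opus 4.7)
The plan is to deduce $\QQ$-full-rank of $\BB$ from $\FF_p$-full-rank by a suitable application of \Thm~\ref{thm_main} with deterministic weight $\vec\chi\equiv 1$. The engine driving this reduction is the elementary fact that for any integer matrix $M$ and any prime $p$, the rational row rank satisfies $\rk_\QQ(M)\geq \rk_{\FF_p}(M\bmod p)$. To see why, suppose the rows of $M$ are $\QQ$-linearly dependent; clearing denominators and dividing by the gcd of the coefficients produces a primitive integer dependence $\sum_i\tilde c_ir_i=0$ with $\gcd_i(\tilde c_i)=1$. Since $p$ cannot divide all $\tilde c_i$, reducing mod $p$ yields a non-trivial $\FF_p$-dependence among the rows of $M\bmod p$. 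Contrapositively, full row rank over $\FF_p$ implies full row rank over $\QQ$.

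Next I would select a prime $p$ coprime to $\fd$; such primes exist in abundance since only finitely many primes divide $\fd$. With this choice, the constant random variable $\vec\chi\equiv 1\in\FF_p^\ast$ qualifies as a valid weight distribution in the setup of \Sec~\ref{sec_results_finite}, and the resulting random matrix $\AA_n(\vd,\vk,\vec\chi)$ is precisely the reduction of $\BB$ modulo $p$. Crucially, the function $\Phi$ from \eqref{eqBFE} depends only on $D$ and $K$, not on $q$, so the hypothesis \eqref{eqmain} assumed in Corollary~\ref{thm_Q} is identical to the hypothesis required by \Thm~\ref{thm_main} for this $p$. Together with the coprimality $\gcd(p,\fd)=1$, \Thm~\ref{thm_main} therefore yields that $\BB\bmod p$ has full row rank over $\FF_p$ \whp

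Combining the two steps gives the corollary: \whp\ $\rk_\QQ(\BB)\geq \rk_{\FF_p}(\BB\bmod p)=\vm$, so $\BB$ has full row rank over $\QQ$. There is no substantive obstacle here; the whole argument is essentially a one-line consequence of \Thm~\ref{thm_main} once the integer-matrix reduction is observed. The only point to verify carefully is that a deterministic $\vec\chi$ is admissible in the statement of \Thm~\ref{thm_main}, which is immediate from the wording that only requires $\vec\chi$ to be an $\FF_q^\ast$-valued random variable.
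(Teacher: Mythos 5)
Your proof is correct and follows essentially the same route as the paper: choose a prime $p$ coprime to $\fd$, set $\vec\chi\equiv1$ so that $\BB\bmod p$ has the distribution of $\AA$ over $\FF_p$, apply \Thm~\ref{thm_main}, and lift an $\FF_p$-full-rank statement to $\QQ$ by reducing a primitive integer dependence modulo $p$. Your observation that $\Phi$ is independent of $q$ is exactly the point that lets condition \eqref{eqmain} transfer unchanged, and your inequality $\rk_\QQ(M)\geq\rk_{\FF_p}(M\bmod p)$ is the same clearing-denominators argument the paper uses.
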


Since \eqref{eqMaurice} holds for random matrices over the rationals as well, \Cor~\ref{thm_Q} is optimal to the extent that $\BB$ fails to have full row rank \whp\ if $\max_{x\in[0,1]}\Phi(x)>\Phi(0)$.
Moreover, in Example~\ref{ex_identical} we will see that $\BB$ does not generally have full rank \whp\ unless $x=0$ is the {unique} maximiser of $\Phi$.
%After investigating a few concrete examples within the scope of \Thm s~\ref{thm_main} and \Cor~\ref{thm_Q}, we outline the proof strategy and discuss further related work.

\subsection{Examples}\label{sec_examples}
To illustrate the power of \Thm~\ref{thm_main} and \Cor~\ref{thm_Q} we consider a few instructive special cases of distributions $\vd,\vk,\vec\chi$.

\begin{figure}
	\centering
	\includegraphics[width=0.33\textwidth]{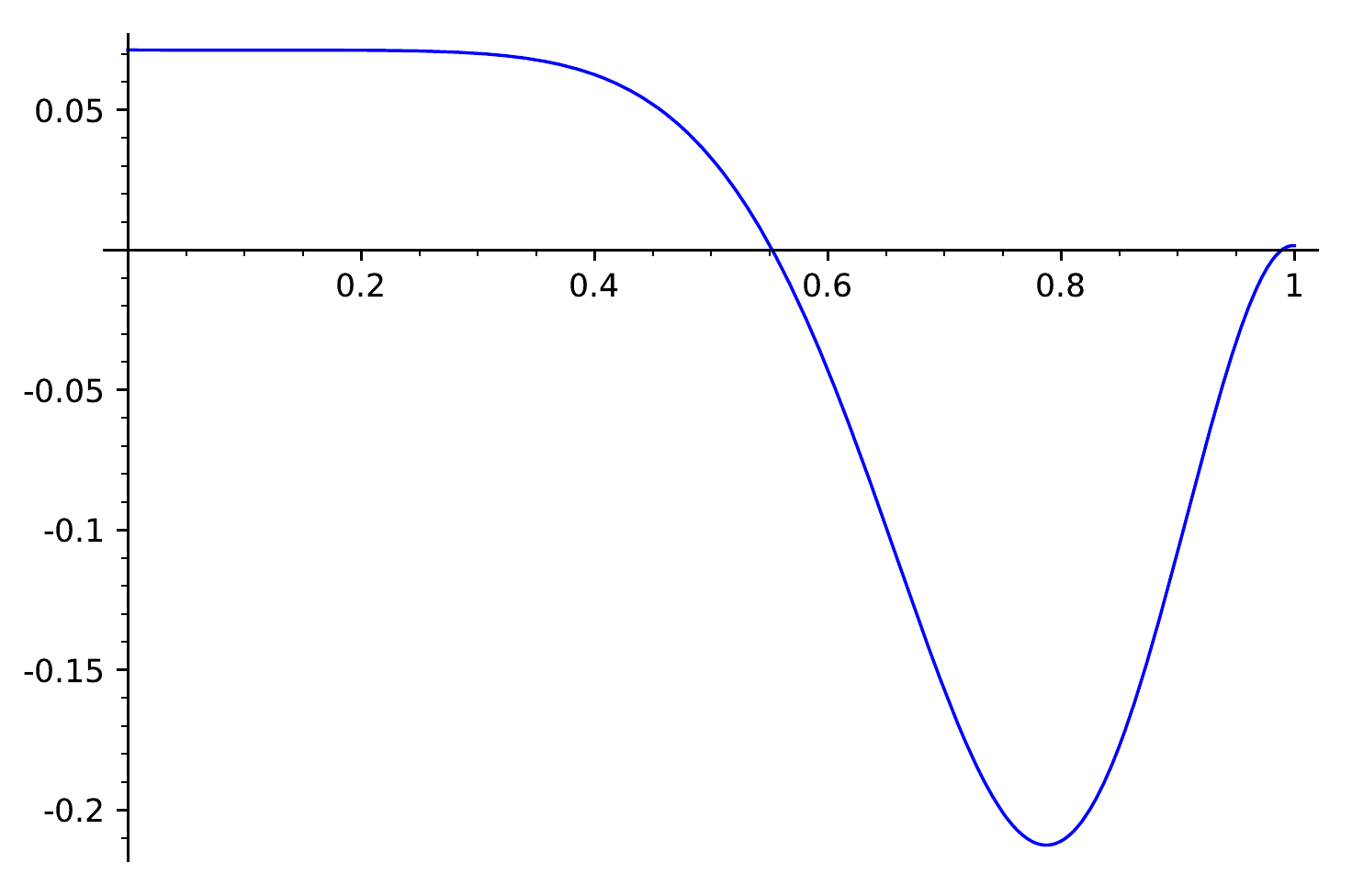}	
	\includegraphics[width=0.33\textwidth]{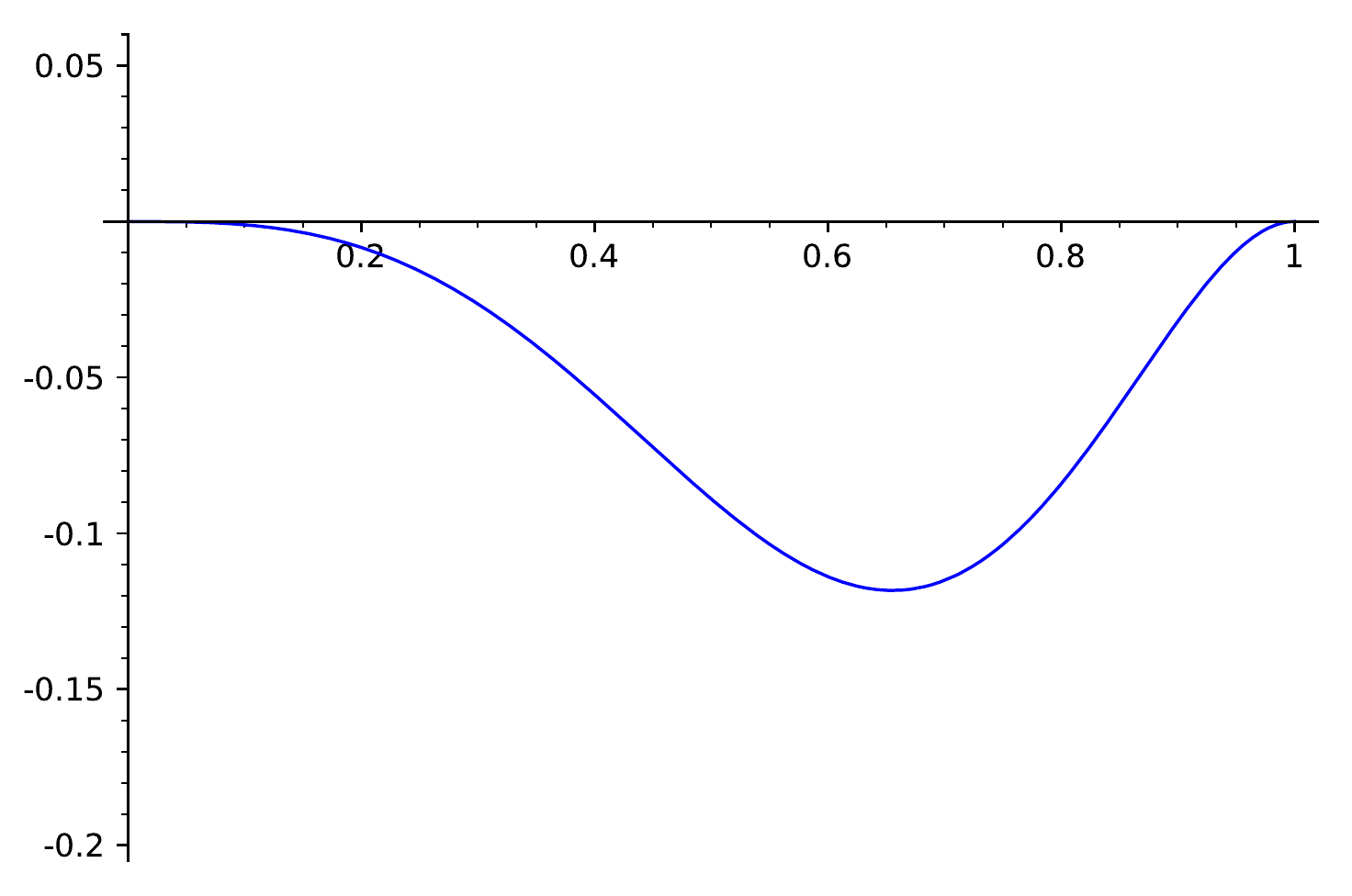}

	\caption{Left: Example \ref{ex_XOR} with $D(z)=\exp(6.5(z-1))$ and $K(z)=z^7$. Middle: Example \ref{ex_identical} with $D(z)=K(z)=(z^3+z^4)/2$.}
	\label{fig:ex1-4} 
\end{figure}

\begin{figure}
	\centering
	\includegraphics[width=0.33\textwidth]{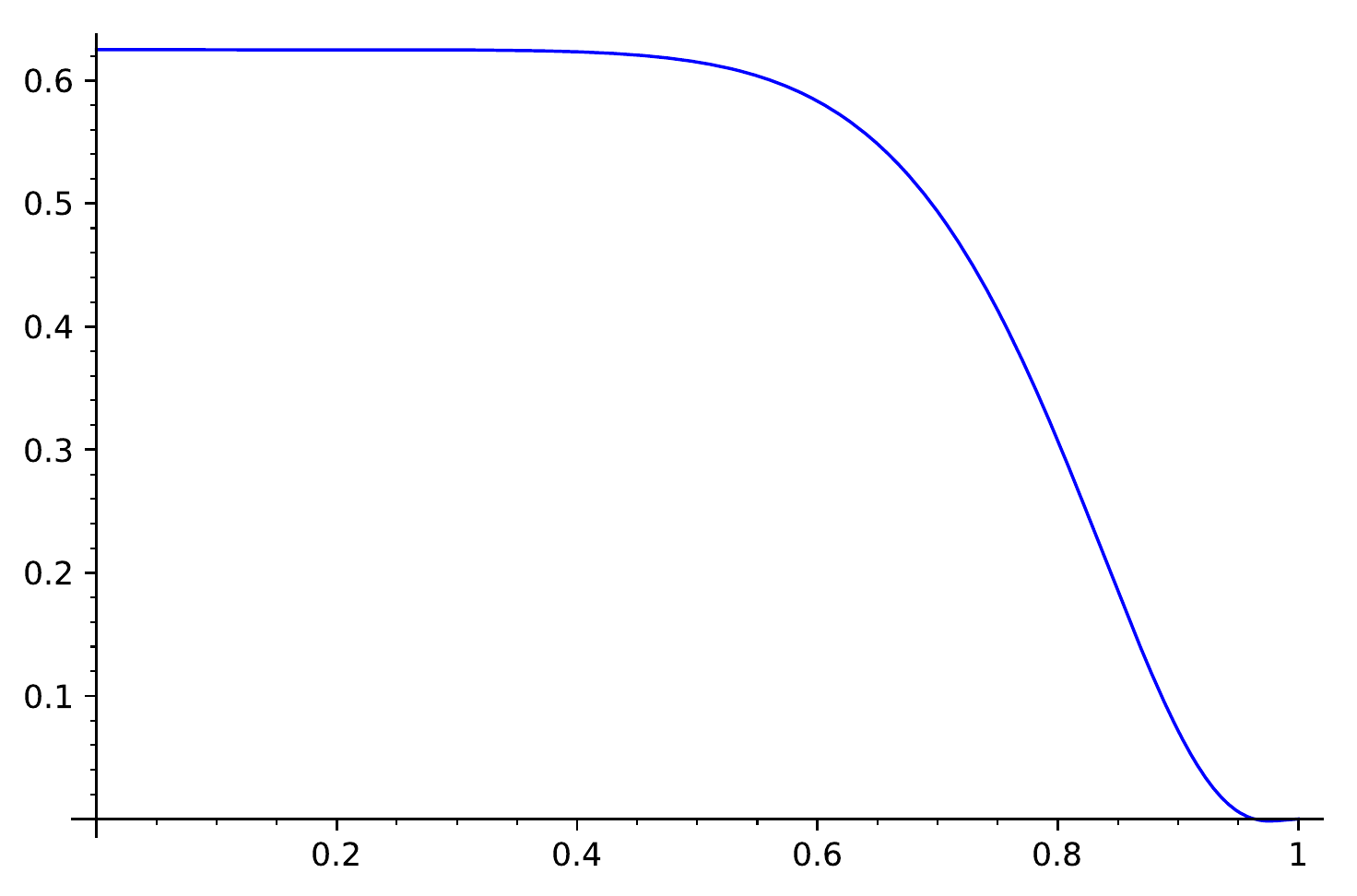}
	\includegraphics[width=0.33\textwidth]{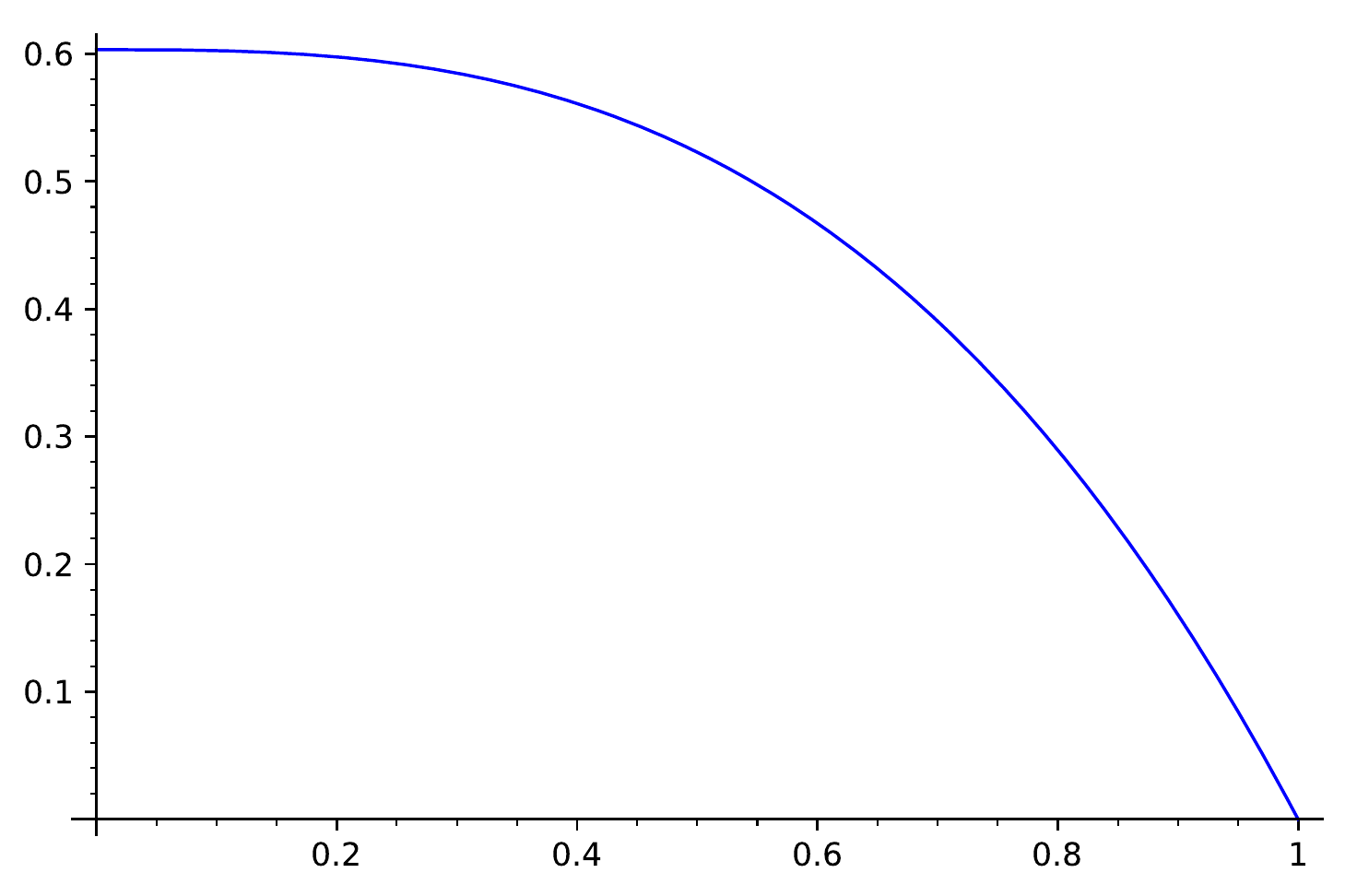}	
	\caption{Left: Example \ref{ex_fixed} with $D(z)=z^{3}, K(z)=z^{8}$. Right: Example \ref{ex_power} with $D(z) = \sum_{\ell = 1}^{\infty} \zeta(3.5)^{-1} z^{\ell} \ell^{-3.5}$ and $K(x) = x^3$.}
	\label{fig:ex1-6} 
\end{figure}

\begin{example}[random $k$-XORSAT]\upshape\label{ex_XOR}
	In random $k$-XORSAT we are handed a number of independent random constraints $c_i$ of the type
	\begin{align}\label{eqXOR}
			c_i=y_{i1}\ \mathrm{XOR}\ \cdots\ \mathrm{XOR}\ y_{ik},
		\end{align}
	where each $y_{ij}$ is either one of $n$ available Boolean variables $x_1,\ldots,x_n$ or a negation $\neg x_1,\ldots,\neg x_n$.
	The obvious question is to determine the satisfiability threshold, i.e., the maximum number of random constraints can be satisfied simultaneously \whp\

	Because Boolean XOR boils down to addition over $\FF_2$, this problem can be rephrased as the full rank problem for the random matrix $\AA$ with $q=2$, $\vk=k$ fixed to a deterministic value and $\vd\sim\Po(d)$ for a parameter $d>0$.
	To elaborate, because the constraints $c_i$ are drawn uniformly and independently, we can think of each as tossing $k$ balls randomly into $n$ bins that represent $x_1,\ldots,x_n$.
	If there are $\vm\sim\Po(dn/k)$ constraints $c_i$, the joint distribution of the variable degrees coincides with the distribution of $(\vd_1,\ldots,\vd_n)$ subject to the condition \eqref{deg_sums}.
	Furthermore, the random negation patterns of the constraints \eqref{eqXOR} amount to choosing a random right-hand side vector $\vy$ for which we are to solve $\AA x=\vy$.

	Since the generating functions of $\vd,\vk$ work out to be $D(z)=\exp(d(z-1))$ and $K(z)=z^k$, we obtain
	\begin{align*}
		\Phi_{d,k}(z)=\exp(-dz^{k-1})-\frac dk\bc{1-kz^{k-1}+(k-1)z^k}.
	\end{align*}
	Thus, \Thm~\ref{thm_main} implies that for a given $k\geq3$ the threshold of $d$ up to which random $k$-XORSAT is satisfiable \whp\ equals the largest $d$ such that
	\begin{align}\label{eqXORex}
		\Phi_{d,k}(z)<\Phi_{d,k}(0)=1-d/k\qquad\mbox{for all }0<z\leq1.
	\end{align}
	A few lines of calculus verify that \eqref{eqXORex} matches the formulas for the $k$-XORSAT threshold derived by combinatorial methods tailored to this specific case~\cite{Dietzfelbinger,DuboisMandler,PittelSorkin,MRTZ}.
	\Thm~\ref{thm_main} also encompasses the generalisations to other finite fields $\FF_q$ from~\cite{Ayre,GoerdtFalke}.
\end{example}

\begin{example}[identical distributions]\upshape\label{ex_identical}
	An interesting scenario arises when $\vd,\vk$ are identically distributed.
	For example, suppose that $\pr[\vd=3]=\pr[\vd=4]=\pr[\vk=3]=\pr[\vk=4]=1/2$.
	Thus, $D(z)=K(z)=(z^3+z^4)/2$ and
\begin{align*}
		\Phi(z)&=\frac{256z^{12}+768z^{11}+864z^{10}-1808z^9-4959z^8-3780z^7+6111z^6+10584z^5-3234z^4-4802z^3}{4802}. 
\end{align*}
	This function attains two identical maxima, namely $\Phi(0)=\Phi(1)=0$.
	Since the degrees $\vk_i,\vd_i$ are chosen independently subject only to \eqref{deg_sums}, the probability that $\AA$ has more rows than columns works out to be $1/2+o(1)$.
	As a consequence, $\AA$ cannot have full row rank \whp\
	This example shows that the condition that $0$ be the {\em unique} maximiser of $\Phi(x)$ is generally necessary $\AA$ to ensure full row rank.
	The same applies to the rational rank of $\BB$.
\end{example}

\begin{example}[fixed $\vd,\vk$]\upshape\label{ex_fixed}
	Suppose that both $\vd=d,\vk=k\geq3$ are constants rather than genuinely random.
	Then
	\begin{align*}
		\Phi(z)&=\left(1 - z^{k-1}\right)^{d}-\frac dk\bc{1- kz^{k-1}+(k-1)z^{k}}.
	\end{align*}
	Clearly, $\AA$ cannot have full row rank unless $d\leq k$, while \Thm~\ref{thm_main} implies that $\AA$ has full row rank \whp\ if $d<k$.
	This result was previously established via the second moment method~\cite{MillerCohen}.
	But in the critical case $d=k$ the function $\Phi(z)$ attains its identical maxima at $z=0$ and $z=1$.
	Specifically, $0=\Phi(0)=\Phi(1)>\Phi(z)$ for all $0<z<1$.
	Hence, \Thm~\ref{thm_main} does not cover this special case. 
	Nonetheless, Huang~\cite{Huang} proved that the random $\{0,1\}$-matrix $\BB$ has full rational rank \whp\
	The proof is based on a delicate moment computation in combination with a precise local expansion around the equitable solutions.
\end{example}

\begin{example}[power laws]\upshape\label{ex_power}
	Let $\pr(\vd = \ell ) \propto \ell^{-\alpha}$ for some $\alpha > 3$ and $ \vk = k \geq 3$. 
	Thus,
	\begin{align*}
		D(z)& = \frac1{\zeta(\alpha)}\sum_{\ell = 1}^{\infty}\frac{z^\ell}{\ell^{\alpha}},&
		K(z) &= z^k,&
		\Phi(z) = D\bc{1-z^{k-1}} - \frac{\zeta^{-1}(\alpha)\zeta(\alpha-1)}{k} \bc{1 - k z^{k-1}+ (k-1)z^{k} }.
	\end{align*}
	Since
	\begin{align*}
		\Phi'(z) = -(k-1)z^{k-2}D'\bc{1-z^{k-1}}+ \frac{\zeta^{-1}(\alpha)\zeta(\alpha-1)}{k} \bc{k(k-1) (z^{k-1}- z^{k-2}) } < 0,
	\end{align*}
	the function $\Phi(z)$ is strictly decreasing on $(0,1)$.
	Therefore, \eqref{eqmain} is satisfied.
\end{example}

\begin{example}[zero row sums]\upshape\label{ex_zerorowsums}
	\Thm~\ref{thm_main} requires the assumption that $q$ and the g.c.d.\ $\fd$ of the support of $\vd$ be coprime.
	This assumption is indeed necessary.
	To see this, consider the case that $q=2$, $\vec\chi=1$, $\vd=4$ and $\vk=8$ deterministically.
	Then the rows of $\AA$ always sum to zero.
	Hence, $\AA$ cannot have full row rank.
\end{example}

\section{Overview}\label{sec_outline}

\noindent
In contrast to much of the prior work on the rank problem, random $k$-XORSAT and random constraint satisfaction problems generally, the proofs of the main results do not rely on an ``annealed'' second moment computation.
Such arguments appear to be far too susceptible to large deviations effects to extend to as general a random matrix model as we deal with here.
Instead, we proceed by way of a ``quenched'' argument that enables us to discard pathological events.
As a result, it suffices to carry out the moment calculation in the particularly benign case of ``equitable'' solutions.

This proof strategy draws on but substantially generalises tools that were developed towards the approximate rank formula~\eqref{eqMaurice} and variations on random $k$-XORSAT~\cite{Ayre,Maurice}.
%But we need to take a crucial step beyond these techniques.
In addition, to actually prove that $\AA$ has full rank with {\em high} probability we will need to carry out a meticulous, asymptotically exact calculation of the expected number of equitable solutions.
A key element of this analysis will be a delicate analysis of the lattices generated by certain integer vectors that encode conceivable equitable solutions.
This part of the proof, which generalises a part of Huang's argument for the adjacency matrices of random $d$-regular graphs~\cite{Huang}, combines local limit techniques with a whiff of linear algebra.

To describe the proof strategy in detail let us first explore the ``annealed'' path, discover its pitfalls and then apply the lessons learned to develop a workable ``quenched'' strategy.
The bulk of the proof deals with the random matrix model from \Sec~\ref{sec_results_finite} over the finite field $\FF_q$; the rational case from \Cor~\ref{thm_Q} comes out as an easy consequence.

In order to reduce fluctuations we are going to condition on the $\sigma$-algebra $\fA$ generated by $\vm,(\vk_i)_{i\geq1},(\vd_i)_{i\geq1}$ and by the numbers $\vm(\chi_1,\ldots,\chi_\ell)$ of checks of degree $\ell\geq3$ with coefficients $\chi_1,\ldots,\chi_\ell\in\FF_q^*$.
We write $\pr_{\fA}=\pr\brk{\nix\mid\fA}$ and $\ex_{\fA}=\ex\brk{\nix\mid\fA}$ for brevity.

\subsection{Moments and deviations}\label{sec_grand_outline}
We already alluded to how the full rank problem for the random matrix $\AA$ over $\FF_q$ can be viewed as a random constraint satisfaction problem.
Indeed, suppose we draw a right-hand side vector $\vy\in\FF_q^{\vm}$ independently of $\AA$.
Then $\AA$ has full row rank \whp\ iff the random linear system $\AA x=\vy$ admits a solution \whp\
For if $\rk\AA<\vm$, then the image $\AA \FF_q^n$ is a proper subspace of $\FF_q^{\vm}$ and thus the random linear system $\AA x=\vy$ has a solution with probability at most $1-1/q$.
Naturally, the random linear system is nothing but a random constraint satisfaction problem with $\vm$ constraints and $n$ variables.

Over the past two decades the second moment method has emerged as the default approach to pinpointing satisfiability thresholds of random constraint satisfaction problems~\cite{AM,ANP}.
Indeed, one of the first success stories was the random $3$-XORSAT problem, which boils down directly to a full rank problem over $\FF_2$~\cite{DuboisMandler}.
In fact, as we saw in Example~\ref{ex_XOR}, to mimic $3$-XORSAT we just set $q=2$, $\vd=\Po(d)$ for some $d>0$ and $\vk=3$ deterministically.
In addition, draw $\vy\in\FF_2^{\vm}$ uniformly and independently of everything else.

We try the second moment method on the number $\vZ=\vZ(\AA,\vy)$ of solutions to $\AA x=\vy$ given $\fA$.
%To this end, we retrace the calculations from~\cite{}.
Since $\vy$ is independent of $\AA$, for any fixed vector $x\in\FF_2^n$ the event $\AA x=\vy$ has probability $2^{-\vm}$.
Consequently, 
\begin{align}\label{eqXOR1}
	\ex_{\fA}[\vZ]=2^{n-\vm}.
\end{align}
Hence, \eqref{eqXOR1} recovers the obvious condition that we cannot have more rows than columns.
Since $\vm\sim\Po(dn/3)$, \eqref{eqXOR1} boils down to $d<3$.

The second moment method now rests on the hope that we may be able to show that $\ex_{\fA}[\vZ^2]\sim\ex_{\fA}[\vZ]^2$.
Then Chebyshev's inequality would imply $\vZ\sim\ex_{\fA}[\vZ]$ \whp, and thus, in light of \eqref{eqXOR1}, that $\AA x=\vy$ has a solution \whp\

\begin{figure}
	\includegraphics[width=0.4\textwidth]{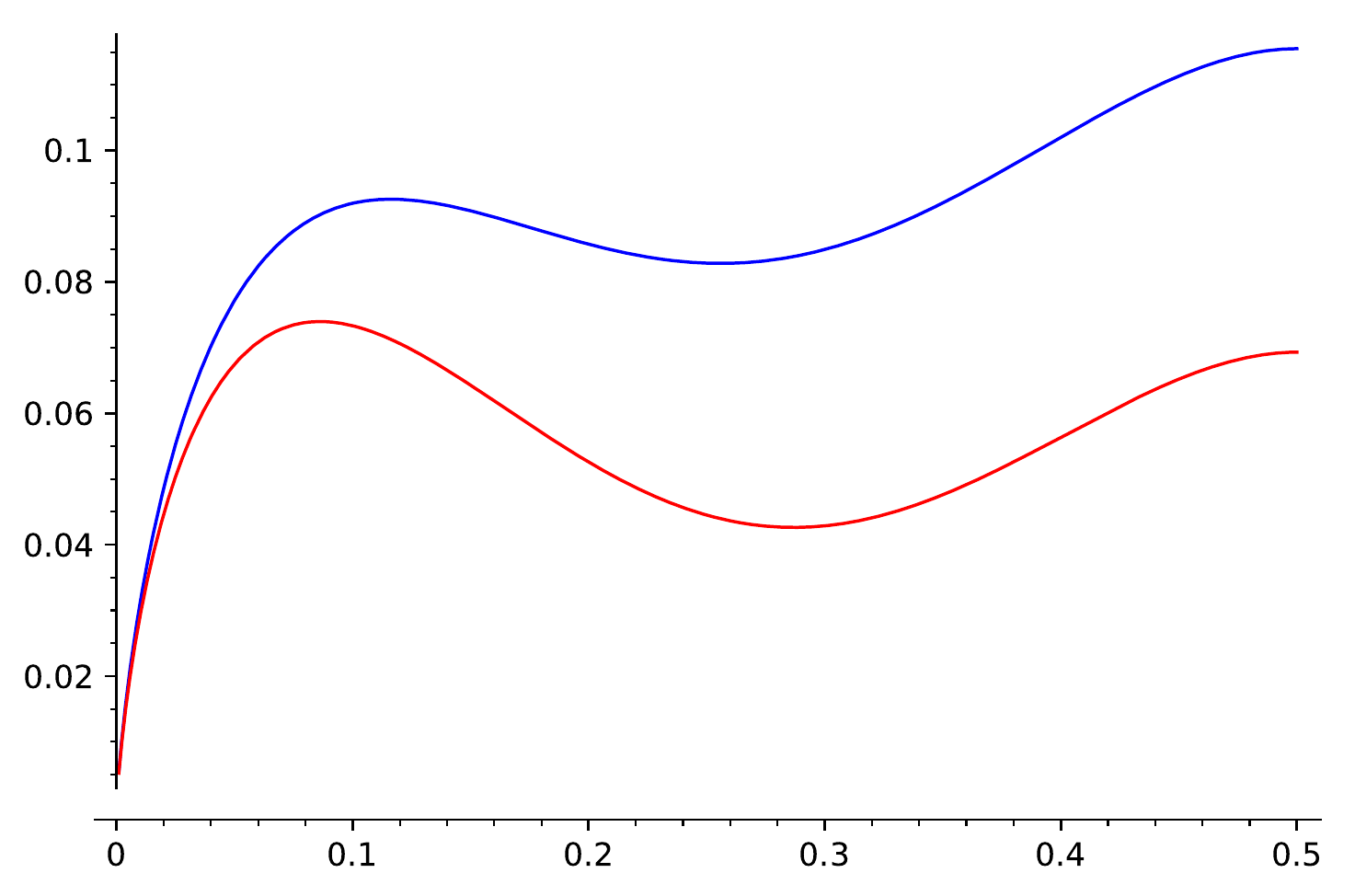}
	\caption{The r.h.s.\ of \eqref{eqXOR4} for $d=2.5$ (blue) and $d=2.7$ (red) in the interval $[0,\frac12]$.}\label{fig_xor_mmt}
\end{figure}

Concerning the computation of $\ex_{\fA}[\vZ^2]$, because the set of solutions is either empty or a translation of the kernel, we obtain
\begin{align}\label{eqXOR2}
\ex_{\fA}[\vZ^2]=\sum_{\sigma,\tau\in\FF_q^n}\pr_{\fA}\brk{\AA\sigma=\AA\tau=\vy}=\sum_{\sigma,\tau\in\FF_q^n}\pr_{\fA}\brk{\AA\sigma=\vy}\pr_{\fA}\brk{\sigma-\tau\in\ker\AA}=\ex_{\fA}\brk{\vZ}\ex_{\fA}|\ker\AA|.
\end{align}
To calculate the expected kernel size we notice that the probability that a vector $x$ is in the kernel depends on its Hamming weight.
For instance, the zero vector always belongs to the kernel, while the all-ones vector $\vecone$ does not \whp\
More systematically, invoking inclusion/exclusion, we find that for a vector $x$ of Hamming weight $w$ we have $\pr_{\fA}\brk{x\in\ker\AA}\sim\brk{(1+(1-2w/n)^3)/2}^{\vm}.$
Since the total number of such vectors comes to $\binom n{w}$, we obtain
	\begin{align}\label{eqXOR3}
		\ex_{\fA}|\ker\AA|&=\sum_{w=0}^n\binom nw\bcfr{1+(1-2w/n)^3}2^{\vm}.
	\end{align}
Taking logarithms, invoking Stirling's formula and parametrising $w=z n$, we simplify \eqref{eqXOR3} to
\begin{align}\label{eqXOR4}
	\log\,\ex_{\fA}|\ker\AA|&\sim n\cdot\max_{z\in[0,1]}-z\log z-(1-z)\log(1-z)+\frac\vm n\log\frac{1+(1-2z)^3}2&&\mbox{(cf.~\cite{DuboisMandler}).}
\end{align}
If we substitute $z=1/2$ into \eqref{eqXOR4}, the expression further simplifies to $(n-\vm)\log2$.
Hence, if the maximum is attained at another value $z\neq1/2$, then \eqref{eqXOR4} yields $\ex_{\fA}|\ker\AA|\gg 2^{n-\vm}$ and the second moment method fails.

%Thus, \eqref{eqXOR1} and \eqref{eqXOR2} imply that the second moment method succeeds only if the maximum \eqref{eqXOR4} is attained at $z=1/2$.
Figure~\ref{fig_xor_mmt} displays \eqref{eqXOR4} for $d=2.5$ and $d=2.7$.
While for $d=2.5$ the function takes its maximum at $z=1/2$, for $d=2.7$ the maximum is attained at $z\approx 0.085$.
However, the true random 3-XORSAT threshold is $d\approx2.75$~\cite{DuboisMandler}.
Thus, the naive second moment calculation falls short of the real threshold. 

How so?
The expression \eqref{eqXOR4} does not determine the ``likely'' but the expected size of the kernel, a value prone to large deviations effects.
Indeed, because the number of vectors in the kernel scales exponentially with $n$, an exponentially unlikely event that causes an exceptionally large kernel may end up dominating $\ex_{\fA}|\ker\AA|$.
Precisely such an event manifests itself in the left local maxima in Figure~\ref{fig_xor_mmt}.
Moreover, as we approach the satisfiability threshold such large deviations issues are compounded by a diminishing error tolerance.
Indeed, while for $d=2.5$ the value at $z=1/2$ just swallows the spurious maximum, this is no longer the case for $d=2.7$.

For random $k$-XORSAT Dubois and Mandler managed to identify the precise large deviations effect at work.
It stems from fluctuations of a densely connected sub-graph of $\GG$ called the 2-core, obtained by iteratively pruning nodes of degree less than two along with their neighbours (if any).
Dubois and Mandler pinpointed the 3-XORSAT threshold by applying the second moment method to the minor $\AA^{(2)}$ induced by $\GG^{(2)}$ while conditioning on the 2-core having its typical dimensions.

The technical difficulty is that the rows of $\AA^{(2)}$ are no longer independent.
Indeed, $\AA^{(2)}$ is distributed as a random matrix with a truncated Poisson $\vd^{(2)}\sim\Po_{\geq2}(d')$ with $d'=d'(d,k)>0$ as the distribution of the variable degrees. 
Unfortunately, the given-degrees model leads to a fairly complicated moment computation.
Instead of the humble one-dimensional problem from \eqref{eqXOR4} we now face parameters $(z_i)_{i\geq2}$ that gauge the fraction of variables of each possible degree $i$ set to one.
Additionally, on the constraint side we need to keep track of the number of equations with zero and with two variables set to one.
Of course, these variables are tied together through the constraint that the total Hamming weight on the variable side match that on the constraint side.

With a deal of diligence Dubois and Mandler managed to solve this optimisation problem.
However, even just the step on to check degrees $k>3$ turns out to be tricky because now we need to keep track of all the possible ways in which a $k$-ary parity constraint can be satisfied~\cite{Dietzfelbinger,PittelSorkin}.
Yet even these difficulties are eclipsed by those that result from merely advancing to fields of size $q=3$~\cite{GoerdtFalke}.

Not to mention entirely general degree distributions $\vd,\vk$ and general fields $\FF_q$ as in \Thm~\ref{thm_main}.
The ensuing optimisation problem comes in terms of variables $(z_i)_{i\in\supp\vd}$ that range over the space $\cP(\FF_q)$ of probability distributions on $\FF_q$.
Additionally, there is a second set of variables $(\hat z_{\chi_1,\ldots,\chi_\ell})_{\ell\in\supp\vk,\,\chi_1,\ldots,\chi_\ell\in\supp\vec\chi}$ to go with the rows of $\AA$ whose non-zero entries are precisely $\chi_1,\ldots,\chi_\ell$.
These variables range over probability distributions on solutions $\sigma\in\FF_q^\ell$ to $\chi_1\sigma_1+\cdots+\chi_\ell\sigma_\ell=0$.
In terms of these variables we would need to solve 
\begin{align}\label{eqXOR11}
	\max&\quad\sum_{\sigma\in\FF_q}\ex\brk{(\vd-1)z_{\vd}(\sigma)\log z_{\vd}(\sigma)}
	-\frac{d}{k}\ex\brk{\sum_{\substack{\sigma_1,\dots,\sigma_{\vk}\in\FF_q\\{\vec\chi_{1,1}\sigma_1+\cdots+\vec\chi_{1,\vk}\sigma_{\vk}=0}}}\hat z_{\vec\chi_{1,1},\ldots,\vec\chi_{1,\vk}}(\sigma_1,\ldots,\sigma_{\vk})\log\hat z_{\vec\chi_{1,1},\ldots,\vec\chi_{1,\vk}}(\sigma_1,\ldots,\sigma_{\vk})}
	  \\&\quad \mbox{s.t.}\quad\ex[\vd z_{\vd}(\tau)]=\ex\brk{\sum_{\substack{\sigma_1,\dots,\sigma_{\vk}\in\FF_q\\{\vec\chi_{1,1}\sigma_1+\cdots+\vec\chi_{1,\vk}\sigma_{\vk}=0}}}\vk\vecone\cbc{\sigma_1=\tau}\hat z_{\vec\chi_{1,1},\ldots,\vec\chi_{1,\vk}}(\sigma_1,\ldots,\sigma_{\vk})}\qquad\mbox{for all }\tau\in\FF_q\nonumber.
\end{align}
As in random 3-XORSAT, a simple calculation shows that the value of \eqref{eqXOR11} evaluated at the ``equitable'' solution
\begin{align}\label{eqXOR12}
	z_i(\sigma)&=q^{-1}&\hat z_{\chi_1,\ldots,\chi_\ell}(\sigma_1,\ldots,\sigma_\ell)&=q^{1-\ell}&&\mbox{for all }i, \chi_1,\ldots,\chi_\ell
\end{align}
hits the value $(1-d/k)\log q$, which matches the normalised first moment $n^{-1}\log\ex_{\fA}[\vZ]$.
%Hence, the second moment method succeeds iff the uniform solution is the maximiser of \eqref{eqXOR11}.

In summary, the second moment method hardly seems like a promising path towards \Thm~\ref{thm_main}.
Not only does \eqref{eqXOR11} seem unwieldy as even for very special cases of $\vd,\vk$ an analytic solution remains elusive~\cite{GoerdtFalke}.
Even worse, just in the case of ``unabridged'' random $k$-XORSAT large deviations effects may cause spurious maxima.
In effect, even if we could miraculously figure out the precise conditions for \eqref{eqXOR11} being attained at the uniform solution, this would hardly determine for what $\vd,\vk$ the random matrix $\AA$ actually has full row rank \whp\ 

\subsection{Quenching and truncating}\label{sec_quench}
The large deviations issues ultimately result from our attempt at computing the mean of $|\ker\AA|$, a (potentially) exponential quantity.
The mathematical physics prescription is to compute the expectation of its logarithm instead~\cite{MM}.
In the present algebraic setting this comes down to computing the mean of the nullity $\nul\AA=\dim\ker\AA$, or equivalently of the rank $\rk\AA=n-\nul\AA$.
This ``quenched average'' is always of order $O(n)$ and therefore immune to large deviations effects.
In fact, even if on some unfortunate event of exponentially small probability $\exp(-\Omega(n))$ the kernel of $\AA$ were quite large, the ensuing boost to $\ex_{\fA}[\nul\AA]$ remains negligible.

Yet computing the quenched average $\ex_{\fA}[\nul\AA]$ does not suffice to prove \Thm~\ref{thm_main}.
Indeed, \eqref{eqMaurice} already provides an asymptotic formula for $\ex_{\fA}[\nul\AA]$.
But as we saw due to the normalisation on the l.h.s.\ \eqref{eqMaurice} merely implies that $\rk\AA=\vm-o(n)$ \whp\
To actually prove that $\rk\AA=\vm$ \whp\ we will combine the quenched computation with a truncated moment argument calculation.
Specifically, we will harness an enhanced version of \eqref{eqMaurice} to prove that under the assumptions of \Thm~\ref{thm_main} the only combinatorially meaningful solutions to \eqref{eqXOR11} asymptotically coincide with the equitable solution \eqref{eqXOR12}, around which we will subsequently expand \eqref{eqXOR11} carefully.

To carry this programme out, let $\vx_{\AA}=(\vx_{\AA,i})_{i\in[n]}\in\FF_q^n$ be a random vector from the kernel of $\AA$.
Consider the event
\begin{align}\label{eqO}
	\fO&=\cbc{\sum_{\sigma,\tau\in\FF_q}\sum_{i,j=1}^n\abs{\pr\brk{\vx_{\AA,i}=\sigma,\ \vx_{\AA,j}=\tau\mid\AA}-q^{-2}}=o(n^2)}.
\end{align}
Then by Chebyshev's inequality on $\fO$ \whp\ we have 
\begin{align*}
	\sum_{i=1}^n\vecone\cbc{\vd_i=\ell,\,\vx_{\AA,i}=\sigma}=\pr\brk{\vd=\ell}n/q+o(n)&&\mbox{for all }\sigma\in\FF_q,\ \ell\in\supp\vd.
\end{align*}
Hence, on $\fO$ the only combinatorially relevant value of $z_\ell(\sigma)$ from \eqref{eqXOR11} is the uniform $1/q$ for every $\ell,\sigma$, because for every $\ell$ asymptotically almost all kernel vectors set about an equal number of variables of degree $\ell$ to each of the $q$ possible values.
Thanks to this observation will prove that  \whp\ 
\begin{align}\label{eqO1}
	\ex_{\fA}\brk{\vZ\cdot\vecone\cbc{\AA\in\fO}}&\sim\ex_{\fA}\brk{\vZ}\sim q^{n-\vm}&&\mbox{and}\\
	\ex_{\fA}\brk{\vZ^2\cdot\vecone\cbc{\AA\in\fO}}&\sim\ex_{\fA}\brk{\vZ}^2, \label{eqO2}
\end{align}
provided that \eqref{eqmain} is satisfied.
\Thm~\eqref{thm_main} will turn out to be an easy consequence of \eqref{eqO1}--\eqref{eqO2}, and \Cor~\ref{thm_Q} of \Thm~\ref{thm_main}.

Thus, the challenge is to prove \eqref{eqO1}--\eqref{eqO2}.
Specifically, while the second asymptotic equality in \eqref{eqO1} is easy, the proof of the first is where we require knowledge of the ``quenched average'' \eqref{eqMaurice}.
In fact, instead of just applying \eqref{eqMaurice} as is we will need to perform a ``quenched'' computation for a slightly enhanced random matrix from scratch.
Second, the key challenge towards the proof of \eqref{eqO2} is to obtain an exact asymptotic equality here, rather than the weaker estimate $\ex_{\fA}\brk{\vZ^2\cdot\vecone\cbc{\AA\in\fO}}=O(\ex_{\fA}\brk{\vZ}^2)$.
This will require a meticulous expansion of the second moment around the uniform solution, which will involve the detailed analysis of the lattices generated by integer vectors that encode conceivable values of $z_i,\hat z_{\chi_1, \ldots, \chi_\ell}$ from \eqref{eqXOR11}.

\subsection{The truncated first moment}\label{sec_overlap_outline}
Let us begin with \eqref{eqO1}.
Although we know the approximate nullity \eqref{eqMaurice} of $\AA$ already, this does not suffice to actually prove that $\fO$ is a ``likely'' event.
To this end we need to study a slightly modified matrix instead.
Specifically, for an integer $t\geq0$ obtain $\AA_{[t]}$ from $\AA$ by adding $t$ more rows that contain precisely three non-zero entries.
The positions of these non-zero entries are chosen uniformly, mutually independently and independently of everything else, and the non-zero entries themselves are independent copies of $\vec\chi$.
We require the following lower bound on the rank of $\AA_{[t]}$.

\begin{proposition}\label{prop_three}
	If \eqref{eqmain} is satisfied then there exists $\delta_0=\delta_0(\vd,\vk)>0$ such that for all $0<\delta<\delta_0$ we have 
	\begin{align}\label{eqprop_three}
		\liminf_{n\to\infty}\frac1n\Erw[\nul\AA_{\brk{\lfloor\delta n\rfloor}}]&\leq 1-\frac dk-\delta.
	\end{align}
\end{proposition}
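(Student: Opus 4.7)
\noindent
\emph{Proof plan.}
The strategy is to interpret $\AA_{[\lfloor \delta n \rfloor]}$ as (a close approximation of) a random matrix in the paper's framework with suitably modified degree distributions, and then to invoke the normalised rank formula~\eqref{eqMaurice}. Setting $t = \lfloor \delta n \rfloor$ and $p = d/(d + k\delta)$, the augmented matrix has $\vm + t$ rows whose degree distribution is the mixture $\vk' = \vk$ with probability $p$ and $\vk' = 3$ with probability $1-p$, while each column receives a roughly $\mathrm{Po}(3\delta)$ number of additional non-zero entries, so the column degrees are approximately distributed as $\vd' := \vd + \mathrm{Po}(3\delta)$. A routine configuration-model coupling that handles the $O(1)$-many coincidences in the added rows, Poissonises $\vm + t$, and enforces the degree-sum condition~\eqref{deg_sums} for the modified distributions, identifies $\AA_{[t]}$ with an instance of the paper's model with distributions $(\vd',\vk',\vec\chi)$ up to a rank deviation of $o(n)$.

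A short calculation yields $d_\delta := \Erw[\vd'] = d + 3\delta$, $k_\delta := \Erw[\vk'] = k(d + 3\delta)/(d + k\delta)$, and hence $d_\delta/k_\delta = d/k + \delta$. Writing $\Phi^{(\delta)}$ for the function~\eqref{eqBFE} associated with $(\vd', \vk')$ and noting $\vk' \geq 3$, we obtain $\Phi^{(\delta)}(0) = 1 - d_\delta/k_\delta = 1 - d/k - \delta$. Since the augmented distributions inherit the $(2+\eta)$-moment condition, \eqref{eqMaurice} applied to the augmented model gives
\begin{align*}
\nul(\AA_{[t]})/n \stackrel{\Pr}{\longrightarrow} \max_{z \in [0,1]} \Phi^{(\delta)}(z),
\end{align*}
and because $\nul/n \in [0,1]$ is bounded, convergence of expectations follows automatically. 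The proposition therefore reduces to showing that for all sufficiently small $\delta > 0$ the unique maximiser of $\Phi^{(\delta)}$ on $[0, 1]$ is $z = 0$.

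This final step is a perturbation argument driven by condition~\eqref{eqmain}. On any compact interval $[\epsilon, 1]$ with $\epsilon > 0$, hypothesis~\eqref{eqmain} and compactness yield $\Phi(0) - \Phi(z) \geq c_1(\epsilon) > 0$ throughout $[\epsilon, 1]$, while $\Phi^{(\delta)} \to \Phi$ uniformly on $[0,1]$ as $\delta \to 0$ and $\Phi^{(\delta)}(0) - \Phi(0) = -\delta$. Hence, once $\delta$ is small enough in terms of $\epsilon$, $\Phi^{(\delta)}(z) < \Phi^{(\delta)}(0)$ for every $z \in [\epsilon, 1]$. On the interval $[0, \epsilon]$ a direct Taylor expansion, exploiting $\vk' \geq 3$ so that $K_\delta(z), K_\delta'(z) = O(z^2)$ near the origin, gives
\begin{align*}
\Phi^{(\delta)}(z) = \Phi^{(\delta)}(0) - \frac{2 d_\delta}{k_\delta} \Pr[\vk' = 3]\, z^3 + O(z^4),
\end{align*}
and because $\Pr[\vk' = 3] = (d\Pr[\vk=3] + k\delta)/(d + k\delta) > 0$ for every $\delta > 0$, choosing $\epsilon$ small enough (possibly depending on $\delta$) renders the $z^3$ term dominant and yields $\Phi^{(\delta)}(z) < \Phi^{(\delta)}(0)$ on $(0, \epsilon]$ as well.

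The principal obstacle is the first paragraph: rigorously matching $\AA_{[t]}$ with the paper's model so that~\eqref{eqMaurice} applies off the shelf. This entails the standard configuration-model bookkeeping (multi-edges among the added rows, non-independence of the column-degree increments, and the degree-sum constraint for the new distributions). A secondary subtlety arises when $\Pr[\vk = 3] = 0$: the coefficient of $z^3$ in $\Phi^{(\delta)}$ then scales with $\delta$, so the radius of $z^3$-dominance shrinks as $\delta \to 0$, and one must carefully couple the choice of $\epsilon$ to $\delta$ in the local step of the perturbation argument. Both issues are routine but consume the bulk of the work.
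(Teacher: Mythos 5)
Your reduction of the proposition to a perturbation analysis of $\Phi^{(\delta)}$ is the attractive part: the arithmetic ($d_\delta=d+3\delta$, $k_\delta=k(d+3\delta)/(d+k\delta)$, $d_\delta/k_\delta=d/k+\delta$) is right, the Taylor expansion around $z=0$ giving $\Phi^{(\delta)}(z)=\Phi^{(\delta)}(0)-2(d_\delta/k_\delta)\Pr[\vk'=3]z^3+O(z^4)$ with $\Pr[\vk'=3]\geq k\delta/(d+k\delta)>0$ is correct, and the uniform-convergence-plus-compactness argument on $[\eps,1]$ is sound. Assuming the model identification, the rest works.

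However, the model identification in your first paragraph is a genuine gap, not ``routine configuration-model bookkeeping.'' The augmented matrix $\AA_{[t]}$ has a hybrid law: the base rows come from a uniform bipartite graph with degree sequence $(\vd_i),(\vk_j)$, while each added ternary row picks three \emph{variables} uniformly at random. By contrast, the configuration model with column degrees $\vd'=\vd+\Po(3\delta)$ and mixed row degrees $\vk'$ draws a single uniform clone-matching for the whole graph. These are different distributions even conditionally on the degree sequences: in the configuration model, which of a variable's $\vd'_i$ clones connect to ternary versus base checks is random, so the base sub-degree of a given variable is not $\vd_i$; in $\AA_{[t]}$ it is. (A tiny example: two variables of base degree $1$, one base check of degree $2$, one ternary check; in $\AA_{[t]}$ the base check is always adjacent to both variables, but in the configuration model with the merged degree sequence it typically is not.) Consequently, applying \eqref{eqMaurice} ``off the shelf'' to $\AA_{[t]}$ is not justified --- \eqref{eqMaurice} was proved for the configuration model, and your hybrid model is not a configuration model. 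The local weak limits of the two models do in fact agree (one can check that the probability that a uniformly random edge points to a degree-$3$ check is $\frac{3\delta+3d\Pr[\vk=3]/k}{d+3\delta}$ in both), which makes the conclusion plausible, but a local-to-global transfer at the precision of \eqref{eqMaurice} is itself a theorem that would need proving.

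The paper takes a different route that avoids this problem entirely: rather than invoking \eqref{eqMaurice} as a black box, it re-runs the Aizenman--Sims--Starr interpolation directly for the hybrid model (\Prop~\ref{prop_auxphi}), obtaining a two-parameter variational bound $\tilde\Phi_\delta(\alpha,\beta)$ that explicitly accounts for the ternary checks choosing uniform variables rather than clones, and then solves the optimisation (\Prop~\ref{prop_aux}). In effect, the paper proves the analogue of \eqref{eqMaurice} for the hybrid model from scratch, which is precisely the step you are hoping to avoid. Your route would be shorter if a clean contiguity or rank-Lipschitz coupling between the hybrid and configuration models were available, but constructing one is essentially equivalent in effort to re-doing the interpolation argument, so the apparent saving is illusory.
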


The proof of \Prop~\ref{prop_three} relies on the Aizenman-Sims-Starr scheme, a coupling argument inspired by spin glass theory~\cite{Aizenman}.
The technique was also used in~\cite{Maurice} to prove the rank formula~\eqref{eqMaurice}.
While we mostly follow that proof strategy and can even reuse some of the intermediate deliberations, a subtle modification is required to accommodate the additional ternary equations.
The details can be found in \Sec~\ref{sec_prop_three}.

How does \Prop~\ref{prop_three} facilitate the proof of \eqref{eqO1}?
Assuming \eqref{eqmain}, we obtain from \eqref{eqMaurice} that $\nul\AA/n\sim1-d/k$ \whp\
Hence, \eqref{eqprop_three} shows that nearly each one of the of the additional ternary rows added to $\AA_{\brk{\lfloor\delta n\rfloor}}$ reduces the nullity.
We are going to argue that this is possible only if $\AA\in\fO$ \whp\

To see this, let us think about the kernel of a general $M\times N$ matrix $A$ over $\FF_q$ for a short moment.
Draw $\vx_A=(\vx_{A,i})_{i\in[N]}\in\ker A$ uniformly at random.
For any given coordinate $\vx_{A,i}$, $i\in[N]$ there are two possible scenarios: either $\vx_{A,i}=0$ deterministically, or $\vx_{A,i}$ is uniformly distributed over $\FF_q$.
(This is because if we multiply $\vx_A$ by a scalar $t\in\FF_q$ we obtain $t\vx_A\in\ker A$.)
We therefore call coordinate $i$ {\em frozen} if $x_{i}=0$ for all $x \in \ker A$ and unfrozen otherwise.
Let $\fF(A)$ be the set of frozen coordinates.

If $\AA$ had many frozen coordinates then adding an extra random row with three non-zero entries could hardly decrease the nullity \whp\
For if all three non-zero coordinates fall into the frozen set, then we get the new equation ``for free'', i.e., $\nul \AA_{[1]}=\nul \AA$.
Thus, \Prop~\ref{prop_three} implies that $|\fF(\AA)|=o(n)$ \whp\
We conclude that $\vx_{\AA,i}$ is uniformly distributed over $\FF_q$ for all but $o(n)$ coordinates $i\in[n]$.
However, this does not yet imply that $\vx_{\AA,i}$, $\vx_{\AA,j}$ are independent for most $i,j$, as required by $\fO$.
Yet a more careful argument based on the ``pinning lemma'' from~\cite{Maurice} does.
The proof of the following statement can be found in \Sec~\ref{sec_prop_overlap}.

\begin{proposition}\label{prop_overlap}
	Assume that \eqref{eqmain} is satisfied.
	Then \eqref{eqO1} holds \whp\
\end{proposition}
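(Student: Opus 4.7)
The first observation is that the expectation $\ex_{\fA}[\vZ]$ is trivially $q^{n-\vm}$: since $\vy$ is uniform on $\FF_q^{\vm}$ and independent of $\AA$, summing the identity $\pr[\AA x=\vy\mid\AA]=q^{-\vm}$ over $x\in\FF_q^n$ yields $\ex[\vZ\mid\AA]=q^{n-\vm}$ almost surely. Because the event $\fO$ defined in \eqref{eqO} depends only on $\AA$, conditioning on $\AA$ gives
\begin{align*}
	\ex_{\fA}\brk{\vZ\cdot\vecone\cbc{\AA\in\fO}}=q^{n-\vm}\cdot\pr_{\fA}\brk{\AA\in\fO}.
\end{align*}
Taking expectations on $\fA$ reduces \eqref{eqO1} to the unconditional statement $\pr\brk{\AA\in\fO}=1-o(1)$, from which $\pr_{\fA}\brk{\AA\in\fO}\to 1$ in probability follows automatically.

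The core step is to exploit \Prop~\ref{prop_three} to bound the frozen set $\fF(\AA)$. Since $\ker\AA$ is closed under scalar multiplication by $\FF_q^*$, each coordinate $\vx_{\AA,i}$ is either deterministically $0$ (so $i\in\fF(\AA)$) or is uniformly distributed on $\FF_q$. A random ternary row appended to $\AA$ lies in the row space of $\AA$ whenever all three of its non-zero positions fall inside $\fF(\AA)$, an event of probability $\bc{\abs{\fF(\AA)}/n}^3+o(1)$. Iterating over the $t=\lfloor\delta n\rfloor$ additional rows, and using that the frozen set is monotone in the number of equations, I would deduce
\begin{align*}
	\Erw[\nul\AA-\nul\AA_{[t]}]\leq t\bc{1-\Erw\brk{\bc{\abs{\fF(\AA)}/n}^{3}}}+o(n).
\end{align*}
Combining the l.h.s.\ with the rank formula \eqref{eqMaurice} (plus uniform integrability to pass to expectations) and the r.h.s.\ with \Prop~\ref{prop_three} yields $\Erw\brk{\bc{\abs{\fF(\AA)}/n}^{3}}=o(1)$, so $\abs{\fF(\AA)}=o(n)$ \whp. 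In particular $\pr\brk{\vx_{\AA,i}=\sigma\mid\AA}=q^{-1}$ for all but $o(n)$ indices $i\in[n]$.

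To upgrade this single-coordinate statement to the pair-marginal bound encoded in $\fO$, I would invoke the pinning lemma of~\cite{Maurice}: for any $\eps>0$ there is a constant $\theta$ such that after pinning a uniform number $\Theta\in\{0,\ldots,\theta\}$ of random coordinates to their values under $\vx_{\AA}$, the resulting kernel distribution is, on average, $\eps$-close to the product of its single-coordinate marginals. Pinning is realised by appending $\Theta$ unit rows to $\AA$ with prescribed right-hand sides, and the frozen-set argument of the preceding paragraph persists for the augmented matrix, so its marginals are uniform on all but $o(n)$ coordinates. Together these two inputs force the average pair marginal of the pinned measure to equal $(1+o(1))q^{-2}$, and since removing the $O(1)$ pins perturbs pair marginals by at most $o(1)$ on average, $\AA\in\fO$ \whp. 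The main obstacle is the bookkeeping in the second paragraph: one must verify that the per-row inequality really iterates over the $\lfloor\delta n\rfloor$ ternary equations of \Prop~\ref{prop_three} with a uniform $o(n)$ error, and that the $\FF_q^*$-action on $\ker\AA$ yields the ``zero-or-uniform'' dichotomy that drives the cubic-in-$\abs{\fF(\AA)}/n$ bound.
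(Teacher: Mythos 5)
Your plan captures the two pillars of the paper's argument: (i) combine \Prop~\ref{prop_three} with \eqref{eqMaurice} to force the frozen fraction to vanish, and (ii) invoke the pinning lemma \Cor~\ref{lem_pinning_RS} to upgrade to pair-marginal decorrelation. Your telescoping derivation of $\ex[(|\fF(\AA)|/n)^3]=o(1)$ is a clean, direct rephrasing of what the paper does in contrapositive form in Lemma~\ref{Rank_too_small}, and your observation that the same telescoping persists for the pinned matrix $\AA[\vec\theta]$ (since the $\Theta=O(1)$ unary rows change the nullity by at most $\Theta$) is exactly right and is necessary, because the hypothesis of \Cor~\ref{lem_pinning_RS} is about $\fF(\AA[\vec\theta])$ rather than $\fF(\AA)$.

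The one step that needs more care is your closing claim that ``removing the $O(1)$ pins perturbs pair marginals by at most $o(1)$ on average.'' Pinning replaces the uniform distribution on $\ker\AA$ by its conditional on a codimension-$\leq\Theta$ subspace, and this conditioning is not a small perturbation of pair marginals in general: for instance if $\vecone\in\ker\AA$ then every coordinate of $\vx_{\AA}$ is uniform yet all pairs are perfectly correlated, while after a single pin the kernel may collapse entirely. Such pathologies are of course excluded once you know $\ex|\fF(\AA[\vec\theta])|=o(n)$ --- but that exclusion is precisely the content of the pinning lemma, whose conclusion is a statement about the \emph{pinned} kernel measure $\vx_{\AA[\vec\theta]}$. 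So the ``unpin'' step is not free; the paper's proof is better read as keeping $\AA[\vec\theta]$ in play to the end (full row rank of $\AA[\vec\theta]$ implies full row rank of the row-submatrix $\AA$), rather than transferring the decorrelation back to $\AA$. If you rewire the plan so that the event in \eqref{eqO} and the downstream moment computation are carried out for the pinned matrix, the argument closes; as written, the unpinning claim is the unproved link.
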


\subsection{Expansion around the equitable solution}\label{sec_moment_outline}
As outlined earlier, now that we know \eqref{eqO1} we can establish \eqref{eqO2} by expanding \eqref{eqXOR11} around the uniform distribution \eqref{eqXOR12}.
At first glance, this may not seem entirely immediate because \eqref{eqO1} only appears to fix the variables $(z_i(\sigma))_{i,\sigma}$ of \eqref{eqXOR11} that correspond to the variable nodes.
But thanks to a certain inherent symmetry property the optimal $\hat z_{\chi_1,\ldots,\chi_\ell}$ to go with the check nodes end up being nearly equitable as well.
This observation by itself now suffices to show without further ado that
\begin{align}\label{eqXOR13}
	\ex_{\fA}[\vZ^2\cdot\vecone\{\AA\in\fO\}]&=O\bc{\ex_{\fA}[\vZ\cdot\vecone\{\AA\in\fO\}]^2}.
\end{align}

Yet the estimate \eqref{eqXOR13} is not quite precise enough to complete the proof of \Thm~\ref{thm_main}.
Indeed, to apply Chebyshev's inequality we would need asymptotic equality as in \eqref{eqO2} rather than just an $O(\nix)$-bound; Huang~\cite{Huang} faced the same issue in the case $\vd=\vk$ constant and $q$ prime.
The proof of this seemingly innocuous improvement actually constitutes one of the main technical obstacles that we need to surmount.

As a first step, using a careful local expansion we will show that the dominant contribution to the second moment actually comes from $(z_\ell)_\ell$ such that
\begin{align}\label{eqGrid1}
	\sum_{\ell\in\supp\vd}\pr\brk{\vd=\ell}\sum_{\sigma\in\FF_q}|z_\ell(\sigma)-q^{-1}|&=O(n^{-1/2}).
\end{align}
But even once we know \eqref{eqGrid1} a critical issue remains because we allow general distributions of degrees $\vd,\vk$ and matrix entries $\vec\chi$.
In effect, to estimate the kernel size accurately we need to investigate the conceivable frequencies of field values that can lead to solutions.
Specifically, for an integer $k_0\geq3$ and $\chi_1,\ldots,\chi_{k_0}\in\FF_q^*$ let
\begin{equation}\label{eqS}
	\cS_q(\chi_1,\ldots,\chi_{k_0})=\cbc{\sigma\in\FF_q^{k_0}:\sum_{i=1}^{k_0}\chi_i\sigma_i=0}
\end{equation}
comprise all solutions to a linear equation with coefficients $\chi_1,\ldots,\chi_{k_0}\in\FF_q$.
For each $\sigma\in\cS_q(\chi_1,\ldots,\chi_{k_0})$ the vector
\begin{align}\label{eqMyFreq}
	\hat\sigma=\bc{\sum_{i=1}^{k_0}\vecone\cbc{\sigma_i=s}}_{s\in\FF_q^*}\in\ZZ^{\FF_q^*}
\end{align}
tracks the frequencies with which the various non-zero field elements appear.
Depending on the coefficients $\chi_1,\ldots,\chi_{k_0}$, the frequency vectors $\hat\sigma$ may be confined to a proper sub-grid of the integer lattice.
For example, in the case $q=k_0=3$ and $\chi_1=\chi_2=\chi_3=1$ they span the sub-lattice spanned by $\binom11$ and $\binom03$.
The following proposition characterises the lattice spanned by the frequency vectors for general $k_0$ and $\chi_1,\ldots,\chi_{k_0}$.

\begin{proposition}\label{prop_module}
	Let $k_0\geq3$, let $\chi_1,\ldots,\chi_{k_0}\in\FF_q^*$ and let $\fM_q(\chi_1,\ldots,\chi_{k_0})\subseteq\ZZ^{\FF_q^*}$ be the $\ZZ$-module generated by the frequency vectors $\hat\sigma$ for $\sigma\in\cS_q(\chi_1,\ldots,\chi_{k_0})$.
	Then $\fM_q(\chi_1,\ldots,\chi_{k_0})$ has a basis $\fb_1,\ldots,\fb_{q-1}$ of non-negative integer vectors with $\|\fb_i\|_1\leq3$ for all $1\leq i\leq q-1$ such that $\det\bc{\fb_1\ \cdots\ \fb_{q-1}}=q^{\vecone\{\chi_1=\cdots=\chi_{k_0}\}}.$
\end{proposition}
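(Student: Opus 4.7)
The statement splits naturally according to whether the coefficients coincide. Define the surjective $\ZZ$-linear map $\Psi:\ZZ^{\FF_q^*}\to\FF_q$ by $\Psi(e_s)=s$, so $\ker\Psi$ has index $q$ in $\ZZ^{\FF_q^*}$. Any frequency vector $\hat\sigma$ satisfies $\Psi(\hat\sigma)=\sum_i\sigma_i$; thus if $\chi_1=\cdots=\chi_{k_0}$ then every solution has $\sum_i\sigma_i=0$ (after dividing by the common coefficient), forcing $\fM_q\subseteq\ker\Psi$, while otherwise $\Psi$ imposes no restriction. Because $k_0\geq 3$, two families of short frequency vectors always belong to $\fM_q$: for distinct indices $i,j$ and $s\in\FF_q^*$, the pair-vector $e_s+e_{-\chi_i\chi_j^{-1}s}$ (collapsing to $2e_s$ when the two entries coincide) arising from a two-coordinate solution; and for distinct $i,j,k$ and $s,t\in\FF_q^*$, the triple-vector $e_s+e_t+e_u$ with $u:=-\chi_k^{-1}(\chi_i s+\chi_j t)\in\FF_q^*$ arising from a three-coordinate solution. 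Both families consist of non-negative integer vectors of $\ell_1$-norm at most $3$.

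\textbf{The identical case.} Suppose $\chi_1=\cdots=\chi_{k_0}=1$ WLOG; then $\fM_q\subseteq\ker\Psi$ by the above. To exhibit the reverse inclusion together with the required basis, I would combine pair-vectors $e_s+e_{-s}$ (or $2e_s$ when $s=-s$, i.e., in characteristic $2$) with triple-vectors $e_s+e_t+e_{-s-t}$, choosing $q-1$ of them arranged in step with a fixed $\FF_p$-basis $\beta_1,\ldots,\beta_e$ of $\FF_q$ to form a basis of $\ker\Psi$. The recipe is to assign one pair-/triple-vector per element of $\FF_q^*$ in such a way that, after ordering rows and columns carefully, the resulting matrix becomes almost upper-triangular with diagonal product $p^e=q$. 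This simultaneously establishes $\fM_q=\ker\Psi$ and produces the required basis with $|\det|=q$.

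\textbf{The non-identical case and main obstacle.} If $\chi_1\neq\chi_2$ WLOG, then pair-vectors $e_s+e_{\lambda s}$ with $\lambda=-\chi_1/\chi_2$ already populate a sublattice of $\fM_q$, and combining them with triple-vectors involving a third coordinate (whose coefficient may differ from $\chi_1$ and $\chi_2$) provides enough freedom to generate $\ZZ^{\FF_q^*}$: differences of triple-vectors sharing a coordinate, corrected by pair-vectors, isolate each standard basis vector $e_r$ inside the $\ZZ$-span. Any $(q-1)$-element full-rank selection among pair-/triple-vectors then has $\det=\pm 1$, giving the required basis. The main obstacle is the identical case for prime powers $q=p^e$ with $e\geq 2$: while $\fM_q\subseteq\ker\Psi$ and that the index divides $q$ are immediate, proving that the index equals \emph{exactly} $q$ (and that our explicit basis realises this) requires delicately verifying independence of the $e$ modulo-$p$ constraints imposed by $\Psi$; I expect to handle this by an inductive argument synchronised with the $\FF_p$-structure of $\FF_q$, treating each of the $e$ coordinates modulo $p$ in turn and checking that adding one more $\beta_i$ to the basis strictly enlarges the constraint set.
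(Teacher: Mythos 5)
Your overall strategy agrees with the paper's in broad outline: split on whether the coefficients coincide, observe that in the identical case $\fM_q$ sits inside the kernel of the map $\Psi:\ZZ^{\FF_q^*}\to\FF_q$ (which has index $q$), and try to exhibit explicit pair- and triple-vectors that span enough to pin down the index. The sandwich $\fB_2\subseteq\fM_q\subseteq\ker\Psi$ you suggest is logically equivalent to what the paper does: the paper constructs a second, auxiliary basis $\cB_1$ with determinant $q$ and shows $\fB_2\subseteq\fM_q\subseteq\fB_1$, and since $\fB_1\subseteq\ker\Psi$ both have index $q$ one has $\fB_1=\ker\Psi$. However, as written your proposal has two genuine gaps.

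First, in the identical case the heart of the matter is precisely what you defer: exhibiting $q-1$ non-negative, $\ell_1$-short vectors $\fb_1,\ldots,\fb_{q-1}\in\fM_q$ whose determinant equals $q$ exactly. You say you ``expect to handle this by an inductive argument synchronised with the $\FF_p$-structure,'' but this is the entire content of the paper's argument (the ``length'' ordering, the construction of the matrix $A_q$, and the determinant computation of \Lem~\ref{Lem_detAp} and \Cor~\ref{Cor_detAq}). Producing short vectors of determinant divisible by $q$ is easy; getting \emph{exactly} $q$ while keeping entries non-negative and $\ell_1$-norm $\leq3$ takes work, especially for $q=p^\ell$ with $\ell\geq2$ where naive choices like $\{e_s+e_{-s}\}$ do not even give full rank. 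Also, your parenthetical ``the index divides $q$ are immediate'' is backwards: from $\fM_q\subseteq\ker\Psi$ one only gets that the index is a \emph{multiple} of $q$; the determinant argument is what caps it at $q$.

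Second, and more seriously, your claim in the non-identical case that ``any $(q-1)$-element full-rank selection among pair-/triple-vectors then has $\det=\pm1$'' is false. For instance take $q=3$, $\chi_1=\chi_2=1$, $\chi_3=2$: the triple-vectors $2e_1+e_2$ and $e_1+2e_2$ are both in $\fM_q$ and are linearly independent, yet $\det\begin{pmatrix}2&1\\1&2\end{pmatrix}=3$. Full rank over $\QQ$ does not imply unimodularity; one must choose the basis carefully. The paper's \Lem~\ref{lemma_module'} does this by partitioning $\FF_q^*$ into orbits of the multiplicative action of $\langle\chi_3^{-1}\rangle$ (or $\langle-\chi_2^{-1}\rangle$, depending on the case), associating $\fo-1$ pair-vectors plus one triple-vector to each orbit of size $\fo$, and then performing explicit integral row-reductions to isolate each standard basis vector. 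That combinatorial structure, or something equivalent, is necessary and is not present in your proposal.
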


\noindent
A vital feature of \Prop~\ref{prop_module} is that the module basis consists of non-negative integer vectors with small $\ell_1$-norm.
In effect, the basis vectors are ``combinatorially meaningful'' towards our purpose of counting solutions.
Perhaps surprisingly, the proof of \Prop~\ref{prop_module} turns out to be rather delicate, with details depending on whether $q$ is a prime or a prime power, among other things.
The details can be found in \Sec~\ref{sec_prop_module}.

In addition to the subgrid constraints imposed by the linear equations themselves, we need to take a divisibility condition into account.
Indeed, for any assignment $\sigma\in\FF_q^n$ of values to variables the frequencies of the various field elements $s\in\FF_q$ are divisible by the g.c.d.\ $\fd$ of $\supp(\vd)$, i.e.
\begin{align}\label{eqGrid2}
	\fd\mid\sum_{i=1}^n\vd_i\vecone\cbc{\sigma_i=s}&&\mbox{for all }s\in\FF_q.
\end{align}
Thus, to compute the expected kernel size we need to study the intersection of the sub-grid \eqref{eqGrid2} with the grid spanned by the frequency vectors $\hat\sigma$ for $\sigma\in\cS_q(\vec\chi_{1,1},\ldots,\vec\chi_{1,\vk})$.
Specifically, by way of estimating the number of assignments represented by each grid point and calculating the ensuing satisfiability probability, we obtain the following.

\begin{proposition}\label{prop_mmt}
	Assume that $q$ and $\fd$ are coprime and that \eqref{eqmain} is satisfied.
	Then \eqref{eqO2} holds \whp
\end{proposition}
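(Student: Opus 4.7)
The plan is to expand the truncated second moment around the equitable joint distribution on $\FF_q\times\FF_q$ and to pin down the prefactor using the lattice description from Proposition~\ref{prop_module}, mirroring the rationale sketched in Section~\ref{sec_moment_outline}. First, I would rewrite
\begin{align*}
\ex_{\fA}\brk{\vZ^2\cdot\vecone\cbc{\AA\in\fO}}=\sum_{\sigma,\tau\in\FF_q^n}\pr_{\fA}\brk{\AA\sigma=\AA\tau=\vy}\,\vecone\cbc{(\sigma,\tau)\in\fO'}
\end{align*}
by classifying pairs $(\sigma,\tau)$ according to their joint empirical distribution $\nu\in\cP(\FF_q^2)$, where $\fO'$ is the event that the marginals of $\nu$ are $q^{-1}$-equitable up to $o(1)$ (which follows from $\AA\in\fO$ via Proposition~\ref{prop_overlap}). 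The satisfiability probability factorises over rows: each check of degree $\ell$ with coefficients $\chi_1,\ldots,\chi_\ell$ contributes, conditional on $\fA$, a factor depending only on how the incident variables' pairs $(\sigma_i,\tau_i)$ are distributed, and this factor is exactly the one that would appear in the joint analogue of \eqref{eqXOR11}.

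Next I would localise: a Laplace-type analysis, formally identical to the one underlying the approximate rank formula~\eqref{eqMaurice} but applied to the product space $\FF_q^2$, shows that under \eqref{eqmain} the exponential rate is uniquely maximised at the product-uniform measure $\nu^\ast=u\otimes u$ with a strictly negative definite Hessian on the affine slice cut out by the marginal constraints. Hence only joint types with $\|\nu-\nu^\ast\|_1=O(n^{-1/2})$, i.e.\ the analogue of \eqref{eqGrid1} for pairs, contribute to leading order. On this scale I would replace the outer sum by a Gaussian integral, so what remains is to count integer joint-frequency vectors lying in the intersection of two sub-lattices of $\ZZ^{\FF_q^2}$: (i) the lattice $\fL_1$ generated by the pair-frequency vectors arising from solutions $(\sigma,\tau)\in\cS_q(\chi_1,\ldots,\chi_\ell)^2$ for each check, and (ii) the sub-grid $\fL_2$ imposed by the variable-side divisibility relation \eqref{eqGrid2} for both $\sigma$ and $\tau$.

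The main obstacle, and the heart of the proof, is computing the covolume of $\fL_1\cap\fL_2$ exactly. I would apply Proposition~\ref{prop_module} twice (once for each coordinate of the pair) to produce an explicit small-norm non-negative basis of $\fL_1$, whose determinant picks up a factor of $q$ per ``all-coefficients-equal'' check. Crucially, the coprimality hypothesis $\gcd(q,\fd)=1$ now guarantees that the mod-$\fd$ divisibility constraints defining $\fL_2$ are independent of the mod-$q$ field structure of $\fL_1$, so the covolumes multiply; without this, $\fL_1$ could fail to be transversal to $\fL_2$ and the count would drop, as in Example~\ref{ex_zerorowsums}. Performing the ensuing Gaussian integral yields an asymptotic formula for $\ex_{\fA}[\vZ^2\vecone\{\AA\in\fO\}]$. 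Finally I would run the analogous, simpler single-copy computation for $\ex_{\fA}[\vZ\vecone\{\AA\in\fO\}]$ (consistent with \eqref{eqO1}) and observe that the Jacobians, the per-check lattice determinants, and the Gaussian prefactors combine so that the second moment is asymptotic to the square of the first, establishing \eqref{eqO2}.
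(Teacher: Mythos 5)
Your proposal identifies the right ingredients — localization around the equitable type, a local-CLT expansion at the $O(n^{-1/2})$ scale, the lattice/covolume computation via \Prop~\ref{prop_module}, and the role of coprimality of $q$ and $\fd$ — but it misses the algebraic reduction that makes the paper's argument tractable. Since the solution set of $\AA x=\vy$ is either empty or a coset of $\ker\AA$, one has $\vZ^2=\vZ\cdot|\ker\AA|$ deterministically; averaging over the independent right-hand side $\vy$ then yields $\ex_{\fA}\brk{\vZ^2\cdot\vecone\{\AA\in\fO\}}=q^{n-\vm}\cdot\ex_{\fA}\brk{|\ker\AA|\cdot\vecone\{\AA\in\fO\}}$ (cf.~\eqref{eqXOR2}). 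The entire problem thus collapses to a single-copy estimate of the expected kernel size on $\fO$, and the frequency vectors $\rho_\sigma$ one must track live in $\ZZ^{\FF_q^*}$, not $\ZZ^{\FF_q^2}$. That is exactly what Fact~\ref{Cor_Maurice}, \Lem~\ref{lemma_far} and \Lem~\ref{lemma_near} accomplish, with \Prop~\ref{prop_module} feeding directly into \Lem~\ref{lemma_near}.

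Your direct two-replica scheme keeps the joint type $\nu\in\cP(\FF_q^2)$ and proposes to control the lattice of pair-frequency vectors by ``applying \Prop~\ref{prop_module} twice.'' This is a genuine gap: \Prop~\ref{prop_module} describes the $\ZZ$-module in $\ZZ^{\FF_q^*}$ generated by \emph{single}-solution frequency vectors, and the module generated by pair-frequency vectors of $(\sigma,\tau)\in\cS_q(\chi_1,\ldots,\chi_\ell)^2$ inside $\ZZ^{\FF_q^2}$ is not in general a tensor square of the one-copy module, so you would have to determine its structure, determinant, and an explicit small-$\ell_1$-norm basis from scratch. A secondary issue is your appeal to a Laplace-type argument asserting that $\nu^\ast=u\otimes u$ is the unique maximizer over $\cP(\FF_q^2)$: read globally, that is precisely the optimization \eqref{eqXOR11} the paper argues can have spurious maxima, so the localization must be imported from \Prop~\ref{prop_overlap}/Fact~\ref{Cor_Maurice} rather than from an optimization claim. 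Refactoring your plan via the coset identity removes both obstacles and brings it in line with the paper's proof.
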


\noindent
We prove \Prop~\ref{prop_mmt} in \Sec~\ref{sec_prop_mmt}.
Combining \Prop s~\ref{prop_three}--\ref{prop_mmt}, we now establish the main theorem.

\begin{proof}[Proof of \Thm~\ref{thm_main}]
	The assumption \eqref{eqmain} implies that $1-d/k=\Phi(0)>\Phi(1)=0$.
	Combining \Prop s~\ref{prop_overlap} and~\ref{prop_mmt}, we obtain \eqref{eqO1}--\eqref{eqO2}.
	Hence, Chebyshev's inequality implies that $\vZ\geq q^{n-\vm}=q^{n(1-d/k+o(1))}>0$ \whp\
	Consequently, the random linear system $\AA x=\vy$ has a solution \whp, and thus $\rk\AA=\vm$ \whp
\end{proof}

\begin{proof}[Proof of \Cor~\ref{thm_Q}]
	Let $q$ be a prime that does not divide $\fd$ and let $\vec\chi=1$ deterministically.
Obtain the matrix $\bar\BB\in\FF_q^{\vm\times n}$ by reading the $\{0,1\}$-entries of $\BB$ as elements of $\FF_q$.
Then the distribution of $\bar\BB$ coincides with the distribution of the random $\FF_q$-matrix $\AA$.
Hence, \Thm~\ref{thm_main} implies that $\bar\BB$ has full row rank \whp

Suppose that indeed $\rk\bar\BB=\vm$.
We claim that then the rows of $\BB$ are linearly independent.
Indeed, assume that $z^\trans\BB=0$ for some vector $z=(z_1,\ldots,z_{\vm})^\trans\in\ZZ^{\vm}$.
Factoring out $\gcd(z_1,\ldots,z_{\vm})$ if necessary, we may assume that the vector $\bar z\in\FF_q^{\vm}$ with entries $\bar z_i=z_i+q\ZZ$ is non-zero.
Since $z^\trans\BB=0$ implies that $\bar z^\trans\bar\BB=0$, the rows of $\bar\BB$ are linearly dependent, in contradiction to our assumption that $\bar\BB$ has full row rank.
\end{proof}

\subsection{Discussion and related work}\label{sec_related}
The present proof strategy draws on the prior work~\cite{Ayre,Maurice} on the rank of random matrices.
Specifically, toward the proof of \Prop~\ref{prop_three} we extend the Aizenman-Sims-Starr technique from~\cite{Maurice} and to prove \Prop~\ref{prop_overlap} we generalise an argument from~\cite{Ayre}.
Additionally, the expansion around the centre carried out in the proof of \Prop~\ref{prop_mmt} employs some of the techniques developed in the study of satisfiability thresholds, particularly the extensive use of local limit theorems and auxiliary probability spaces~\cite{Kosta,KostaNAE}.

The principal new proof ingredient is the asymptotically precise analysis of the second moment by means of the study of the sub-grids of the integer lattice induced by the constraints as sketched in \Sec~\ref{sec_moment_outline}.
This issue that was absent in the prior literature on variations on random $k$-XORSAT~\cite{Ayre,Maurice,CFP} and on other random constraint satisfaction problems~\cite{Kosta,KostaNAE}.
However, in the study of the random regular matrix from Example~\ref{ex_fixed} Huang~\cite{Huang} faced a similar issue in the special case $\vd=\vk$ constant and $\vec\chi=1$ deterministically.
\Prop~\ref{prop_module}, whose proof is based on a combinatorial investigation of lattices in the general case, constitutes a generalisation of the case Huang studied.
A further feature of \Prop~\ref{prop_module} absent in~\cite{Huang} is the explicit $\ell_1$-bound on the basis vectors.
This bound facilitates the proof of \Prop~\ref{prop_mmt}, which ultimately carries out the expansion around the equitable solution.

Satisfiability thresholds of random constraint satisfaction problems have been studied extensively in the statistical physics literature via a non-rigorous technique called the ``cavity method''.
The cavity method comes in two installments: the simpler ``replica symmetric ansatz'' associated with the Belief Propagation message passing scheme, and the more intricate ``replica symmetry breaking ansatz''.
The proof of \Thm~\ref{thm_main} demonstrates that the former renders the correct prediction as to the satisfiability threshold of random linear equations.
By contrast, in quite a few problems, notoriously random $k$-SAT, replica symmetry breaking occurs~\cite{Jean,DSS3}.

An intriguing question for future work might be to understand the ``critical'' case of $\Phi$ that attain their global max at $0$ and another point left open by \Thm~\ref{thm_main}.
While Example~\ref{ex_identical} shows that it cannot generally be true that $\AA$ has full row rank \whp, the regular case where $\vd=\vk=d$ are fixed to the same constant provides an intriguing example.
For this scenario Huang proved that the random $\{0,1\}$-matrix $\BB$ has full rank \whp~\cite{Huang}.
The proof, based effectively on a moment computation over finite fields and local limit techniques, also applies to the adjacency matrices of random $d$-regular graphs.

While the present paper deals with sparse random matrices with a bounded average number of non-zero entries in each row and column, the case of dense random matrices has received a great deal of attention, too.
\Komlos~\cite{Komlos} first shows that dense square random $\{0,1\}$-matrices are regular over the rationals \whp; Vu~\cite{Vu} suggested an alternative proof.
The computation of the exponential order of the singularity probability subsequently led to a series of intriguing articles~\cite{KKS,TaoVu,Tikhomirov}.
By contrast, the singularity probability of a dense square matrix over a finite field converges to a value strictly between zero and one~\cite{Kovalenko,Kovalenko2,Lev1,Lev2}.

Apart from the sparse and dense case, the regime of intermediate densities has been studied as well.
Balakin~\cite{Balakin2} and Bl\"omer, Karp and Welzl~\cite{BKW} dealt with the rank of such random matrices of intermediate densities over finite fields.
In addition, Costello and Vu~\cite{costello2008rank,costello2010rank} studied the rational rank of random symmetric matrices of an intermediate density.

Indeed, an interesting open problem appears to be the extension of the present methods to the symmetric case.
In particular, it would be interesting to see if the present techniques can be used to add to the line of works on the adjacency matrices of random graphs, which have been approached by means of techniques based on local weak convergence or Littlewood-Offord techniques~\cite{BLS,Ferber}.

\subsection{Organisation}
After some preliminaries in \Sec~\ref{sec_pre} we begin with the proof of \Prop~\ref{prop_three} in \Sec~\ref{sec_prop_three}.
The proof relies on an Aizenman-Sims-Starr coupling argument, some details of which are deferred to \Sec~\ref{sec_prop_auxphi}.
\Sec~\ref{sec_prop_overlap} deals with the proof of \Prop~\ref{prop_overlap}.
Subsequently we prove \Prop~\ref{prop_module} in \Sec~\ref{sec_prop_module}, thereby laying the ground for the proof of \Prop~\ref{prop_mmt} in \Sec~\ref{sec_prop_mmt}.

\section{Preliminaries}\label{sec_pre}
\noindent
Unsurprisingly, the proofs of the main results involve a few concepts and ideas from linear algebra.
We mostly follow the terminology from~\cite{Maurice}, summarised in the following definition.

\begin{definition}[{\cite[Definition~2.1]{Maurice}}]\label{Def_Alp}
	Let $A$ be an $m\times n$-matrix over a field $\FF$.
	\begin{itemize}
		\item A set $\emptyset\neq I\subset[n]$ is a \bemph{relation} of $A$ if there exists a row vector 
			$y\in\FF^{1\times m}$ such that $\emptyset\neq\supp(y A)\subset I$.
		\item If $I=\cbc{i}$ is a relation of $A$, then we call $i$ \bemph{frozen} in $A$. Let $\fF(A)$ be the set of all frozen $i\in[n]$ and let $$\ff(A)=|\fF(A)|/n.$$
		\item A set $I\subset[n]$ is a \bemph{proper relation} of $A$ if $I\setminus\fF(A)$ is a relation of $A$.
		\item For $\delta>0$, $\ell\geq1$ we say that $A$ is \bemph{$(\delta,\ell)$-free} if there are no more than $\delta n^\ell$ proper relations $I\subset[n]$ of size $|I|=\ell$.
	\end{itemize}
\end{definition}

Thus, a relation is set of column indices such that the support of a non-zero linear combination $yA$ of rows of $A$ is contained in that set of indices.
Of course, every single row induces a relation on the column indices where it has non-zero entries.
%But more generally we will be interested in relations that involve a linear fraction of all rows.
An important special case is a relation consisting of one coordinate $i$ only.
If such a relation exists, then $x_i=0$ for all vectors $x\in\ker A$, which is why we call such a coordinate $i$ frozen.
Furthermore, a proper relation is a relation that is not just built up of frozen variables.
Finally, we introduce the term $(\delta,\ell)$-free to express that $A$ has ``relatively few'' relations of size $\ell$ as we will generally employ this term for bounded $\ell$ and small $\delta>0$.

The following observation will aid the Aizenman-Sims-Starr coupling argument, where we will need to study the effect of adding a few extra rows and columns to a random matrix.

\begin{lemma}[{\cite[\Lem~2.4]{Maurice}}]\label{Cor_free}
Let $A,B,C$ be matrices of size $m \times n$, $m'\times n$ and $m'\times n'$, respectively, and let $I\subset[n]$ be the set of all indices of non-zero columns of $B$.
Moreover, obtain $B_*$ from $B$ by replacing for each $i\in I\cap\fF(A)$ the $i$-th column of $B$ by zero.
Unless $I$ is a proper relation of $A$ we have
\begin{align}\label{eqLemma_free}
\nul\begin{pmatrix}A&0\\B&C\end{pmatrix}-\nul A=n'-\rk(B_*\ C).
\end{align}
\end{lemma}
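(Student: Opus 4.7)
The plan is to convert the nullity identity into a rank identity and then exploit frozen coordinates to simplify the block structure. Writing $M=\begin{pmatrix}A&0\\B&C\end{pmatrix}$, note that
\begin{align*}
\nul M-\nul A=(n+n')-\rk M-(n-\rk A)=n'-\bc{\rk M-\rk A}.
\end{align*}
So \eqref{eqLemma_free} is equivalent to $\rk M-\rk A=\rk(B_*\ C)$, which I will establish in two stages.

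First, I will perform row operations that replace $B$ by $B_*$ without changing the rank of $M$. For each frozen index $i\in\fF(A)$, by definition of freezing there exists a row vector $y_i\in\FF^{1\times m}$ with $y_iA$ equal to a non-zero scalar multiple of the standard basis vector $e_i$; after rescaling assume $y_iA=e_i$. Then for each bottom row $j\in[m']$ and each $i\in I\cap\fF(A)$ I subtract $B_{j,i}\cdot(y_iA,\,0)$ from the $j$-th block-bottom row. This eliminates the $(j,i)$-entry of $B$ while leaving $C$ and the top block $(A,0)$ untouched. Iterating over all $i\in I\cap\fF(A)$ transforms $B$ into precisely $B_*$, and I obtain $\rk M=\rk\begin{pmatrix}A&0\\B_*&C\end{pmatrix}$.

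Second, I will show that under the non-proper-relation hypothesis,
\begin{align*}
\mathrm{row}(A,0)\cap\mathrm{row}(B_*,C)=\{0\},
\end{align*}
from which the standard inclusion-exclusion identity gives $\rk\begin{pmatrix}A&0\\B_*&C\end{pmatrix}=\rk A+\rk(B_*\ C)$, completing the proof. Suppose toward contradiction that $(\alpha A,0)=(\beta B_*,\beta C)$ is a non-zero element of the intersection. Then $\alpha A=\beta B_*\neq 0$. Since non-zero columns of $B_*$ are indexed by $I\setminus\fF(A)$ (columns outside $I$ are zero in $B$, hence in $B_*$; columns in $I\cap\fF(A)$ were just zeroed out), the support of $\beta B_*=\alpha A$ is a non-empty subset of $I\setminus\fF(A)$. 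This exhibits $I\setminus\fF(A)$ as a relation of $A$, so by \Def~\ref{Def_Alp} the set $I$ is a proper relation, contradicting our assumption.

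The routine bookkeeping occupies step one, while the only genuine content is step two, where one has to align the definition of ``proper relation'' with the support structure that arises after zeroing out the frozen columns of $B$. The key observation enabling this is simply that $\supp(\beta B_*)\subseteq I\setminus\fF(A)$ by construction, which is exactly the certificate needed to say $I$ is a proper relation of $A$.
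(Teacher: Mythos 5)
Your proof is correct. Both stages are sound: the row operations using the freezing certificates $y_i$ (with $y_iA=e_i$ after rescaling) replace $B$ by $B_*$ while only adding multiples of top-block rows, so $\rk M$ is preserved; and the intersection argument correctly extracts, from any nonzero element $(\alpha A, 0)=(\beta B_*,\beta C)$ of $\mathrm{row}(A\ 0)\cap\mathrm{row}(B_*\ C)$, the certificate $\alpha$ witnessing $\emptyset\neq\supp(\alpha A)\subseteq I\setminus\fF(A)$, which is exactly the statement that $I$ is a proper relation of $A$. Note that the paper itself does not prove this lemma but cites it as~\cite[\Lem~2.4]{Maurice}, so there is no in-paper proof to compare against; your argument is a clean, self-contained derivation via the rank reformulation $\rk M-\rk A=\rk(B_*\ C)$ and the block lower-triangular row-space decomposition.
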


Apart from \Lem~\ref{Cor_free} we will harness an important trick called the ``pinning operation''.
The key insight is that for {\em any} given matrix we can diminish the number of short proper relations by simply expressly freezing a few random coordinates.
The basic idea behind the pinning operation goes back to the work of Montanari~\cite{Montanari} and has been used in other contexts~\cite{CKPZ,Raghavendra}.
The version of the construction that we use here goes as follows.

\begin{definition}[{\cite[Definition~2.2]{Maurice}}]\label{Def_pin}
Let $A$ be an $m\times n$ matrix and let $\theta\geq0$ be an integer.
Let $\vi_1,\vi_2,\ldots,\vi_\theta\in[n]$ be uniformly random and mutually independent column indices.
Then the matrix $A[\theta]$ is obtained by adding $\theta$ new rows to $A$ such that for each $j\in[\theta]$ the $j$-th new row has precisely one non-zero entry, namely a one in the $\vi_j$-th column.
\end{definition}

\begin{proposition}[{\cite[\Prop~2.3]{Maurice}}]\label{Prop_Alp}
For any $\delta>0$, $\ell>0$ there exists $\Theta_0=\Theta_0(\delta,\ell)>0$ such that for all $\Theta>\Theta_0$ and for any matrix $A$ over any field $\FF$ the following is true.
With $\THETA\in[\Theta]$ chosen uniformly at random we have $\pr\brk{\mbox{$A[\THETA]$ is $(\delta,\ell)$-free}}>1-\delta.$
\end{proposition}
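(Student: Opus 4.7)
The plan is to combine a monotonicity-based averaging argument for the size of the frozen set with a quantitative gain lemma that converts an abundance of proper relations into a concrete expected increment of $|\fF(\nix)|$ per pinning step.

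First, I would observe that $\theta\mapsto|\fF(A[\theta])|$ is non-decreasing in $\theta$, because adding a row can only enlarge the set of coordinates forced to vanish on the kernel, and it is bounded above by $n$. Summing the telescoping increments gives, for $\THETA$ uniform on $[\Theta]$,
$$\ex\bigl[|\fF(A[\THETA+1])|-|\fF(A[\THETA])|\bigr]\leq\frac{n}{\Theta}.$$
Granting the gain lemma below, this immediately drives the failure probability of $(\delta,\ell)$-freeness to zero as $\Theta\to\infty$. The gain lemma I would aim for reads: there exists $c=c(\delta,\ell)>0$ such that any matrix $B$ with $n$ columns and more than $\delta n^\ell$ proper relations of size $\ell$ satisfies $\ex_{\vi}\bigl[|\fF(B[1])|-|\fF(B)|\bigr]\geq cn$, where $\vi\in[n]$ is uniform. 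If $\pr[A[\THETA]\text{ is not }(\delta,\ell)\text{-free}]\geq\delta$, then the expected one-step gain is at least $\delta c n$, contradicting the $n/\Theta$ bound once $\Theta>\delta^{-1}c^{-1}$. Choosing $\Theta_0$ accordingly then finishes the proof.

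The gain lemma is essentially direct for $\ell=2$: a proper $2$-relation $\{i,j\}$ necessarily has $i,j\notin\fF(B)$, because a one-element relation already forces its element to be frozen, and the corresponding row combination encodes $a x_i+b x_j=0$ on $\ker B$ with $a,b\neq0$, so pinning either coordinate freezes the other. Double counting the at least $2\delta n^2$ incidences $(\vi, I)$ and averaging over a uniform pin yields an expected gain of at least $2\delta n$. For $\ell\geq 3$ I would proceed by induction on $\ell$: after pinning a random column, either many coordinates freeze immediately, or a positive fraction of the original proper $\ell$-relations give rise to proper relations of strictly smaller size in $B[1]$ (because any row $y$ witnessing a relation on $J=I\setminus\fF(B)$ can be combined with the new pinning row $e_{\vi}^\top$ to witness a smaller relation in $B[1]$). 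After one more round of pinning, the inductive hypothesis applies and delivers the required gain; iterating $\ell-1$ times produces $c(\delta,\ell)>0$ at the cost of multiplying $\Theta_0$ by a factor depending only on $\ell$.

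The main obstacle is precisely this inductive step. A single pin need not freeze anything beyond the pinned coordinate in a ``loose'' relation such as the $3$-relation $x_1+x_2+x_3=0$: pinning $x_1=0$ merely converts it into the $2$-relation $\{2,3\}$ without any immediate freezing. The amortisation across several rounds must simultaneously track how many $\ell$-relations successfully cascade down to proper $2$-relations, how many coordinates migrate into $\fF$ along the way (which can eat relations by making them cease to be \emph{proper}), and the loss of non-frozen coordinates available for further freezing. The delicate point is to ensure that at least a $\mathrm{poly}(\delta)$-fraction of the original $\delta n^\ell$ relations survives the full induction, so that the eventual quantitative gain $c(\delta,\ell)$ remains strictly positive and the choice of $\Theta_0$ as a function of $\delta$ and $\ell$ alone is legitimate.
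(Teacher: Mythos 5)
Your overall skeleton — monotonicity of $|\fF(\nix)|$, the averaging bound $\ex\brk{|\fF(A[\THETA+1])|-|\fF(A[\THETA])|}\leq n/\Theta$, and a gain lemma that turns ``not $(\delta,\ell)$-free'' into a linear expected increment — is the right frame, and your $\ell=2$ argument is correct. However, the \emph{single-pin} gain lemma you state is false for $\ell\geq3$, and this is a genuine gap. Take $\FF=\QQ$, let $H$ be a $2\times n$ matrix any two of whose columns are linearly independent (for instance, column $i$ equal to $(1,i)^\trans$), and let $B$ be any $(n-2)\times n$ matrix whose row space equals $\ker H$. Every three columns of $H$ are dependent, so every $3$-subset of $[n]$ is a minimal relation of $B$, while $\fF(B)=\emptyset$; thus $B$ has $\binom{n}{3}>\delta n^3$ proper $3$-relations. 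Yet pinning a single coordinate $\vi$ only adjoins $e_{\vi}$ to the row space, and for $j\neq\vi$ the membership $e_j\in\ker H+\langle e_{\vi}\rangle$ would force columns $j$ and $\vi$ of $H$ to be parallel; hence $\fF(B[1])=\{\vi\}$ and the one-pin gain equals $1$, not $\Omega(n)$. No constant $c(\delta,\ell)>0$ can rescue the lemma as you have stated it.

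Your closing remarks concede that one must spend about $\ell$ pins, so the gain lemma must be \emph{stated} as a multi-pin bound and the amortisation you flag as ``the main obstacle'' must actually be carried out. A route that sidesteps the induction on $\ell$ altogether is to argue via minimal relations: each proper $\ell$-relation $I$ of $A$ contains a minimal relation $J_I\subset I\setminus\fF(A)$ of some size $s$ with $2\leq s\leq\ell$ (a singleton subrelation would already be a frozen coordinate), and at most $n^{\ell-s}$ sets $I$ of size $\ell$ contain a given $J$ of size $s$; so some $s$ admits at least $\tfrac{\delta}{\ell-1}n^s$ minimal $s$-relations disjoint from $\fF(A)$. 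Pinning $J\setminus\{j\}$ freezes $j$, because minimality makes $J$ the full support of some row combination. Over $\ell$ fresh uniform pins and for a fixed $j$, the events that the first $s-1$ pins enumerate $J\setminus\{j\}$ are pairwise disjoint as $J$ ranges over minimal $s$-relations containing $j$, each has probability $(s-1)!/n^{s-1}$, and there are at most $\binom{n-1}{s-1}\leq n^{s-1}/(s-1)!$ such $J$ — so the total probability never exceeds $1$, which is precisely what keeps the double-counting you worry about under control. Summing over $j$ gives an expected gain of at least $\tfrac{2\delta}{\ell-1}n$ over $\ell$ pins, and combining this with the telescoping bound $\sum_{\theta<\Theta}\bigl(|\fF(A[\theta+\ell])|-|\fF(A[\theta])|\bigr)\leq\ell n$ yields $\sum_{\theta<\Theta}\pr\brk{A[\theta]\text{ is not }(\delta,\ell)\text{-free}}\leq\ell(\ell-1)/(2\delta)$, whence the proposition follows with $\Theta_0=O(\ell^2/\delta^2)$.
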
  

As a fairly immediate application of \Prop~\ref{Prop_Alp} we conclude that if the pinning operation applied to a random matrix over a finite field leaves us with few frozen variables, a decorrelation condition akin to the event $\fO$ from \eqref{eqO} will be satisfied.
For a matrix $A$ we continue to denote by $\vx_A$ a random vector from $\ker A$.

\begin{corollary}[{\cite[\Lem~4.2]{Maurice}}]\label{lem_pinning_RS}
	For any $\zeta>0$ and any prime power $q>0$ there exist $\xi>0$ and $\Theta_0>0$ such that for any $\Theta>\Theta_0$ for large enough $n$ the following is true.
Let $A$ be a $m\times n$-matrix over $\FF_q$.
Suppose that for a uniformly random $\vec\theta\in[\Theta]$  we have $\Erw|\fF(A[\vec\theta])|<\xi n$. Then
$$\sum_{\sigma,\tau\in\FF_q}\sum_{i,j=1}^n\ex\abs{\pr\brk{\vx_i=\sigma,\,\vx_j=\tau\mid A[\vec\theta]}-q^{-2}}<\zeta n^2.$$
\end{corollary}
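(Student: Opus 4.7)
The plan is to marry the pinning proposition (\Prop~\ref{Prop_Alp}) with an elementary linear-algebraic observation about projections of kernels. Given $\zeta>0$ and $q$, I would pick a small parameter $\delta>0$ (to be tuned later in terms of $\zeta$ and $q$), set $\xi:=\delta^2$, and choose $\Theta_0:=\Theta_0(\delta,2)$ as in \Prop~\ref{Prop_Alp} with $\ell=2$. Then for $\Theta>\Theta_0$ the random matrix $A[\vec\theta]$ is $(\delta,2)$-free except on an event of probability at most $\delta$. Combining this with Markov's inequality applied to the hypothesis $\Erw|\fF(A[\vec\theta])|<\xi n$, there is a ``good event'' $\cG$ of probability at least $1-2\delta$ on which $A[\vec\theta]$ is simultaneously $(\delta,2)$-free and satisfies $|\fF(A[\vec\theta])|\leq\delta n$.

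The core observation is that, conditional on $A[\vec\theta]$, the vector $\vx$ is uniform on the subspace $V=\ker A[\vec\theta]$, hence for every ordered pair $(i,j)$ the joint law of $(\vx_i,\vx_j)$ is the uniform distribution on the projection $\pi_{ij}(V)\subseteq\FF_q^2$. A standard duality argument identifies $\pi_{ij}(V)=\FF_q^2$ with ``$\{i,j\}$ is not a relation of $A[\vec\theta]$'' in the sense of Definition~\ref{Def_Alp}: a non-zero linear functional on $\FF_q^2$ vanishing on $\pi_{ij}(V)$ lifts to a row-space vector supported on $\{i,j\}$, and vice versa. Moreover, if neither of $i,j$ is frozen then $\{i,j\}\setminus\fF(A[\vec\theta])=\{i,j\}$, so any relation on $\{i,j\}$ is automatically a \emph{proper} relation. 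Consequently, for any ordered pair $(i,j)$ with $\{i,j\}\cap\fF(A[\vec\theta])=\emptyset$ that is not among the proper relations of size two, the corresponding summand in the target sum vanishes identically.

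It then suffices to count bad ordered pairs on $\cG$. Pairs touching the frozen set contribute at most $2|\fF(A[\vec\theta])|\cdot n\leq 2\delta n^2$ ordered instances, while ordered proper relations of size two contribute at most $2\delta n^2$ by $(\delta,2)$-freeness. For each such pair the inner sum $\sum_{\sigma,\tau\in\FF_q}|\pr[\vx_i=\sigma,\vx_j=\tau\mid A[\vec\theta]]-q^{-2}|$ is bounded by the trivial total-variation bound $2$. On the complement $\cG^c$, whose probability is at most $2\delta$, the entire double sum is bounded trivially by $2n^2$. Taking expectations and assembling the pieces yields a bound of order $O(\delta n^2)$ which, upon choosing $\delta$ sufficiently small in terms of $\zeta$ and $q$ (and $\xi=\delta^2$ correspondingly), drops below $\zeta n^2$.

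The principal step is the linear-algebraic identification of ``$\pi_{ij}(V)=\FF_q^2$'' with ``$\{i,j\}$ is not a relation'', together with the remark that unfrozen endpoints promote every relation on $\{i,j\}$ to a proper one; once that duality is in place, the counting and the parameter tuning are routine. Notably the argument uses no structural information about $A$ beyond its dimensions, so no assumptions on degree distributions, the matrix entries, or the characteristic of $\FF_q$ come into play.
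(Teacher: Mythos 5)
Your argument is correct and is essentially the standard route for this statement (which the paper imports from \cite[Lemma~4.2]{Maurice} rather than reproving): combine \Prop~\ref{Prop_Alp} with $\ell=2$ and Markov's inequality on the frozen-set size, observe via the annihilator duality $\ker(A[\vec\theta])^\perp=\mathrm{rowspace}(A[\vec\theta])$ that $\pi_{ij}(\ker A[\vec\theta])\neq\FF_q^2$ exactly when $\{i,j\}$ is a relation, and note that unfrozen endpoints promote relations to proper relations, so off the small exceptional set of ordered pairs the joint law of $(\vx_i,\vx_j)$ is exactly uniform. One minor imprecision worth flagging: your sentence asserting that the summand ``vanishes identically'' for every ordered pair with unfrozen coordinates not forming a size-two proper relation is false on the diagonal $i=j$, where the inner sum equals $2(1-1/q)$ even when $i$ is unfrozen; there are only $n$ such pairs though, contributing $O(n)=o(n^2)$, which is harmlessly absorbed for $n$ large, so the conclusion stands unchanged.
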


As mentioned earlier, at a key junction of the moment computation we will need to estimate the number of integer lattice points that satisfy certain linear relations.
The following elementary estimate will prove useful.

\begin{lemma}{{\cite[p.~135]{Lenstra}}}\label{lemma_gridcount}
	Let $\fM\subset\RR^\ell$ be a $\ZZ$-module with basis $b_1,\ldots,b_\ell$.
	Then
	$$\lim_{r\to\infty}\frac{\abs{\cbc{x\in\fM:\|x\|\leq r}}}{\vol\bc{\cbc{x\in\RR^\ell:\|x\|\leq r}}}=\frac1{|\det(b_1\cdots b_\ell)|}.$$
\end{lemma}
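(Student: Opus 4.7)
The plan is to exploit the tiling of $\RR^\ell$ by translates of the fundamental parallelepiped $P=\{\sum_{i=1}^{\ell} t_i b_i : t_i\in[0,1)\}$ of the lattice $\fM$. The starting point is the standard linear-algebra fact that $\vol(P)=|\det(b_1\ \cdots\ b_\ell)|$ and that the translates $x+P$ with $x\in\fM$ cover $\RR^\ell$ with pairwise overlaps of measure zero, so that $\vol(S)=\sum_{x\in\fM}\vol((x+P)\cap S)$ for every measurable $S\subset\RR^\ell$.

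Next I would set $D=\sum_{i=1}^\ell\|b_i\|$, which bounds the diameter of $P$ via the triangle inequality. For each $r>0$ and each $x\in\fM$ the translate $x+P$ is contained in the ball $B_r=\{y\in\RR^\ell:\|y\|\leq r\}$ as soon as $\|x\|\leq r-D$, while conversely $(x+P)\cap B_r=\emptyset$ whenever $\|x\|>r+D$. Combining these two inclusions with the tiling identity applied to $S=B_r$ yields the sandwich
$$|\fM\cap B_{r-D}|\cdot\vol(P)\;\leq\;\vol(B_r)\;\leq\;|\fM\cap B_{r+D}|\cdot\vol(P).$$
Dividing through by $\vol(B_r)$ and invoking the elementary scaling identity $\vol(B_{r\pm D})/\vol(B_r)=(1\pm D/r)^\ell\to1$ as $r\to\infty$, both ends of the sandwich converge to $1/\vol(P)=1/|\det(b_1\ \cdots\ b_\ell)|$, which gives the claim.

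There is no real obstacle here: the argument is entirely classical, and the only mildly delicate point is verifying that the ``boundary annulus'' $B_{r+D}\setminus B_{r-D}$ contains asymptotically negligibly many lattice points compared with $B_r$, which is immediate from the volume comparison above once one squeezes the upper bound against the lower. Since the lemma is standard (cf.\ the cited reference~\cite{Lenstra}), I would simply invoke it as a black box; within the paper it will be applied to the integer sublattices generated by the frequency vectors arising from \Prop~\ref{prop_module}, in order to estimate the number of admissible profiles $\hat\sigma$ that fall into a prescribed box.
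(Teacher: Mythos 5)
Your proof is correct, and it is the classical fundamental-parallelepiped tiling argument; the paper itself does not prove this lemma but simply cites \cite{Lenstra}, which contains essentially the same argument, so you are in agreement with the paper's practice of treating it as a black box. One small exposition nit: as literally written, ``dividing the sandwich $|\fM\cap B_{r-D}|\vol(P)\leq\vol(B_r)\leq|\fM\cap B_{r+D}|\vol(P)$ by $\vol(B_r)$'' produces $1$ in the middle rather than the quantity $|\fM\cap B_r|/\vol(B_r)$ you actually want to estimate; the clean finish is to instantiate the left inequality at radius $r+D$ and the right at $r-D$, giving $\vol(B_{r-D})\leq|\fM\cap B_r|\vol(P)\leq\vol(B_{r+D})$, and then divide by $\vol(B_r)$ and use $\vol(B_{r\pm D})/\vol(B_r)\to 1$. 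You gesture at this in the last paragraph, but it is worth stating explicitly since otherwise the reader may think the squeeze is circular.
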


The definition of the random Tanner graph in \Sec~\ref{sec_results_finite} provides that $\GG$ is simple.
Commonly it is easier to conduct proofs for an auxiliary random multigraph drawn from a pairing model and then lift the results to the simple random graph.
This is how we proceed as well.
Thus, given \eqref{deg_sums} we let $\vG$ be the random bipartite graph on the set $\{x_1,\ldots,x_n\}$ of variable nodes and $\{a_1,\ldots,a_{\vm}\}$ of check nodes generated by drawing a perfect matching $\vec\Gamma$ of the complete bipartite graph on
	$$\bigcup_{i=1}^n\cbc{x_i}\times[\vd_i]\qquad\mbox{and}\qquad\bigcup_{i=1}^{\vm}\cbc{a_i}\times[\vk_i]$$
	and contracting the sets $x_i\times[\vd_i]$ and $a_i\times[\vk_i]$ of variable/check clones.
	We also let $\vA$ be the random matrix to go with this random multi-graph.
	Hence, 
	$$\vA_{ij}=\vec\chi_{i,j}\sum_{u=1}^{\vk_i}\sum_{v=1}^{\vd_j}\vecone\{\{(a_i,u),(x_j,v)\}\in\vec\Gamma\}\enspace.$$
	Routine arguments show that $\G$ is simple with a non-vanishing probability.

\begin{proposition}[{\cite[\Lem~4.3]{Maurice}}]\label{lemma_contig}
	We have $\pr\brk{\G\mbox{ is simple}\mid\sum_{i=1}^n\vd_i=\sum_{i=1}^{\vm}\vk_i}=\Omega(1)$.
\end{proposition}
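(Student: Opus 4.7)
The plan is to establish that the bipartite multigraph $\vG$ from the pairing model has no parallel edges with probability bounded away from $0$. Since $\vG$ is bipartite there are no loops, so ``simple'' is equivalent to ``no parallel edges''. The strategy is a standard second-moment/method-of-moments Poisson approximation adapted to the given-degrees bipartite pairing model with random, possibly unbounded, degrees controlled only through the $2+\eta$ moment assumption on $\vd,\vk$.

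First I would condition on the full degree sequences $(\vd_i)_{i\leq n}$, $(\vk_j)_{j\leq\vm}$ and on the event $\sum_i\vd_i=\sum_j\vk_j=M$. Using the law of large numbers and the assumption $\Erw[\vd^{2+\eta}]+\Erw[\vk^{2+\eta}]<\infty$, with probability tending to one all of the following ``nice'' events occur: $M=dn(1+o(1))$, $\vm=(d/k)n(1+o(1))$, $\sum_i\vd_i(\vd_i-1)=n\,\Erw[\vd(\vd-1)](1+o(1))$, and likewise for $\vk$, together with the truncation bound $\max_i\vd_i+\max_j\vk_j=o(n^{1/2})$ obtained from a Markov bound on $\vd^{2+\eta}$. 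Restrict attention to such degree sequences.

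Next, let $\vX$ count unordered pairs of parallel edges between the same $(x_i,a_j)$. In the pairing model on $M$ variable-clones and $M$ check-clones, any two unordered pairs of clones (one at $x_i$, one at $a_j$) are matched into two parallel edges with probability $2/(M(M-1))$. Summing gives
\begin{align*}
\Erw[\vX\mid\text{degrees}]=\frac{1}{M(M-1)}\sum_{i,j}\vd_i(\vd_i-1)\,\vk_j(\vk_j-1)\cdot\tfrac12 \;\longrightarrow\; \lambda:=\frac{\Erw[\vd(\vd-1)]\,\Erw[\vk(\vk-1)]}{2dk},
\end{align*}
which is a finite constant. An analogous pairing computation for the $r$-th factorial moment produces $\Erw[(\vX)_r\mid\text{degrees}]\to\lambda^r$, provided the contribution of clone-tuples that are not all distinct is negligible: this is where the truncation $\max\vd_i,\max\vk_j=o(\sqrt n)$ enters, together with $\sum_i\vd_i^2=O(n)$, to show that repeated clones cost a factor $o(1)$ per repetition. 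Poisson convergence via the method of moments then gives $\pr[\vX=0\mid\text{degrees}]\to e^{-\lambda}$.

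Finally, integrating out the conditioning on the ``nice'' degree sequences (which have probability $1-o(1)$ inside the conditioning event $\sum\vd_i=\sum\vk_j$) yields $\pr[\vG\text{ simple}\mid\sum\vd_i=\sum\vk_j]\geq e^{-\lambda}-o(1)=\Omega(1)$. The main technical hurdle is the method-of-moments step with unbounded degrees: only $2+\eta$ moments are available, so I would carry out the factorial-moment computation on the high-probability event where $\max\vd_i,\max\vk_j\leq n^{1/(2+\eta)}\log n$, and bound the contribution of the exceptional event by a union bound over the tail of $\vd,\vk$ to absorb it into the $o(1)$. With that truncation in place, the remaining pairing calculations are routine.
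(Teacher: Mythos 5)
The paper does not prove \Prop~\ref{lemma_contig}; it imports it verbatim as Lemma~4.3 of \cite{Maurice}, so there is no in-paper argument to compare against. Your configuration-model Poisson-approximation argument is the standard one for this type of statement and is correct in outline: since $\vG$ is bipartite loops are impossible, the limiting Poisson rate $\lambda=\Erw[\vd(\vd-1)]\Erw[\vk(\vk-1)]/(2dk)$ is computed correctly, and $\pr[\vX=0\mid\cdot]\to e^{-\lambda}>0$ gives the $\Omega(1)$. The two delicate points are exactly the ones you flag, and both are handled by hypotheses the paper actually has. First, the conditioning event $\sum_i\vd_i=\sum_j\vk_j$ only has probability $\Omega(n^{-1/2})$, so the LLN-type events you restrict to must fail unconditionally with probability $o(n^{-1/2})$; \Lem~\ref{Lemma_sums} supplies the stronger $o(1/n)$ bound precisely because $\Erw[\vd^{2+\eta}]+\Erw[\vk^{2+\eta}]<\infty$ with $2+\eta>2$. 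Second, the factorial-moment collision terms involve sums like $\sum_i\vd_i^{2t}$ for $t\geq2$; these are $O\bigl((\max_i\vd_i)^{2t-2}\sum_i\vd_i^2\bigr)$, and with $\max_i\vd_i=o(\sqrt n)$ (from Markov on $\vd^{2+\eta}$) and $\sum_i\vd_i^2=O(n)$ each such term is $o(n^t)$, so the collisions vanish after normalising by $M^{2r}$. The one clarification I would spell out in a full write-up: run the method-of-moments step conditionally on a fixed nice degree sequence, on which $\Erw[(\vX)_r\mid\text{degrees}]\to\lambda^r$ holds uniformly for every fixed $r$, conclude $\pr[\vX=0\mid\text{degrees}]=e^{-\lambda}+o(1)$ there, and only then integrate out the degree sequence; your phrasing of ``truncating and absorbing the exceptional event into the $o(1)$'' is exactly this, but conditioning first avoids any ambiguity about uncontrolled factorial moments on the rare bad sequences.
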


When working with the random graphs $\GG$ or $\G$ we occasionally encounter the size-biased versions $\hat\vd,\hat\vk$ of the degree distributions defined by
\begin{align}\label{eqSizeBiasd}
	\pr\brk{\hat\vd=\ell}&=\ell\pr\brk{\vd=\ell}/{d},&\pr\brk{\hat\vk=\ell}&=\ell\pr\brk{\vk=\ell}/{k}&(\ell\geq0).
\end{align}
In particular, these distributions occur in the Aizenman-Sims-Starr coupling argument.
In that context we will also need the following crude but simple tail bound.

\begin{lemma}[{\cite[Lemma 1.8]{Maurice}}]\label{Lemma_sums}
	Let $(\vec\lambda_i)_{i\geq 1}$ be a sequence of independent copies of an integer--valued random variable $\vec\lambda\geq 0$ with $\Erw\brk{\vec\lambda^r} < \infty$ for some $r>2$.  
	Further, let $s$ be a sequence such that $s=\Theta(n)$. 
	Then for all $\delta >0$, $$\pr\brk{\abs{\sum_{i=1}^s (\vec\lambda_i - \Erw\brk{\vec\lambda})}> \delta n} = o(1/n).$$ 
\end{lemma}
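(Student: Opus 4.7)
The plan is to prove the tail bound by combining a higher-moment estimate on the centred sum with Markov's inequality of order~$r$. First I would centre the variables by setting $X_i := \vec\lambda_i - \Erw[\vec\lambda]$, which produces i.i.d.\ mean-zero random variables with $\Erw[|X_1|^r]<\infty$; the finiteness of the $r$-th absolute moment after centring follows from $\Erw[\vec\lambda^r]<\infty$ together with the elementary inequality $|a-b|^r\leq 2^{r-1}(|a|^r+|b|^r)$.

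The decisive step is an application of Rosenthal's inequality (or, alternatively, the Marcinkiewicz--Zygmund inequality combined with H\"older) for i.i.d.\ centred random variables, which furnishes a constant $C_r>0$ depending only on $r\geq 2$ such that
$$\Erw\brk{\abs{\sum_{i=1}^s X_i}^r}\;\leq\;C_r\,\max\cbc{s^{r/2}\,\Erw\brk{X_1^2}^{r/2},\;s\,\Erw\brk{|X_1|^r}}.$$
Since $\Erw[X_1^2]$ and $\Erw[|X_1|^r]$ are finite constants and $s=\Theta(n)$, and since $r>2$ makes $s^{r/2}$ the dominant term, the right-hand side is $O(n^{r/2})$.

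It remains to apply Markov's inequality of order~$r$:
$$\pr\brk{\abs{\sum_{i=1}^s X_i}>\delta n}\;\leq\;\frac{\Erw\brk{\abs{\sum_{i=1}^s X_i}^r}}{(\delta n)^r}\;=\;O\bc{n^{-r/2}}.$$
Because $r>2$ gives $r/2>1$, the right-hand side is indeed $o(1/n)$, which is the claim.

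There is no genuine obstacle in the argument; the only conceptual point is that the variance bound alone (Chebyshev of order $2$) would yield merely $O(1/n)$, so one must exploit the extra decay provided by a moment of order strictly greater than~$2$, which is precisely what the assumption $\Erw[\vec\lambda^{2+\eta}]<\infty$ from the excerpt's global hypotheses supplies. A more hands-on alternative, avoiding the citation of Rosenthal, would be to truncate $\vec\lambda_i$ at a large constant~$M$, handle the bounded part by a Bernstein/Chernoff-type estimate, and bound the tail contribution $\sum_i\vec\lambda_i\vecone\{\vec\lambda_i>M\}$ via Markov together with $\Erw[\vec\lambda^r]<\infty$; but invoking Rosenthal's inequality delivers the cleanest and shortest route.
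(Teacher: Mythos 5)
The paper does not prove this lemma; it imports it verbatim from the reference cited in its header, so there is no in-paper argument to compare against. I can therefore only assess your proof on its own terms, and it is correct. Centring gives i.i.d.\ mean-zero variables with finite $r$-th absolute moment, Rosenthal's inequality yields $\Erw\bigl[\bigl|\sum_{i=1}^s X_i\bigr|^r\bigr]=O(n^{r/2})$ because $s=\Theta(n)$ and, for $r>2$, the variance term $s^{r/2}$ dominates the linear term $s$, and Markov of order $r$ then gives $O(n^{-r/2})=o(1/n)$ since $r/2>1$. Your remark that Chebyshev alone would stall at $O(1/n)$ correctly pinpoints why the hypothesis $r>2$ is essential, and the suggested truncation-plus-Bernstein alternative is a legitimate more elementary route for a reader who prefers to avoid citing Rosenthal. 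One small polish: when you introduce Rosenthal with a max rather than a sum of the two terms, add a remark that the two forms differ only by a factor of two in the constant, so the conclusion is unaffected.
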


Finally, throughout the article we use the common $O(\nix)$-notation to refer to the limit $n\to\infty$.
In addition, we will sometimes need to deal with another parameter $\eps>0$.
In such cases we use $O_\eps(\nix)$ and similar symbols to refer to the double limit $\eps \to 0$ after $n\to \infty$.

\section{Proof of \Prop~\ref{prop_three}}\label{sec_prop_three}

\subsection{Overview}
The first ingredient of the proof of \Prop~\ref{prop_three} is a coupling argument inspired by the Aizenman-Sims-Starr scheme from mathematical physics~\cite{Aizenman}, which also constituted the main ingredient of the proof of the approximate rank formula~\eqref{eqMaurice} from~\cite{Maurice}.
Indeed, the coupling argument here is quite similar to that from~\cite{Maurice}, with some extra bells and whistles to accommodate the additional ternary equations.
We therefore defer that part of the proof to \Sec~\ref{sec_prop_auxphi}.
The Aizenman-Sims-Starr argument leaves us with a variational formula for the rank of $\AA_{[\lfloor\delta n\rfloor]}$.
The second proof ingredient is to solve this variational problem.
Harnessing the assumption \eqref{eqmain}, we will obtain the explicit expression for the rank provided by \Prop~\ref{prop_three}.

Let us come to the details.
As explained in \Sec~\ref{sec_pre}, we will have an easier time working with the pairing model versions $\G,\A$ of the Tanner graph and the random matrix.
Moreover, to facilitate the coupling argument we will need to poke a few holes, known as ``cavities'' in physics jargon, into the random matrix.
More precisely, we will slightly reduce the number of check nodes and tolerate a small number of variable nodes $x_i$ of degree less than $\vd_i$.
The cavities will provide the flexibility needed to set up the coupling argument.

Formally, let $\eps, \delta \in (0,1)$ and let $\Theta\geq 0$ be an integer.
Ultimately $\Theta$ will depend on $\eps$ but not on $n$ or $\delta$.
We then construct the random matrix $\vA\brk{n,\eps,\delta,\Theta}$ as follows. 
Let 
\begin{align}\label{eqms}
	\vm_\eps &\sim \Po((1-\eps)dn/k),& \vm_\delta &\sim \Po(\delta n),&\vec \theta &\sim \text{unif}([\Theta]).
\end{align}
The Tanner multigraph $\vec G\brk{n,\eps, \delta, \Theta}$ has variable nodes $x_1, \ldots, x_n$ and check nodes $a_1, \ldots, a_{\vm_{\eps}}, t_1, \ldots, t_{\vm_\delta}, p_1, \ldots, p_{\vec \theta}$.
To connect them draw a random maximum matching $\vec \Gamma\brk{n,\eps}$ of the complete bipartite graph with vertex classes
\begin{align*}
    V_1= \bigcup_{i=1}^{\vm_\eps} \cbc{a_i} \times [\vk_i] \qquad \text{and} \qquad V_2= \bigcup_{j=1}^{n}\cbc{x_j} \times [\vd_j].
\end{align*}
For every matching edge $\{(a_i,h),(x_j,\ell)\}\in\vec\Gamma[n,\eps]$, $h\in[\vk_i],\ell\in[\vd_j]$, between a clone of $x_h$ and a clone of $a_i$ we insert an $a_i$-$x_j$-edge into $\vec G\brk{n,\eps, \delta, \Theta}$.
Moreover, the check nodes $t_1, \ldots, t_{\vm_\delta}$ each independently choose three neighboring variables uniformly with replacement random among $\cbc{x_1, \ldots, x_n}$. 
Further, check node $p_\ell$ for $\ell \in [\vec \theta]$ is adjacent to $x_\ell$ only. 
Finally, to obtain the random $(\vec \theta +\vm_\eps+\vm_\delta) \times n$-matrix $\vA\brk{n,\eps,\delta,\Theta}$ from $\vec G\brk{n,\eps, \delta, \Theta}$ we let
\begin{align}
	\vA\brk{n,\eps,\delta,\Theta}_{p_i,x_h}&=\vecone\cbc{i=h}&&(i\in[\THETA],h\in[n]),\\
\vA\brk{n,\eps,\delta,\Theta}_{a_{i},x_h}&=\vec\chi_{i, h} \sum_{\ell=1}^{\vk_i}\sum_{s=1}^{\vd_h} \vecone \{(x_h, s), (a_{i}, \ell)\}\in\vec{\Gamma}\brk{n,\eps}\}&&(i\in[\vm_\eps], h\in[n]),\\%\chi_{\row_{i,j},\col_h}\sum_{s=1}^i\sum_{t=1}^{\vd_h}
%	\vecone\cbc{\{(x_h,t),(a_{i,j},s)\}\in\vec\Gamma_{n,\vM}}&&(i\geq1,j\in[\vM_i],h\in[n]).
\vA\brk{n,\eps,\delta,\Theta}_{t_i,x_h}&=\vec\chi_{\vm_{\eps}+i, h} \sum_{\ell=1}^3\vecone\{\vec i_{i,\ell} = h\} &&(i\in[\vm_\delta],h\in[n]).\label{eq_ternary1}
\end{align}
Applying the Aizenman-Sims-Starr scheme to the matrix $\A[n,\eps,\delta,\Theta]$, we obtain the following variational bound.

\begin{proposition}\label{prop_auxphi}
	There exist $\delta_0>0$, $\Theta_0(\eps)>0$ such that for all $0<\delta<\delta_0$ and any $\Theta=\Theta(\eps)\geq\Theta_0(\eps)$ we have
	\begin{align}\label{eq_prop_auxphi}
		\limsup_{\eps\to0}\limsup_{n \to \infty} \frac{1}{n} \Erw\brk{\nul(\vA\brk{n,\eps,\delta,\Theta})} 
	& \leq \max_{\alpha, \beta \in [0,1]} \Phi(\alpha) + \bc{\exp\bc{-3\delta \beta^2}-1}D(1-K'(\alpha)/k) -\delta + 3\delta\beta^2 -2\delta\beta^3.
	\end{align}
\end{proposition}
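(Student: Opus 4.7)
The plan is to establish~\eqref{eq_prop_auxphi} via the Aizenman-Sims-Starr coupling scheme, building on the strategy that delivered the rank formula~\eqref{eqMaurice} in~\cite{Maurice}. The starting point is the Ces\`aro-type inequality
\[
\limsup_{n\to\infty}\frac{1}{n}\Erw\brk{\nul \vA[n,\eps,\delta,\Theta]} \leq \limsup_{n\to\infty}\bc{\Erw\brk{\nul \vA[n+1,\eps,\delta,\Theta]} - \Erw\brk{\nul \vA[n,\eps,\delta,\Theta]}},
\]
which reduces the task to bounding the marginal effect of adding a single variable $x_{n+1}$ together with $\Po((1-\eps)d/k)$-many additional main-type checks, $\Po(\delta)$-many additional ternary checks, and the increment of the pinning vector. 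The main-matching and pinning increments are handled essentially as in~\cite{Maurice}, while the novel task is to absorb the ternary increment and its interaction with the rest.

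To carry out the coupling, I would pair $\vA[n+1,\eps,\delta,\Theta]$ and $\vA[n,\eps,\delta,\Theta]$ so that the $\vd_{n+1}$ clones of the new variable are matched into uniformly random cavities on the check side, and then apply \Lem~\ref{Cor_free} to rewrite the change in nullity as a sum of small-matrix rank computations whose outcome depends only on whether the involved coordinates are frozen in $\vA[n,\eps,\delta,\Theta]$. Thanks to \Prop~\ref{Prop_Alp}, for $\Theta \geq \Theta_0(\eps)$ the matrix $\vA[n,\eps,\delta,\Theta]$ is $(\xi,\ell)$-free for any prescribed $\xi>0$ and bounded $\ell$, so that frozen coordinates decorrelate. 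I would then parameterize the analysis by two Belief Propagation fixed-point parameters: $\alpha\in[0,1]$ captures the probability that a uniformly random clone on the variable side is frozen due to the main-matching structure, exactly as in~\cite{Maurice}, while $\beta\in[0,1]$ captures the analogous quantity induced by the ternary layer. A direct enumeration of the small-matrix ranks then produces the three terms on the right-hand side of~\eqref{eq_prop_auxphi}: the main-matching contribution $\Phi(\alpha)$; the mixed coupling term $(\exp(-3\delta\beta^2)-1)D(1-K'(\alpha)/k)$ that records how the pre-existing $\vm_\delta$ ternary checks perturb the frozenness seen by the new variable's clones; and the pure ternary contribution $-\delta + 3\delta\beta^2 - 2\delta\beta^3$, where $3\delta\beta^2 - 2\delta\beta^3$ is the expected proportion of fresh ternary equations that involve at least two active coordinates and therefore reduce the nullity by one.

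The hard part will be the joint treatment of the two layers. In~\cite{Maurice} a single BP parameter $\alpha$ sufficed, but here one must simultaneously track the pre-existing ternary equations' effect on the effective frozenness on the variable side and the new main checks' effect on the ternary layer. The $(\xi,\ell)$-freeness supplied by the pinning must be exploited to show that these two effects decouple to leading order through the product parameterization $(\alpha,\beta)$, rather than requiring a finer joint order parameter indexed by the type of equation a variable belongs to. Once the marginal bound is established for every BP fixed point $(\alpha,\beta)$ that could conceivably arise from the coupling, taking the supremum over $(\alpha,\beta)\in[0,1]^2$ yields~\eqref{eq_prop_auxphi}; the error terms from the cavity construction, the Poisson approximation, and the fluctuations around the fixed point are controlled by standard concentration arguments such as \Lem~\ref{Lemma_sums} together with the $\eps\to 0$ limit on the outside of the bound.
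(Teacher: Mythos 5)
Your overall plan — Aizenman--Sims--Starr via the Ces\`aro inequality $\limsup_n \frac{1}{n}\Erw[\nul\vA[n]] \le \limsup_n(\Erw[\nul\vA[n+1]]-\Erw[\nul\vA[n]])$, then apply \Lem~\ref{Cor_free} with the $(\delta,\ell)$-freeness supplied by the pinning rows — is the right shape and matches what the paper does. But your identification of the two order parameters $\alpha$ and $\beta$ is not what makes the computation work, and with your reading you would not be able to carry the rank arithmetic through. You describe $\alpha$ as the probability a clone is frozen \emph{due to the main-matching structure} and $\beta$ as the analogous quantity \emph{induced by the ternary layer}, i.e.\ you split freezing by which type of equation causes it, and you expect these two ``effects'' to decouple into a product parameterization. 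In the paper's argument there is no such decomposition. Both $\ALPHA$ and $\BETA$ are statistics of a single intermediate cavity matrix $\vA'$, which contains \emph{all} checks (main and ternary) minus the ones withheld to create cavities: $\ALPHA$ is the fraction of frozen \emph{cavities} (unmatched variable clones, hence degree-biased), $\BETA$ the fraction of frozen \emph{variables} (uniform over $x_1,\ldots,x_n$). The reason $\alpha$ governs the main-check contribution and $\beta$ the ternary contribution is not which layer ``froze'' the coordinate but what the newly added rows attach to: the new $\vk$-ary checks in $\vA''$, $\vA'''$ attach to uniformly random cavities, whereas the new ternary checks attach to uniformly random variables (cf.\ the definitions following~\eqref{eqlambda''} and~\eqref{eqlambda'''}).

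Relatedly, ``pair $\vA[n+1]$ and $\vA[n]$ so that the $\vd_{n+1}$ clones of the new variable are matched into uniformly random cavities on the check side'' misplaces the cavities (they live on the variable side) and, more importantly, skips the actual coupling structure: one cannot directly match an $n$- and an $(n+1)$-variable matrix. The Aizenman--Sims--Starr step requires first constructing a common ancestor $\vA'$ with cavities (by under-provisioning checks and ternary equations as in~\eqref{eqminus}), then extending $\vA'$ in two ways --- to $\vA''$ reproducing the $n$-variable law and to $\vA'''$ reproducing the $(n+1)$-variable law --- and computing $\Erw[\nul\vA''-\nul\vA']$ and $\Erw[\nul\vA'''-\nul\vA']$ separately (\Lem s~\ref{Lemma_A''}, \ref{Lemma_A'''}). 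Without the intermediate model, the freezing probabilities at the attachment points of the new rows are not even well defined. Finally, your reading of $3\delta\beta^2-2\delta\beta^3$ as ``the expected proportion of fresh ternary equations with at least two active coordinates'' is off: the combination $-\delta+3\delta\beta^2-2\delta\beta^3 = -\delta(1-\beta)^2(1+2\beta)$ is what the difference $\Erw[\nul\vA''']-\Erw[\nul\vA'']$ produces from the $\vec\lambda'''$-ternary term $-3\delta(1-\beta^2)$, the $(1-\beta^2)^{\vec\lambda'''}$ column term, and the $\vec\lambda''$-ternary term $+2\delta(1-\beta^3)$; it is not a single enumeration.
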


\noindent
The proof of \Prop~\ref{prop_auxphi}, carried out in \Sec~\ref{sec_prop_auxphi} in detail, resembles that of the rank formula \eqref{eqMaurice}, except that we have to accommodate the additional ternary checks $t_i$.
Their presence is the reason why the optimisation problem on the r.h.s.\ comes in terms of two variables $\alpha,\beta$ rather than a single variable as \eqref{eqMaurice}.

To complete the proof of \Prop~\ref{prop_three} we need to solve the optimisation problem \eqref{eq_prop_auxphi}.
This is the single place where we require that $\Phi(z)$ take its unique global max at $z=0$, which ultimately implies that the optimiser of \eqref{eq_prop_auxphi} is $\alpha=\beta=0$.
This fact in turn implies the following.

\begin{proposition}\label{prop_aux}
	For any $\vd,\vk$ that satisfy \eqref{eqmain} there exists $\delta_0>0$ such that for all $0<\delta<\delta_0$ we have
	\begin{align*}
		\max_{\alpha, \beta \in [0,1]} \Phi(\alpha) + \bc{\exp\bc{-3\delta \beta^2}-1}D(1-K'(\alpha)/k) -\delta + 3\delta\beta^2 -2\delta\beta^3=1-\frac{d}{k}-\delta.
	\end{align*}
\end{proposition}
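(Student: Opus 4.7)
Denote the objective by $F(\alpha,\beta)$ and set $D_\alpha := D(1-K'(\alpha)/k)$. Since $\vk\geq 3$ forces $K(0)=K'(0)=0$, one has $\Phi(0)=1-d/k$, and evaluating at $(0,0)$ yields $F(0,0)=1-d/k-\delta$; this supplies the trivial lower bound. The plan is therefore to establish the matching upper bound $F(\alpha,\beta)\leq 1-d/k-\delta$ on $[0,1]^2$ for $\delta$ small.

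The first step is to optimise over $\beta$ with $\alpha$ fixed. A direct calculation gives $\partial_\beta F = 6\delta\beta(1-\beta-D_\alpha e^{-3\delta\beta^2})$, and for $\delta<1/6$ the parenthesised factor is strictly decreasing in $\beta$, so the only candidates for a maximiser are $\beta=0$ and, when $D_\alpha<1$, a unique interior critical point $\beta^*=\beta^*(\alpha)\in(0,1)$ satisfying $1-\beta^*=D_\alpha e^{-3\delta(\beta^*)^2}$; the endpoint $\beta=1$ is excluded because $\partial_\beta F(\alpha,1)<0$. If the inner maximum is at $\beta=0$, then $F(\alpha,0)=\Phi(\alpha)-\delta\leq\Phi(0)-\delta$ by~\eqref{eqmain}, and we are done. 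In the interior case we substitute $(e^{-3\delta(\beta^*)^2}-1)D_\alpha=1-\beta^*-D_\alpha$ together with $\Phi(\alpha)=D_\alpha-\tfrac{d}{k}(1-K(\alpha)-(1-\alpha)K'(\alpha))$; after cancellation the residual simplifies to
\[ F(\alpha,\beta^*)-\bc{1-\tfrac{d}{k}-\delta}\;=\;\gamma(\alpha)-\beta^*+\delta(\beta^*)^2(3-2\beta^*), \]
where $\gamma(\alpha):=\tfrac{d}{k}(K(\alpha)+(1-\alpha)K'(\alpha))$. Setting $\eta(\alpha):=1-D_\alpha$, the critical point equation gives the easy bound $\beta^*\geq\eta$, while~\eqref{eqmain} rephrases as $\eta(\alpha)>\gamma(\alpha)$ on $(0,1]$. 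Thus the whole proof reduces to showing
\[ 3\delta(\beta^*)^2 \;\leq\; \eta(\alpha)-\gamma(\alpha)\qquad\text{uniformly in } \alpha\in(0,1] \]
for $\delta$ sufficiently small.

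The hard part is the regime $\alpha\to 0$, where both sides vanish and plain compactness breaks down. Two ingredients are needed: first, a short bootstrap on the implicit identity $\beta^*=\eta+3\delta D_\alpha(\beta^*)^2+O(\delta^2(\beta^*)^4)$ yielding $\beta^*\leq 2\eta$ for $\delta$ small; second, the Taylor expansion of $\Phi$ around $\alpha=0$. Letting $k_0:=\min\supp(\vk)\geq 3$, one has $K'(\alpha)=\Theta(\alpha^{k_0-1})$ and
\[ \alpha K'(\alpha)-K(\alpha)\;=\;\sum_{\ell\geq k_0}(\ell-1)\Pr[\vk=\ell]\alpha^\ell\;\geq\;(k_0-1)\Pr[\vk=k_0]\alpha^{k_0}, \]
whence $\eta-\gamma=\tfrac{d}{k}(\alpha K'(\alpha)-K(\alpha))+O(K'(\alpha)^2)=\Theta(\alpha^{k_0})$ and $\eta=O(\alpha^{k_0-1})$. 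Consequently $3\delta(\beta^*)^2=O(\delta\alpha^{2(k_0-1)})$, and because $k_0\geq 3$ gives $2(k_0-1)\geq k_0$, we have $\alpha^{2(k_0-1)}\leq\alpha^{k_0}$ on $[0,1]$, so the required inequality holds on some interval $(0,\alpha_1]$ provided $\delta\leq\delta_0(\vd,\vk)$ is small enough. On the complement $[\alpha_1,1]$ the continuous strictly positive function $\eta-\gamma$ is bounded below by some $c>0$ by compactness, while $3\delta(\beta^*)^2\leq 3\delta$; taking $\delta\leq c/3$ closes this case. Combining the two ranges yields the required upper bound and completes the argument.
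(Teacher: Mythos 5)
Your proof is correct and takes a genuinely different, and arguably cleaner, route than the paper's. The paper treats the four pieces of the boundary of $[0,1]^2$ separately and then handles the interior via the \emph{joint} critical-point system (both partials zero), which it shows has no solutions with small $\alpha$ through an exhaustive sub-case analysis on whether $\alpha=\beta$, $\alpha<\beta$ or $\alpha>\beta$, followed by a compactness step for $\alpha\geq\hat\alpha$. You instead optimise out $\beta$ for each fixed $\alpha$: the inner maximiser is either $\beta=0$ (only when $D_\alpha=1$, i.e.\ $\alpha=0$) or the unique root $\beta^*$ of $1-\beta=D_\alpha e^{-3\delta\beta^2}$, and substituting that identity collapses the problem to the single scalar inequality $\gamma(\alpha)-\beta^*+\delta(\beta^*)^2(3-2\beta^*)\leq0$, for which $3\delta(\beta^*)^2\leq\eta(\alpha)-\gamma(\alpha)$ suffices via $\beta^*\geq\eta$. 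The payoff is that the delicate small-$\alpha$ regime is handled by a transparent comparison of orders: $\eta-\gamma=\Theta(\alpha^{k_0})$, $\beta^*=O(\alpha^{k_0-1})$ (via your bootstrap $\beta^*\leq2\eta$ for $\delta<1/6$), and since $k_0\geq3$ gives $2(k_0-1)\geq k_0$, the factor of $\delta$ on the left wins uniformly; the complementary $\alpha\geq\alpha_1$ case is again compactness. This replaces the paper's three-way sub-case split with one clean estimate, and it also absorbs the boundary cases $\alpha=1$ and $\beta=1$ into the main argument rather than treating them separately. One cosmetic point: when $D_\alpha=0$ (which can only occur at $\alpha=1$ if $\Pr[\vd=0]=0$) the root $\beta^*$ equals $1$ and $\partial_\beta F(\alpha,1)=0$ rather than being strictly negative, so the sentence ``the endpoint $\beta=1$ is excluded'' is slightly imprecise; but the subsequent algebraic reduction applies verbatim to $\beta^*=1$, so this does not affect the argument.
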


\noindent
The proof of \Prop~\ref{prop_aux} can be found in \Sec~\ref{Sec_findmax}.
Finally, in \Sec~\ref{sec_proof_three} we will see that \Prop~\ref{prop_three} is an easy consequence of \Prop s~\ref{prop_auxphi} and~\ref{prop_aux}.

\subsection{Proof of \Prop~\ref{prop_aux}}\label{Sec_findmax}
Let
\begin{align*}
	\tilde{\Phi}_\delta(\alpha, \beta) &=  \Phi(\alpha) + \bc{\exp\bc{-3\delta \beta^2}-1}D(1-K'(\alpha)/k) -\delta + 3\delta\beta^2 -2\delta\beta^3&&(\alpha, \beta \in [0,1]).
\end{align*}
Assuming \eqref{eqmain}, we are going to prove that for small enough $\delta$,
\begin{align}\label{eqprop_aux1}
	\max_{\alpha, \beta \in [0,1]} \tilde{\Phi}_\delta(\alpha, \beta) =\tilde{\Phi}_\delta(0,0) = 1-\frac{d}{k}-\delta, 
\end{align}
whence the assertion is immediate.

	The $C^1$-function $\tilde\Phi_\delta$ attains its maximum either at a boundary point of the compact domain $[0,1]^2$ or at a point where the partial derivatives vanish.
	Beginning with the former, we consider four cases.
	\begin{description}
		\item[Case 1: $\alpha=0$] we have
			\begin{align}\label{eqlemma_aux_1}
				\tilde{\Phi}_\delta(0, \beta) = \tilde{\Phi}_\delta(0,0) + 3\delta \beta^2 - 2\delta \beta^3 -(1-\exp\bc{-3\delta \beta^2}).
			\end{align} 
			Expanding the exponential function, we see that $3\delta \beta^2 - 2\delta \beta^3 -(1-\exp\bc{-3\delta \beta^2}) = - 2\delta \beta^3 + O_\delta(\delta^2\beta^4)$.
			Since $- 2\delta \beta^3 + O_\delta(\delta^2\beta^4)$ is non-positive for all $\beta \in [0,1]$, \eqref{eqlemma_aux_1} yields $\max_\beta\tilde{\Phi}_\delta(0,\beta)=\tilde\Phi_\delta(0,0)$ for small enough $\delta>0$.
		\item[Case 2: $\beta=0$] the assumption \eqref{eqmain} ensures that $\Phi$ is maximised in $0$.
			Therefore, as $\tilde{\Phi}_\delta(\alpha,0) = \Phi(\alpha)-\delta$, the maximum on $\cbc{(\alpha,0): \alpha \in [0,1]}$ is attained in $\alpha =0$.
		\item[Case 3: $\alpha=1$] we obtain
			\begin{align*}
				\tilde{\Phi}_\delta(1,\beta) &= \Phi(1) - \delta + 3 \delta \beta^2 - 2 \delta \beta^3 =  \delta (3\beta^2 - 2 \beta^3-1).
			\end{align*}
			Since $- 2 \beta^3 +  3\beta^2\leq1$ for all $\beta\in[0,1]$ and $d/k<1$, for small enough $\delta$ we obtain $ \tilde{\Phi}_\delta(1,\beta) \leq 1-d/k-\delta = \tilde{\Phi}_\delta(0,0)$.
		\item[Case 4: $\beta=1$] we have
			\begin{equation}\label{eqlemma_aux_2}
				\tilde{\Phi}_\delta(\alpha, 1) = \Phi(\alpha) - (1-\exp\bc{-3\delta})D\bc{1-\frac{K'(\alpha)}{k}}.
			\end{equation}
			Because $D$ and $K'$ are continuous on $[0,1]$ due to the assumption $\ex[\vd^2]+\ex[\vk^2]<\infty$, for any $\zeta>0$ there exists $\hat\alpha>0$ such that $D(1-K'(\alpha)/k) > 1-\zeta$ for all $0<\alpha<\hat\alpha$.
			%some open neighborhood $N$ of $0$ such that for all $\alpha \in N$, $D(1-K'(\alpha)/k) > 1-\zeta$.
			Therefore, \eqref{eqlemma_aux_2} shows that for small enough $\delta>0$ and $0<\alpha<\hat\alpha$ we have $\tilde{\Phi}_\delta(\alpha, 1) < \tilde{\Phi}_\delta(\alpha,0)\leq\tilde{\Phi}_\delta(0,0)$.  
			On the other hand, for $\hat\alpha\leq\alpha\leq1$ the difference $\Phi(\alpha)-\Phi(0)$ is uniformly negative because of our assumption \eqref{eqmain} that $\Phi$ attains its unique global maximum at $\alpha=0$. 
			Hence, for $\delta$ small enough and $\hat\alpha\le\alpha\leq1$ we obtain $\tilde{\Phi}_\delta(\alpha, 1) < \tilde{\Phi}_\delta(0,0)$.
	\end{description}
	Combining Cases 1--4, we obtain
	\begin{align}\label{eqlemma_aux_3}
		\max_{(\alpha,\beta)\in\partial[0,1]^2}\tilde{\Phi}_\delta(\alpha,\beta) = \tilde{\Phi}_\delta(0,0).
	\end{align}

	Moving on to the interior of $[0,1]^2$, we calculate the derivatives
	\begin{align*}
		\frac{\partial \tilde\Phi_\delta}{\partial \alpha} &= \Phi'(\alpha) + \bc{1-\exp\bc{-3\delta \beta^2}} \frac{K''(\alpha)}{k}D'(1-K'(\alpha)/k) = \frac{K''(\alpha)}{k} \bc{d(1-\alpha)-\exp\bc{-3\delta \beta^2}D'(1-K'(\alpha)/k)},\\
		\frac{\partial \tilde\Phi_\delta}{\partial \beta} &= 6 \delta \beta \bc{1-\beta-\exp\bc{-3\delta \beta^2}D(1-K'(\alpha)/k)}. 
	\end{align*}
	Hence, potential maximisers $(\alpha,\beta)$ in the interior of $[0,1]^2$ satisfy
	\begin{equation}\label{int_max}
		d(1-\alpha)=D'(1-K'(\alpha)/k)\exp\bc{-3\delta \beta^2} \qquad \text{and} \qquad 1-\beta = \exp\bc{-3\delta \beta^2}D(1-K'(\alpha)/k). 
	\end{equation}
	Substituting \eqref{int_max} into $\tilde\Phi_\delta$, we obtain
	\begin{align}\nonumber
		\tilde{\Phi}_\delta(\alpha, \beta)&= \Phi(\alpha) - \delta + \bc{\exp\bc{-3\delta \beta^2}-1}D(1-K'(\alpha)/k) + 3\delta\beta^2 -2\delta\beta^3\\
		%&=\Phi(\alpha) - \delta - D(1-K'(\alpha)/k) +1-\beta+ 3\delta\beta^2 -2\delta\beta^3 \\
		 &=\Phi(\alpha) - \delta + (1-\beta)(1-\exp\bc{3\delta \beta^2})+ 3\delta\beta^2 -2\delta\beta^3 \nonumber\\
		 &\leq   \Phi(\alpha) - \delta - 3\delta \beta^2 (1-\beta)+ 3\delta\beta^2 -2\delta\beta^3 = \Phi(\alpha)-\delta +\delta \beta^3.
		 \label{int_max2}
	\end{align}

	To estimate the r.h.s.\ we consider the cases of small and large $\alpha$ separately. %Let $\hat\alpha \in (0,\delta)$ be a parameter that we choose sufficiently small according to the following considerations.
	Specifically, by continuity for any $\zeta>0$ there is $0<\hat\alpha<\delta$ such that $D(1-K'(\alpha)/k)>1-\zeta$ for all $0<\alpha<\hat\alpha$.
	\begin{description}
		\item[Case 1: $0<\alpha<\hat\alpha$] 
		Since $D(1-K'(\alpha)/k)>1-\zeta$, \eqref{int_max} implies that for $\beta >0$
			$$1-\beta > (1-3\delta \beta^2) (1-\zeta) = 1 - \zeta - 3\delta \beta^2(1-\zeta).$$
			In particular, small $\hat \alpha$ implies that also $\beta$ is small. More precisely, after choosing $\delta,\zeta$ small enough, we may assume that $\beta<\hat\beta$ for any fixed $\hat\beta>0$.
			In this case, we may thus restrict to solutions $(\alpha, \beta) \in (0,1)^2$ to \eqref{int_max} where \textit{both} coordinates are sufficiently small. Also here, we distinguish three cases that all lead to contradictions. 
			\begin{itemize}
			\item[(A)] If the solution satisfies $\alpha=\beta$, consider the function
			$$x \mapsto 1-x-\exp\bc{-3\delta x^2} D(1-K'(x)/k)$$
			whose zeros determine the solutions to the right equation in \eqref{int_max} under the assumption $\alpha=\beta$.
			Its value is zero at $x=0$ and it has derivative
			$$-1+6\delta x \exp\bc{-3\delta x^2} D(1-K'(x)/k) + \exp\bc{-3\delta x^2} D'(1-K'(x)/k)\frac{K''(x)}{k},$$
			which is negative in a neighbourhood of $x=0$. Thus $(\alpha, \alpha)$ cannot be a solution to \eqref{int_max} for $\alpha \in (0, \hat\alpha)$. 
			\item[(B)] Assume now that $\alpha<\beta$. Then the right equation of \eqref{int_max} yields
			\begin{align*}
			1-\beta > \exp\bc{-3\delta \beta^2}D\bc{1-K'(\beta)/k} > \bc{1-3 \delta \beta^2} \bc{1- \frac{d}{k}K'(\beta)}.
			\end{align*}
			Now since $\vk \geq 3$, $K'(\beta) = O_\beta(\beta^2)$. But then the above equation yields a contradiction for $\beta$ small enough and thus $(\alpha, \beta) \in (0,\hat \alpha) \times (0,\hat \beta)$ with $\alpha < \beta$ is no possible solution. 
			\item[(C)] Finally, if $\alpha > \beta$, the left equation of \eqref{int_max} yields
			\begin{align*}
			d\bc{1-\alpha} > \exp\bc{-3\delta \alpha^2}D'\bc{1-K'(\alpha)/k} > d\bc{1-3 \delta \alpha^2} \bc{1- \frac{\Erw\brk{\vd^2}}{dk}K'(\alpha)}.
			\end{align*}			
			Now since $\vk \geq 3$, $K'(\beta) = O_\beta(\beta^2)$. But then the above equation yields a contradiction for $\beta$ small enough and thus $(\alpha, \beta) \in (0,\hat \alpha) \times (0,\hat \beta)$ with $\alpha > \beta$ is no possible solution.			\end{itemize} 
			Hence, \eqref{int_max} has no solution with $0<\alpha<\hat\alpha$.
		\item[Case 2: $\hat\alpha\leq\alpha<1$] because $\Phi(\alpha)<\Phi(0)$ for all $0<\alpha\leq1$, \eqref{int_max2} shows that we can choose $\delta$ small enough so that $\tilde\Phi_\delta(\alpha,\beta)<\tilde{\Phi}_\delta(0,0)$ for all $\alpha\geq\hat\alpha$ and all $\beta\in[0,1]$.
	\end{description}
	Combining both cases and recalling \eqref{eqlemma_aux_3}, we obtain \eqref{eqprop_aux1}.

\subsection{Proof of \Prop~\ref{prop_three}}\label{sec_proof_three}

Combining \Prop s~\ref{prop_auxphi} and~\ref{prop_aux}, we see that
\begin{align}\label{eqprop_three_1}
	\frac{1}{n} \Erw\brk{\nul(\vA\brk{n,\eps,\delta,\Theta})} & \leq 1-\frac{d}{k}-\delta+o_{\eps}(1).
\end{align}
The only (small) missing piece is that we still need to extend this result to the original random matrix $\AA_{[\lfloor\delta n\rfloor]}$ based on the simple random factor graph $\GG$.
To this end we apply the following lemma.

\begin{lemma}[{\cite[\Lem~4.8]{Maurice}}]\label{Lemma_JanesCoupling}\label{lem_epscoupling}
For any fixed $\Theta>0$ there exists a coupling of $\vA$ and $\vA\brk{n,\eps,0,\Theta}$ such that 
	$$\ex|\nul\vA-\nul\vA\brk{n,\eps,0,\Theta}|=O_{\eps}(\eps n). $$
\end{lemma}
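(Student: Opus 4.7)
The plan is to build a single probability space on which $\vA$ and $\vA[n,\eps,0,\Theta]$ agree except on a small number of rows, and then to exploit the fact that adding or removing a single row changes the nullity by at most one.

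First, on a common probability space I sample the variable degrees $\vd_1,\ldots,\vd_n$ together with an infinite i.i.d.\ sequence $\vk_1,\vk_2,\ldots$ of check degrees. I couple the check counts by drawing $\vm_\eps\sim\Po((1-\eps)dn/k)$ and an independent $\vec y\sim\Po(\eps dn/k)$, then setting $\vm=\vm_\eps+\vec y\sim\Po(dn/k)$, so that $\ex|\vm-\vm_\eps|=\eps dn/k$. Conditioning on \eqref{deg_sums} inflates this expectation by only a constant factor, as follows from a standard local limit theorem applied to $\sum_{i\leq n}\vd_i-\sum_{j\leq\vm}\vk_j$; the $(2+\eta)$-moment hypotheses guarantee that this event has probability $\Omega(n^{-1/2})$ and that the conditional law of $\vm-\vm_\eps$ has mean of the same order as the unconditional one.

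Second, I couple the Tanner matchings. Set $V_1^{\mathrm{full}}=\bigcup_{i=1}^{\vm}\{a_i\}\times[\vk_i]$ and $V_2=\bigcup_{j=1}^n\{x_j\}\times[\vd_j]$, and let $V_1^{\eps}=\bigcup_{i=1}^{\vm_\eps}\{a_i\}\times[\vk_i]\subseteq V_1^{\mathrm{full}}$. Given \eqref{deg_sums}, I sample a uniformly random perfect matching $\vec\Gamma$ between $V_1^{\mathrm{full}}$ and $V_2$ and use it to build $\vA$, then define the cavity matching as the restriction of $\vec\Gamma$ to edges incident to $V_1^{\eps}$. Since the restriction of a uniformly random bijection is a uniformly random injection, this restricted matching coincides in distribution with a uniformly random maximum matching in the cavity model. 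Reusing the coefficients $\vec\chi_{i,j}$ and appending $\vec\theta$ independent pinning rows then realises $\vA[n,\eps,0,\Theta]$ on the same probability space.

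Third, let $\vA^{\circ}$ denote $\vA[n,\eps,0,\Theta]$ with its $\vec\theta$ pinning rows removed. Under the coupling, $\vA^{\circ}$ is exactly the submatrix of $\vA$ consisting of its first $\vm_\eps$ rows. Since each row added or removed can alter the nullity by at most one,
\begin{align*}
|\nul\vA-\nul\vA[n,\eps,0,\Theta]|\leq|\nul\vA-\nul\vA^{\circ}|+|\nul\vA^{\circ}-\nul\vA[n,\eps,0,\Theta]|\leq(\vm-\vm_\eps)+\vec\theta.
\end{align*}
Taking expectations and recalling that $\vec\theta\leq\Theta=O(1)$ gives the claimed bound $O_\eps(\eps n)$. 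The main obstacle is the first step: controlling the effect of the conditioning \eqref{deg_sums} on the Poisson coupling, which requires a quantitative local limit theorem for the signed sum $\sum_{i\leq n}\vd_i-\sum_{j\leq\vm}\vk_j$ and relies crucially on the $(2+\eta)$-moment assumption on $\vd$ and $\vk$.
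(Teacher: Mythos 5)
The paper does not actually prove this lemma; it is imported verbatim from \cite[\Lem~4.8]{Maurice}, so there is no in-paper argument to compare against. Your high-level plan --- realise the cavity matrix as a submatrix of $\vA$ plus a bounded number of pinning rows, then use that each extra row changes the nullity by at most one to obtain $|\nul\vA-\nul\vA[n,\eps,0,\Theta]|\le(\vm-\vm_\eps)+\vec\theta$ --- is the right shape. But the object you construct is not a coupling of the two models as the paper defines them, and this is a substantive gap rather than a routine detail.

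The problem is that $\vA$ is defined \emph{conditionally} on \eqref{deg_sums}, whereas $\vA[n,\eps,0,\Theta]$ is defined with no such conditioning: its check count $\vm_\eps\sim\Po((1-\eps)dn/k)$ and its check degrees $(\vk_j)_{j\le\vm_\eps}$ come from the unconditioned product law. Your construction conditions the entire joint law on \eqref{deg_sums} and then reads both matrices off the same perfect matching. This gives the right marginal for $\vA$, but the induced law on $\bigl((\vd_i)_i,\vm_\eps,(\vk_j)_{j\le\vm_\eps}\bigr)$ is tilted by a factor proportional to
$\pr\bigl[\sum_{j\le\vec y}\vk_{\vm_\eps+j}=\sum_{i}\vd_i-\sum_{j\le\vm_\eps}\vk_j\bigr]$,
which forces the residual degree sum $\sum_{i}\vd_i-\sum_{j\le\vm_\eps}\vk_j$ into a window of width $\Theta(\sqrt{\eps n})$ around $\eps dn$, whereas under the unconditioned cavity law it fluctuates on the much larger scale $\sqrt n$. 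Consequently the total-variation distance between the two laws of the cavity data is $\Theta(1)$, not $o(1)$. The local limit theorem you invoke controls $\ex|\vm-\vm_\eps|$ under the conditioning but is silent about this marginal mismatch, and since the lemma literally asserts the \emph{existence of a coupling}, i.e.\ a joint law with both marginals exact, the argument as written does not establish it. To repair it you would either have to (i) build the joint space the other way around --- sample the unconditioned cavity model first and then extend the degree data and the matching so that the resulting $\vA$ has exactly the conditional law --- or (ii) keep your conditioned coupling and add a separate estimate showing that tilting the cavity data in this way moves $\ex\bigl[\nul\vA[n,\eps,0,\Theta]\bigr]$ by only $o(n)$; the latter is plausible because $\nul$ concentrates and the residual degree sum can shift the nullity by at most $O(\sqrt n)$, but it is an additional argument you have not supplied, not a consequence of the local limit theorem you cite.
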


Let $\A_{[\lfloor\delta n\rfloor]}$ be the matrix obtained from $\A$ by adding $\lfloor\delta n\rfloor$ random ternary equations.
Combining \eqref{eqprop_three_1} with \Lem~\ref{Lemma_JanesCoupling} and observing that each of the unary checks $p_i$ can alter the nullity by at most one, we obtain
\begin{align}\label{eqprop_three_2}
	\frac{1}{n} \Erw\brk{\nul(\vA_{[\lfloor \delta n\rfloor]})} & \leq 1-\frac{d}{k}-\delta+o(1).
\end{align}
Furthermore, since changing a single edge of the Tanner graph $\G$ or a single entry of $\A$ can change the rank by at most one, the Azuma--Hoeffding inequality shows that $\nul(\vA_{[\lfloor \delta n\rfloor]})$ is tightly concentrated.
Thus, \eqref{eqprop_three_2} implies
\begin{align}\label{eqprop_three_3}
	\pr\brk{\frac{1}{n}\nul(\vA_{[\lfloor \delta n\rfloor]}) \leq 1-\frac{d}{k}-\delta+o(1)}&=1-o(1/n).
\end{align}
Finally, combining \eqref{eqprop_three_3} with \Lem~\ref{lemma_contig}, we conclude that
	$$\pr\brk{\frac{1}{n}\nul(\AA_{[\lfloor \delta n\rfloor]}) \leq 1-\frac{d}{k}-\delta+o(1)}=1-o(1/n),$$
	which implies the assertion because $\nul(\AA_{[\lfloor \delta n\rfloor]})\leq n$ deterministically.

\section{Proof of \Prop~\ref{prop_overlap}}\label{sec_prop_overlap}

\noindent
We recall that $\vA$ is the random $\vm\times n$-matrix generated by way of the pairing model.
Moreover, we continue to let $\vA[n,\eps,\delta,\Theta]$ be the matrix from \Sec~\ref{sec_prop_three} with $\vm_\eps\sim\Po((1-\eps)dn/k)$ checks with independent degrees $\vk_i$, another $\vm_\delta\sim\Po(\delta n)$ ternary checks and further $\vec\theta\sim\unif([\Theta])$ unary checks (cf.~\eqref{eqms}).
Lemma \ref{lem_epscoupling} shows that the nullity of the second model approaches that of the first as $\eps\to0$.

We now go on to prove that if the matrix $\vA[\vec\theta_0]$ obtained from $\vA$ by adding a few random unary checks has many frozen coordinates, then the nullity of $\vA[n,\eps,\delta,\Theta]$ would be greater than permitted by \Prop~\ref{prop_three}; we use an argument similar to~\cite[proof of \Prop~2.7]{Ayre}.
Invoking \Cor~\ref{lem_pinning_RS} will then complete the proof of \Prop~\ref{prop_overlap}.

\begin{lemma}\label{Rank_too_small}
	Assume that for some $\Theta_0>0$ and $\vec\theta_0\sim\unif([\Theta_0])$ we have
\begin{align*}
	\limsup_{n \to \infty}\frac{1}{n}\ex\abs{\fF(\vA[\vec\theta_0])}>0.
\end{align*}
Then for any $\delta_0>0$ there exists $0<\delta<\delta_0$ and $\Theta_1=\Theta_1(\eps)$ such that for any $\Theta=\Theta(\eps)>\Theta_1(\eps)$ we have
\begin{align*}
		\limsup_{\eps\to0}\limsup_{n \to \infty} \frac{1}{n} \Erw\brk{\nul(\vA\brk{n,\eps, \delta, \Theta})} > 1-\frac{d}{k}-\delta.
\end{align*}
\end{lemma}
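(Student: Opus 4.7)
The plan is to argue that a positive fraction of the $\vm_\delta$ ternary rows added to build $\vA[n,\eps,\delta,\Theta]$ are redundant, so the nullity exceeds the Prop.~\ref{prop_three} target by a positive margin. The elementary observation underlying everything is that $i\in\fF(B)$ is equivalent to $e_i^\trans$ lying in the row span of $B$, hence any check row whose three non-zero columns all lie in $\fF(B)$ is already in the row span of $B$ and does not reduce nullity. I therefore introduce the auxiliary matrix $N^{*}:=\vA[\vec\theta]\cup T$, where $T$ denotes the $\vm_\delta$ ternary rows (together with their $\vec\chi$-coefficients) appearing in $\vA[n,\eps,\delta,\Theta]$, and let $N_{\mathrm{fr}}$ be the number of rows of $T$ whose three non-zero column indices all lie in $\fF(\vA[\vec\theta])$; then
\begin{align*}
\nul(N^{*})\geq\nul(\vA[\vec\theta])-\vm_\delta+N_{\mathrm{fr}}.
\end{align*}

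To exploit the hypothesis quantitatively, set $\gamma:=\limsup_n\frac{1}{n}\Erw|\fF(\vA[\vec\theta_0])|>0$. Coupling $\vA[\theta]$ with $\vA[\theta+1]$ by one additional independent uniform unary row shows $\fF(\vA[\theta])\subseteq\fF(\vA[\theta+1])$ pointwise, so $\theta\mapsto\Erw|\fF(\vA[\theta])|$ is non-decreasing. Since $\Erw|\fF(\vA[\vec\theta_0])|$ is the Cesaro average over $\theta\in[\Theta_0]$, we deduce $\Erw|\fF(\vA[\Theta_0])|/n\geq\gamma$ and, by another Cesaro step, $\Erw|\fF(\vA[\vec\theta])|/n\geq\gamma/2$ for $\vec\theta\sim\unif([\Theta])$ and any $\Theta\geq 2\Theta_0$ (along the same subsequence in $n$). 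Conditionally on $\vA[\vec\theta]$, the three indices of each ternary row in $T$ are i.i.d.\ uniform in $[n]$, so Jensen's inequality applied to $x\mapsto x^{3}$ on $[0,1]$ gives
\begin{align*}
\Erw N_{\mathrm{fr}}=\delta n\cdot\Erw\brk{(|\fF(\vA[\vec\theta])|/n)^{3}}\geq\delta n(\gamma/2)^{3}.
\end{align*}
Combined with $\Erw\nul(\vA[\vec\theta])\geq\Erw\nul(\vA)-\Theta$ and $\Erw\nul(\vA)/n\to 1-d/k$ (from~\eqref{eqMaurice} and bounded convergence, since $\nul(\vA)/n\in[0,1]$), this yields $\liminf_n\frac{1}{n}\Erw\nul(N^{*})\geq 1-d/k-\delta+\delta(\gamma/2)^{3}$ along the subsequence where $\gamma$ is achieved.

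The final step is to transfer the bound to $\vA[n,\eps,\delta,\Theta]$ via a subset coupling of the underlying Tanner multigraphs. Sampling the perfect matching backing $\vA$ on the larger clone set $\bigcup_{i=1}^{\vm}\{a_i\}\times[\vk_i]$ and then restricting it to the first $\vm_\eps$ check clones yields, by symmetry, a matching with the distribution of the max matching $\vec\Gamma[n,\eps]$ used to define $\vA[n,\eps,0,\Theta]$, whose regular rows are literally a subset of those of $\vA$. Overlaying the identical unary rows (positions $1,\dots,\vec\theta$) and the identical ternary rows $T$ on both sides, the row multiset of $\vA[n,\eps,\delta,\Theta]$ becomes contained in that of $N^{*}$, so $\nul(\vA[n,\eps,\delta,\Theta])\geq\nul(N^{*})$ deterministically under the coupling. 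Taking $\limsup_\eps\limsup_n$ delivers the required bound $\limsup_\eps\limsup_n\frac{1}{n}\Erw\nul(\vA[n,\eps,\delta,\Theta])\geq 1-d/k-\delta+\delta(\gamma/2)^{3}>1-d/k-\delta$. The main obstacle is making the subset coupling compatible with the conditioning on~\eqref{deg_sums} (present for $\vA$ but absent from the max-matching model $\vA[n,\eps,0,\Theta]$); this can be handled in the spirit of Lem.~\ref{lem_epscoupling} by either showing directly that restricting a uniform conditioned perfect matching produces a uniform max matching on the smaller clone set, or by routing through an unconditioned auxiliary matrix and absorbing the $O(n^{-1/2})$ change-of-measure cost into the strict surplus $\delta(\gamma/2)^{3}$.
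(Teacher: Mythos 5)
Your core counting argument is sound and is essentially the paper's approach, just phrased a little differently: where the paper works with a positive-probability event $\{|\fF(\vA[\vec\theta_0])| > \zeta n\}$ and gets a surplus of $\delta\zeta^4$ per unit, you apply Jensen's inequality to $x\mapsto x^3$ to get $\delta(\gamma/2)^3$. Both exploit the same key observation that a ternary row supported entirely in $\fF$ adds nothing to the rank, and your monotone Cesaro step from $\vec\theta_0\sim\unif([\Theta_0])$ to $\vec\theta\sim\unif([\Theta])$ is fine. Up to and including the lower bound on $\liminf_n\tfrac1n\ex[\nul(N^*)]$, the argument is correct.

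The genuine gap is in the final transfer to $\vA[n,\eps,\delta,\Theta]$. The proposed \emph{subset coupling} does not exist: $N^*$ is built on $\vA$, whose check rows are a \emph{perfect} matching conditioned on~\eqref{deg_sums} with a Poisson number $\vm$ of checks \emph{constrained} by that event, whereas $\vA[n,\eps,0,\Theta]$ uses an \emph{unconditioned} $\vm_\eps\sim\Po((1-\eps)dn/k)$ and a max matching. Because the conditioning on $\sum_i\vd_i=\sum_{j\le\vm}\vk_j$ ties together all $\vm,\vk_1,\ldots,\vk_{\vm},\vd_1,\ldots,\vd_n$, the restriction of the conditioned perfect matching to the first $\vm_\eps$ checks is \emph{not} distributed as the uniform max matching (and in particular you cannot even arrange $\vm_\eps\le\vm$ deterministically with both marginals correct). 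Your first suggested remedy is therefore false as stated. Your second remedy — ``absorbing the $O(n^{-1/2})$ change-of-measure cost into the surplus'' — does not work either: $\pr[\eqref{deg_sums}]=\Theta(n^{-1/2})$ makes the Radon–Nikodym derivative grow like $n^{1/2}$, and the identity $\ex[\nul]=\pr[\eqref{deg_sums}]\,\ex[\nul\mid\eqref{deg_sums}]+\pr[\neg\eqref{deg_sums}]\,\ex[\nul\mid\neg\eqref{deg_sums}]$ gives no usable lower bound since the second, dominant term is uncontrolled. This is not a ``cost'' that can be absorbed additively into a $\Theta(\delta n)$-size surplus.

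The clean way to close the argument is exactly what the paper does: apply \Lem~\ref{lem_epscoupling} to get a coupling with $\ex|\nul\vA-\nul\vA[n,\eps,0,\Theta]|=O_\eps(\eps n)$, share the \emph{same} Poisson-$\delta n$ ternary rows $T$ and (up to $O(\Theta)=O(1)$) the same unary pins on both sides, and note that the two resulting matrices $N^*=\vA[\vec\theta]\cup T$ and $\vA[n,\eps,\delta,\Theta]$ then differ in a row multiset of expected size $O_\eps(\eps n)$, hence $\ex|\nul N^*-\nul\vA[n,\eps,\delta,\Theta]|=O_\eps(\eps n)$. The $O_\eps(\eps n)$ term vanishes under $\limsup_{\eps\to0}$, leaving the strict surplus $\delta(\gamma/2)^3>0$ intact, which is all you need. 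You already have \Lem~\ref{lem_epscoupling} in hand — inventing the stronger deterministic subset coupling is unnecessary and, because of the conditioning, unavailable.
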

\begin{proof}
	For an integer $\ell\geq0$ obtain  $\vA_{[\ell]}[\vec\theta_0]$ from $\vA[\vec\theta_0]$ by adding $\ell$  random ternary equations.
	Moreover, let $\vec\lambda=\Po(\delta n)$.
	Since $\nul\vA_{[\vec\lambda]}[\vec\theta_0]\geq\nul\vA-\vec\lambda-\vec\theta_0$, \Lem~\ref{lem_epscoupling} implies that for any $\Theta=\Theta(\eps)$,
	\begin{align}\label{eqRank_too_small0}
		\ex\abs{\nul\vA_{[\vec\lambda]}[\vec\theta_0]-\nul\vA\brk{n,\eps,\delta,\Theta}}=\ex\abs{\nul\vA_{[\vec\lambda]}[\vec\theta_0]-\nul\vA\brk{n,\eps,0,\Theta}}+O_\eps(\delta n)&=O_{\eps}((\eps+\delta) n+\Theta)=O_\eps(\eps n).
	\end{align}
	%\nm{Is it on purpose that $\nul\vA\brk{n,\eps,0,\Theta}$ has $\delta=0$? Also does the bound not depend on $\delta?$ (Jane's comment). I also find the use of \Lem~\ref{lem_epscoupling} confusing.}
	We now estimate the nullity of $\vA_{[\vec\lambda]}[\vec\theta_0]$ under the assumption that for a large $n$,
	\begin{align}\label{eqRank_too_small1}
		\pr\brk{\abs{\fF(\vA[\vec\theta_0])}>\zeta n}&>\zeta&&\mbox{for some }\zeta>0.
	\end{align}
	Because adding equations can only increase the set of frozen variables, we have $\fF(\vA_{[\ell]}[\vec\theta_0])\subset \fF(\vA_{[\ell+1]}[\vec\theta_0])$ for all $\ell\geq0$.
Therefore, \eqref{eqRank_too_small1} implies that
	\begin{align}\label{eqRank_too_small3}
		\pr\brk{\abs{\fF(\vA_{[\ell]}[\vec\theta_0])}>\zeta n}&>\zeta&&\mbox{ for all }\ell\geq0.
	\end{align}

	We now claim that
	\begin{align}\label{eqRank_too_small4}
		\frac1n\ex\brk{\nul\vA_{[\vec\lambda]}[\vec\theta_0]}&\geq 1-d/k-\delta+\delta\zeta^4+o(1).
	\end{align}
	To prove \eqref{eqRank_too_small4} it suffices to show that for any $\ell\geq0$,
	\begin{align}\label{eqRank_too_small5}
		\ex\brk{\nul\vA_{[\ell+1]}[\vec\theta_0]-\nul\vA_{[\ell]}[\vec\theta_0]}&\geq  \zeta^4-1+o(1).
	\end{align}
	Indeed, we obtain \eqref{eqRank_too_small4} from \eqref{eqRank_too_small5} and the nullity formula $n^{-1}\ex[\nul\vA_{[0]}[\vec\theta_0]]=n^{-1}\ex[\nul\vA]+o(1)=1-d/k+o(1)$ from \eqref{eqMaurice} by writing a telescoping sum.

	To establish \eqref{eqRank_too_small5} we observe that $\nul\vA_{[\ell+1]}[\vec\theta_0]-\nul\vA_{[\ell]}[\vec\theta_0]\geq-1$ because we obtain $\vA_{[\ell+1]}[\vec\theta_0]$ from $\vA_{[\ell]}[\vec\theta_0]$ by adding a single ternary equation.
	Furthermore, if $|\fF(\vA_{[\ell]}[\vec\theta_0])|\geq\zeta n$, then with probability $\zeta^3+o(1)$ all three variables of the new ternary equation are frozen in $\vA_{[\ell]}[\vec\theta_0]$, in which case $\nul\vA_{[\ell+1]}[\vec\theta_0]=\nul\vA_{[\ell]}[\vec\theta_0]$.
	Hence, \eqref{eqRank_too_small4} follows from \eqref{eqRank_too_small5}, which follows from \eqref{eqRank_too_small3}.
	Finally, combining \eqref{eqRank_too_small0} and \eqref{eqRank_too_small4} completes the proof.
\end{proof}

\begin{proof}[Proof of \Prop~\ref{prop_overlap}]
	The proposition follows from \Cor~\ref{lem_pinning_RS} and \Lem~\ref{Rank_too_small}.
\end{proof}

\section{Proof of \Prop~\ref{prop_module}}\label{sec_prop_module}

\noindent
The proof proceeds very differently depending on whether the coefficients $\chi_1,\ldots,\chi_{k_0}$ are identical or not.
The following two lemmas summarise the analyses of the two cases.

\begin{lemma}\label{lemma_module}
	For any prime power $q$ and any $\chi\in\FF_q^*$ the $\ZZ$-module $\fM_q(\chi,\chi,\chi)$ possesses a basis $(\fb_1,\ldots,\fb_{q-1})$ of non-negative integer vectors $\fb_i\in\ZZ^{\FF_q^*}$ for all $i\in[q-1]$ such that
	\begin{align*}
		\|\fb_i\|_1\leq3\quad\mbox{ and }\quad\sum_{s\in\FF_q^*}\fb_{i,s}s=0\quad\mbox{for all $i\in[q-1]$, and}\quad\det\bc{\fb_1\ \cdots\ \fb_{q-1}}=q.
	\end{align*}
	Furthermore, for any $k_0>3$ we have
	$\fM_q\underbrace{\bc{\chi,\ldots,\chi}}_{\mbox{$k_0$ times}}=\fM_q(\chi,\chi,\chi).$
\end{lemma}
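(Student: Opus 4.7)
The lemma has two claims: a basis of $\fM_q(\chi,\chi,\chi)$ with the stated properties exists, and $\fM_q(\chi,\ldots,\chi)=\fM_q(\chi,\chi,\chi)$ for $k_0>3$. I would start with the stabilisation, the easier half. The inclusion $\fM_q(\chi,\chi,\chi)\subseteq\fM_q(\chi,\ldots,\chi)$ is immediate, since any $3$-term solution $(\sigma_1,\sigma_2,\sigma_3)$ extends to the $k_0$-term solution $(\sigma_1,\sigma_2,\sigma_3,0,\ldots,0)$ without altering the frequency vector. The reverse inclusion proceeds by induction on $k_0$: given a $k_0$-term solution $\sigma$, set $\tau=\sigma_1+\sigma_2$ and, when $\tau\neq 0$, use the identity
\begin{equation*}
    \hat\sigma \;=\; \widehat{(\sigma_1,\sigma_2,-\tau)}\;+\;\widehat{(\tau,\sigma_3,\ldots,\sigma_{k_0})}\;-\;(e_\tau+e_{-\tau}),
\end{equation*}
where the first summand is a $3$-term frequency vector, the second a $(k_0-1)$-term frequency vector handled by the inductive hypothesis, and the third equals $\widehat{(\tau,-\tau,0)}\in\fM_q(\chi,\chi,\chi)$. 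The degenerate case $\tau=0$ is analogous: the first two terms contribute exactly $\widehat{(\sigma_1,-\sigma_1,0)}$, and the remaining $k_0-2$ terms form a shorter solution treated inductively.

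The more substantive claim is the basis construction. The key observation is that $\fM_q(\chi,\chi,\chi)\subseteq\cK_q:=\{b\in\ZZ^{\FF_q^*}:\sum_s b_s s=0\text{ in }\FF_q\}$, and since the group homomorphism $b\mapsto\sum_s b_s s$ from $\ZZ^{\FF_q^*}$ to $\FF_q^+$ is surjective, $\cK_q$ has index $q$ in $\ZZ^{\FF_q^*}$, so any $\ZZ$-basis of $\cK_q$ automatically has determinant $\pm q$. It therefore suffices to exhibit $q-1$ non-negative vectors of $\ell_1$-norm at most $3$ that lie in $\fM_q(\chi,\chi,\chi)$ and form a basis of $\cK_q$, and I would split into cases according to the arithmetic of $q$. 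In the prime case I would take the pair vectors $e_s+e_{q-s}=\widehat{(s,-s,0)}$ for $s=1,\ldots,(q-1)/2$ together with the doubling vectors $2e_s+e_{-2s\bmod q}=\widehat{(s,s,-2s)}$ for $s=1,\ldots,(q-1)/2$ (handling $q=2$ separately via $\{2e_1\}$), and verify via block row reduction that the resulting $(q-1)\times(q-1)$ matrix has determinant $\pm q$. For $q=p^r$ with $r\geq 2$ I would work relative to an $\FF_p$-basis $\{1,\alpha,\ldots,\alpha^{r-1}\}$ of $\FF_q$ and assemble a basis from the pair vectors $e_s+e_{-s}$, the doubling vectors $2e_s+e_{-2s}$, genuine triples $e_s+e_t+e_{-s-t}$, and (when $p\in\{2,3\}$) the vectors $p\,e_s$, organised by a careful block decomposition of the ensuing matrix.

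The principal obstacle lies in the prime-power case with $r\geq 2$, especially when $p>3$. There the lattice $\cK_q$ is cut out by $r$ independent congruences modulo $p$, one for each $\FF_p$-coordinate of the elements $s\in\FF_q^*$, so the clean pairing structure of the prime case is unavailable. Since no single-support vector $p\,e_s$ has $\ell_1$-norm at most $3$ in this regime, the entire basis must be assembled from vectors supported on at least two distinct coordinates, and one must verify that the resulting $(q-1)\times(q-1)$ integer matrix has determinant exactly $\pm p^r$ rather than a higher power of $p$. Arranging the chosen $2$- and $3$-term generators into an explicit block-triangular form whose Smith normal form pins the determinant down on the nose, while simultaneously respecting both the $\ell_1$-bound and non-negativity constraints, is where the real work of the proof lies.
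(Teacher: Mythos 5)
Your stabilisation argument is correct and more direct than the paper's: the paper obtains $\fM_q(\chi,\ldots,\chi)=\fM_q(\chi,\chi,\chi)$ only as a byproduct of sandwiching the $k_0$-term module between the two auxiliary modules $\fB_2\subseteq\fM\subseteq\fB_1$ and then showing $\fB_1=\fB_2$, whereas your identity $\hat\sigma=\widehat{(\sigma_1,\sigma_2,-\tau)}+\widehat{(\tau,\sigma_3,\ldots,\sigma_{k_0})}-(e_\tau+e_{-\tau})$ settles it by induction in a self-contained way. Your observation that $\fM_q(\chi,\chi,\chi)$ sits inside the index-$q$ sublattice $\cK_q=\ker(b\mapsto\sum_s b_s s)$, so that the determinant is automatically $\pm q$ once the proposed vectors are shown to span $\cK_q$, is also correct and is implicitly what drives the paper's two-basis argument.

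However, the proposed basis for the prime case is wrong, not merely unverified. Adjoining to the pair vectors $e_s+e_{-s}$ the doubling vectors $2e_s+e_{-2s}$ imposes, modulo the generated lattice $L$, exactly the relations $e_{-s}\equiv-e_s$ and $e_{2s}\equiv2e_s$; these only couple coordinates lying in the same orbit of the subgroup $H=\langle 2,-1\rangle\leq(\ZZ/p)^\ast$ acting by multiplication on $\FF_p^\ast$. Consequently $\ZZ^{\FF_p^\ast}/L$ splits as a direct sum over the $H$-orbits, each orbit contributing a cyclic factor of order $p$, so $[\ZZ^{\FF_p^\ast}:L]=p^{(p-1)/|H|}$. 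As soon as $H$ is a proper subgroup the index strictly exceeds $p$. The smallest such prime is $p=17$: there $2^4\equiv-1\pmod{17}$, so $H=\langle 2\rangle$ has order $8$ with two orbits, and the index is $17^2$, not $17$. The paper's construction of $A_p$ avoids precisely this by replacing the doubling vectors with the triples $e_h+e_{-1}+e_{-h+1}$ for $h\in\{(p+1)/2,\ldots,p-1\}$; the fixed appearance of the element $-1$ couples the $H$-orbits, and the row reduction in \Lem~\ref{Lem_detAp} pins the determinant down to exactly $p$. Finally, as you yourself flag, the prime-power case $q=p^r$ with $r\geq2$ is left entirely open in your sketch; that is where the paper's length-based ordering of $\FF_q^\ast$ and the explicit block-triangular matrices $M_q$ and $A_q$ carry the real weight of the proof.
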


\begin{lemma}\label{lemma_module'}
	Suppose that $q$ is a prime power, that $k_0\geq3$ and that $\chi_1,\ldots,\chi_{k_0}\in\FF_q^*$ satisfy $|\{\chi_1,\ldots,\chi_{k_0}\}|\geq2$.
	Then $$\fM_q(\chi_1,\ldots,\chi_{k_0})=\ZZ^{\FF_q^*}.$$
	Furthermore, $\fM_q(\chi_1,\ldots,\chi_{k_0})$ possesses a basis $(\fb_1,\ldots,\fb_{q-1})$ of non-negative integer vectors $\fb_i\in\ZZ^{\FF_q^*}$ such that
	\begin{align*}
		\|\fb_i\|_1\leq3\quad\mbox{and}\quad\sum_{s\in\FF_q^*}\fb_{i,s}s=0\quad\mbox{for all }i\in[q-1].
	\end{align*}
\end{lemma}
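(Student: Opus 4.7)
The plan is to prove the two assertions of Lemma~\ref{lemma_module'} sequentially: first that $\fM_q(\chi_1,\ldots,\chi_{k_0}) = \ZZ^{\FF_q^*}$, and then to exhibit the required basis. Both parts crucially exploit the hypothesis that at least two of the coefficients $\chi_i$ are distinct.

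For the identity $\fM_q(\chi_1,\ldots,\chi_{k_0}) = \ZZ^{\FF_q^*}$, I will work with the $\ZZ$-linear ``exit map'' $\phi : \ZZ^{\FF_q^*} \to \FF_q$ defined by $\phi(v) = \sum_{t\in\FF_q^*} tv_t$. For any solution $\sigma \in \cS_q(\chi_1,\ldots,\chi_{k_0})$ one computes $\phi(\hat\sigma) = \sum_i \sigma_i$. First, I will show that $\ker\phi$, a sublattice of index exactly $q$ in $\ZZ^{\FF_q^*}$, is entirely contained in $\fM_q$: here Lemma~\ref{lemma_module} applied to the identical-coefficient sub-equations available within $(\chi_1,\ldots,\chi_{k_0})$ supplies sufficient generators, either via repeated coefficients when $\chi_i=\chi_j$ for some $i\neq j$, or via suitable three-term combinations in the all-distinct case. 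Second, assuming WLOG $\chi_1 \neq \chi_2$, the binary sub-solutions $\sigma_1 = s$, $\sigma_2 = -\chi_1\chi_2^{-1}s$, $\sigma_i = 0$ for $i\geq 3$ produce frequency vectors $e_s + e_{-\chi_1\chi_2^{-1}s} \in \fM_q$ with $\phi$-image $s(1 - \chi_1/\chi_2)$; as $s$ ranges over $\FF_q^*$ these sweep out all of $\FF_q$, showing $\phi$ is surjective on $\fM_q$. Combined, these yield $\fM_q = \ZZ^{\FF_q^*}$.

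For the basis construction, I aim to produce $q-1$ explicit frequency vectors of solutions, each with non-negative integer entries and $\ell_1$-norm at most three, whose determinant is $\pm 1$. The strategy is to linearly order the elements of $\FF_q^*$ and, inductively, choose for each $s$ a frequency vector $\fb_s$ obtained from a binary sub-solution $e_s + e_{rs}$ (where $r = -\chi_i\chi_j^{-1}$ for a chosen pair of indices) or a ternary sub-solution $e_s + e_t + e_u$, such that the resulting matrix $(\fb_s)_{s\in\FF_q^*}$ is triangular in the chosen ordering. Unimodularity of a triangular matrix with unit diagonal entries then follows automatically, while non-negativity and the norm bound are built into the construction. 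The additional linear condition $\sum_s s\fb_{i,s}=0$ will be arranged by selecting, whenever possible, pairs/triples of indices with appropriately related coefficients.

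The main obstacle will lie in the basis construction: realising the triangular structure subject to the non-negativity and $\ell_1 \leq 3$ constraints requires careful bookkeeping based on the coefficient pattern, and the prime-power case $q = p^k$ with $k \geq 2$ is especially delicate because $\FF_q$ is a non-prime extension field and its $\ZZ$-module structure interacts subtly with the available coefficient ratios. I anticipate a case analysis on the multiset $\{\chi_1,\ldots,\chi_{k_0}\}$—treating separately the configurations where some coefficient repeats (providing symmetric binary solutions) versus where all coefficients are distinct (forcing the use of genuinely ternary solutions)—will be needed to cover all scenarios uniformly.
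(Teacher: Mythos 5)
Your ``exit map'' $\phi(v)=\sum_t tv_t$ is a neat reformulation: since $\phi$ is surjective onto $\FF_q$, $\ker\phi$ has index $q$ in $\ZZ^{\FF_q^*}$, and moreover $\ker\phi=\fM_q(\chi,\chi,\chi)$ by Lemma~\ref{lemma_module} (both are index-$q$ sublattices of $\ZZ^{\FF_q^*}$ with the former contained in the latter). The surjectivity of $\phi$ restricted to $\fM_q(\chi_1,\ldots,\chi_{k_0})$ via the binary solutions $e_s+e_{-\chi_1\chi_2^{-1}s}$ is also correct. The gap is the first step: you assert $\ker\phi\subseteq\fM_q(\chi_1,\ldots,\chi_{k_0})$ ``via Lemma~\ref{lemma_module} applied to identical-coefficient sub-equations.'' But Lemma~\ref{lemma_module} only yields generators for $\fM_q(\chi,\chi,\chi)$, and this only embeds into $\fM_q(\chi_1,\ldots,\chi_{k_0})$ when three of the $\chi_i$ coincide (pad with zeros). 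When at most two $\chi_i$ agree --- e.g.\ $k_0=3$ with all distinct, which the statement explicitly allows --- the route via an identical-coefficient sub-equation is unavailable, and your fallback of ``suitable three-term combinations in the all-distinct case'' is precisely the unaddressed crux.

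The basis construction has a similar structural gap. You propose to linearly order $\FF_q^*$ and pick for each $s$ a binary solution $e_s+e_{rs}$ (or a ternary one) so that the resulting matrix is lower triangular with unit diagonal. But the map $s\mapsto rs$ for $r=-\chi_i\chi_j^{-1}\neq1$ is a permutation of $\FF_q^*$ whose cycles have length $\mathrm{ord}(r)>1$; on each such cycle you cannot globally pair every element with a strictly ``earlier'' one, so the greedy triangular scheme necessarily gets stuck at the top of every cycle. This is precisely the difficulty the paper confronts: it partitions $\FF_q^*$ into the orbits of $\langle\chi_3^{-1}\rangle$ (or $\langle-\chi_2^{-1}\rangle$), uses the binary ``chain'' vectors $e_{g_i}+e_{g_{i+1}}$ within each orbit, and then inserts one specially chosen \emph{ternary} escape vector per orbit (e.g.\ $e_{g_1}+e_{g_2}+e_{2g_1}$) to break the cyclic degeneracy, followed by a further case split depending on whether the escape target stays inside the orbit or leaves it. Your plan does not account for the cycle structure, and without the escape mechanism the proposed binary vectors span only a proper sublattice (consider $r$ of even order, where the pairwise sums around a cycle introduce a factor of two). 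In short, both halves of the proposal rest on steps that fail outside the most favourable coefficient patterns, and the difficult cases are the ones the lemma is really about.
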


Clearly, \Prop~\ref{prop_module} is an immediate consequence of \Lem s~\ref{lemma_module} and \ref{lemma_module'}.
We proceed to prove the former in \Sec~\ref{sec_module} and the latter in \Sec~\ref{sec_module'}.

\subsection{Proof of \Lem~\ref{lemma_module}}\label{sec_module}
Because we can just factor out any scalar, it suffices to consider the module
$$\fM=\fM_q\underbrace{(1,\ldots,1)}_{\mbox{$k_0$ times}}.$$
Being a $\ZZ$-module, $\fM$ is free, but it is not entirely self-evident that a basis with the additional properties stated in \Lem~\ref{lemma_module} exists.
Indeed, while it is easy enough to come up with $q-1$ linearly independent vectors in $\fM$ that all have an $\ell_1$-norm bounded by $3$, it is more difficult to show that these vectors generate $\fM$. 
In the proof of \Lem~\ref{lemma_module}, we sidestep this difficulty by working with two sets of vectors $\cB_1$ and $\cB_2$.
The first set $\cB_1$ is easily seen to generate $\fM$, while $\cB_2$ is a set of linearly independent vectors in $\fM$ with $\ell_1$-norms bounded by $3$. 
To argue that $\cB_2$ generates $\fM$, too, it then suffices to show that the determinant of the change of basis matrix equals one. 

To interpret the bases as subsets of $\ZZ^{q-1}$ rather than $\ZZ^{\FF_q^\ast}$ in the following, we fix some notation for the elements of $\FF_q$. 
Throughout this section, we let $q=p^\ell$ for a prime $p$ and $\ell \in \NN$. 
If $\ell=1$, we regard $\FF_q$ as the set $\{0, \ldots, p-1\}$ with \hspace{-0.25 cm} $\mod p$ arithmetic. 
If $\ell \geq 2$, the field elements can be written as
\begin{align*}
\{a_0 + a_1\var + \ldots + a_{\ell-1}\var^{\ell-1}: a_j \in \FF_p \text{ for } j = 0, \ldots, \ell-1\}, 
\end{align*}
with  \hspace{-0.3 cm} $\mod g(\var)$ arithmetic for a prime polynomial $g(\var)\in \FF_p[\var]$ of degree $\ell$. 
Exploiting this representation of the field elements as polynomials, we define the length len$(a_0 + a_1\var + \ldots + a_{\ell-1}\var^{\ell-1})$ of an element of $\FF_q$ to be the number of its non-zero coefficients. 
Finally, let
\begin{align}\label{eq_length}
\mathbb{F}_q^{(\geq 2)} = \cbc{h \in \FF_q: \text{len}(h) \geq 2}
\end{align}
be the set of all elements of $\FF_q$ with length at least two. Of course, if $\ell = 1$, $\FF_q^{(\geq 2)}$ is empty. 

Recall that we view $\fM$ as a subset of $\ZZ^{\FF_q^\ast}$ that is generated by the vectors
\begin{align*}
\bc{\sum_{i=1}^{k_0}\vecone\cbc{\sigma_i=s}}_{s\in\FF_q^*}, \quad \sigma\in\cS_0(1, \ldots, 1).
\end{align*}
In the above representation, the generators are indexed by $\FF_q^*$ rather than by the set $[q-1]$. But to carry out the determinant calculation, it is immensely useful to represent both $\cB_1$ and $\cB_2$ as matrices with a convenient structure. 
Hence, there is ambiguity in the choice of a bijection $f:\FF_q^\ast \to \{1, \ldots, q-1\}$ that maps the non-zero elements of $\FF_q$ to coordinates in $\ZZ^{\FF_q^*}$. 
To put a clear structure to the matrices in this subsection, we will soon choose $f$ in a particular way. With the above notation, we will from now on fix a bijection $f$ that is monotonically decreasing with respect to the length function on $\FF_q^\ast$: If len$(h_1) <$len$(h_2)$ for $h_1, h_2 \in \FF_q^\ast$, then $f(h_1) > f(h_2)$. 
More precisely, $f$ maps the $(p-1)^\ell$ elements in $\FF_q^\ast$ of maximal length $\ell$ to the interval $[(p-1)^\ell]$, the $\ell (p-1)^{\ell-1}$ elements of length $\ell-1$ to the interval $\{(p-1)^\ell+1, \ldots, (p-1)^\ell+\ell(p-1)^{\ell-1}\}$, and so on. 
For elements of length one, we further specify that
\begin{align*}
f(a \var^{i})= q-1-(\ell-i)(p-1) + a \quad \text{for } i \in \{0, \ldots \ell-1\} \text{ and } a \in [p-1].
\end{align*}
For our purposes, there is no need to fully specify the values of $f$ within sets of constant length greater than one, but one could follow the lexicographic order, for example. 
The benefit of such an ordering will become apparent in the next two subsections.

\subsubsection{First basis $\cB_1$}
The idea behind the first set $\cB_1$ is that it consists of vectors whose coordinates can be easily seen to correspond to element statistics of a valid solution while ignoring the $\ell_1$-restriction formulated in \Lem~\ref{lemma_module}. 
We build $\cB_1$ from frequency vectors of solutions of the form
\begin{align*}
\bc{a_0 + a_1\var + \ldots + a_{\ell-1}\var^{\ell-1}} + \sum_{i=0}^{\ell-1} a_i \cdot ((p-1)\var^{i}) = 0.
\end{align*}
That is, we take any element $a_0 + a_1\var + \ldots + a_{\ell-1}\var^{\ell-1}$ from $\FF_q^\ast$ and cancel it by a linear combination of elements from $\{p-1, (p-1)\var, \ldots, (p-1)\var^{\ell-1}\} \subset \FF_q^\ast$. 
Formally, let $e_1, \ldots, e_{q-1}$ denote the canonical basis of $\ZZ^{q-1}$. 
The set of statistics of all frequency vectors of the form described above then reads
\begin{align*}
\cB_1 =  \cbc{e_{f(\sum_{i=0}^{\ell-1} a_i\var^{i})} + \sum_{i=0}^{\ell-1} a_i e_{f(-\var^{i})}: \sum_{i=0}^{\ell-1} a_i\var^{i} \in \FF_q^\ast}.
\end{align*}
A moment of thought shows that $|\cB_1| = q-1$. 
Indeed, it is helpful to notice that for any $h \in \FF_q^\ast \setminus\{-1, \ldots, -\var^{\ell-1}\}$, there is exactly one element with a non-zero position in coordinate $f(h)$, and this coordinate is $1$. 
That is, there is basically exactly one element in $\cB_1$ associated with each element of $\FF_q^\ast$. 
Generally, the elements of $\cB_1$ can be ordered to yield a lower triangular matrix $M_q$. 
To sketch this matrix, we first consider the case $\ell=1$.  
In this case, with our choice of indexing function $f$, the elements of $\B_1$ can be ordered to give the matrix displayed in Figure~\ref{fig_Mp}.
For the case of fields of prime order, this basis is already implicitly mentioned in~\cite{Huang}.

\begin{figure}
\begin{align*}
M_p= \begin{pNiceArray}{cccccccc}[first-row, first-col]
		& 1& 2 &       &. &. &. &. &  p-1 \\
	1	&1 &  &  &  &  &  &  & \\
	2	 && 1 &  &  &  &&& \\
		& &  & 1 &  &  & & &\\
		& &  &  & \ddots &  & &&\\
	\vdots	& &  &  & &  &  &  & \\
		& &  &  &  &  & \ddots&& \\
		& &  &  &  &  & & 1&\\
	p-1	& 1 & 2 & 3 &  & \cdots &  & p-2 & p
		\end{pNiceArray}.
\end{align*}
\caption{The matrix $M_p$.}\label{fig_Mp}
\end{figure}

Note that this reduces to $M_2 = (2)$ in the case $p=2$. 
In this representation, rows are indexed by the field elements they represent, while columns are indexed by the field elements they are associated with. 
For $\ell \geq 2$, we can use the matrix $M_p$ for the compact representation of $M_q$ displayed in Figure~\ref{fig_Mq}.

\begin{figure}
\begin{align}
M_q = \begin{pNiceArray}{cccccccc|ccccccccccc}[first-row, first-col]
& & & & &\rotate{\FF_q^{(\geq 2)}} & & & & \rotate 1&\hdots &\rotate{p-1} &\rotate{\var} &\hdots&\rotate{(p-1)\var}&\hdots&\rotate{\var^{\ell-1}} &\hdots&\rotate{(p-1)\var^{\ell-1}}\\
&1 &  &  &  &  &  &  & & & & &&&&&&&\\
&		 & 1 &  &  &  &&& & & & & &&&&&&\\
&		 &  & \ddots &  &  & & && & & &\Block{4-5}{\bigzero} &&&&&&\\
&		 &  &  & 1 &  & &&& & & & & &&&&&\\
\FF_q^{(\geq 2)}&		 &  &  &  & 1 &  &  & & & & &&&&&&&\\
&		 &  &   & &  &1  &  & & & & &&&&&&&\\
&		 &  &  & &   & &  & & & & &&&&&&&\\
&		 &  &  & & & &  & 1 & & & &&&&&&&\\
		\hline
1 &		0 &  & \Cdots & &  && &0  & \Block[draw]{3-3}{M_p}  & &&&&&\Block{3-5}{\bigzero}&&\\
 \vdots &		0& & \Cdots &  &  & && 0 &  && &&&&&&&\\
 p-1 &		 \ast& & \Cdots &   && &&\ast & & & & &&&&&&\\
  \var &		 0 & & \Cdots &  & &&&0 & && & \Block[draw]{3-3}{M_p}  &&&&&&\\
      \vdots &	0 & & \Cdots &  &  & &&0 & \Block{6-3}{\bigzero}& & & &  &&&&&\\  
 (p-1)\var &		\ast & & \Cdots & &  &&& \ast & &   & & && &&&&\\
 \vdots &		\ast & & \Cdots & & &&&\ast & &   & & && &\ddots &&&\\
 \var^{\ell-1}&  0& & \Cdots & &  &&&0  & &   & & && && \Block[draw]{3-3}{M_p} &&\\
       \vdots &	0	 & & \Cdots &  &  & & & 0 & && & &&&&&&  &\\
         (p-1)\var^{\ell-1 }&	\ast	&  & \Cdots &  &   &  & &\ast  && & &&&&& && 
	\end{pNiceArray}.
\end{align}
	\caption{The matrix $M_q$ for $\ell\geq2$.}\label{fig_Mq}
\end{figure}

In the matrix $M_q$, the upper left block is an identity matrix of the appropriate dimension, the upper right is a zero matrix, the lower left is a matrix that only has non-zero entries in rows that correspond to $-1, \ldots, -\var^{\ell-1}$ while the lower right is a block diagonal matrix whose blocks are given by $M_p$. 
In particular, $M_p$ is a lower triangular matrix.
Because $M_p$ has determinant $p$ the following is immediate.

\begin{claim}\label{claim_detM}
We have $ \det(M_q) = p^\ell = q.  $
\end{claim}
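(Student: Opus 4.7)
The plan is to exploit the block structure of $M_q$ visible in Figure~\ref{fig_Mq} and reduce the computation to that of $\det(M_p)$. Since the upper-right block of $M_q$ is the zero matrix, $M_q$ is block lower triangular with respect to the partition of rows/columns into $\FF_q^{(\geq 2)}$ and $\FF_q \setminus \FF_q^{(\geq 2)}$. Consequently the determinant factors as the product of the determinant of the upper-left block and that of the lower-right block, regardless of the entries in the lower-left block (which are the only ``interesting'' entries and encode the cancellation of length-$\geq 2$ elements by length-one ones).

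Next I would observe that the upper-left block is simply an identity of appropriate size, contributing a factor of $1$. The lower-right block is itself block diagonal, consisting of $\ell$ copies of $M_p$, one for each ``axis'' $\{a \varpi^i : a\in[p-1]\}$, $i=0,\ldots,\ell-1$, of length-one elements. Block-diagonal matrices have determinant equal to the product of the diagonal blocks' determinants, so $\det(M_q) = \det(M_p)^\ell$.

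It therefore remains to evaluate $\det(M_p)$. Reading off Figure~\ref{fig_Mp}, the matrix $M_p$ is lower triangular: rows $1,\ldots,p-2$ have a single non-zero entry (namely a $1$) on the diagonal, while the last row reads $(1,2,3,\ldots,p-2,p)$, so its diagonal entry is $p$. Hence $\det(M_p)$ equals the product of the diagonal entries, which is $p$. Combining, $\det(M_q) = p^\ell = q$, as claimed.

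Nothing in this argument is genuinely hard; the only thing worth double-checking is that the sign of the permutation used to order the rows/columns as in Figure~\ref{fig_Mq} matches the one used for the columns, so that the displayed matrix indeed has the determinant listed. This is ensured by our fixed choice of the bijection $f:\FF_q^\ast\to[q-1]$ (monotonically decreasing in length, together with the explicit specification on length-one elements), which indexes rows and columns by the \emph{same} ordering on $\FF_q^\ast$. So no extra sign appears and the computation gives $\det(M_q)=q$ directly.
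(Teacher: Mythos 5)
Your argument is exactly the one the paper uses: $M_q$ is block lower triangular with an identity block in the upper left and a block-diagonal lower-right block consisting of $\ell$ copies of $M_p$, and $M_p$ is lower triangular with diagonal entries $1,\ldots,1,p$, giving $\det(M_q)=\det(M_p)^\ell=p^\ell=q$. The proof is correct and coincides with the paper's.
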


Let $\fB_1$ denote the $\ZZ$-module generated by the elements of $\cB_1$. 
Then the lower triangular structure of $M_q$ also implies the following.

\begin{claim}
The rank of $\cB_1$ is $q-1$.
\end{claim}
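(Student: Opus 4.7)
The plan is very short because nearly all the work has already been done in the preceding discussion. Observe that by construction, the set $\cB_1$ consists of exactly $q-1$ integer vectors in $\ZZ^{q-1}$, and these are precisely the columns (equivalently, rows) of the matrix $M_q$ displayed in Figure~\ref{fig_Mq}. So the rank of $\cB_1$ as a subset of $\ZZ^{q-1}$ equals the rank of the matrix $M_q$ over $\QQ$.

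By Claim~\ref{claim_detM}, we have $\det(M_q) = q$. Since $q \geq 2$, the determinant is non-zero, so $M_q$ has full rank $q-1$ over $\QQ$. Hence its $q-1$ columns are $\QQ$-linearly independent, and therefore also $\ZZ$-linearly independent. This immediately gives that the rank of $\cB_1$ equals $q-1$.

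There is essentially no obstacle: the heavy lifting was carried out in identifying the right ordering of the elements of $\cB_1$ (via the indexing function $f$ that is monotone decreasing in length) that exposes the block-triangular structure of $M_q$, and then in computing $\det(M_q) = q$ from that structure. Once Claim~\ref{claim_detM} is in hand, the rank claim is an immediate consequence of the standard fact that a square integer matrix with non-zero determinant has full rank, so I would simply cite Claim~\ref{claim_detM} and note that a non-vanishing determinant forces the $q-1$ columns of $M_q$ to be linearly independent, which is exactly the assertion.
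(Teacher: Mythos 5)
Your proposal is correct and follows essentially the same route as the paper: both arguments rest on the matrix $M_q$ whose columns are the elements of $\cB_1$, and conclude from the nonzero determinant (equivalently, the lower triangular structure with nonzero diagonal entries) established in Claim~\ref{claim_detM} that the columns are linearly independent, hence $\cB_1$ has rank $q-1$. The paper phrases this as "the lower triangular structure of $M_q$ also implies the following," which is the same observation in slightly different words.
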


The following lemma shows that the module $\fM$ is contained in $\fB_1$.
\begin{lemma}
The  $\ZZ$-module $\fM$ is contained in the $\ZZ$-module $\fB_1$. 
\end{lemma}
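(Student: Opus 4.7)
The plan is to show that each generator of $\fM$—namely, the frequency vector $\hat\sigma$ of any $\sigma\in\cS_q(1,\ldots,1)$—already belongs to $\fB_1$. This will be done by exhibiting an explicit integer linear combination of the elements of $\cB_1$ that equals $\hat\sigma$.

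First I would exploit the explicit form of the generators of $\cB_1$. For $h\in\FF_q^\ast$ with polynomial expansion $h=\sum_{i=0}^{\ell-1}a_i(h)\var^i$ (where $a_i(h)\in\{0,1,\ldots,p-1\}$) set $v_h:=e_{f(h)}+\sum_{i=0}^{\ell-1}a_i(h)\,e_{f(-\var^i)}\in\cB_1$. Consider the combination $\sum_{h\in\FF_q^\ast}\hat\sigma_h v_h$. Because the only generator whose leading summand is $e_{f(h)}$ is $v_h$ itself, this combination recovers $\hat\sigma$ plus a correction supported solely on the coordinates $f(-\var^i)$. A one-line rearrangement gives
\begin{align*}
\sum_{h\in\FF_q^\ast}\hat\sigma_h v_h \;=\; \hat\sigma \;+\; \sum_{i=0}^{\ell-1} c_i\, e_{f(-\var^i)}, \qquad c_i \,:=\, \sum_{h\in\FF_q^\ast}\hat\sigma_h\, a_i(h)\in\ZZ_{\geq0}.
\end{align*}

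The key—and most delicate—step is to show that $p\mid c_i$ for every $i$. Here I would invoke the solution constraint $\sum_{j=1}^{k_0}\sigma_j=0$ in $\FF_q$. Expanding each $\sigma_j$ in the polynomial basis and collecting coefficients, this constraint becomes $\sum_{i=0}^{\ell-1} c_i\var^i\equiv 0\pmod{g(\var)}$ in $\FF_p[\var]$. Since the polynomial $\sum_i(c_i\bmod p)\var^i$ has degree strictly smaller than $\ell=\deg g$, and $g$ is irreducible of degree $\ell$, divisibility by $g$ forces this reduced polynomial to vanish identically. Hence $c_i\equiv 0\pmod p$ for every $i$, so we may write $c_i=p\,n_i$ with $n_i\in\ZZ$. (In the prime case $\ell=1$ this step collapses to the trivial observation that $\sum_j\sigma_j\equiv 0\pmod p$ is exactly the congruence $c_0\equiv 0\pmod p$.)

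Finally I would absorb the correction term into $\fB_1$. A direct calculation with $-\var^i=(p-1)\var^i$, so $a_j(-\var^i)=(p-1)\vecone\{j=i\}$, yields $v_{-\var^i}=p\cdot e_{f(-\var^i)}$. Therefore $\sum_{i=0}^{\ell-1}c_i\,e_{f(-\var^i)} = \sum_{i=0}^{\ell-1}n_i\, v_{-\var^i}\in\fB_1$, and rearranging gives
\begin{align*}
\hat\sigma \;=\; \sum_{h\in\FF_q^\ast}\hat\sigma_h\, v_h \;-\; \sum_{i=0}^{\ell-1} n_i\, v_{-\var^i} \;\in\; \fB_1,
\end{align*}
which completes the argument. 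The only non-routine point is the degree/irreducibility argument that upgrades $\sum_i c_i\var^i\equiv 0$ in $\FF_q$ to $p\mid c_i$ over $\ZZ$; every other step is bookkeeping in the explicit polynomial representation of $\FF_q$.
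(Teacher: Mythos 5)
Your proof is correct and follows essentially the same route as the paper's: express the frequency vector as a $\ZZ$-linear combination of the generators $v_h$ of $\cB_1$, observe that the leftover correction lives on the coordinates $f(-\var^j)$, and use the solution constraint $\sum_h \hat\sigma_h h = 0$ in $\FF_q$ to show those correction coefficients are divisible by $p$ so they can be absorbed by the generators $v_{-\var^j}=p\,e_{f(-\var^j)}$. The only cosmetic differences are that you sum over all of $\FF_q^\ast$ and subtract the correction afterwards (the paper excludes $\{-1,\ldots,-\var^{\ell-1}\}$ from the start), and you justify the divisibility via the degree bound against the irreducible $g$ rather than via the additive isomorphism $(\FF_q,+)\cong(\FF_p^\ell,+)$; these are equivalent reformulations of the same fact.
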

\begin{proof}
We show that each element of $\fM$ can be written as a linear combination of elements of $\cB_1$. 
To this end it is sufficient to show that every frequency vector of a solution to an equation with exactly $k_0$ non-zero entries and all-one coefficients can be written as a linear combination of the elements of $\cB_1$. 
Let thus $x \in \NN^{q-1}$ be such a frequency vector, that is $\| x \|_1 \leq k_0$ and $\sum_{i=1}^{q-1} x_i f^{-1}(i) = 0$ in $\FF_q$. 
Before we state a linear combination of $x$ in terms of $\cB_1$, observe that for each $j \in [q-1] \setminus \{q-1-(\ell-1)(p-1), q-1-(\ell-2)(p-1), \ldots, q-1\}$, there is exactly one basis vector with a non-zero entry in position $j$. 
Moreover, the entry of this basis vector in position $j$ is $1$. 
On the other hand, the basis vectors corresponding to the remaining $\ell$ columns $q-1-(\ell-1)(p-1), q-1-(\ell-2)(p-1), \ldots, q-1$ of $M_q$ are actually integer multiples of the standard unit vectors, as
\begin{align*}
e_{f((p-1)\var^{i})} + (p-1) e_{f(-\var^{i})} = p e_{f((p-1)\var^{i})}
\end{align*}
for $i=0, \ldots, \ell-1$. 
With these observations, the only valid candidate for a linear combination of $x$ in terms of the elements of $\cB_1$ is given by
\begin{align*}
	x &= \sum_{\sum_{i=0}^{\ell-1} a_i\var^{i} \in \FF_q^\ast \setminus \{-1,\ldots, -\var^{\ell-1}\}} x_{f(\sum_{i=0}^{\ell-1} a_i\var^{i})}\bc{e_{f(\sum_{i=0}^{\ell-1} a_i\var^{i})} + \sum_{j=0}^{\ell-1} a_j e_{f(-\var^{j})} }\\
	  & \qquad \qquad + \sum_{j=0}^{\ell-1}\frac{x_{f(-\var^{j})} -  \sum_{\sum_{i=0}^{\ell-1} a_i\var^{i} \in \FF_q^\ast \setminus\{-1, \ldots, -\var^{\ell-1}\}} a_j x_{f(\sum_{i=0}^{\ell-1} a_i\var^{i})}}{p} \cdot pe_{f(-\var^{j})}.
\end{align*}
It remains to argue why the coefficients of the basis vectors $pe_{f(-1)}, \ldots, pe_{f(-\var^{\ell-1})}$ in the second sum are integers.
At this point, we will use that $x$ is a solution statistic: Because 
\begin{align*}
\sum_{\sum_{i=0}^{\ell-1} a_i\var^{i} \in \FF_q^\ast} x_{f(\sum_{i=0}^{\ell-1} a_i\var^{i} )} \sum_{j=0}^{\ell-1} a_j\var^{j} = 0 \qquad \text{in} \quad \FF_q
\end{align*}
 and the additive group $(\FF_q,+)$ is isomorphic to $((\FF_p)^\ell, +)$, all ``components'' in the above sum must be zero and thus
\begin{align*}
\sum_{\sum_{i=0}^{\ell-1} a_i\var^{i} \in \FF_q^\ast} x_{f(\sum_{i=0}^{\ell-1} a_i\var^{i} )}  a_j= 0 \qquad \text{in} \quad  \FF_p
\end{align*}
for all $j=0, \ldots, \ell-1$. 
However, isolating the contribution from $\{-1, \ldots, -\var^{\ell-1}\}$ yields
\begin{align}\label{eq_division}
0 = \sum_{\sum_{i=0}^{\ell-1} a_i\var^{i} \in \FF_q^\ast} x_{f(\sum_{i=0}^{\ell-1} a_i\var^{i} )}  a_j =  - x_{f(-\var^{j})} + \sum_{\sum_{i=0}^{\ell-1} a_i\var^{i} \in \FF_q^\ast \setminus \{-1, \ldots, -\var^{\ell-1}\}} a_j x_{f(\sum_{i=0}^{\ell-1} a_i\var^{i} )} \qquad \text{in} \quad  \FF_p,
\end{align}
as the coefficient $a_j$ of $\var^{j}$ in $-\var^{i}$ is zero unless $i=j$. 
Therefore, the right hand side in (\ref{eq_division}) is divisible by $p$ and the claim follows.
\end{proof}

\subsubsection{Second basis $\cB_2$}
In this subsection, we define a candidate set for the vectors $(\fb_1, \ldots, \fb_{q-1})$ in the statement of \Lem~\ref{lemma_module}. 
That is, we define a set $\cB_2$ all whose elements have non-negative components and $\ell_1$-norm at most three. 
In other words, we are looking for solutions to 
\begin{align}\label{eq_eq}
x_1 + \ldots + x_{k_0} = 0
\end{align}
with at most three different non-zero components. 

Here again, our construction basically associates one basis vector to each element of $\FF_q^\ast$. 
However, due to the $\ell_1$-restriction, there is less freedom in choosing the remaining non zero-coordinates. 
Our approach to design a set that satisfies this restriction while retaining a representation in a convenient block lower triangular matrix structure is to distinguish between elements of length one and of length at least two. 
We will therefore construct $\cB_2$ via two sets $\cB^{(1)}$ and $\cB^{(\geq 2)}$ such that $\cB_2$ is given as 
\begin{equation}
\cB_2 = \cB^{(1)} \cup \cB^{(\geq 2)}.
\end{equation}
Let us start with an element $h = \sum_{i=0}^{\ell-1}a_i\var^{i}$ of length at least two in $\FF_q$. 
Assume that its leading coefficient is $a_r$ for $r \in [\ell-1]$. 
If a variable in (\ref{eq_eq}) takes value $h$, we may cancel its contribution to an equation by subtracting the two elements $a_r\var^r$ and $h - a_r\var^r$, both of which are shorter than $h$:
\begin{align*}
 \sum_{i=0}^{\ell-1} a_i\var^{i} - a_r\var^r - \bc{\sum_{i=0}^{\ell-1}a_i\var^{i} - a_r\var^r}=0.
 \end{align*}
This solution corresponds to the vector
 \begin{align*}
 e_{f(h)} + e_{f(-a_r\var^r)} + e_{f(-h+a_r\var^r)}.
 \end{align*}
This idea for field elements $h \in \FF_q^{(\geq 2)}$ of length at least two then yields the $q-1-\ell(p-1)$ integer vectors
 \begin{align*}
 \cB^{(\geq 2)} =  \cbc{e_{f(h)} + e_{f(-a_r\var^r)} + e_{f(-h+a_r\var^r)}: r \in [\ell-1] \text{ and } h = \sum_{i=0}^{r}a_i \var^{i} \in \FF_q^{(\geq 2)} \text{ with } a_r\not=0}.
 \end{align*}
 
For a field element $h$ of length one, an analogous shortening operation would correspond to the vector
\begin{align*}
e_{f(h)} + e_{f(-h)}.
\end{align*}
If $p=2$, this procedure applied to all field elements of length one yields $\ell$ distinct vectors and we are done. 
However, if $p > 2$, employing this idea for all elements of length one would only lead to $\ell(p-1)/2$ rather than $\ell(p-1)$ additional vectors, as $h$ and $-h$ are distinct and obviously give rise to the same statistic. 
As a consequence, for $p>2$, we need to deviate from the above construction and come up with a modified ``short-solution'' scheme. 
Let $h = a_r\var^r$ be an element of length one. 
If $a_r \in \{1, \ldots (p-1)/2\}$, we simply associate the vector $e_{f(h)} + e_{f(-h)}$ to it, as indicated. 
If on the other hand  $a_r \in \{(p+1)/2, \ldots, p-1\}$, we let $h$ correspond to the vector
\begin{align*}
e_{f(h)} + e_{f(-\var^r)} + e_{f(-h+\var^r)}.
\end{align*}
With this, for $p>2$, the part of $\cB_2$ that corresponds to field elements of length one is given by the set
\begin{align}
\cB^{(1)}= \bigcup_{r=0}^{\ell-1}\bc{\cbc{e_{f(a_r \var^r)} + e_{f(-a_r\var^r)}: a_r \in [(p-1)/2]} \cup \cbc{e_{f(-a_r\var^r)} + e_{f(-\var^r)} + e_{f(a_r\var^r+\var^r)}: a_r \in [(p-1)/2]}}.
\end{align}
If $p=2$, in line with the above discussion, we simply let
\begin{align}
\cB^{(1)}= \bigcup_{r=0}^{\ell-1}\cbc{2e_{f( \var^r)} }.
\end{align}
Again, a moment of thought shows that in any case, $|\cB_2| = |\cB_1| = q-1$. 
Let $\fB_2$ denote the $\ZZ$-module generated by the elements of $\cB_2$. 
Our choice of $\cB_2$ has the advantage that again, its elements may be represented in a block lower triangular matrix. 
For this representation, it is instructive to consider the case $\ell=1$ first. 
In this case and with our choice of $f$, the elements of $\cB_2$ can be arranged as the columns of a matrix $A_p$ as in Figure~\ref{fig_Ap}.

\begin{figure}
\begin{align}\label{A_p}
A_p= \begin{pmatrix}
		1 & 0 & \cdots & \cdots & 0 & 0 & \cdots &\cdots & \cdots & 0\\
		0 & \ddots & \ddots &  & \vdots &\vdots  & &  & \reflectbox{$\ddots$}& 1\\
		\vdots &\ddots  &\ddots &\ddots  &\vdots   & \vdots  &  & \reflectbox{$\ddots$} & \reflectbox{$\ddots$}& 0  \\
		 \vdots&  & \ddots&  \ddots & 0 & \vdots & \reflectbox{$\ddots$} &  \reflectbox{$\ddots$}& \reflectbox{$\ddots$}  & \vdots\\
		0 & \cdots &\cdots &  0 & 1 & 0 & 1&0& \cdots & 0 \\
		0 & \cdots & \cdots &  0 & 1& 2 &0  & \cdots  &\cdots & 0\\
		\vdots &  & \reflectbox{$\ddots$}  & \reflectbox{$\ddots$} & 0 & 0 & 1& \ddots & &\vdots \\
		\vdots & \reflectbox{$\ddots$} & \reflectbox{$\ddots$} & \reflectbox{$\ddots$} & \vdots  & \vdots  &\ddots  & \ddots& \ddots & \vdots\\
		0 & \reflectbox{$\ddots$} & \reflectbox{$\ddots$}   & & \vdots & 0 & \cdots &0 &1  &0 \\
		1 & 0 & \cdots  & \cdots &0&1& \cdots & \cdots & 1 &  2
		\end{pmatrix}.
\end{align}	\caption{The matrix $A_p$.}\label{fig_Ap}
\end{figure}

Here, as in the construction of $M_p$, column $i$ corresponds to the unique vector associated to $i \in \FF_q$. In the special case $p=2$, this matrix reduces to 
\begin{align*}
A_2 = (2).
\end{align*}
For $\ell \geq 2$, the elements of $\cB_2$ may then be visualised in the matrix from Figure~\ref{fig_Aq}.

\begin{figure}
\begin{align}\label{A_q}
A_q = \begin{pNiceArray}{cccccccc|ccccccccccc}[first-row, first-col]
& & & & \rotate{\FF_q^{(\geq 2)}} & & & & &\rotate 1 &\cdots &\rotate{p-1} &\rotate{\var} &\cdots&\rotate{(p-1)\var}&\cdots&\rotate{\var^{\ell-1}} &\cdots&\rotate{(p-1)\var^{\ell-1}}\\
&1 &  &  &  &  &  &  & & & & &&&&&&&\\
&	\ast	 & 1 &  &  &  &&& & & & & &&&&&&\\
&		\ast & \ast & \ddots &  &  & & && & & &\Block{4-5}{\bigzero} &&&&&&\\
&	\ast	 & \cdots &  \ast& 1 &  & &&& & & & & &&&&&\\
\FF_q^{(\geq 2)}&\ast & \Cdots &  & \ast & 1 &  &  & & & & &&&&&&&\\
&	\ast	 &  & \Cdots & & \ast &1  &  & & & & &&&&&&&\\
&	\ast	 &  &  & \Cdots&   & \ast & \ddots & & & & &&&&&&&\\
&	\ast	 &  &  &\Cdots &  & &\ast  & 1 & & & &&&&&&&\\
		\hline
1 &	\ast	 &  & \Cdots &  &   && & \ast & \Block[draw]{3-3}{A_p}  & &&&&&\Block{3-5}{\bigzero}&&\\
 \vdots &	\ast	& & \Cdots &  &  & &&\ast &  && &&&&&&&\\
 p-1 &	\ast	 & & \Cdots &   &  & && \ast& & & & &&&&&&\\
  \var&	\ast	 & & \Cdots &  &  &&& \ast & && & \Block[draw]{3-3}{A_p}  &&&&&&\\
      \vdots & \ast	 & & \Cdots &  &   & &&\ast & \Block{6-3}{\bigzero}& & & &  &&&&&\\  
(p-1)\var &	\ast	 & & \Cdots & &  &&& \ast & &   & & && &&&&\\
 \vdots &	\ast	 & & \Cdots & &  &&&\ast & &   & & && &\ddots &&&\\
\var^{\ell-1}&  & & \Cdots & &  &&&\ast & &   & & && && \Block[draw]{3-3}{A_p} &&\\
       \vdots &	\ast	 & & \Cdots &&  & & &\ast& && & &&&&&&  &\\
       (p-1)\var^{\ell-1 }&	\ast	&  & \Cdots &  &  &  & & \ast && & &&&&& && 
	\end{pNiceArray}.
\end{align}
\caption{The matrix $A_q$ for $\ell\geq2$.}\label{fig_Aq}
\end{figure}
In $A_q$, column $i \in [q-1]$ corresponds to the unique vector that is associated with the field element $f^{-1}(i)$.
Moreover, at this point, a moment of appreciation of our indexing choice $f$ is in place: Because $f$ is monotonically decreasing with respect to length, there are no entries above the diagonal in the first $|\FF_q^{(\geq 2)}|$ columns, as we only cancel field elements by strictly shorter ones. 
Moreover, the remaining $\ell(p-1)$ columns are governed by a simple block structure. 
As a concrete example, \eqref{A_p} with $p=7$ reads
\begin{equation*}
A_7 = \begin{pmatrix}
		1 & 0 & 0 & 0 & 0 & 0\\
		0 & 1 & 0 & 0 & 0 & 1\\
		0 & 0 & 1 & 0 & 1 & 0\\
		0 & 0 & 1 & 2 & 0 & 0\\
		0 & 1 & 0 & 0 & 1 & 0\\
		1 & 0 & 0 & 1 & 1 & 2
		\end{pmatrix}
\end{equation*} and $A_7$ would be used as a block matrix in any field of order $7^\ell$ as shown in \eqref{A_q}.

As each element of $\cB_2$ corresponds to a solution with at most $3 \leq k_0$ non-zero components, we obtain the following.
\begin{claim}
The  $\ZZ$-module $\fB_2$ is contained in the $\ZZ$-module $\fM$. 
\end{claim}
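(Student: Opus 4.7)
The plan is to show that each generator in $\cB_2 = \cB^{(\geq 2)} \cup \cB^{(1)}$ is itself the frequency vector $\hat\sigma$ of some $\sigma \in \cS_q(1,\ldots,1)$. Since $\fB_2$ is the $\ZZ$-module generated by $\cB_2$ and $\fM$ is closed under integer linear combinations, this containment at the level of generators will immediately yield $\fB_2 \subseteq \fM$. The uniform recipe will be: for each generator, identify the two or three field elements (with multiplicity) of which it is meant to be the frequency vector, verify that they sum to zero in $\FF_q$, and then pad the resulting short tuple with $k_0-2$ or $k_0-3$ zero coordinates (permissible since $k_0\geq 3$) to obtain a genuine element of $\cS_q(1,\ldots,1)$ whose frequency vector on $\FF_q^*$ coincides with the given generator.

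For $\cB^{(\geq 2)}$, I would simply observe that the generator $e_{f(h)} + e_{f(-a_r\var^r)} + e_{f(-h+a_r\var^r)}$ witnesses the three-term identity $h + (-a_r\var^r) + (-h+a_r\var^r) = 0$, with all three summands nonzero because $h \ne 0$, $a_r \ne 0$ is the leading coefficient of $h$, and $h \in \FF_q^{(\geq 2)}$ has at least a second nonzero coefficient so that $h - a_r\var^r \ne 0$. For the two-term generators of $\cB^{(1)}$ with $p>2$, the identity $a_r\var^r + (-a_r\var^r) = 0$ is trivial. For the three-term generators $e_{f(-a_r\var^r)} + e_{f(-\var^r)} + e_{f((a_r+1)\var^r)}$ with $a_r \in [(p-1)/2]$ the underlying identity is $(-a_r\var^r) + (-\var^r) + ((a_r+1)\var^r) = 0$; here the only point that needs attention is $(a_r+1)\var^r \ne 0$, which holds because $a_r+1 \in \{2,\ldots,(p+1)/2\}$ is nonzero modulo $p$. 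Note that in the degenerate case $a_r=1$ two of the three indices collapse, yielding the vector $2e_{f(-\var^r)} + e_{f(2\var^r)}$, but this is still the frequency vector of the padded solution $(-\var^r,-\var^r,2\var^r,0,\ldots,0)$, so the argument is unaffected. Finally, the $p=2$ generator $2e_{f(\var^r)}$ is realised by $(\var^r,\var^r,0,\ldots,0)$ via $\var^r + \var^r = 0$ in characteristic two.

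There is no real obstacle: the set $\cB_2$ was constructed precisely to consist of such short solutions, so the proof reduces to the case-by-case verification above. The only step requiring any care is checking nonvanishing of the field-element coefficients in each generator, which in every case is forced by the restrictions built into the definition of $\cB_2$, namely the leading-coefficient condition for $\cB^{(\geq 2)}$ and the range restriction $a_r \in [(p-1)/2]$ for the length-one generators in odd characteristic.
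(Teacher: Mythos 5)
Your proof is correct and is exactly the argument the paper invokes, just spelled out in full: the paper records this claim with the one-line observation that each element of $\cB_2$ corresponds by construction to a solution with at most $3\le k_0$ non-zero components, which is precisely what your case-by-case verification (padding to length $k_0$ and checking nonvanishing of each field element) establishes.
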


Thus far we know $ \fB_2 \subseteq \fM \subseteq \fB_1.  $
Moreover, $\cB_2$ has the desired $\ell_1$-property. 
On the other hand, in comparison to $\cB_1$, it is less clear that $\cB_2$ generates $\fM.$ 
It thus remains to show that in fact $\fB_2 = \fB_1$. 
We will do so by using the following fact, which is an immediate consequence of the adjugate matrix representation of the inverse matrix.

\begin{fact}\label{Lem_Conrad}
If $M$ is a free $\ZZ$-module with basis $x_1, \ldots, x_n$, a set of elements $y_1, \ldots, y_n\in M$ is a basis of $M$ if and only if the change of basis matrix $(c_{ij})$ has determinant $\pm1$.
\end{fact}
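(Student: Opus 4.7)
The plan is a standard two-direction argument leveraging the fact that the unit group of $\ZZ$ is precisely $\{\pm 1\}$. The only real subtlety is to stay inside $\ZZ$ throughout, since we are working with a free module rather than a vector space, so that ``invertible over $\QQ$'' is not enough.

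First I would fix notation by writing $y_j = \sum_i c_{ij} x_i$, so that $C = (c_{ij}) \in \ZZ^{n \times n}$ is the change-of-basis matrix. For the ``only if'' direction, assume that $(y_1, \ldots, y_n)$ is also a $\ZZ$-basis of $M$. Then each $x_i$ can be expressed as an integer combination $x_i = \sum_j d_{ij} y_j$, giving a matrix $D = (d_{ij}) \in \ZZ^{n \times n}$. Substituting one expansion into the other and invoking uniqueness of the expansion in the basis $(x_i)$, I would deduce $CD = DC = I_n$ as equations over $\ZZ$. Taking determinants then yields $\det C \cdot \det D = 1$ in $\ZZ$, which forces $\det C \in \ZZ^{\times} = \{\pm 1\}$.

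For the ``if'' direction, suppose $\det C = \pm 1$. The adjugate identity $C \cdot \mathrm{adj}(C) = \det(C)\, I_n$ shows that $D := \det(C)^{-1}\, \mathrm{adj}(C)$ is an integer matrix with $CD = DC = I_n$. Expanding $x_i = \sum_j d_{ji} y_j$ exhibits each $x_i$ as a $\ZZ$-linear combination of the $y_j$'s, so the $y_j$'s span $M$. For $\ZZ$-linear independence, any relation $\sum_j a_j y_j = 0$ with $a_j \in \ZZ$ translates, after substituting $y_j = \sum_i c_{ij} x_i$ and using independence of $(x_i)$, to the integer vector equation $Ca = 0$; multiplying by $D$ then gives $a = 0$.

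There is no genuine obstacle here; the essential content is the adjugate formula, which guarantees that invertibility over $\ZZ$ is witnessed by an integer matrix precisely when $\det C \in \{\pm 1\}$. The authors already flag this one-line observation in the sentence preceding the fact, so I would simply package the argument above as the proof.
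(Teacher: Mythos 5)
Your proof is correct and matches the paper's approach exactly: the paper states the fact as ``an immediate consequence of the adjugate matrix representation of the inverse matrix,'' which is precisely the argument you spell out. Nothing to add.
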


We will apply Fact \ref{Lem_Conrad} to $M=\fB_1$ with $\{x_1, \ldots, x_n\} = \cB_1$ and $\{y_1, \ldots, y_n\} = \cB_2$. 
Let $C_q \in \ZZ^{(q-1) \times (q-1)}$ be the matrix whose entries comprise the coefficients when we express the elements of $\cB_2$ by $\cB_1$ (recall that $\fB_2 \subseteq \fB_1$) when we order the elements of $\cB_1, \cB_2$ as done in the construction of $M_q$ and $A_q$. 
Thus $A_q = M_q C_q.$
As 
\begin{align*}
\det(A_q) = \det(M_q C_q) = \det(M_q) \cdot \det (C_q),
\end{align*}
we do not need to compute $C_q$ explicitly to apply Fact \ref{Lem_Conrad}, but instead it suffices to compute $\det(M_q)$ and $\det(A_q)$. 
From Claim \ref{claim_detM}, $\det(M_q)$ is already known. 
Moreover, for $A_q$, the computation will not be too hard, as $A_q$ is a block lower triangular matrix.
Therefore, we are just left to calculate the determinant of the non-trivial diagonal blocks.

\begin{lemma} \label{Lem_detAp}
For any prime $p$ we have $\det(A_p) = p.$
\end{lemma}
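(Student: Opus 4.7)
The plan is to compute $\det(A_p)$ directly by reducing $A_p$ to a block-triangular form through column operations and then evaluating a small core determinant explicitly. Write $m = (p-1)/2$, so $A_p$ is $2m \times 2m$. Its columns split into the pair columns $c_i = e_i + e_{p-i}$ for $i \in [1, m]$ and the triple columns for $i \in [m+1, p-1]$, the latter being of the form $e_i + e_{p-1} + e_{p-i+1}$ for a generic $i$, and $2 e_{(p+1)/2} + e_{p-1}$ or $2 e_{p-1} + e_2$ at the two boundary indices $i = (p+1)/2$ and $i = p-1$.

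First I would perform the column replacements $c_{(p+1)/2} \leftarrow c_{(p+1)/2} - 2 c_{(p-1)/2}$, $c_{p-1} \leftarrow c_{p-1} - c_2$, and $c_i \leftarrow c_i - c_{p-i+1}$ for each $i \in [(p+3)/2, p-2]$. Since $p - i + 1 \in [3, m]$ for $i$ in the last range, $c_{p-i+1}$ is a pair column $e_{p-i+1} + e_{i-1}$, so the triple column simplifies to $e_i - e_{i-1} + e_{p-1}$; similarly the two special columns become $-2 e_m + e_{p-1}$ and $-e_{p-2} + 2 e_{p-1}$. After these operations no modified column has a nonzero entry in any row $j \in [1, m-1]$, whereas each pair column $c_i$ with $i \in [1, m-1]$ still places a $1$ in row $i$ and a $1$ in row $p - i \geq m+2$. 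Consequently the resulting matrix is block-triangular with an $(m-1) \times (m-1)$ identity block in the upper-left and an $(m+1) \times (m+1)$ core block $A'_p$ in the lower-right indexed by rows and columns $m, m+1, \ldots, 2m$, so $\det(A_p) = \det(A'_p)$.

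Next I would evaluate $\det(A'_p)$ by direct inspection of the remaining entries: row $m$ is $(1, -2, 0, \ldots, 0)$; rows $m+1, \ldots, 2m-1$ are bidiagonal with $1$ on the diagonal and $-1$ on the superdiagonal, except that row $m+1$ has its diagonal entry equal to $0$ and an extra $1$ in column $m$; and the last row is $(0, 1, 1, \ldots, 1, 2)$. Expanding along row $m$ yields $\det(A'_p) = \det(M_1) + 2 \det(M_2)$, where $M_1$ and $M_2$ are the $m \times m$ minors obtained by deleting row $m$ together with column $m$ or column $m+1$, respectively.

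Each of $M_1$ and $M_2$ I would evaluate by a further cofactor expansion along its first column. In $M_1$ only the last entry of the first column is nonzero, and the complementary minor is a lower-bidiagonal matrix with $-1$ on the diagonal and $1$ on the subdiagonal, so the signs combine to give $\det(M_1) = 1$. In $M_2$ only the first entry of the first column is nonzero, and expansion reduces $M_2$ to a matrix $B_{m-1}$ that is upper-bidiagonal on its top rows but carries a final row $(1, 1, \ldots, 1, 2)$. Iterating the column additions $C_{j+1} \leftarrow C_{j+1} + C_j$ on $B_k$ converts it into an upper-triangular matrix with diagonal $(1, 1, \ldots, 1, k+1)$, so $\det(B_k) = k+1$ and hence $\det(M_2) = m$. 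Combining, $\det(A'_p) = 1 + 2m = p$. The main technical obstacle is not any deep algebraic identity but the careful bookkeeping of rows, columns, and cofactor signs across several successive reductions, together with handling the degenerate small cases $p \in \{2, 3\}$ by direct inspection.
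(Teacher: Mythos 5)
Your computation is correct and arrives at $\det(A_p) = p$, but the route is genuinely different from the paper's. Both proofs open with the same column subtractions $c_j \leftarrow c_j - c_{p+1-j}$ for $j \in [(p+3)/2, p-1]$, but they then diverge. The paper stays entirely within column operations: it cyclically swaps column $(p+1)/2$ to the last position (tracking the sign $(-1)^{(p-3)/2}$), then adds twice the sum of the intermediate columns to the last column, producing a genuine lower-triangular matrix from which $\det(A_p) = p$ is read off the diagonal. You instead add one extra preliminary operation $c_{(p+1)/2} \leftarrow c_{(p+1)/2} - 2c_{(p-1)/2}$, which buys a block-triangular decomposition with identity in the upper-left $(m-1)\times(m-1)$ corner, and then evaluate the $(m+1)\times(m+1)$ core by a chain of cofactor expansions — first along row $m$ to split into $\det(M_1) + 2\det(M_2)$, then along the first column of each minor, and finally by column additions on $B_k$. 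I verified the result agrees on small cases ($p=5,7$), and each step of the bookkeeping is sound. The trade-off is largely one of taste: the paper keeps the computation to a single triangularization and avoids any cofactor machinery, at the cost of tracking a sign through a sequence of column swaps; your version avoids the swaps but needs two layers of expansion and the supplementary column operation. One minor slip in the write-up: after the additions $C_{j+1} \leftarrow C_{j+1} + C_j$, the matrix $B_k$ becomes \emph{lower}-triangular rather than upper-triangular (the accumulating row is the bottom one); the diagonal is still $(1,\ldots,1,k+1)$ so $\det(B_k)=k+1$ is unaffected. Your remark on handling $p\in\{2,3\}$ separately is appropriate, since for $p=3$ the column indices $(p+1)/2$ and $p-1$ coincide and the generic column operations degenerate.
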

\begin{proof}
The case $p=2$ is immediate.
We thus assume that $p>2$ in the following. 
We transform $A_p$ into a lower triangular matrix by elementary column operations. 
To this end, let $a_1, \ldots, a_{q-1}$ be the columns of $A_p$. 
The first $(p+1)/2$ columns already have the right form, so we do not alter this part of the matrix. 
For any $j=(p+3)/2, \ldots, p-1$, subtract column $a_{p+1-j}$ from column $a_j$. 
This yields the matrix
\begin{align*}
\begin{pmatrix}
		1 & 0 & 0 & 0 & 0 & 0 & 0 & 0\\
		0 & 1 & 0 & 0 & 0 &0&0& 0\\
		0 & 0 & \ddots & 0 & 0 & 0&  \reflectbox{$\ddots$}&0\\
		0 & 0 & 0 & 1 & 0 & 0&0&0 \\
		0 & 0 & 0 & 1& 2 &-1  & 0 & 0\\
		0 & 0 & \reflectbox{$\ddots$} & 0 & 0 & 1&-1&0 \\
		0 & 1 & 0 & 0 & 0 & 0&\ddots&-1 \\
		1 & 0 & 0 &0&1& 1 & 1 & 2
		\end{pmatrix}.
\end{align*}
Next, we swap column $(p+1)/2$ successively with columns $(p+3)/2, \ldots$ up to $p-1$, yielding
\begin{align*}
\begin{pmatrix}
		1 & 0 & 0 & 0 & 0 & 0 & 0 & 0\\
		0 & 1 & 0 & 0 & 0 &0&0& 0\\
		0 & 0 & \ddots & 0 & 0 & 0&  \reflectbox{$\ddots$}&0\\
		0 & 0 & 0 & 1 & 0 & 0&0&0 \\
		0 & 0 & 0 & 1& -1  & 0 & 0& 2\\
		0 & 0 & \reflectbox{$\ddots$} & 0 & 1&-1&0 & 0 \\
		0 & 1 & 0 & 0 & 0 & 0&\ddots& 0 \\
		1 & 0 & 0 &0&1& 1 & 2 & 1
		\end{pmatrix}.
\end{align*}
This changes the determinant by a factor of $(-1)^{(p-3)/2}$. 
Finally, in order to erase the entry $2$ in row $(p+1)/2$ and column $p-1$, we add twice the sum of columns $(p+1)/2, \ldots, p-2$ to column $p-1$. 
We thus obtain the matrix
\begin{align*}
\begin{pmatrix}
		1 & 0 & 0 & 0 & 0 & 0 & 0 & 0\\
		0 & 1 & 0 & 0 & 0 &0&0& 0\\
		0 & 0 & \ddots & 0 & 0 & 0&  \reflectbox{$\ddots$}&0\\
		0 & 0 & 0 & 1 & 0 & 0&0&0 \\
		0 & 0 & 0 & 1& -1  & 0 & 0& 0\\
		0 & 0 & \reflectbox{$\ddots$} & 0 & 1&-1&0 & 0 \\
		0 & 1 & 0 & 0 & 0 & 0&\ddots& 0 \\
		1 & 0 & 0 &0&1& 1 & 2 & p
		\end{pmatrix}.
\end{align*}
with determinant $(-1)^{(p-3)/2}p$. 
Multiplying with $(-1)^{(p-3)/2}$ from the column swaps yields the claim.
\end{proof}

\begin{corollary} \label{Cor_detAq}
For any prime $p$ and $\ell \geq 1$, we have $ \det(A_q) = q.  $
\end{corollary}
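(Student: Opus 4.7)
The plan is to exploit the block lower triangular structure of $A_q$ displayed in Figure~\ref{fig_Aq}, which reduces the computation to an instance of \Lem~\ref{Lem_detAp} applied $\ell$ times. Recall that our indexing bijection $f:\FF_q^\ast\to[q-1]$ was chosen to be monotonically decreasing with respect to length, so the first $|\FF_q^{(\geq 2)}|=q-1-\ell(p-1)$ columns of $A_q$ correspond to elements of length at least two, while the remaining $\ell(p-1)$ columns correspond to elements of length one, grouped by the power $\var^i$ (for $i=0,\ldots,\ell-1$) that they are scalar multiples of.

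First I would verify that the upper-left $|\FF_q^{(\geq 2)}|\times|\FF_q^{(\geq 2)}|$ block is the identity. This is built into the construction of $\cB^{(\geq 2)}$: the vector associated to an element $h\in\FF_q^{(\geq 2)}$ of leading coefficient $a_r\var^r$ has its only $\FF_q^{(\geq 2)}$-indexed nonzero entry in position $f(h)$ itself (since $-a_r\var^r$ and $-h+a_r\var^r$ are both strictly shorter than $h$, hence indexed in the length-one block). Next I would verify that the upper-right block of $A_q$ is zero: no vector in $\cB^{(1)}$ has a nonzero coordinate in a position indexed by an element of $\FF_q^{(\geq 2)}$, since all entries of vectors in $\cB^{(1)}$ occur at coordinates indexed by $\pm a_r\var^r$ or $a_r\var^r+\var^r$, which all have length one. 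Finally, the lower-right $\ell(p-1)\times\ell(p-1)$ block is block diagonal with $\ell$ diagonal blocks, each equal to $A_p$: the vectors in $\cB^{(1)}$ corresponding to multiples of $\var^r$ only have nonzero entries at coordinates indexed by multiples of $\var^r$, and within each such block of coordinates the construction of $\cB^{(1)}$ exactly reproduces the matrix $A_p$ of the prime case.

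Since $A_q$ is block lower triangular, its determinant factors as the product of the determinants of the diagonal blocks:
\begin{align*}
    \det(A_q) = \det\bc{I_{|\FF_q^{(\geq 2)}|}} \cdot \prod_{r=0}^{\ell-1}\det(A_p) = 1\cdot p^{\ell} = q,
\end{align*}
where the penultimate equality invokes \Lem~\ref{Lem_detAp}. This completes the proof. The main obstacle is really nothing more than carefully matching the block structure of $A_q$ to the construction of $\cB_2$; once the indexing $f$ is pinned down so that length-one elements sit in the last $\ell(p-1)$ coordinates (arranged by power of $\var$), the reduction to \Lem~\ref{Lem_detAp} is immediate.
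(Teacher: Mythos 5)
Your proposal follows the same route as the paper — exploit the block lower triangular structure of $A_q$ and reduce to \Lem~\ref{Lem_detAp} — and arrives at the correct conclusion. However, there is one factual slip worth correcting.

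You claim the upper-left $|\FF_q^{(\geq 2)}|\times|\FF_q^{(\geq 2)}|$ block is the identity, on the grounds that both $-a_r\var^r$ and $-h+a_r\var^r$ are ``indexed in the length-one block.'' That is true for $-a_r\var^r$, but not for $-h+a_r\var^r$: the latter has $\text{len}(-h+a_r\var^r) = \text{len}(h)-1$, which is $\geq 2$ whenever $\text{len}(h)\geq 3$. So for $\ell \geq 3$ the basis vector associated with such an $h$ has an off-diagonal nonzero entry within the $\FF_q^{(\geq 2)}$-indexed block; this is exactly the $\ast$'s below the diagonal in the paper's display of $A_q$ in Figure~\ref{fig_Aq}. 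The upper-left block is therefore not the identity but a lower triangular matrix with unit diagonal. Because $f$ is monotonically decreasing in length, $f(-h+a_r\var^r) > f(h)$, which is what guarantees these stray entries fall strictly below the diagonal. Since the determinant of a lower triangular matrix with unit diagonal is still $1$, your final computation $\det(A_q) = 1\cdot p^\ell = q$ goes through unchanged; the argument just needs the phrase ``identity'' replaced with ``lower triangular with unit diagonal.'' The remaining two checks — that the upper-right block is zero (all coordinates touched by $\cB^{(1)}$ have length one) and that the lower-right block is $\ell$ diagonal copies of $A_p$ — are correct as written.
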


Finally, Claim \ref{claim_detM} and Corollary \ref{Cor_detAq} imply that $\det(C_q) = 1.$
Thus, by Fact \ref{Lem_Conrad}, $\cB_2$ is a basis of $\fB_1$, which implies that $ \fB_1 = \fB_2 = \fM.  $
The column vectors $\fb_1,\ldots,\fb_{q-1}$ of $A_q$ therefore enjoy the properties stated in \Lem~\ref{lemma_module}.

\subsection{Proof of \Lem~\ref{lemma_module'}}\label{sec_module'}
%\cjl{I have removed the bijection $f$ from this section because we do not use matrices and it fits better with the statement of the theorem. Moreover, I think the first part was not necessary anymore and so I deleted it.}
%As in the proof of  \Lem~\ref{lemma_module}, we will use a bijection $f:\FF_q^\ast \to [q-1]$ to fix a unique correspondence between the elements of $\FF_q^\ast$ and the coordinate directions of \cjl{$\ZZ^{q-1}$}.
%We prove the first part of \Lem~\ref{lemma_module'} by showing that the module $\fM=\fM_q(\chi_1,\ldots,\chi_{k_0})$ contains the canonical basis vectors $(e_h)_{h \in \FF_q^\ast}$ of $\ZZ^{\FF_q^\ast}$. 
%The idea behind this is that while the vectors $(e_h)_{h \in \FF_q^\ast}$ are not contained in the set of generators of $\fM$, they can still be viewed as the difference of two appropriate generators:

%\noindent 
Assume w.l.o.g.\ that $\chi_1=1$. Moreover, by assumption, the set $\{\chi_1, \ldots, \chi_{k_0}\}$ contains at least two different elements, and so we may also assume that $\chi_3 \not= 1$ (recall that $k_0 \geq 3$).

%Now observe that for any $h \in \FF_q^\ast$, 
%$$\chi_1 \cdot h + \chi_2 \cdot (-\chi_2^{-1}h) = 0,$$
%and by assigning all other variables the value zero, this implies that
%\begin{align}\label{eq_vec1}
%	e_{h} + e_{-\chi_2^{-1} h} \in \fM.
%\end{align}
%The second observation that we need is that for any $h \in \FF_q^\ast$, 
%\begin{align*}
%	\chi_1 \cdot (1 - \chi_3)h + \chi_2 \cdot (-\chi_2^{-1}h)  + \chi_3 \cdot h  = 0,
%\end{align*}
%and again by assigning all other variables the value zero,
%\begin{align}\label{eq_vec2}
%	e_{h} + e_{-\chi_2^{-1}h} + e_{f(1-\chi_3)h} \in \fM.
%\end{align}
%By design, $1-\chi_3 \not=0$.
%Subtracting (\ref{eq_vec1}) from (\ref{eq_vec2}) shows that $ e_{(1-\chi_3) h} \in \fM$ for any $h \in \FF_q^\ast$. 
%But the elements $(e_{(1-\chi_3) h} )_{h \in \FF_q^\ast}$ are pairwise distinct, and thus this construction yields the full set of standard basis vectors $(e_h)_{h \in \FF_q^\ast}$. This implies that $\fM =\ZZ^{\FF_q^\ast}$. However, even though the basis $(e_h)_{h \in \FF_q^\ast}$ satisfies the $\ell_1$-restriction from \Lem~\ref{lemma_module'}, its elements do not correspond to solutions.

%\noindent
We define $(\fb_1, \ldots, \fb_{q-1})$ by distinguishing between three cases:

\noindent
\textbf{Case 1:} $p=2$ and $\chi_2=1$.

Denote the order of $\chi_3^{-1}$ in $(\FF_q^\ast, \cdot)$ by $\fo$, so that the elements $1, \chi_3^{-1}, \ldots, \chi_3^{-(\fo-1)}$ are pairwise distinct. Since $p=2$ and $\fo \mid q-1$, $\fo$ is an odd number. Moreover, because $\chi_3^{-1} \not=1$, $\fo \geq 3$.  
We now partition $\FF_q^\ast$ into orbits of the action of $(\{1, \chi_3^{-1}, \ldots, \chi_3^{-(\fo-1)} \}, \cdot)$ on $\FF_q^\ast$ such that 
$$\FF_q^\ast =\dot \bigcup_{j=1}^{(q-1)/\fo} \fO_j,$$ 
where each orbit $\fO_j$ contains exactly $\fo$ elements. Suppose that $\fO_j=\{g^{(j)}_1, \ldots, g^{(j)}_{\fo}\}$, where the elements are indexed such that $g_{i+1}^{(j)} = \chi_3^{-1}g_{i}^{(j)}$. 

To each $\fO_j$, we associate a set of potential basis vectors whose union over different $j$ then yields the full set $(\fb_1, \ldots, \fb_{q-1})$. More precisely, the set corresponding to $\fO_j$ is defined as
\begin{align*}
	\fB_j = \bigcup_{i=1}^{\fo-1} \cbc{e_{g^{(j)}_i}+ e_{g^{(j)}_{i+1}}} \cup  \cbc{e_{g_{1}^{(j)}} +  e_{g_{2}^{(j)}} + e_{g_{2}^{(j)}+g_3^{(j)}}} .
\end{align*}
In this definition, we have used that for $\chi_1 = -\chi_2=1$ and any $h \in \FF_q$,
\begin{align*}
	\chi_1 \cdot h + \chi_2 \cdot 0  + \chi_3 \cdot \chi_3^{-1}h  = 0 \qquad \text{as well as} \qquad \chi_1 \cdot h + \chi_2 \cdot \chi_3^{-1}h  + \chi_3 \cdot (\chi_3^{-1}h+\chi_3^{-2}h)  = 0.
\end{align*}
Note that the element
\begin{align*}
	g_2^{(j)}+g_3^{(j)} = (1+ \chi_3^{-1})g_2^{(j)}
\end{align*}
is nonzero and distinct from both $g_2^{(j)}$ and $g_3^{(j)}$. It might be one of $g_{1}^{(j)},g_4^{(j)}, \ldots, g_{\fo}^{(j)}$.

We next argue that the union of the different $\fB_j$ generates $\ZZ^{\FF_q^\ast}$. By linear transformation and using that $\fo$ is odd, $\fB_j$ has the same span as
\begin{align*}
	\cbc{e_{g^{(j)}_1} + e_{g_2^{(j)}}, e_{g^{(j)}_1} - e_{g_3^{(j)}}, e_{g^{(j)}_1} + e_{g_4^{(j)}}, \ldots, e_{g^{(j)}_1} - e_{g_{\fo}^{(j)}}} \cup  \cbc{e_{g_{1}^{(j)}+g_2^{(j)}}}. 
\end{align*}
Now, there are two cases.
\begin{enumerate}
\item For all $j \in [(q-1)/\fo]$, $g_2^{(j)}+g_3^{(j)} \in \{g_{1}^{(j)}, g_4^{(j)}, \ldots, g_{\fo}^{(j)}\}$. In this case, either $e_{g_2^{(j)}+g_3^{(j)}} =e_{g_{1}^{(j)}}$, or we can subtract $e_{g_2^{(j)}+g_3^{(j)}}$ from or add it to the element $e_{g^{(j)}_1} \pm e_{g_2^{(j)}+g_3^{(j)}}$ to obtain $e_{g_{1}^{(j)}}$. After isolating $e_{g_1^{(j)}}$, a straightforward linear transformation yields a set of $\fo$ distinct unit vectors whose non-zero components are given by $\fO_j$. 
Thus, the union over all $\fB_j$ constitutes a set of linearly independent elements that generates $\ZZ^{\FF_q^*}$. 
\item For all $j \in [(q-1)/\fo]$, $g_2^{(j)}+g_3^{(j)} \notin \{g_1^{(j)}, g_4^{(j)}, \ldots, g_{\fo}^{(j)}\}$. In this case, consider the union $\bigcup_{j=1}^{(q-1)/\fo}\fB_j$, which has the same span as
\begin{align*}
	\bigcup_{j=1}^{(q-1)/\fo} \cbc{e_{g^{(j)}_1} + e_{g_2^{(j)}}, e_{g^{(j)}_1} - e_{g_3^{(j)}}, e_{g^{(j)}_1} + e_{g_4^{(j)}}, \ldots, e_{g^{(j)}_1} - e_{g_{\fo}^{(j)}}} \cup  \cbc{e_{g_{1}^{(j)}+g_2^{(j)}}}.
\end{align*}
Since for each $j$, the element $g_1^{(j)}+g_2^{(j)}$ must be contained in some $\fO_{j'}$ for $j\not=j'$, as in case (1), $e_{g_{1}^{(j)}+g_2^{(j)}}$ can be used to isolate $e_{g_1^{(j')}}$. After isolating $e_{g_1^{(j')}}$ for all $j'$, these elements can be straightforwardly used to linearly transform the union over all $\fB_j$ into the standard basis $(e_{h})_{h \in \FF_q^\ast}$ of $\ZZ^{\FF_q^*}$.
\end{enumerate}
Finally, set $\bigcup_{j=1}^{(q-1)/\fo} \fB_j = \{\fb_1, \ldots, \fb_{q-1}\}$.

\textbf{Case 2:} $p \not=2$ and $\chi_2=-1$. 

We proceed almost exactly as before, only the choice of the ``acyclic'' basis vectors is different: 

Denote the order of $\chi_3^{-1}$ in $(\FF_q^\ast, \cdot)$ by $\fo$, so that the elements $1, \chi_3^{-1}, \ldots, \chi_3^{-(\fo-1)}$ are pairwise distinct. Then $\fo \mid q-1$, and since $\chi_3^{-1} \not=1$, $\fo \geq 2$. 
We now partition $\FF_q^\ast$ into orbits of the action of $(\{1, \chi_3^{-1}, \ldots, \chi_3^{-(\fo-1)} \}, \cdot)$ on $\FF_q^\ast$ such that 
$$\FF_q^\ast =\dot \bigcup_{j=1}^{(q-1)/\fo} \fO_j,$$ 
where each orbit $\fO_j$ contains exactly $\fo$ elements. Suppose that $\fO_j=\{g^{(j)}_1, \ldots, g^{(j)}_{\fo}\}$, where the elements are indexed such that $g_{i+1}^{(j)} = \chi_3^{-1}g_{i}^{(j)}$. 

To each $\fO_j$, we associate a set of potential basis vectors whose union over different $j$ then yields the full set $(\fb_1, \ldots, \fb_{q-1})$. More precisely, the set corresponding to $\fO_j$ is defined as
\begin{align*}
	\fB_j = \bigcup_{i=1}^{\fo-1} \cbc{e_{g^{(j)}_i}+ e_{g^{(j)}_{i+1}}} \cup  \cbc{e_{g_{1}^{(j)}} +  e_{g_{2}^{(j)}} + e_{2g_{1}^{(j)}}} .
\end{align*}
Here, we have used that for $\chi_1 = -\chi_2=1$ and $p \not=2$,
\begin{align*}
	\chi_1 \cdot 0 + \chi_2 \cdot h  + \chi_3 \cdot \chi_3^{-1}h  = 0 \qquad \text{and} \qquad \chi_1 \cdot h + \chi_2 \cdot 2 h  + \chi_3 \cdot \chi_3^{-1}h  = 0.
\end{align*}
Note that the element $2g_1^{(j)}$
is distinct from $g_1^{(j)}$. It might be one of $g_2^{(j)}, \ldots, g_{\fo}^{(j)}$.

We next argue that the union of the different $\fB_j$ generates $\ZZ^{\FF_q^\ast}$. By linear transformation, $\fB_j$ has the same span as
\begin{align*}
	\cbc{e_{g^{(j)}_1} + e_{g_2^{(j)}}, e_{g^{(j)}_1} - e_{g_3^{(j)}}, e_{g^{(j)}_1} + e_{g_4^{(j)}}, \ldots, e_{g^{(j)}_1} \pm e_{g_{\fo}^{(j)}}} \cup  \cbc{e_{2g_{1}^{(j)}}}. 
\end{align*}
Now, there are two cases.
\begin{enumerate}
\item For all $j \in [(q-1)/\fo]$, $2g_1^{(j)} \in \{g_2^{(j)}, \ldots, g_{\fo}^{(j)}\}$. As in case 1, we can then subtract $e_{2g_2^{(j)}}$ from or add it to $e_{g_1^{(j)}} \pm e_{2g_2^{(j)}}$ to isolate $e_{g_1^{(j)}}$. After isolating $e_{g_1^{(j)}}$, a straightforward linear transformation yields a set of $\fo$ distinct unit vectors whose non-zero components are given by $\fO_j$. 
Thus, the union over all $\fB_j$ constitutes a set of linearly independent elements that generates $\ZZ^{\FF_q^*}$.
\item For all $j \in [(q-1)/\fo]$, $2g_1^{(j)} \notin \{g_2^{(j)}, \ldots, g_{\fo}^{(j)}\}$. In this case, consider the union $\bigcup_{j=1}^{(q-1)/\fo}\fB_j$, which has the same span as
\begin{align*}
	\bigcup_{j=1}^{(q-1)/\fo} \cbc{e_{g^{(j)}_1} + e_{g_2^{(j)}}, e_{g^{(j)}_1} - e_{g_3^{(j)}}, e_{g^{(j)}_1} + e_{g_4^{(j)}}, \ldots, e_{g^{(j)}_1} \pm e_{g_{\fo}^{(j)}}} \cup  \cbc{e_{2g_{1}^{(j)}}}.
\end{align*}
Since for each $j$, the element $2g_1^{(j)}$ must be contained in some $\fO_{j'}$ for $j\not=j'$, as in case (1), $e_{2g_{1}^{(j)}}$ can be used to isolate $e_{g_1^{(j')}}$. After isolating $e_{g_1^{(j')}}$ for all $j'$, these elements can be straightforwardly used to linearly transform the union over all $\fB_j$ into the standard basis $(e_{h})_{h \in \FF_q^\ast}$ of $\ZZ^{\FF_q^*}$.
\end{enumerate}
In any case, set $\bigcup_{j=1}^{(q-1)/\fo} \fB_j = \{\fb_1, \ldots, \fb_{q-1}\}$.

\noindent
\textbf{Case 3:} $\chi_2\not= -1$. 

Denote the order of $-\chi_2^{-1}$ in $(\FF_q^\ast, \cdot)$ by $\fo$, so that the elements $1, -\chi_2^{-1}, \ldots, (-\chi_2^{-1})^{\fo-1}$ are pairwise distinct. Then $\fo \mid q-1$, and since $-\chi_2^{-1} \not=1$, $\fo \geq 2$. 
We now partition $\FF_q^\ast$ into orbits of the action of $(\{1, -\chi_2^{-1}, \ldots,(-\chi_2^{-1})^{\fo-1} \}, \cdot)$ on $\FF_q^\ast$ such that 
$$\FF_q^\ast =\dot \bigcup_{j=1}^{(q-1)/\fo} \fO_j,$$ 
where each orbit $\fO_j$ contains exactly $\fo$ elements. Suppose that $\fO_j=\{g^{(j)}_1, \ldots, g^{(j)}_{\fo}\}$, where the elements are indexed such that $g_{i+1}^{(j)} = -\chi_2^{-1}g_{i}^{(j)}$. 

To each $\fO_j$, we associate a set of potential basis vectors whose union over different $j$ then yields the full set $(\fb_1, \ldots, \fb_{q-1})$. More precisely, the set corresponding to $\fO_j$ is defined as
\begin{align*}
	\fB_j= \bigcup_{i=1}^{\fo-1} \cbc{e_{g^{(j)}_i}+ e_{g^{(j)}_{i+1}}} \cup \cbc{e_{g_{1}^{(j)}} + e_{g_{2}^{(j)}} + e_{(1-\chi_3)g_{1}^{(j)}} }.
\end{align*}
In the above, we have used that for $\chi_1=1$,
\begin{align*}
	\chi_1 \cdot h + \chi_2 \cdot (-\chi_2^{-1}) h  + \chi_3 \cdot 0  = 0 \qquad \text{and} \qquad   \chi_1 \cdot (1-\chi_3)h + \chi_2 \cdot (-\chi_2^{-1})h  + \chi_3 \cdot h  = 0.
\end{align*}
Note that the element $(1-\chi_3)g_1^{(j)}$
is distinct from $g_1^{(j)}$. It might be one of $g_2^{(j)}, \ldots, g_{\fo}^{(j)}$.

We next argue that the union of the different $\fB_j$ generates $\ZZ^{\FF_q^\ast}$. By linear transformation, $\fB_j$ has the same span as
\begin{align*}
	\cbc{e_{g^{(j)}_1} + e_{g_2^{(j)}}, e_{g^{(j)}_1} - e_{g_3^{(j)}}, e_{g^{(j)}_1} + e_{g_4^{(j)}}, \ldots, e_{g^{(j)}_1} \pm e_{g_{\fo}^{(j)}}} \cup  \cbc{ e_{(1-\chi_3)g_{1}^{(j)}} }.
\end{align*}
Now, there are two cases.
\begin{enumerate}
\item For all $j \in [(q-1)/\fo]$, $(1-\chi_3)g_{1}^{(j)}$ is one of the elements $g_2^{(j)}, \ldots, g_{\fo}^{(j)}$. As in case 1, we can then subtract $e_{2g_2^{(j)}}$ from or add it to $e_{g_1^{(j)}} \pm e_{(1-\chi_3)g_1^{(j)}}$ to isolate $e_{g_1^{(j)}}$. After isolating $e_{g_1^{(j)}}$, a straightforward linear transformation yields a set of $\fo$ distinct unit vectors whose non-zero components are given by $\fO_j$. 
Thus, the union over all $\fB_j$ constitutes a set of linearly independent elements that generates $\ZZ^{\FF_q^*}$.
\item For all $j \in [(q-1)/\fo]$, $(1-\chi_3)g_{1}^{(j)}$ is none of the elements $g_2^{(j)}, \ldots, g_{\fo}^{(j)}$. In this case, consider the union $\bigcup_{j=1}^{(q-1)/\fo}\fB_j$, which has the same span as
\begin{align*}
	\bigcup_{j=1}^{(q-1)/\fo} \cbc{e_{g^{(j)}_1} + e_{g_2^{(j)}}, e_{g^{(j)}_1} - e_{g_3^{(j)}}, e_{g^{(j)}_1} + e_{g_4^{(j)}}, \ldots, e_{g^{(j)}_1} \pm e_{g_{\fo}^{(j)}}} \cup  \cbc{e_{(1-\chi_3)g_{1}^{(j)}}}.
\end{align*}
Since for each $j$, the element $(1-\chi_3)g_1^{(j)}$ must be contained in some $\fO_{j'}$ for $j\not=j'$, as in case (1), $e_{(1-\chi_3)g_{1}^{(j)}}$ can be used to isolate $e_{g_1^{(j')}}$. After isolating $e_{g_1^{(j')}}$ for all $j'$, these elements can be straightforwardly used to linearly transform the union over all $\fB_j$ into the standard basis $(e_{h})_{h \in \FF_q^\ast}$ of $\ZZ^{\FF_q^*}$.
\end{enumerate}
In any case, set $\bigcup_{j=1}^{(q-1)/\fo} \fB_j = \{\fb_1, \ldots, \fb_{q-1}\}$.

\section{Proof of \Prop~\ref{prop_mmt}}\label{sec_prop_mmt}

\subsection{Overview}
The aim in this section is to bound the expected size of the kernel of $\A$ on $\fO$ from \eqref{eqO}, i.e., $|\ker\A|\cdot\vecone\fO$.
As in \Sec~\ref{sec_grand_outline} we let $\fA$ be the $\sigma$-algebra generated by $\vm,(\vk_i)_{i\geq1},(\vd_i)_{i\geq1}$ and by the numbers $\vm(\chi_1,\ldots,\chi_\ell)$ of equations of degree $\ell\geq3$ with coefficients $\chi_1,\ldots,\chi_\ell\in\FF_q^*$.
Thus, the total degree $\vec\Delta=\sum_{i=1}^n\vd_i$ is $\fA$-measurable.

Let us first observe that it suffices to count ``nearly equitable'' kernel vectors, in the following sense.
For a vector $\sigma\in\FF_q^n$ and $s\in\FF_q$ define the empirical frequency
\begin{align}\label{eqrho}
	\rho_\sigma(s)=\sum_{i=1}^n \vd_i\vecone\cbc{\sigma_i=s}
\end{align}
and let $\rho_\sigma=(\rho_\sigma(s))_{s\in\FF_q}$.
If $\fO$ occurs, then $\rho_\sigma$ is nearly uniform for most kernel vectors.
Formally, we have the following statement.

\begin{fact}\label{Cor_Maurice}
For any $\eps>0$ \whp\ given $\fA$ we have
	$\vecone\fO\cdot|\ker\A|\leq(1+\eps)\abs{\cbc{\sigma\in\ker\A:\|\rho_\sigma-q^{-1}\vec\Delta\vecone\|_1<\eps\vec\Delta}}.$
\end{fact}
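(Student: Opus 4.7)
The plan is to draw $\vx_\A \in \ker\A$ uniformly at random given $\A$ and show that on $\fO$ the frequency vector $\rho_{\vx_\A}$ concentrates near $q^{-1}\vec\Delta\vecone$. Since the ratio $|\{\sigma\in\ker\A:\|\rho_\sigma-q^{-1}\vec\Delta\vecone\|_1\geq\eps\vec\Delta\}|/|\ker\A|$ equals precisely $\pr[\|\rho_{\vx_\A}-q^{-1}\vec\Delta\vecone\|_1\geq\eps\vec\Delta\mid\A]$, if this conditional probability is $o(1)$ on $\fO$, then for $n$ large the desired inequality will follow.

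First, I would extract from the defining condition of $\fO$ some control on the one-point marginals $p_i(s):=\pr[\vx_{\A,i}=s\mid\A]$. Marginalising the inner probability over $\tau$ and applying the triangle inequality, then averaging over $j\in[n]$, yields $\sum_{i=1}^n|p_i(s)-q^{-1}|=o(n)$ for each $s\in\FF_q$. Next I would compute the conditional second moment of $\rho_{\vx_\A}(s)=\sum_i\vd_i\vecone\{\vx_{\A,i}=s\}$. With $p_{ij}(s):=\pr[\vx_{\A,i}=\vx_{\A,j}=s\mid\A]$, expanding the square gives
$$\ex\bigl[(\rho_{\vx_\A}(s)-q^{-1}\vec\Delta)^2\,\big|\,\A\bigr]\leq\sum_{i,j=1}^n\vd_i\vd_j|p_{ij}(s)-q^{-2}|+2q^{-1}\vec\Delta\sum_{i=1}^n\vd_i|p_i(s)-q^{-1}|.$$
The second summand is $o(n^2)$ by Cauchy-Schwarz using the one-point bound above, together with $\sum_i\vd_i^2=O(n)$ (which holds on an $\fA$-typical event by $\ex[\vd^{2+\eta}]<\infty$) and the elementary estimate $(p_i(s)-q^{-1})^2\leq|p_i(s)-q^{-1}|$. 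A second application of Cauchy-Schwarz handles the first summand: $\sum_{i,j}\vd_i\vd_j|p_{ij}(s)-q^{-2}|\leq(\sum_i\vd_i^2)\cdot\sqrt{\sum_{i,j}|p_{ij}(s)-q^{-2}|^2}$, which is $O(n)\cdot o(n)=o(n^2)$ since $|p_{ij}(s)-q^{-2}|\leq 1$ and the inner sum is $o(n^2)$ directly from $\fO$.

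Chebyshev's inequality then gives $\pr[|\rho_{\vx_\A}(s)-q^{-1}\vec\Delta|\geq(\eps/q)\vec\Delta\mid\A]=o(1)$ on $\fO$ for each $s$, and a union bound over the finite set $\FF_q$ yields $\pr[\|\rho_{\vx_\A}-q^{-1}\vec\Delta\vecone\|_1\geq\eps\vec\Delta\mid\A]=o(1)$, whence the claimed bound on $|\ker\A|$ for $n$ large enough. The only genuine obstacle is pushing the weighting by $\vd_i$ through the Cauchy-Schwarz step; the moment hypothesis $\ex[\vd^{2+\eta}]<\infty$ is exactly what keeps $\sum_i\vd_i^2=O(n)$, and once this is in hand the argument is a routine second-moment concentration driven by the two-point decorrelation encoded in $\fO$.
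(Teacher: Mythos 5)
Your proof is correct and follows the same basic second-moment strategy as the paper — bounding the conditional variance of $\rho_{\vx_\A}(s)$ for a uniformly random kernel vector $\vx_\A$ via the two-point decorrelation in $\fO$, then applying Chebyshev and observing that the fraction of kernel vectors with non-equitable frequency vector equals the conditional probability you estimate — but the treatment of the degree weights $\vd_i$ is genuinely different. The paper picks a cutoff $d^*$ so that the variables of degree larger than $d^*$ contribute only a $\delta$-fraction of $\vec\Delta$, then stratifies the second-moment computation over fixed degrees $\ell\leq d^*$, where the weight $\vd_i$ drops out of $\rho_\sigma(s)$, and finally reassembles. You instead keep the weight in place and control $\sum_i\vd_i|p_i(s)-q^{-1}|$ and $\sum_{i,j}\vd_i\vd_j|p_{ij}(s)-q^{-2}|$ by Cauchy--Schwarz, pulling out $\sum_i\vd_i^2=O(n)$ (the real use of $\ex[\vd^2]<\infty$, which the paper also needs to justify $\vec\Delta=\Theta(n)$ and the degree truncation) and then replacing the $\ell^2$-norm of the discrepancies by the $\ell^1$-norm, legitimate since each summand lies in $[0,1]$. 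Both routes are sound; your Cauchy--Schwarz version is a bit tidier because it avoids choosing a cutoff, whereas the paper's stratified form implicitly records the finer degree-by-degree concentration of $\vx_\A$, a fact the paper alludes to in the discussion of why $\fO$ pins down the variables $z_\ell(\sigma)$ in the moment optimisation.
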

\begin{proof}
	Choose $\delta=\delta(\eps,q)>0$ small enough.
	Since $0<\ex[\vd^2]<\infty$ we find a constant $d^*>0$ such that 
	\begin{align}\label{eqCorMaurice1}
		\vec\Delta>\sqrt\delta n\qquad\mbox{ and }\qquad\sum_{i=1}^n\vecone\{\vd_i>d^*\}\vd_i<\delta \vec\Delta\qquad\mbox{\whp}
	\end{align}
	Now, the definition \eqref{eqO} of $\fO$ implies that for any degree $\ell\leq d^*$ a random vector $\vx_{\A}\in\ker\A$ satisfies
		\begin{align}\label{eqCorMaurice3}
			\sum_{s,t\in\FF_q}\sum_{i,j=1}^n\vecone\{\vd_i=\vd_j=\ell\}\abs{\pr\brk{\vx_{\A,i}=s,\ \vx_{\A,j}=t\mid\A}-q^{-2}}=o(n^2)\quad\mbox{on }\fO.
		\end{align}
		By Chebyshev's inequality \whp\ $\sum_{j=1}^n\vecone\{\vd_j=\ell\}=\Omega(n)$ and consequently \eqref{eqCorMaurice3} shows that \whp\ for a random vector $\vx_{\A}$ we have
		\begin{align}\label{eqCorMaurice2}
			\abs{\sum_{i=1}^n\vecone\cbc{\vd_i=\ell}\bc{ \vecone\cbc{\vx_{\A,i}=s}-1/q}}=o(n)\qquad\mbox{for all }s\in\FF_q,\ \ell\leq d^*\mbox{ on the event }\fO.
		\end{align}
		Combining \eqref{eqCorMaurice1} and \eqref{eqCorMaurice2} with the definition \eqref{eqrho} of $\rho_\sigma$ completes the proof.
\end{proof}

We proceed to contemplate different regimes of ``nearly equitable'' frequency vectors and employ increasingly subtle estimates to bound their contributions.
To this end let $\fP_q$ be the set of all possible frequency vectors, i.e.,
\begin{align*}
	\fP_q&=\cbc{\rho_\sigma:\sigma\in\FF_q^n}.
\end{align*}
Moreover, for $\eps>0$ let
\begin{align*}
	\fP_q(\eps)&= \cbc{\rho\in\fP_q: \|\rho - q^{-1}\vec\Delta \vecone\| < \eps\vec\Delta}.
\end{align*}
In addition, we introduce
\begin{align*}
	\cZ_\rho&=\abs{\cbc{\sigma\in\ker\vA:\rho_\sigma=\rho}}&&(\rho\in\fP_q),\\
	\cZ_\eps&= \sum_{\rho\in\fP_q(\eps)}\cZ_\rho&&(\eps\geq0),\\
	\cZ_{\eps,\eps'} &= \cZ_{\eps'}-\cZ_\eps&&(\eps,\eps'\geq0).
\end{align*}

The following lemma sharpens the $\eps\vec\Delta$ error bound from Fact~\ref{Cor_Maurice} to $\omega n^{-1/2}\vec\Delta$.

\begin{lemma}\label{lemma_far}
	For any fixed $\eps>0$ for large enough $\omega=\omega(\eps)>1$ \whp\ we have $\ex\brk{\cZ_{\omega n^{-1/2},\eps}\mid\alg}<\eps q^{n-\vm}.$
\end{lemma}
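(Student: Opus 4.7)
The plan is to estimate $\ex\brk{\cZ_\rho \mid \fA}$ for each individual frequency vector $\rho$ in the annulus
$$\cN = \cbc{\rho \in \fP_q \colon \omega n^{-1/2}\vec\Delta \leq \|\rho - q^{-1}\vec\Delta\vecone\|_1 < \eps\vec\Delta},$$
and then sum the resulting Gaussian tail. For a fixed $\rho$ I factor
$$\ex\brk{\cZ_\rho \mid \fA} = N(\rho)\cdot P(\rho),$$
where $N(\rho) = |\{\sigma\in\FF_q^n:\rho_\sigma=\rho\}|$ and $P(\rho)$ is the conditional probability that a fixed such $\sigma$ satisfies $\vA\sigma = 0$. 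Stirling's formula, equivalently a local central limit theorem for the degree-weighted empirical law of a uniform $\vec\sigma\in\FF_q^n$, gives a sharp handle on $N(\rho)$. On the check side the pairing model decouples $P(\rho)$ into a product over checks: a check with coefficient profile $(\chi_1,\ldots,\chi_\ell)$ is satisfied, up to $O(1/n)$ without-replacement corrections, with probability $\sum_{\tau\in\cS_q(\chi_1,\ldots,\chi_\ell)}\prod_j p(\tau_j)$, where $p=\rho/\vec\Delta$.

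Set $\delta = p - q^{-1}\vecone$ and Taylor expand $\log N(\rho) + \log P(\rho)$ around the equitable point $\delta = 0$. The zeroth-order terms multiply out to the baseline $q^{n-\vm}$, and the linear term vanishes because $\delta = 0$ is a stationary point of both factors. The upshot is
$$\ex\brk{\cZ_\rho \mid \fA} \leq C n^{-(q-1)/2}\, q^{n-\vm}\, \exp\bc{-\tfrac{n}{2}\,\delta^\trans H\delta + O\bc{n\|\delta\|_\infty^3}},$$
with $H$ a symmetric matrix combining the entropy Hessian on the variable side and the satisfaction Hessian on the check side. Choosing $\eps$ sufficiently small absorbs the cubic remainder into half the quadratic.

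The main obstacle is to show that $H$ is positive definite on the hyperplane $\{\sum_s \delta_s = 0\}$ to which $\delta$ is constrained, with an $n$-uniform constant $\lambda = \lambda(\vd,\vk,\vec\chi) > 0$ satisfying $\delta^\trans H\delta \geq \lambda \|\delta\|_2^2$. This is precisely where \eqref{eqmain} enters: $\delta^\trans H\delta$ is the Hessian at $q^{-1}\vecone$ of the multivariate analogue of the functional $\Phi$ from \eqref{eqBFE}, and the hypothesis that $\Phi$ attains its strict unique maximum at $z=0$ forces the multivariate analogue to attain its strict unique maximum at the equitable distribution; the assumption $\vk \geq 3$ ensures the check contribution is strictly concave along every non-trivial direction in $\cP(\FF_q)$. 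Once positivity is established, the lattice structure of $\fP_q$ yields $O((\sqrt{n}\,r)^{q-1})$ points $\rho \in \cN$ with $\|\delta\|_2 \in [r, r + n^{-1/2}]$, and via $\|\delta\|_1\leq\sqrt q\|\delta\|_2$ the Gaussian-annulus integration
$$\ex\brk{\cZ_{\omega n^{-1/2},\eps}\mid \fA} \leq C' q^{n-\vm} \int_{\omega/\sqrt{qn}}^{\eps}(\sqrt n r)^{q-1}\exp\bc{-\tfrac{n\lambda}{4}r^2}\sqrt n\,dr = O(\exp(-c\omega^2))\cdot q^{n-\vm}$$
falls below $\eps q^{n-\vm}$ once $\omega = \omega(\eps)$ is chosen sufficiently large.
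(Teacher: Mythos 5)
Your overall strategy --- expand $\log\ex[\cZ_\rho\mid\fA]$ around the equitable frequency vector and integrate the resulting Gaussian tail --- is the same as the paper's. The genuine gap is in the reasoning you give for why the quadratic form $H$ is positive definite on the hyperplane $\{\sum_s\delta_s=0\}$. You assert this is ``precisely where \eqref{eqmain} enters,'' but \eqref{eqmain} plays no role whatsoever in this lemma. The paper (Claim~\ref{lemma_rough_S}) computes the check-side Hessian explicitly: at the equitable point it equals $-\frac{h}{\vec\Delta^2}\vecone_{q\times q}$, a rank-one multiple of $\vecone\vecone^\trans$, hence it annihilates every direction in the constraint hyperplane. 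The check side therefore contributes nothing at quadratic order --- only an $O(n\|\delta\|^3)$ remainder --- and the strict positivity of $H$ on the hyperplane is carried entirely by the entropy term (Claim~\ref{lemma_entropy_rough}, a plain Azuma--Hoeffding estimate). Your claimed link between \eqref{eqmain} and the Hessian of a ``multivariate analogue of $\Phi$'' is neither proved nor needed; if you actually tried to establish it you would find the question moot. Condition~\eqref{eqmain} does real work in Propositions~\ref{prop_three} and~\ref{prop_overlap}, not here.

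A secondary concern: you assert that $P(\rho)$ decouples into a product over checks ``up to $O(1/n)$ without-replacement corrections.'' The pairing model does induce correlations between checks sharing clones, and the multiplicative correction to a naive product of per-check marginals is polynomial in $n$, not a $(1+O(1/n))^{\vm}$ factor in any simple sense. The paper sidesteps this cleanly by introducing auxiliary i.i.d.\ variables $\vec\xi_{ij}$ with law $\vec\Delta^{-1}\rho$ and applying Bayes' rule, so that the dependence enters only through the ratio $\pr_{\fA}[\fR(\rho)\mid\fS]/\pr_{\fA}[\fR(\rho)]$ of two local-limit factors, both of order $n^{(1-q)/2}$, which cancel to $O(1)$. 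For the upper bound required here a constant-factor error is harmless, so your route can be salvaged, but the ``$O(1/n)$ per check'' claim as stated does not justify why the total correction stays bounded.
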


\noindent
The proof of \Lem~\ref{lemma_far}, which can be found in \Sec~\ref{sec_lemma_far}, is based on an expansion to the second order of the optimisation problem \eqref{eqXOR11} around the equitable solution.
Similar arguments have previously been applied in the theory of random constraint satisfaction problems, particularly random $k$-XORSAT (e.g.~\cite{ANP,Ayre,DuboisMandler}).

For $\rho$ that are within $O(n^{-1/2}\vec\Delta)$ of the equitable solution such relatively routine arguments do not suffice anymore.
%More accurate estimates are needed.
Indeed, by comparison to examples of random CSPs that have been studied previously, sometimes by way of the small subgraph conditioning technique, a new challenge arises.
Namely, due to the algebraic nature of our problem the conceivable empirical distributions $\rho_{\vx}$ given that $\vx\in\ker\A$ are confined to a proper sub-lattice of $\ZZ^q$.
The same is true of $\fP_q$ unless $\fd=1$.
Hence, we need to work out how these lattices intersect.
Moreover, for $\rho\in\fP_q$ we need to calculate the number of assignments $\sigma$ such that $\rho_\sigma=\rho$ as well as the probability that such an assignment satisfies all equations.
Seizing upon \Prop~\ref{prop_module} and local limit theorem-type techniques, we will deal with these challenges in \Sec~\ref{sec_lemma_near}, where we prove the following.
%\aco{Comment on use of LLTs in random CSPs!}

\begin{lemma}\label{lemma_near}
	For any $\eps>0$ for large enough $\omega=\omega(\eps)>1$ we have $\ex[\cZ_{\omega n^{-1/2}}\mid\alg]\leq (1+\eps)q^{n-\vm}$ \whp
\end{lemma}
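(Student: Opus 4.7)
My strategy is to decompose $\ex[\cZ_{\omega n^{-1/2}}\mid\alg]=\sum_{\rho\in\fP_q(\omega n^{-1/2})}N_\rho P_\rho$, where $N_\rho=|\{\sigma\in\FF_q^n:\rho_\sigma=\rho\}|$ is $\alg$-measurable and $P_\rho$ is the common pairing-model probability that any such $\sigma$ lies in $\ker\vA$, and then exploit that the per-check satisfying probability is remarkably flat at the equitable frequency $\bar\rho=q^{-1}\vec\Delta\vecone$. Under the pairing model, $P_\rho$ factorises (up to lower-order sampling-without-replacement corrections) over check types as $\prod_{(\chi_1,\ldots,\chi_\ell)}Q_{\chi_1,\ldots,\chi_\ell}(\rho)^{\vm(\chi_1,\ldots,\chi_\ell)}$ with
\begin{align*}
Q_{\chi_1,\ldots,\chi_\ell}(\rho)=\sum_{\tau\in\cS_q(\chi_1,\ldots,\chi_\ell)}\prod_{j=1}^\ell\rho(\tau_j)/\vec\Delta.
\end{align*}
The key algebraic observation is that, writing $\rho=\bar\rho+\xi$ with $\sum_s\xi(s)=0$, all Taylor coefficients of $Q_{\chi_1,\ldots,\chi_\ell}$ at $\bar\rho$ of orders $1,\ldots,\ell-1$ vanish: for any $S\subseteq[\ell]$ of size $r$ with $1\leq r<\ell$, the count $|\{\tau\in\cS_q(\chi_1,\ldots,\chi_\ell):\tau_j=s_j\,\forall j\in S\}|=q^{\ell-r-1}$ is independent of the choice of $(s_j)$ (being the number of solutions to one nontrivial linear equation in $\ell-r\geq 1$ unknowns over $\FF_q$), so
\begin{align*}
\sum_{\tau\in\cS_q(\chi_1,\ldots,\chi_\ell)}\prod_{j\in S}\xi(\tau_j)=q^{\ell-r-1}\bc{\sum_{s\in\FF_q}\xi(s)}^r=0.
\end{align*}
Hence $Q_{\chi_1,\ldots,\chi_\ell}(\bar\rho+\xi)=q^{-1}\bc{1+O((\|\xi\|/\vec\Delta)^\ell)}$.

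For $\rho\in\fP_q(\omega n^{-1/2})$ the deviation satisfies $\|\xi\|/\vec\Delta=O(\omega/\sqrt n)$. Because $\vk\geq 3$ guarantees every check has degree at least $3$, summing the per-check log-corrections over all $\vm=\Theta(n)$ checks yields $P_\rho=q^{-\vm}(1+o(1))$ uniformly in $\rho\in\fP_q(\omega n^{-1/2})$. The bound then follows immediately:
\begin{align*}
\ex[\cZ_{\omega n^{-1/2}}\mid\alg]=(1+o(1))q^{-\vm}\sum_{\rho\in\fP_q(\omega n^{-1/2})}N_\rho\leq(1+o(1))q^{-\vm}\sum_{\rho\in\fP_q}N_\rho=(1+o(1))q^{n-\vm},
\end{align*}
since $\sum_{\rho\in\fP_q}N_\rho=|\FF_q^n|=q^n$, and taking $n$ large delivers $(1+\eps)q^{n-\vm}$ as required. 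Note that, in contrast to the lower-bound statement that will be needed to establish \eqref{eqO2}, the present upper bound costs nothing in terms of lattice bookkeeping because we simply enlarge the sum to the full range of $\rho$.

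The principal obstacle is ensuring that the $o(1)$ correction in $P_\rho$ is genuinely uniform over the exponentially many points $\rho\in\fP_q(\omega n^{-1/2})$, accounting for two distinct sources of error: the pairing-model sampling-without-replacement correction per check (standard under the moment assumption $\ex[\vk^{2+\eta}]<\infty$), and the accumulation of per-check Taylor remainders whose implicit constants grow with the check degree $\ell$. The latter requires a preliminary truncation of atypically high-degree checks, which is permissible because \Lem~\ref{Lemma_sums} and the moment bound on $\vk$ ensure that the total contribution from checks of unusually large degree is negligible \whp
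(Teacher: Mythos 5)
Your argument has a genuine gap at the step ``$P_\rho=q^{-\vm}(1+o(1))$ uniformly.'' The product $\prod_{(\chi_1,\ldots,\chi_\ell)}Q_{\chi_1,\ldots,\chi_\ell}(\rho)^{\vm(\chi_1,\ldots,\chi_\ell)}$ is the probability that every check is satisfied when the check slots are filled by \emph{independent} samples from $\rho/\vec\Delta$, and your Taylor calculation correctly shows this product is $q^{-\vm}(1+o(1))$ for nearly equitable $\rho$. But $P_\rho$ is the probability when exactly $\rho(s)$ pebbles labeled $s$ are placed on the slots \emph{without replacement}; in the notation of the paper (with uniform auxiliary $\vec\xi$) this is $P_\rho=\pr_\fA\brk{\fS\mid\fR(\rho)}=\pr_\fA\brk{\fS}\,\pr_\fA\brk{\fR(\rho)\mid\fS}/\pr_\fA\brk{\fR(\rho)}$, via \eqref{eqJane}. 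What you compute is $\pr_\fA[\fS]$, but the remaining ratio $\pr_\fA[\fR(\rho)\mid\fS]/\pr_\fA[\fR(\rho)]$ is \emph{not} a lower-order sampling-without-replacement correction: conditioning on $\fS$ confines $\hat{\vec\rho}$ to a proper sub-lattice whenever $|\supp\vec\chi|=1$ (\Prop~\ref{prop_module}), so the ratio vanishes for $\rho$ off that sub-lattice and is $\sim q^{\vecone\{|\supp\vec\chi|=1\}}$ on it. Concretely, for $q=2$, $\vk\equiv3$, $\vec\chi\equiv1$, given $\fS$ the total number of check-side slots carrying a $1$ is a sum of i.i.d.\ variables supported on $\{0,2\}$, hence always even; so $P_\rho=0$ whenever $\rho(1)$ is odd and $P_\rho\sim2\cdot2^{-\vm}$ whenever $\rho(1)$ is even -- never $\sim2^{-\vm}$. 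Your chain of inequalities then overshoots by a factor of $q$ precisely when all coefficients are identical.

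The bound is rescued not by tightening $P_\rho$ but by counting how many $\sigma\in\FF_q^n$ have $\rho_\sigma$ on the achievable sub-lattice: this is the role of \Lem~\ref{lemma_hit}, which shows $\pr_\fA[\hat\rho_{\vec\sigma}\in\fL_0]\leq(1+o(1))q^{-\vecone\{|\supp\vec\chi|=1\}}$, exactly cancelling the oversize of $P_\rho$. This is also where the coprimality hypothesis $\gcd(q,\fd)=1$ enters (via the Cramer's-rule step intersecting $\fM$ with $\fd\ZZ^{\FF_q^*}$), and your proof never invokes it -- a warning sign, since without coprimality the conclusion is false. In Example~\ref{ex_zerorowsums} ($q=2$, $\vd\equiv4$, $\vk\equiv8$, $\vec\chi\equiv1$) condition \eqref{eqmain} holds but $\fd=4$ forces every $\rho_\sigma(1)$ to be even, hence every nearly equitable $\sigma$ lands in the achievable sub-lattice with $P_{\rho_\sigma}\sim2\cdot2^{-\vm}$, giving $\ex[\cZ_{\omega n^{-1/2}}\mid\alg]\sim2\cdot2^{n-\vm}$ -- your claimed upper bound of $(1+\eps)2^{n-\vm}$ cannot hold there, and your derivation, applying to it verbatim, would ``prove'' it anyway.
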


\begin{proof}[Proof of \Prop~\ref{prop_mmt}]
	This is an immediate consequence of Fact~\ref{Cor_Maurice}, \Lem~\ref{lemma_far} and \Lem~\ref{lemma_near}.
\end{proof}

\subsection{Proof of \Lem~\ref{lemma_far}}\label{sec_lemma_far}
As we just saw, on the one hand we need to count $\sigma\in\FF_q^n$ such that $\rho_\sigma$ hits a particular attainable $\rho\in\fP_q(\eps)$.
On the other hand, we need to estimate the probability that such a given $\sigma$ satisfies all equations.
The first of these, the entropy term, increases as $\rho$ becomes more equitable.
The second, probability term takes greater values for non-uniform $\rho$.
Roughly, the more zero entries $\rho$ contains, the better.
The thrust of the proofs of \Lem s~\ref{lemma_far} and~\ref{lemma_near} is to show that the drop in entropy is an order of magnitude stronger than the boost to the success probability.

Toward the proof of \Lem~\ref{lemma_far} we can get away with relatively rough bounds, mostly disregarding constant factors.
The first claim bounds the entropy term.
Instead of counting assignments we will take a probabilistic viewpoint.
Hence, let $\vec\sigma\in\FF_q^n$ be a uniformly random assignment.

\begin{claim}\label{lemma_entropy_rough}
There exists $C>0$ such that \whp\ $\pr\brk{\|\rho_{\vec \sigma}-q^{-1}\vec\Delta\vecone\|_1>t\sqrt{\vec\Delta}\mid\alg}\leq C\exp(-nt^2/C)$ for all $t>0$.
\end{claim}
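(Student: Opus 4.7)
The plan is to apply Hoeffding's inequality coordinatewise in $\FF_q$ and then union-bound over the $q$ coordinates. Conditioning on $\alg$ fixes the degree sequence $(\vd_i)_{i\leq n}$ and the total degree $\vec\Delta=\sum_i\vd_i$, while leaving the coordinates $\vec\sigma_1,\ldots,\vec\sigma_n$ independent and uniformly distributed on $\FF_q$. Consequently, for each $s\in\FF_q$ the quantity
$$\rho_{\vec\sigma}(s)-\vec\Delta/q=\sum_{i=1}^n\vd_i\bc{\vecone\{\vec\sigma_i=s\}-q^{-1}}$$
is a sum of $n$ conditionally independent mean-zero random variables, the $i$-th of which is bounded in absolute value by $\vd_i$.

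First I would invoke Hoeffding's inequality to obtain the conditional tail bound
$$\pr\brk{|\rho_{\vec\sigma}(s)-\vec\Delta/q|>u\mid\alg}\leq 2\exp\bc{-\frac{u^2}{2\sum_{i=1}^n\vd_i^2}}\qquad(u>0).$$
The only piece of work remaining after this purely conditional estimate is to control $\sum_i\vd_i^2$ and $\vec\Delta$ uniformly. Since the assumption $\ex\brk{\vd^{2+\eta}}<\infty$ makes $(\vd_i^2)_{i\geq 1}$ i.i.d.\ and integrable (indeed with a moment of order $1+\eta/2$), \Lem~\ref{Lemma_sums} applied to $\vd_i^2$ and to $\vd_i$ supplies a constant $c=c(\vd)>0$ such that $\sum_{i=1}^n\vd_i^2\leq cn$ and $\vec\Delta\geq dn/2$ on an $\alg$-measurable event of probability $1-o(1/n)$; this is precisely the source of the \whp\ qualifier in the statement.

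Finally I would finish the argument by a pigeonhole observation: whenever $\|\rho_{\vec\sigma}-q^{-1}\vec\Delta\vecone\|_1>t\sqrt{\vec\Delta}$, some coordinate $s\in\FF_q$ must satisfy $|\rho_{\vec\sigma}(s)-\vec\Delta/q|>t\sqrt{\vec\Delta}/q$. Substituting $u=t\sqrt{\vec\Delta}/q$ into the Hoeffding estimate, plugging in the \whp\ bounds $\sum_i\vd_i^2\leq cn$ and $\vec\Delta\geq dn/2$ (so that the exponent scales correctly in $n$ and $t$), and then taking a union bound over the $q$ coordinates produces a sub-Gaussian tail bound with a constant $C=C(q,\vd)$ depending only on $q$ and the law of $\vd$. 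The argument is essentially routine; there is no real obstacle, and the only modest care required is choosing $C$ large enough to simultaneously absorb the $q$-fold union-bound factor and the ambient constants produced by the \whp\ controls on $\sum_i\vd_i^2$ and $\vec\Delta$.
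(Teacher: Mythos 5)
Your argument is essentially the paper's: the paper invokes Azuma--Hoeffding, which, for the Doob martingale of $\rho_{\vec\sigma}(s)$ with bounded differences $\vd_i$, produces exactly the same sub-Gaussian tail bound $2\exp(-u^2/(2\sum_i\vd_i^2))$ that you obtain from Hoeffding for sums of independent bounded random variables. The conditional-then-\whp\ structure, the coordinatewise reduction, and the union bound over $\FF_q$ are all the intended steps.

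One small slip: you cite \Lem~\ref{Lemma_sums} for $\vd_i^2$, but that lemma requires $\ex[\vec\lambda^r]<\infty$ for some $r>2$, i.e.\ $\ex[\vd^{4+\eta'}]<\infty$, which is not part of the hypotheses — and you yourself note that $\vd^2$ only has moments of order $1+\eta/2 < 2$. Fortunately you do not need the $o(1/n)$ rate supplied by that lemma: for the \whp\ qualifier it suffices that $\sum_i\vd_i^2\leq cn$ and $\vec\Delta\geq dn/2$ hold with probability $1-o(1)$, and both follow from $\ex[\vd^2]<\infty$ via the weak law of large numbers (or from Markov's inequality applied to $\sum_i\vd_i^2\vecone\{\vd_i>L\}$ plus a Chernoff bound on the bounded part). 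So replace the appeal to \Lem~\ref{Lemma_sums} with a plain second-moment concentration argument and the proof is complete.
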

\begin{proof}
	Since $\ex[\vd^2]<\infty$, this is an immediate consequence of Azuma--Hoeffding.
\end{proof}

Let us move on to the probability term.
We proceed indirectly by way of Bayes' rule.
Hence, fix $\rho\in\fP_q(\eps)$ and let $\vec\xi=(\vec\xi_{ij})_{i,j\geq1}$ be an infinite array of $\FF_q$-valued random variables with distribution $\vec\Delta^{-1}\rho$, mutually independent and independent of all other randomness.
Moreover, let
\begin{align}\label{eqRS}
	\fR(\rho)&=\bigcap_{s\in\FF_q}\cbc{\sum_{i=1}^{\vm}\sum_{j=1}^{\vk_i}\vecone\cbc{\vec\xi_{ij}=s}=\rho(s)},&
	\fS&=\cbc{\forall i \in [\vm]: \sum_{j=1}^{\vk_i}\vec\chi_{ij}\vec\xi_{ij}=0}.
\end{align}
In words, $\fR(\rho)$ is the event that the empirical distribution induced by the random vector $\vec\xi_{ij}$, truncated at $i=\vm$ and $j=\vk_i$ for every $i$, works out to be $\rho\in\fP_q$.
Furthermore, $\fS$ is the event that all $\vm$ checks are satisfied if we substitute the independent values $\vec\xi_{ij}$ for the variables.

Crucially, $\fS$ ignores that the various equations share variables, or conversely that variables may appear in several distinct checks.
Hence, the {\em unconditional} event $\fS$ effectively just deals with a linear system whose Tanner graph consists of $\vm$ checks with degrees $\vk_1,\ldots,\vk_{\vm}$ and $\sum_{i=1}^{\vm}\vk_i$ variable nodes of degree one each.
However, the {\em conditional} probability $\pr_{\alg}\brk{\fS\mid\fR(\rho)}$ equals the probability that a random assignment $\vec\sigma$ lies in the kernel of $\vA$ given that $\rho_{\vec\sigma}=\rho$; in symbols,
\begin{align}\label{eqJane}
	\pr_{\alg}\brk{\fS\mid\fR(\rho)}&=\pr_{\alg}\brk{\vec\sigma\in\ker\vA\mid\rho_{\vec\sigma}=\rho}.
	\end{align}
	Indeed, given $\alg$ and given $\rho_{\vec\sigma}=\rho$ the randomness that remains amounts to just how the variable clones are matched to the check clones to form the random Tanner graph $\G$.
	We can think of this matching as randomly distributing $\sum_{i=1}^n\vd_i\rho(s)$ ``pebbles'' with value $s$ onto the $\vm$ equations.
	The probability that the pebbles happen to satisfy all the equations is precisely equal to $\pr_{\alg}\brk{\fS\mid\fR(\rho)}$.

We are going to see momentarily that the unconditional probabilities of $\fR(\rho)$ and $\fS$ are easy to calculate.
In addition, we will be able to calculate the conditional probability $\pr_{\alg}\brk{\fS\mid\fR(\rho)}$ by way of the local limit theorem for sums of independent random variables.
Finally, \Lem~\ref{lemma_far} will follow from these estimates via Bayes' rule.

\begin{claim}\label{lemma_rough_S}
	We have $\pr_{\fA}\brk{\fS}=q^{\vm(O(\sum_{s\in\FF_q}|\vec\Delta^{-1}\rho(s)-1/q|^3)-1)}$.
\end{claim}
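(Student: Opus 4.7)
Since $(\vec\xi_{ij})_{i,j\geq1}$ are i.i.d.\ on $\FF_q$ with law $p_s:=\vec\Delta^{-1}\rho(s)$ and the $\vm$ checks defining $\fS$ act on disjoint tuples of these variables, the probability factorises as
\begin{align*}
    \pr_{\fA}\brk{\fS}=\prod_{i=1}^{\vm}\pr_{\fA}\brk{\sum_{j=1}^{\vk_i}\vec\chi_{ij}\vec\xi_{ij}=0}.
\end{align*}
The plan is to show that each factor equals $q^{-1}\bc{1+O(\eta^3)}$, where $\eta:=\sum_{s\in\FF_q}|p_s-q^{-1}|$, with implicit constant depending only on $q$, and then to take logarithms.

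For a single check of degree $k_0:=\vk_i\geq3$ with coefficients $\chi_1,\ldots,\chi_{k_0}\in\FF_q^*$ I would use Fourier inversion on the additive group $(\FF_q,+)$. Fixing a non-trivial additive character $\psi$,
\begin{align*}
    \pr_{\fA}\brk{\sum_{j=1}^{k_0}\chi_j\vec\xi_j=0}=\frac1q\sum_{\lambda\in\FF_q}\prod_{j=1}^{k_0}\hat p(\lambda\chi_j),\qquad \hat p(\alpha):=\sum_{s\in\FF_q}p_s\psi(\alpha s).
\end{align*}
The decisive cancellation is that for every $\alpha\neq0$ orthogonality of characters gives $\sum_s\psi(\alpha s)=0$, so $\hat p(\alpha)=\sum_s(p_s-q^{-1})\psi(\alpha s)$, whence $|\hat p(\lambda\chi_j)|\leq\eta$ for all $\lambda\neq0$ and $j$. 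The $\lambda=0$ term contributes exactly $1$ to the outer Fourier sum, while each of the remaining $q-1$ terms is bounded in modulus by $\eta^{k_0}\leq\eta^3$ because $k_0\geq3$. This is the step I expect to carry the main weight of the argument: it is crucial that first- and second-order Fourier contributions drop out, which is precisely why the hypothesis $\vk_i\geq3$ is built into the random matrix model.

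Taking the product over the $\vm$ checks and using $\log(1+x)=O(|x|)$ for $|x|\leq1/2$ (noting that the claim is vacuous for $\eta$ of order one) then yields $\pr_{\fA}\brk{\fS}=q^{-\vm}\exp(O(\vm\eta^3))$. To finish, the elementary inequality $\eta^3\leq q^3\sum_s|p_s-q^{-1}|^3$, which follows from $\eta\leq q\max_s|p_s-q^{-1}|$ together with $(\max_s|p_s-q^{-1}|)^3\leq\sum_s|p_s-q^{-1}|^3$, rewrites the bound as $q^{\vm(O(\sum_s|p_s-q^{-1}|^3)-1)}$, as claimed.
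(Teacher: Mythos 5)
Your proof is correct, and it takes a genuinely different route from the paper. The paper proves the claim by writing $P_h=\log\sum_{\sigma}\vecone\{\sum_i\chi_i\sigma_i=0\}\prod_i\rho(\sigma_i)/\vec\Delta$ for a single check and performing a direct Taylor expansion around the equitable point $\bar\rho=q^{-1}\vec\Delta\vecone$: it computes that the gradient is a multiple of $\vecone_q$ and the Hessian is a multiple of $\vecone_{q\times q}$, so both the first- and second-order contributions are annihilated by the constraint $\rho-\bar\rho\perp\vecone$, and the third derivatives are bounded, so Taylor's theorem yields the cubic error. Your argument instead uses Fourier inversion on $(\FF_q,+)$: orthogonality of characters kills the trivial contribution in $\hat p(\alpha)$ for $\alpha\neq0$, giving $|\hat p(\lambda\chi_j)|\leq\eta$ directly, and the exponent $3$ then falls out because each of the $k_0\geq3$ factors in $\prod_j\hat p(\lambda\chi_j)$ contributes one power of $\eta$. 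The Fourier route arguably makes the role of the hypothesis $\vk\geq3$ more transparent and is structurally cleaner, while the paper's calculus route is more elementary and fits the template of the surrounding Taylor-expansion arguments. Both give the needed two-sided estimate; your reduction from $\eta^3$ to $\sum_s|p_s-q^{-1}|^3$ at the end is the right elementary step, and your remark that the claim is vacuous for $\eta$ of order one correctly handles the regime where the $\log(1+x)=O(x)$ bound would fail.
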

\begin{proof}
	For any $h\geq3$ and any $\chi_1,\ldots,\chi_h\in\supp\chi$ we aim to calculate
	\begin{align*}
		P_h&=\log\sum_{\sigma\in\FF_q^h}\vecone\cbc{\sum_{i=1}^h\chi_i\sigma_i=0}\prod_{i=1}^h\frac{\rho(\sigma_i)}{\vec\Delta}.
	\end{align*}
	The derivatives of this expression work out to be
	\begin{align*}
		\frac{\partial P_h}{\partial\rho_s}&=\frac{\sum_{j=1}^h\sum_{\sigma\in\FF_q^h}\vecone\cbc{\sum_{i=1}^h\chi_i\sigma_i=0,\,\sigma_j=s}\prod_{i\neq j}\frac{\rho(\sigma_i)}{\vec\Delta}}{\vec\Delta \eul^{P_h}}\qquad(s\in\FF_q),\\
		\frac{\partial^2 P_h}{\partial\rho_s\partial\rho_{s'}}&=\frac{\sum_{j,j'=1}^h\sum_{\sigma\in\FF_q^h}\vecone\cbc{\sum_{i=1}^h\chi_i\sigma_i=0,\,\sigma_j=s,\sigma_{j'}=s'}\prod_{i\neq j,j'}\frac{\rho(\sigma_i)}{\vec\Delta}}{\vec\Delta^2 \eul^{P_h}}-\frac{\partial P_h}{\partial\rho_s}\frac{\partial P_h}{\partial\rho_s'}&&(s,s'\in\FF_q,\ s\neq s'),\\
		\frac{\partial^2 P_h}{\partial\rho_s^2}&=\frac{\sum_{j\neq j'}\sum_{\sigma\in\FF_q^h}\vecone\cbc{\sum_{i=1}^h\chi_i\sigma_i=0,\,\sigma_j=\sigma_{j'}=s}\prod_{i\neq j,j'}\frac{\rho(\sigma_i)}{\vec\Delta}}{\vec\Delta^2 \eul^{P_h}}-\bcfr{\partial P_h}{\partial\rho_s}^2.
	\end{align*}
	Evaluating the derivatives at the equitable $\bar\rho=q^{-1}\vec\Delta\vecone$ we obtain for any $i\geq3$,
\begin{align*}
	\frac{\partial P_h}{\partial\rho_s}\bigg|_{\bar\rho}&=\frac{hq^{-1}}{\vec\Delta q^{-1}}=\frac{h}{\vec\Delta},\\
	\frac{\partial^2 P_h}{\partial\rho_s\partial\rho_{s'}}\bigg|_{\bar\rho}&=\frac{h(h-1)q^{-1}}{\vec\Delta^2 q^{-1}}-\frac{h^2}{\vec\Delta^2}=-\frac{h}{\vec\Delta^2},&&(s\neq s') \\
	\frac{\partial^2 P_h}{\partial\rho_s^2}\bigg|_{\bar\rho}&=\frac{h(h-1)q^{-1}}{\vec\Delta^2 q^{-1}}-\frac{h^2}{\vec\Delta^2}=-\frac{h}{\vec\Delta^2}.
	\end{align*}
	Hence, the Jacobi matrix and the Hessian work out to be
	\begin{align}\label{eq2ndDeriv7}
	DP_h\bc{\bar\rho}&=\frac{h}{\vec\Delta}\vecone_q,&
	D^2P_h\bc{\bar\rho}&=-\frac{h}{\vec\Delta^2}\vecone_{q\times q}.
	\end{align}
	Furthermore, the third partial derivatives are clearly bounded, i.e.,
	\begin{align}\label{eq2ndDeriv8}
		\frac{\partial^3 P_h}{\partial\rho_s\partial\rho_{s'}\partial\rho_{s''}}&=O_\eps(1).
	\end{align}
	Since $\rho-\bar\rho\perp\vecone$, \eqref{eq2ndDeriv7}, \eqref{eq2ndDeriv8} and Taylor's theorem imply the assertion.
\end{proof}

\begin{claim}\label{lemma_rough_R}
	\Whp\ we have $\pr_{\fA}\brk{\fR(\rho)}=\Omega_\eps(n^{(1-q)/2})$.
\end{claim}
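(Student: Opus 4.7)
The plan is to recognise $\fR(\rho)$ as the event that a multinomial random vector equals its mean, and then to evaluate the resulting mass function via Stirling's formula. Conditional on $\fA$, both $\vm$ and the check degrees $\vk_1,\ldots,\vk_{\vm}$ are fixed, and on the event \eqref{deg_sums} the total $\vec\Delta=\sum_{i=1}^n\vd_i=\sum_{i=1}^{\vm}\vk_i$ is $\fA$-measurable. The coefficients $\vec\xi_{ij}$ with $i\in[\vm]$ and $j\in[\vk_i]$ are therefore $\vec\Delta$ mutually independent draws from the distribution $\vec\Delta^{-1}\rho$ on $\FF_q$, so the count vector $(\sum_{i,j}\vecone\{\vec\xi_{ij}=s\})_{s\in\FF_q}$ is multinomially distributed with $\vec\Delta$ trials and success probabilities $(\rho(s)/\vec\Delta)_{s\in\FF_q}$. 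Thus $\fR(\rho)$ asks precisely that this multinomial vector equal its mean, so we will be evaluating the mass function at its mode.

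Next I would write
$$\pr_{\fA}\brk{\fR(\rho)}=\vec\Delta!\prod_{s\in\FF_q}\frac{(\rho(s)/\vec\Delta)^{\rho(s)}}{\rho(s)!}$$
and apply Stirling's formula $n!=\sqrt{2\pi n}(n/\eul)^n(1+O(1/n))$ to each factorial. Because $\rho\in\fP_q(\eps)$, for $\eps<1/(2q)$ we have $\rho(s)\geq\vec\Delta/(2q)$ for every $s\in\FF_q$, so Stirling is legitimate throughout. After the routine cancellation of the $\eul^{-\vec\Delta}\prod_s\rho(s)^{\rho(s)}$ factors this will leave
$$\pr_{\fA}\brk{\fR(\rho)}=\bc{1+O_\eps(1/\vec\Delta)}\cdot\frac{1}{(2\pi)^{(q-1)/2}}\cdot\frac{\sqrt{\vec\Delta}}{\prod_{s\in\FF_q}\sqrt{\rho(s)}}.$$
Since each $\rho(s)=\Theta_\eps(\vec\Delta)$, the product in the denominator is $\Theta_\eps(\vec\Delta^{q/2})$, so the right-hand side is of order $\Omega_\eps(\vec\Delta^{(1-q)/2})$.

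To finish, I would invoke \Lem~\ref{Lemma_sums} (applicable because $\Erw[\vd^{2+\eta}]<\infty$) to conclude that $\vec\Delta\sim dn$ \whp, so that $\Omega_\eps(\vec\Delta^{(1-q)/2})=\Omega_\eps(n^{(1-q)/2})$ \whp, as claimed. There is no real obstacle here: the whole argument is a single application of Stirling once one observes that $\fR(\rho)$ targets the mean of the underlying multinomial. The only minor points to verify are that every $\rho(s)$ is strictly positive (automatic for $\rho\in\fP_q(\eps)$ with small $\eps$) and that $\vec\Delta$ concentrates around $dn$, both of which are standard.
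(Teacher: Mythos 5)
Your proposal is correct and follows essentially the same route as the paper: recognising $\pr_{\fA}\brk{\fR(\rho)}$ as a multinomial mass function evaluated at the mean and invoking Stirling's formula. The paper's proof is terse (it writes down the multinomial formula and says "the claim therefore follows from Stirling's formula"), while you spell out the cancellations and the need to verify $\rho(s)=\Theta_\eps(\vec\Delta)$ and $\vec\Delta=\Theta(n)$ \whp, but the underlying argument is identical.
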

\begin{proof}
	Since the $\vec\xi_{ij}$ are mutually independent, the probability of $\fR(\rho)$ given $\fA$ is nothing but
	\begin{align*}
		\pr_{\fA}\brk{\fR(\rho)}=\binom{\vec\Delta}{(\rho(s))_{s\in\FF_q}}\prod_{s\in\FF_q}\bcfr{\rho(s)}{\vec\Delta}^{\rho(s)}.
	\end{align*}
	The claim therefore follows from Stirling's formula.
\end{proof}

\begin{claim}\label{lemma_rough_RS}
	\Whp\ we have $\pr_{\fA}\brk{\fR(\rho)\mid\fS}=O_\eps(n^{(1-q)/2})$.
\end{claim}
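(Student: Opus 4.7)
The strategy is to recognise the conditional probability as a lattice local-limit quantity in $q-1$ dimensions. Since $\fS = \bigcap_{i=1}^{\vm} \fS_i$ factors across checks and the tuples $(\vec\xi_{ij})_{j \in [\vk_i]}$ attached to distinct checks are disjoint, conditional on $\fA$ and $\fS$ these tuples remain mutually independent. Writing $\vec Y_i = \bc{\sum_{j=1}^{\vk_i}\vecone\cbc{\vec\xi_{ij}=s}}_{s \in \FF_q}$ for the per-check frequency vector, the $\vec Y_i$ are thus conditionally independent, each supported on $\cbc{v \in \ZZ_{\geq 0}^{\FF_q} : \sum_s v_s = \vk_i}$, and the target probability rewrites as
\begin{align*}
	\pr_{\fA}\brk{\fR(\rho) \mid \fS} \;=\; \pr_{\fA}\brk{\sum_{i=1}^{\vm} \vec Y_i = \rho \;\middle|\; \fS}.
\end{align*}

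The next step is to invoke the local CLT on the $(q-1)$-dimensional affine hyperplane $\cbc{v \in \RR^{\FF_q} : \sum_s v_s = \vec\Delta}$. Because $\rho \in \fP_q(\eps)$ sits within $O(\eps)$ of $q^{-1}\vec\Delta\vecone$, the marginal $\rho/\vec\Delta$ of each $\vec\xi_{ij}$ is a small perturbation of the uniform distribution on $\FF_q$; consequently, for every fixed coefficient pattern the conditional covariance of $\vec Y_i \mid \fS_i$ along the hyperplane is non-degenerate and of order $\Theta(1)$. Combined with $\vm = \Theta(n)$ \whp\ and $\Erw[\vk^{2+\eta}] < \infty$, the total covariance of $\sum_i \vec Y_i$ along the hyperplane is of order $\Theta(n)$, and a standard lattice local CLT via Fourier inversion on the $(q-1)$-torus yields
\begin{align*}
	\pr_{\fA}\brk{\sum_{i=1}^{\vm} \vec Y_i = v \;\middle|\; \fS} \;=\; O\bc{n^{-(q-1)/2}} \;=\; O\bc{n^{(1-q)/2}}
\end{align*}
uniformly over admissible $v$; specialising to $v = \rho$ gives the claim.

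The main obstacle is verifying that the support of $\sum_i \vec Y_i$ genuinely spans a lattice of full rank $q-1$ inside the hyperplane, as otherwise the rate on the right would inflate. Modulo a deterministic shift, this support coincides with the Minkowski sum of the per-check lattices $\fM_q(\vec\chi_{i,1},\ldots,\vec\chi_{i,\vk_i})$ analysed in \Prop~\ref{prop_module}: checks with at least two distinct coefficients contribute the full lattice $\ZZ^{\FF_q^*}$ by \Lem~\ref{lemma_module'}, while all-equal-coefficient checks contribute the index-$q$ sublattice from \Lem~\ref{lemma_module}. A short case distinction, dividing on whether $\supp\vec\chi$ is a singleton or not, shows that \whp\ these contributions span the full $(q-1)$-dimensional lattice. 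On the Fourier side this full-rankness guarantees that each characteristic function $|\Erw[\eul^{i\scal{t}{\vec Y_i}} \mid \fS_i]|$ is bounded away from $1$ off a neighbourhood of the origin in the dual torus, delivering the exponential cutoff outside an $O(n^{-1/2})$-ball and the Gaussian $n^{-(q-1)/2}$ contribution inside it, which together produce the claimed bound.
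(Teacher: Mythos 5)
Your proof is correct and takes essentially the same route as the paper: you recognise that conditioning on $\fS$ preserves independence across checks, so the truncated frequency vector is a sum of $\vm$ independent lattice-valued random vectors in the hyperplane $\{\sum_s v_s = \vec\Delta\}$, and then apply a multivariate local limit theorem to get the $O(n^{(1-q)/2})$ upper bound. The paper's own proof is terser—it invokes the LCLT and cites~\cite{DavMc} without spelling out the lattice-rank verification—so the extra work you do is added rigour on the same path rather than a new argument. Your instinct to check that the support lattice has full rank $q-1$ before invoking the local CLT is well founded: a degenerate lattice would inflate the rate, and the paper itself acknowledges (in the appendix, regarding the closely related Claim~\ref{lemma_entropy_llt}) that the Davis--McDonald hypotheses are not automatically satisfied in this setting. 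Using \Prop~\ref{prop_module} to guarantee rank $q-1$ with bounded index (at most $q$, in the all-equal-coefficient case) is exactly the right way to close that gap; a bounded index does not hurt the $O(n^{(1-q)/2})$ bound since it only multiplies the per-point probability by a constant.
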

\begin{proof}
	The claim follows from the local limit theorem for the sums of independent random variables (e.g.~\cite{DavMc}).
	To elaborate, even once we condition on the event $\fS$ the random {\em vectors} $(\vec\xi_{ij})_{j\in[\vk_i]}$, $1\leq i\leq\vm$, remain independent for different $i\in[\vm]$ due to the independence of the $(\vec\xi_{ij})_{i,j}$.
	Indeed, $\fS$ only asks that each check be satisfied separately, without inducing dependencies among different checks.
	Thus, the vector
	\begin{align*}
		\bc{\sum_{i=1}^{\vm}\sum_{j=1}^{\vk_i}\vecone\cbc{\vec\xi_{ij}=s}}_{s\in\FF_q}\qquad\mbox{given }\fS
	\end{align*}
	is a sum of $\vm$ independent random vectors.
	The local limit theorem therefore implies that the probability of the most likely outcome of this random vector is of order $n^{(1-q)/2}$; in symbols,
	\begin{align}\label{eq_lemma_rough_RS}
		\max_{r\in\fP_q(\eps)}\pr_{\fA}\brk{\fR(r)\mid\fS}=O(n^{(1-q)/2}).
	\end{align}
	The assertion is an immediate consequence of \eqref{eq_lemma_rough_RS}.
\end{proof}

\begin{proof}[Proof of \Lem~\ref{lemma_far}]
	Fix $\rho\in\fP_q(\eps)$ such that $\omega\sqrt{\vec\Delta}\leq\sum_{s\in\FF_q}|\rho(s)-\vec\Delta/q|\leq\eps\vec\Delta$.
	Combining Claims~\ref{lemma_rough_S}--\ref{lemma_rough_RS} with Bayes' rule, we conclude that \whp\
	\begin{align}\label{eqlemma_far1}
		\pr_{\fA}\brk{\fS\mid\fR(\rho)}&=\frac{\pr_{\fA}\brk{\fS}\pr_{\fA}\brk{\fR(\rho)\mid\fS}}{\pr_{\fA}\brk{\fR(\rho)}}=O(\pr_{\fA}\brk{\fS})=q^{\vm(O(\sum_{s\in\FF_q}|\rho(s)/\vec\Delta-1/q|^3)-1)+O(1)}.
	\end{align}
	Consequently, \eqref{eqJane} and \eqref{eqlemma_far1} imply that
	\begin{align}\label{eqlemma_far2}
		\pr_{\fA}\brk{\vec\sigma\in\ker\A\mid\rho_{\vec\sigma}=\rho}=\pr_{\alg}\brk{\fS\mid\fR(\rho)}=q^{\vm(O(\sum_{s\in\FF_q}|\rho(s)/\vec\Delta-1/q|^3)-1)+O(1)}.
	\end{align}
	Hence, combining Claim~\ref{lemma_entropy_rough} with \eqref{eqlemma_far2} and using the bound $\sum_{s\in\FF_q}|\rho(s)-\vec\Delta/q|\leq\eps\vec\Delta$, we obtain
		\begin{align}\label{eqlemma_far3}
			\pr_{\fA}\brk{\vec\sigma\in\ker\A,\,\rho_{\vec\sigma}=\rho}=q^{\vm(O(\sum_{s\in\FF_q}|\rho(s)/\vec\Delta-1/q|^3)-(\Omega(\sum_{s\in\FF_q}|\rho(s)/\vec\Delta-1/q|^2)-1)+O(1)} =q^{\vm(-1-\Omega(\sum_{s\in\FF_q}|\rho(s)/\vec\Delta-1/q|^2)+O(1)}.
	\end{align}
	Multiplying \eqref{eqlemma_far3} with $q^n$ and summing on $\rho\in\fP_q(\eps)$ such that $\omega n^{-1/2} \vec\Delta \leq \sum_{s\in\FF_q}|\rho(s)-\vec \Delta/q|$, we finally obtain
	\begin{align*}
		\ex_{\fA}\brk{\cZ_{\omega n^{-1/2},\eps}}&=q^{n-\vm+O(1)}\sum_{\substack{\rho\in\fP\\\omega n^{-1/2} \vec\Delta\leq\sum_{s\in\FF_q}|\rho(s)-\vec\Delta/q| < \eps\vec\Delta}}\exp\bc{-\Omega\bc{n\sum_{s\in\FF_q}|\rho(s)/\vec\Delta-1/q|^2}}
		<\eps q^{n-\vm},
	\end{align*}
provided $\omega=\omega(\eps)>0$ is chosen large enough.
\end{proof}

\subsection{Proof of \Lem~\ref{lemma_near}}\label{sec_lemma_near}

By comparison to the proof of \Lem~\ref{lemma_far}, the main difference here is that we need to be more precise.
Specifically, while in Claims~\ref{lemma_rough_R} and~\ref{lemma_rough_RS} we got away with disregarding constant factors, here we need to be accurate up to a multiplicative $1+o(1)$.
Working out the probability term turns out to be delicate.
As in \Sec~\ref{sec_lemma_far} we introduce auxiliary $\FF_q$-valued random variables $\vec\xi=(\vec\xi_{ij})_{i,j\geq1}$.
These random variables are mutually independent as well as independent of all other randomness.
But this time all $\vec\xi_{ij}$ are {\em uniform} on $\FF_q$.
Let $\fR(\rho)$ and $\fS$ be the events from \eqref{eqRS}.

Similarly as in \Sec~\ref{sec_lemma_far} we will ultimately apply Bayes' rule to compute the probability of $\fS$ given $\fR(\rho)$ and hence the conditional mean of $\cZ_\rho$.
The individual probability $\fR(\rho)$ is easy to compute.

\begin{claim}\label{lemma_precise_R}
	For any $\rho\in\fP_q$ we have $\pr_{\fA}\brk{\fR(\rho)}=\binom{\vec\Delta}{\rho}q^{-\vec\Delta}$.
\end{claim}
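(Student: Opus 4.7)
The plan is to observe that the event $\fR(\rho)$ is a purely combinatorial constraint on an i.i.d.\ sequence of uniform $\FF_q$-valued random variables, so its probability is an elementary multinomial count.

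First, I would unpack what $\fA$ does and does not fix. The $\sigma$-algebra $\fA$ determines $\vm$ and the check degrees $(\vk_i)_{i \le \vm}$, and on the event \eqref{deg_sums} (which is implicit throughout the moment computation) we have $\sum_{i=1}^{\vm}\vk_i = \sum_{i=1}^{n}\vd_i = \vec\Delta$. Thus the total number of random variables $\vec\xi_{ij}$ with $i\in[\vm]$, $j\in[\vk_i]$ is $\fA$-measurable and equals $\vec\Delta$.

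Next, I would invoke that in this section (\Sec~\ref{sec_lemma_near}, as opposed to \Sec~\ref{sec_lemma_far}) the variables $\vec\xi_{ij}$ are independent and uniformly distributed on $\FF_q$, and are independent of all other randomness, hence independent of $\fA$. Therefore, conditionally on $\fA$, the vector $(\vec\xi_{ij})_{i\in[\vm],\,j\in[\vk_i]}$ is uniformly distributed on $\FF_q^{\vec\Delta}$.

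Finally, $\fR(\rho)$ is precisely the event that the empirical profile of this uniform vector over the alphabet $\FF_q$ equals $\rho=(\rho(s))_{s\in\FF_q}$. Since $\rho\in\fP_q$ satisfies $\sum_{s\in\FF_q}\rho(s)=\vec\Delta$, the number of configurations in $\FF_q^{\vec\Delta}$ with this profile is the multinomial coefficient $\binom{\vec\Delta}{\rho}$, and each such configuration has probability $q^{-\vec\Delta}$ under the uniform measure. Hence
\begin{equation*}
\pr_{\fA}\brk{\fR(\rho)} = \binom{\vec\Delta}{\rho}\, q^{-\vec\Delta},
\end{equation*}
which is the claim. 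No step is really an obstacle here; the only thing worth stating carefully is the measurability argument ensuring that $\vec\Delta$ and the number of $\vec\xi_{ij}$'s involved in $\fR(\rho)$ agree given $\fA$, so that we may factor out $q^{-\vec\Delta}$ cleanly.
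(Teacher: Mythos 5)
Your proof is correct and takes essentially the same approach as the paper: since the $\vec\xi_{ij}$ are i.i.d.\ uniform on $\FF_q$ and there are $\vec\Delta$ of them given $\fA$, the probability of hitting the profile $\rho$ is the multinomial count $\binom{\vec\Delta}{\rho}$ times $q^{-\vec\Delta}$. The additional remarks you make about $\fA$-measurability of $\vec\Delta$ are sound but left implicit in the paper's one-line argument.
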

\begin{proof}
	This is similar to the proof of Claim~\ref{lemma_rough_R}.
	As the $\vec\xi_{ij}$ are uniformly distributed and independent, we obtain
	\begin{align*}
		\pr_{\fA}\brk{\fR(\rho)}=\binom{\vec\Delta}{(\rho(s))_{s\in\FF_q}}\prod_{s\in\FF_q}q^{-\sum_{s\in\FF_q}\rho(s)}=\binom{\vec\Delta}{\rho}q^{-\vec\Delta},
	\end{align*}
	as claimed.
\end{proof}

As a next step we calculate the conditional probability of $\fS$ given $\fR(\rho)$.
Similar to (\ref{eqrho}), for $s\in\FF_q$ define the empirical frequency
\begin{align}\label{eqdefRHO}
	\vec\rho(s)=\sum_{i=1}^{\vec m}\sum_{j=1}^{\vec k_i} \vecone\cbc{\vec \xi_{ij}=s}
\end{align}
and let $\vec\rho=(\vec\rho(s))_{s\in\FF_q}$ as well as $\hat{\vec\rho}=(\vec\rho(s))_{s\in\FF_q^{\ast}}$. Of course, \Prop~\ref{prop_module} implies that for some $\rho\in\fP_q$ the event $\fS$ may be impossible given $\fR(\rho)$.
Hence, to characterise the distributions $\rho$ for which $\fS$ can occur at all, we let 
\begin{align}
	\fL&=\cbc{r\in\ZZ^{\FF_q^*}:\pr_{\fA}\brk{\mycheck{\vec\rho} = r}>0\mbox{ and }\norm{r-q^{-1}\vec\Delta\vecone}_1\leq\omega n^{-1/2}\vec\Delta}\label{eqL},\\
	\fL_0&=\cbc{r\in\fL:\pr_{\fA}\brk{\mycheck{\vec\rho}=r\mid\fS}>0}\label{eqL0},\\
	\fL_*&=\cbc{r\in\fL:\pr_{\fA}\brk{\mycheck\rho_{\vec\sigma}=r}>0}.\label{eqL*}
\end{align}
Thus, $\fL$ contains all conceivable outcomes of truncated frequency vectors.
Moreover, $\fL_0$ comprises those frequency vectors that can occur given $\fS$, and $\fL_*$ those that can result from random assignments $\vec\sigma$ to the variables.
Hence, $\fL_0$ is a finite subset of the $\ZZ$-module generated by those sets $\cS_q(\chi_1,\ldots,\chi_\ell)$ from \eqref{eqS} with $\vm(\chi_1,\ldots,\chi_\ell)>0$.
The following lemma shows that actually the conditional probability $\fS$ given $\fR(\rho)$ is asymptotically the same for all $\rho\in\fL_0$, i.e., for all conceivably satisfying $\rho$ that are nearly equitable.

\begin{lemma}\label{lemma_uniformly}
	\Whp\ uniformly for all $r \in\fL_0$ we have $\pr_{\alg}\brk{\fS\mid\mycheck{\vec\rho}=r}\sim q^{\vecone\{|\supp\vec\chi|=1\}-\vm}$. 
\end{lemma}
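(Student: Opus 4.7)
The plan is to apply Bayes' rule
\[
\pr_{\alg}[\fS \mid \hat{\vec\rho}=r] = \pr_{\alg}[\fS]\cdot \frac{\pr_{\alg}[\hat{\vec\rho}=r\mid\fS]}{\pr_{\alg}[\hat{\vec\rho}=r]}
\]
and to evaluate the three factors separately. Setting $\hat{\vec\rho}_i=(\sum_{j=1}^{\vk_i}\vecone\{\vec\xi_{ij}=s\})_{s\in\FF_q^*}$ we have $\hat{\vec\rho}=\sum_{i=1}^{\vm}\hat{\vec\rho}_i$ with independent blocks, and this independence persists under $\fS=\bigcap_i \fS_i$ because the latter is a product of single-block events. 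The first factor is immediate: since $\vec\chi_{ij}\in\FF_q^*$, fixing $\vec\xi_{i1},\ldots,\vec\xi_{i,\vk_i-1}$ the equation $\sum_j\vec\chi_{ij}\vec\xi_{ij}=0$ determines $\vec\xi_{i\vk_i}$ with probability $q^{-1}$, so $\pr_{\alg}[\fS]=q^{-\vm}$.

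The remaining ratio is to be computed by a multivariate local limit theorem (LLT) applied to $\hat{\vec\rho}$ under both the unconditional and the conditional laws. A short symmetry calculation shows that both distributions share the same per-block mean $(\vk_i/q)\vecone$ and the same per-block covariance $\vk_i(q^{-1}I-q^{-2}J)$: under $\fS_i$ each $\vec\xi_{ij}$ remains marginally uniform on $\FF_q$, because after fixing $\vec\xi_{ij}=s$ the remaining $\vk_i-1\ge 2$ uniform variables satisfy the single residual linear constraint with probability $q^{-1}$ regardless of $s$; an analogous two-point argument yields $\pr[\vec\xi_{ij}=s,\vec\xi_{ij'}=s'\mid\fS_i]=q^{-2}$ for $j\neq j'$, so variances and covariances match the unconditional case term-by-term.

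Hence the two LLT densities can differ only through the covolume of the support lattice. Unconditionally $\hat{\vec\rho}$ ranges over all of $\ZZ^{\FF_q^*}$, covolume $1$. Under $\fS$, $\hat{\vec\rho}$ lies in a coset of the $\ZZ$-module $\fM=\sum_{i=1}^{\vm}\fM_q(\vec\chi_{i1},\ldots,\vec\chi_{i\vk_i})$. Invoking \Prop~\ref{prop_module}: if $|\supp\vec\chi|=1$ then every check has all-equal coefficients, and by the \emph{furthermore} clause of \Lem~\ref{lemma_module} every summand coincides with a single rank-$(q-1)$ sublattice of covolume $q$, whence $\fM$ itself has covolume $q$. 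If $|\supp\vec\chi|\ge 2$ then, because $\vm=\Theta(n)$ and the coefficient vectors are independently sampled, whp given $\alg$ at least one check carries two distinct coefficients, so by \Lem~\ref{lemma_module'} that single summand already equals $\ZZ^{\FF_q^*}$ and $\fM=\ZZ^{\FF_q^*}$ has covolume $1$. In both regimes the covolume is $q^{\vecone\{|\supp\vec\chi|=1\}}$, which multiplied by $\pr_{\alg}[\fS]=q^{-\vm}$ yields the claimed constant.

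The main technical obstacle will be to establish the LLT uniformly for all $r\in\fL_0$ within $O(\omega n^{-1/2}\vec\Delta)$ of the mean, given that the summands are not identically distributed (random $\vk_i$) and have unbounded support. The approach is to truncate away the $o(n)$ checks with $\vk_i$ exceeding a large constant $k^*$, which is feasible at negligible cost by Chebyshev's inequality together with $\ex[\vk^{2+\eta}]<\infty$. The remaining sum consists of $\Theta(n)$ uniformly bounded lattice-valued independent summands of non-degenerate covariance, to which a Fourier-analytic Stone-type LLT (see~\cite{DavMc}) applies in both regimes and delivers the probabilities as the common Gaussian density multiplied by the respective lattice covolume. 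The Gaussian factors cancel in the ratio, leaving precisely $q^{\vecone\{|\supp\vec\chi|=1\}}$, which proves the lemma.
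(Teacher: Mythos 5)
Your high-level plan — Bayes' rule, the easy computation $\pr_{\alg}[\fS]=q^{-\vm}$, matching of mean and covariance under both laws, and the lattice-covolume factor $q^{\vecone\{|\supp\vec\chi|=1\}}$ supplied by \Prop~\ref{prop_module} and \Lem~\ref{lemma_gridcount} — is exactly right and matches the paper's outline. The gap is in the step you yourself flag as ``the main technical obstacle'': you propose to close it by truncating large $\vk_i$ and then invoking a Davis--McDonald/Stone-type local limit theorem for the conditional law of $\hat{\vec\rho}$ given $\fS$. The paper explicitly avoids this route, and for good reason. When $|\supp\vec\chi|=1$ the per-check increments $\hat{\vec\rho}_i$ given $\fS_i$ are supported on cosets of a proper sublattice (covolume $q$), and the summands are a mixture over check degrees and coefficient patterns; the aperiodicity / realisable-increment hypotheses of the standard Fourier LLT are not immediate and need to be verified uniformly in the $\alg$-measurable data and uniformly over all $r\in\fL_0$. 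Indeed, in the appendix the authors note precisely that the Davis--McDonald theorem ``does not imply [the analogous LLT] directly because a key assumption (that increments of vectors in each direction can be realised) is not satisfied here.'' Truncation also does not remove the difficulty cleanly: the checks with large $\vk_i$ are part of $\fS$ and contribute to $\hat{\vec\rho}$, so they must be controlled (as in Claim~\ref{fact_boxS}) rather than discarded, and one must then splice the bounded and unbounded parts back together at LLT precision, not merely CLT precision.

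What the paper actually does in place of a black-box LLT is an explicit \emph{swapping argument}: using the small-$\ell_1$-norm basis $\fb_1,\ldots,\fb_{q-1}$ from \Prop~\ref{prop_module}, it constructs (Lemma~\ref{lemma_transform}) a near-measure-preserving bijection between tame configurations contributing to $\{\hat{\vec\rho}=r\}$ and $\{\hat{\vec\rho}=r'\}$ for nearby $r,r'\in\fL_0(r_*,\eps)$, then combines this with a crude ``good base point exists'' step (\Lem~\ref{claim_tame3}) to obtain \Cor~\ref{cor_transform4}, which asserts that the conditional probabilities of nearby admissible points are asymptotically equal. This local flatness, together with the Berry--Esseen box estimate of Claim~\ref{fact_boxS} and the lattice-point count of \Lem~\ref{lemma_gridcount}, yields the conditional LLT without ever invoking a ready-made Fourier LLT. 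This is precisely where the $\ell_1\le3$ bound in \Prop~\ref{prop_module} earns its keep — it makes the swap move between lattice points cost $O_\eps(\eps(\omega+t))$ in log-likelihood and hence be negligible. You should replace your appeal to~\cite{DavMc} with this swapping construction (or verify the LLT hypotheses directly, which is substantially more work and still needs the explicit basis). The rest of your proposal is sound.
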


We complement \Lem~\ref{lemma_uniformly} by the following estimate of the probability that a uniformly random assignment $\vec\sigma\in\FF_q^n$ hits the set $\fL_0$ in the first place.

\begin{lemma}\label{lemma_hit}
	\Whp\ we have $\pr_{\alg}\brk{\hat{\rho}_{\vec\sigma}\in\fL_0}\leq(1+o(1))q^{-\vecone\{|\supp\vec\chi|=1\}}.$
\end{lemma}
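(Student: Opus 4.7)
My plan is to distinguish the cases $|\supp\vec\chi|=1$ and $|\supp\vec\chi|\geq 2$. In the latter case the asserted bound reduces to $1+o(1)$ and is trivial; indeed, \whp\ at least one check $i$ has a non-constant coefficient tuple $(\vec\chi_{i,1},\ldots,\vec\chi_{i,\vk_i})$ (by concentration, using $\pr[\vec\chi=\chi]>0$ for at least two distinct $\chi$), so Lemma~\ref{lemma_module'} yields $\fM_q(\vec\chi_{i,1},\ldots,\vec\chi_{i,\vk_i})=\ZZ^{\FF_q^*}$, and therefore the sum of check-wise modules is already the full lattice and $\fL_0$ exerts no further restriction.

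The work is in the case $\supp\vec\chi=\{\chi_0\}$. Since multiplying each check equation through by $\chi_0^{-1}$ leaves its solution set invariant and hence preserves frequency vectors, I may assume $\chi_0=1$. Then every check takes the form $\sigma_{i_1}+\cdots+\sigma_{i_\ell}=0$ in $\FF_q$, and Lemma~\ref{lemma_module} together with the explicit basis construction of \Sec~\ref{sec_module} identifies
$$\fM_q(\underbrace{1,\ldots,1}_{\ell})=\fM_q(1,1,1)=\fH:=\cbc{r\in\ZZ^{\FF_q^*}:\sum_{s\in\FF_q^*}s\cdot r_s=0\mbox{ in }\FF_q},$$
a sublattice of index exactly $q$. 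Because the all-zero vector is a frequency vector of a solution to every homogeneous check, the Minkowski sum of the check-wise solution frequency sets is contained in $\fH$, whence $\fL_0\subseteq\fH$.

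It remains to translate this containment into a bound on $\pr_{\fA}[\hat\rho_{\vec\sigma}\in\fL_0]$. The identity
$$\sum_{s\in\FF_q^*}s\cdot\hat\rho_{\vec\sigma}(s)=\sum_{i=1}^n\vd_i\sigma_i\quad\mbox{in }\FF_q$$
(where $\vd_i\sigma_i$ means $\sigma_i$ added to itself $\vd_i$ times, i.e.\ $(\vd_i\bmod p)\sigma_i$ in characteristic $p$) gives
$$\pr_{\fA}\brk{\hat\rho_{\vec\sigma}\in\fL_0}\leq\pr_{\fA}\brk{\sum_{i=1}^n\vd_i\sigma_i=0\mbox{ in }\FF_q}.$$
The hypothesis $\gcd(q,\fd)=1$ forces $p\nmid\fd=\gcd\supp(\vd)$, so some $d^*\in\supp(\vd)$ satisfies $d^*\not\equiv 0\pmod{p}$. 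Since $\pr[\vd=d^*]>0$, \Lem~\ref{Lemma_sums} gives $\#\{i:\vd_i=d^*\}=\Omega(n)$ \whp\ Conditioning on all $\sigma_j$ with $j\neq i^*$ for a fixed $i^*$ with $\vd_{i^*}=d^*$, the residual $(d^*\bmod p)\sigma_{i^*}$ is uniform on $\FF_q$ because multiplication by the nonzero element $d^*\bmod p\in\FF_p^\ast$ is a bijection of $\FF_q$; hence the whole sum is uniform and the target probability equals exactly $1/q=q^{-\vecone\{|\supp\vec\chi|=1\}}$. I expect the main obstacle to be the structural containment $\fL_0\subseteq\fH$, which is where Proposition~\ref{prop_module} (via Lemma~\ref{lemma_module}) genuinely enters the argument; everything afterwards reduces to a short linear-algebraic calculation enabled by the coprimality of $\fd$ and $q$.
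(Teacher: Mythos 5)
Your proof is correct and takes a genuinely different, substantially more elementary route than the paper's. The paper proves a full local limit theorem for $\hat\rho_{\vec\sigma}$ (Claim~\ref{lemma_entropy_llt}, with a separate appendix argument parallel to Claim~\ref{fact_boxS}), uses it to establish asymptotic uniformity of $\hat\rho_{\vec\sigma}$ on the lattice $\fd\ZZ^{\FF_q^*}$ near $q^{-1}\vec\Delta\vecone$, and then compares lattice densities via Lemma~\ref{lemma_gridcount}: the key algebraic step is the Cramer-rule argument showing $\fM\cap\fd\ZZ^{\FF_q^*}=\fd\fM$ when $\gcd(\fd,q)=1$, which drops the index-$q$ factor from the ratio. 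You instead observe that in the $|\supp\vec\chi|=1$ case the module $\fM_q(1,\ldots,1)$, and a fortiori $\fL_0$, sits inside the hyperplane sublattice $\fH=\{r:\sum_s s\,r_s=0\text{ in }\FF_q\}$ (this containment does not even need the full content of Lemma~\ref{lemma_module}, only that each check's frequency vector lies in $\fH$), and that the event $\{\hat\rho_{\vec\sigma}\in\fH\}$ translates to the scalar equation $\sum_i\vd_i\vec\sigma_i=0$ in $\FF_q$. Conditioning on all coordinates but one $i^*$ with $p\nmid\vd_{i^*}$ — which exists whp precisely because $\gcd(\fd,q)=1$ guarantees some $d^*\in\supp(\vd)$ with $p\nmid d^*$ — makes the sum exactly uniform on $\FF_q$, giving the bound $q^{-1}$ with no error term. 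This avoids the LLT machinery entirely; the trade-off is that your argument only delivers an upper bound, whereas the paper's framework would (in principle) also yield a matching lower bound. But since Lemma~\ref{lemma_near} only needs the upper bound, your version suffices and is cleaner. One minor remark: the appeal to Lemma~\ref{lemma_module'} in your $|\supp\vec\chi|\geq2$ case is decorative — the trivial bound $\pr_\fA[\,\cdot\,]\leq1$ already closes that case — but it is not incorrect.
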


\noindent
We prove \Lem s~\ref{lemma_uniformly} and~\ref{lemma_hit} in \Sec s~\ref{sec_uniformly} and~\ref{sec_hit}, respectively.

\begin{proof}[Proof of \Lem~\ref{lemma_near}]
	The formula \eqref{eqJane} extends to the present auxiliary probability space with uniformly distributed and independent $\vec\xi_{ij}$ (for precisely the same reasons given in \Sec~\ref{sec_lemma_far}).
	Hence, \eqref{eqJane}, \eqref{eqL} and \eqref{eqL0} show that
	\begin{align}\label{eqlemma_near1}
		\ex[\cZ_{\omega n^{-1/2}}\mid\alg]&\leq\sum_{\sigma\in\FF_q^n}\vecone\cbc{\hat\rho_\sigma\in\fL}\pr_{\alg}\brk{\fS\mid\hat{\vec\rho}=\hat\rho_{\sigma}}=\sum_{\sigma\in\FF_q^n}\vecone\cbc{\hat\rho_\sigma\in\fL_0}\pr_{\alg}\brk{\fS\mid\hat{\vec\rho}=\hat\rho_{\sigma}}.
		\end{align}
	Finally, combining \eqref{eqlemma_near1} with \Lem~\ref{lemma_uniformly} and \Lem~\ref{lemma_hit}, we obtain
\begin{align*}
		\ex[\cZ_{\omega n^{-1/2}}\mid\alg]&\leq (1+o(1))q^{\vecone\cbc{|\supp\vec\chi|=1}-\vec m}\sum_{\sigma\in\FF_q^n}\vecone\cbc{\hat\rho_\sigma\in\fL_0}\\
										  &=(1+o(1))q^{n-\vm+\vecone\cbc{|\supp\vec\chi|=1}}\pr_{\alg}\brk{\hat{\rho}_{\vec\sigma}\in\fL_0}\leq(1+o(1))q^{n-\vm},
		\end{align*}
	as desired.
\end{proof}

\subsection{Proof of \Lem~\ref{lemma_uniformly}}\label{sec_uniformly}

\noindent
Given $\omega>0$ (from \eqref{eqL}) we choose $\eps_0=\eps_0(\omega,q)$ sufficiently small and let $0<\eps<\eps_0$. 
Moreover, recall that we assume the existence of a constant $\eta>0$ such that $\Erw[\vec d^{2+\eta}] + \Erw[\vec k^{2+\eta}] < \infty$.
The proof hinges on a careful analysis of the conditional distribution of $\mycheck{\vec\rho}$ given $\fS$.
We begin by observing that the vector $\mycheck{\vec\rho}$ is asymptotically normal given $\fS$.
Let $\vec I_{q-1}$ the $(q-1)\times(q-1)$-identity matrix and let $\vN\in\RR^{\FF_q^*}$ be a Gaussian vector with zero mean and covariance matrix 
	\begin{align}\label{eqCmatrix}
		\cC=q^{-1}\vec I_{q-1}-q^{-2}\vecone_{(q-1)\times(q-1)}.
	\end{align}

\begin{claim}\label{fact_boxS}
	There exists a function $\alpha=\alpha(n,q,\eta)=o(1)$ such that for all axis-aligned cubes $U\subset\RR^{\FF_q^*}$ we have
	$$\ex\abs{\pr_{\fA}\brk{\vec\Delta^{-1/2}(\mycheck{\vec\rho}-q^{-1}\vec\Delta\vecone)\in U\mid\fS}-\pr\brk{\vN\in U}}\leq\alpha.$$
\end{claim}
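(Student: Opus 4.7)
The plan is to recognise that, conditional on $\fA$ and $\fS$, the truncated frequency vector $\mycheck{\vec\rho}$ is a sum of independent random vectors indexed by the checks, and then to invoke a multivariate Berry--Esseen-type theorem. Since $\fS=\bigcap_{i=1}^{\vm}\{\sum_{j=1}^{\vk_i}\vec\chi_{ij}\vec\xi_{ij}=0\}$ factorises over checks and the $(\vec\xi_{ij})_{ij}$ are jointly independent while the coefficients $\vec\chi_{ij}$ are $\fA$-measurable, the blocks $(\vec\xi_{ij})_{j\in[\vk_i]}$ remain mutually independent across $i$ under $\pr_{\fA}[\,\cdot\,\mid\fS]$. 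Writing $\vY_i=(\sum_{j=1}^{\vk_i}\vecone\{\vec\xi_{ij}=s\})_{s\in\FF_q^{\ast}}$, we thus obtain $\mycheck{\vec\rho}=\sum_{i=1}^{\vm}\vY_i$ as an independent sum under this conditional measure.

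First I would compute the first two moments of each $\vY_i$ under the conditioning. A direct symmetry count shows that for any coefficients $\chi_1,\dots,\chi_k\in\FF_q^{\ast}$ with $k\geq 3$, conditional on $\sum_j\chi_j\xi_j=0$, each single $\xi_j$ is marginally uniform on $\FF_q$ and any pair $(\xi_j,\xi_{j'})$ with $j\neq j'$ is jointly uniform on $\FF_q\times\FF_q$: fixing one (respectively two) positions leaves $q^{k-2}$ (respectively $q^{k-3}$) completions, independently of the fixed values. Hence $\ex[\vY_i\mid\fS]=\vk_i q^{-1}\vecone$ and $\Cov(\vY_i\mid\fS)=\vk_i\cC$ with $\cC$ as in~\eqref{eqCmatrix}. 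Summing over $i$ and using $\sum_i\vk_i=\vec\Delta$, the centred and scaled vector $\vec\Delta^{-1/2}(\mycheck{\vec\rho}-q^{-1}\vec\Delta\vecone)$ has mean zero and covariance exactly $\cC$, matching the target Gaussian $\vN$.

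Next I would invoke a multivariate Berry--Esseen bound of Bhattacharya--Bentkus type for convex sets, which in particular covers axis-aligned cubes. The matrix $\cC$ restricted to $\FF_q^{\ast}$ is strictly positive definite (its eigenvalues are $q^{-1}$ with multiplicity $q-2$ and $q^{-2}$ with multiplicity one), and the pointwise bound $\|\vY_i-\vk_i q^{-1}\vecone\|_\infty\leq\vk_i$ yields $\ex[\|\vY_i-\ex[\vY_i\mid\fS]\|^{2+\eta}\mid\fS]=O(\vk_i^{2+\eta})$. Consequently the Lyapunov-type ratio
\[
L_n := \vec\Delta^{-(2+\eta)/2}\sum_{i=1}^{\vm}\ex\bigl[\|\vY_i-\ex[\vY_i\mid\fS]\|^{2+\eta}\bigm|\fS\bigr]
\]
is $O\bigl(\sum_{i}\vk_i^{2+\eta}/n^{1+\eta/2}\bigr)=O(n^{-\eta/2})$ \whp, using $\ex[\vk^{2+\eta}]<\infty$, $\vm=\Theta(n)$ \whp, and Lemma~\ref{Lemma_sums} applied to $\vk_i^{2+\eta}$. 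The Berry--Esseen bound then gives an $O(L_n)$ error uniformly over axis-aligned cubes \whp, and a bounded-convergence argument on the outer expectation produces $\alpha(n,q,\eta)=o(1)$.

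The main obstacle is the interplay between the $\fA$-randomness, which determines the summands' distributions, and the conditional measure $\pr_{\fA}[\,\cdot\,\mid\fS]$: both the exact covariance $\vec\Delta\cC$ and the Lyapunov sum depend on the random degrees $(\vk_i)_i$ and on $\vm$, so all moment estimates must hold simultaneously with high probability, which will require invoking Lemma~\ref{Lemma_sums} twice (for $\vk_i$ and for $\vk_i^{2+\eta}$). A subtler conceptual point, already visible in the proof of Claim~\ref{lemma_rough_RS}, is that the within-check pairwise independence is \emph{blind} to the particular coefficient pattern $(\vec\chi_{ij})_{j}$: each $\vY_i$ contributes the same shape of covariance $\vk_i\cC$ regardless of which pattern it realises, which is precisely what makes the limiting Gaussian $\vN$ depend only on $q$ and not on the distribution of $\vec\chi$.
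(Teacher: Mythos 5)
Your proposal is correct in substance, but it takes a genuinely different route from the paper's proof, and contains one small technical slip worth flagging.

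\textbf{Comparison to the paper.} The paper's proof verifies the mean and covariance the same way (via the observation that, given $\fS$, any $\vk_i-1$ of the $\vec\xi_{ij}$ within a check are jointly uniform, hence any pair is jointly uniform — equivalent to your ``$q^{k-2}$/$q^{k-3}$ completions'' count), but then \emph{truncates} at a large degree threshold $K$: it applies a classical third-moment Berry--Esseen bound (Raič) to $\hat{\vec\rho}'$, the part contributed by checks with $\vk_i\le K$, and controls the tail $\hat{\vec\rho}''$ by Chebyshev. You avoid the truncation by invoking a Lyapunov-type multivariate Berry--Esseen bound for convex sets with $(2+\eta)$-moments. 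This is cleaner if one has such a result at hand, but it relies on a somewhat less standard reference than the third-moment bound the paper cites; the paper's truncation strategy is the way to reduce to the classical third-moment case when only $\Erw[\vk^{2+\eta}]<\infty$ is assumed. Both approaches are valid.

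\textbf{A small gap.} You invoke Lemma~\ref{Lemma_sums} applied to $\vk_i^{2+\eta}$ to control $\sum_i\vk_i^{2+\eta}$. That lemma requires $\Erw[\vec\lambda^r]<\infty$ for some $r>2$, which for $\vec\lambda=\vk^{2+\eta}$ would need $\Erw[\vk^{(2+\eta)r}]<\infty$ with $(2+\eta)r>4$; this is beyond the assumed $\Erw[\vk^{2+\eta}]<\infty$. The lemma therefore does not apply. Fortunately the conclusion you need is weaker: $\Erw\brk{\sum_{i\le\vm}\vk_i^{2+\eta}}=\Erw[\vm]\,\Erw[\vk^{2+\eta}]=\Theta(n)$, so by Markov's inequality $\sum_i\vk_i^{2+\eta}=O(n)$ with probability $1-\delta$ for any fixed $\delta>0$, and hence $L_n=O_P(n^{-\eta/2})\to0$ in probability. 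Since the conditional total-variation discrepancy is bounded by $1$, this in-probability bound still feeds into the bounded-convergence step on the outer expectation and yields $\alpha=o(1)$, so the argument survives — but the justification should use Markov (or the weak law of large numbers), not Lemma~\ref{Lemma_sums}.
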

\begin{proof}
	The conditional mean of $\mycheck{\vec\rho}$ given $\fS$ is uniform.
	To see this, consider any $i\in[\vm]$ and $h\in[\vk_i]$.
	We claim that for any vector $(\tau_j)_{j\in[\vk_i]\setminus\{h\}}$,
		\begin{align}\label{eqfact_boxS}
			\pr_{\fA}\brk{\forall j\in[\vk_i]\setminus\{h\}:\vec\xi_{ij}=\tau_j\mid\fS}&\sim q^{1-\vk_i}.
		\end{align}
	Indeed, for any such vector $(\tau_j)_{j\in[\vk_i]\setminus\{h\}}$ there is exactly one value $\vec\xi_{ih}$ that will satisfy the constraint, namely
		\begin{align*}
\vec\xi_{ih}=-\vec\chi_{ih}^{-1}\sum_{j\in[\vk_i]\setminus\{h\}}\vec\chi_{ij}\tau_j.
			\end{align*}
	Hence, given $\fS$ the events $\{\forall j\in[\vk_i]\setminus\{h\}:\vec\xi_{ij}=\tau_j\}$ are equally likely for all $\tau$, which implies \eqref{eqfact_boxS}.
	Furthermore, together with the definition \eqref{eqdefRHO} of $\vec\rho$, \eqref{eqfact_boxS} readily implies that $\ex_{\fA}\brk{\mycheck{\vec\rho}}=q^{-1}\vec\Delta\vecone$.
	Similarly, \eqref{eqfact_boxS} also shows that $\vec\Delta^{-1/2}\hat{\vec\rho}$ has covariance matrix $\cC$.

	Finally, we are left to prove the desired uniform convergence to the normal distribution.
	To this end we employ the multivariate Berry-Esseen theorem (e.g., \cite{Raic}).
	Specifically, given a small $\alpha>0$ choose $K=K(q,\eta,\alpha)>0$ and $m_0=m_0(K)$, $n_0=n_0(K,m_0)$ sufficiently large.
	Assuming $n>n_0$, we can ensure that \whp\ $\vm>m_0$.
	Also let
	\begin{align*}
		\vk_i'&=\vecone\{\vk_i\leq K\}\vk_i,&\vk_i''&=\vk_i-\vk_i',\\
		\hat{\vec\rho}'(s)&=\sum_{1\leq i\leq \vm: \vk_i\leq K}\sum_{j=1}^{\vk_i}\vecone\{\vec\xi_{ij}=s\},&
		\hat{\vec\rho}''(s)&=\sum_{1\leq i\leq \vm: \vk_i>K}\sum_{j=1}^{\vk_i}\vecone\{\vec\xi_{ij}=s\},\\
		%			\cK&=\sum_{i=1}^n\cK'&=\sum_{i=1}^n{\vk_i'}^2<\alpha,&\cK''&=\sum_{i=1}^n{\vk_i''}^2<\alpha,\\
		\vec\Delta'&=\sum_{i=1}^n\vk_i',&\vec\Delta''&=\sum_{i=1}^n\vk_i''.
	\end{align*}
	Then the assumtion $\ex[\vk^{2+\eta}]<\infty$ and Markov's inequality ensure that \whp
	\begin{align}\label{eqkpedestrian3}
		\vec\Delta''&<\alpha^8\vec\Delta.
	\end{align}
	Moreover, by the same reasoning as in the previous paragraph the random vectors $\hat{\vec\rho}'$ and $\hat{\vec\rho}''$ have means $q^{-1}\vec\Delta'$ and $q^{-1}\vec\Delta''$ and covariances $\vec\Delta'\cC$ and $\vec\Delta''\cC$, respectively.
	Thus, \eqref{eqkpedestrian3} and Chebyshev's inequality show that \whp
	\begin{align}\label{eqkpedestrian5}
		\pr_{\fA}\brk{\norm{\frac{\mycheck{\vec\rho}''-q^{-1}\vec\Delta''\vecone}{\vec\Delta^{1/2}}}>\alpha^2}&<\alpha^2.
	\end{align}
	Further, the Berry--Esseen theorem shows that \whp\ 
	\begin{align}\label{eqkpedestrian4}
		\pr_{\fA}\brk{\frac{\mycheck{\vec\rho}'-q^{-1}\vec\Delta'\vecone}{\sqrt{\vec\Delta'}}\in U}-\pr\brk{\vN\in U}&=O(n^{-1/2})\qquad\mbox{for all cubes }U.
	\end{align}
	Combining \eqref{eqkpedestrian4} and \eqref{eqkpedestrian5}, we see that \whp
	\begin{align}\label{eqkpedestrian6}\abs{\pr_{\fA}\brk{\frac{\mycheck{\vec\rho}-q^{-1}\vec\Delta\vecone}{\sqrt{\vec\Delta}}\in U}-\pr\brk{\vN\in U}}\leq \alpha.\end{align}
	The assertion follows from \eqref{eqkpedestrian6} by taking $\alpha\to0$ slowly as $n\to\infty$.
\end{proof}

The following claim states that the normal approximation from Claim~\ref{fact_boxS} also holds for the unconditional random vector $\hat{\vec\rho}$.

\begin{claim}\label{fact_box}
%	\Whp\ for any $C>0$, $\eps>0$ we have, uniformly for all $\rho\in\cP(\FF_q)$ with $\|\rho-q^{-1}\vec\Delta\vecone\|_\infty<C\sqrt n$,
	There exists a function $\alpha=\alpha(n,q,\eta)=o(1)$ such that \whp\ for all convex sets $U\subset\RR^{\FF_q^*}$ we have
	$$\abs{\pr_{\fA}\brk{\vec\Delta^{-1/2}(\mycheck{\vec\rho}-q^{-1}\vec\Delta\vecone)\in U}-\pr\brk{\vN\in U}}\leq\alpha.$$
\end{claim}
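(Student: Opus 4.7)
The plan is to observe that, without the conditioning on $\fS$, the situation is essentially the classical multinomial CLT, so we can apply a multivariate Berry--Esseen estimate directly.

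First, since the $\vec\xi_{ij}$ are unconditionally i.i.d.\ uniform on $\FF_q$, we can rewrite
\begin{align*}
	\mycheck{\vec\rho} = \sum_{i=1}^{\vm}\sum_{j=1}^{\vk_i} \vY_{ij},\qquad\mbox{where}\qquad \vY_{ij} = \bc{\vecone\{\vec\xi_{ij}=s\}}_{s\in\FF_q^*}\in\{0,1\}^{\FF_q^*},
\end{align*}
and the $\vY_{ij}$ are i.i.d.\ across all pairs $(i,j)$. A direct computation (identical to the one carried out in the proof of Claim~\ref{fact_boxS}) yields $\ex \vY_{ij}=q^{-1}\vecone_{q-1}$ and $\Cov(\vY_{ij})=\cC$ with $\cC$ as in \eqref{eqCmatrix}. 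Hence, conditionally on $\fA$, the vector $\mycheck{\vec\rho}$ is a sum of $\vec\Delta$ i.i.d.\ bounded centred-after-shift random vectors in $\RR^{\FF_q^*}$ of fixed dimension $q-1$, with non-singular covariance $\cC$.

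Next, I would invoke the multivariate Berry--Esseen bound for convex sets of Bentkus (\emph{Ann.\ Probab.}\ 2003), which asserts that for a sum $\vS_N$ of $N$ i.i.d.\ centred random vectors in $\RR^{q-1}$ with covariance $\cC$ and bounded third moment, and $\vN$ a centred Gaussian with covariance $\cC$, one has
\begin{align*}
	\sup_{U\text{ convex}}\abs{\pr\brk{N^{-1/2}\vS_N\in U}-\pr\brk{\vN\in U}} = O\bc{N^{-1/2}},
\end{align*}
where the implicit constant depends only on $q$ (through the dimension and on $\|\cC^{-1}\|$ and the uniform bound on the summands). Apply this with $N=\vec\Delta$ and the centred summands $\vY_{ij}-q^{-1}\vecone_{q-1}$.

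Finally I would discharge the randomness in $\vec\Delta$. Since $\ex[\vd]=d>0$ and $\ex[\vd^{2+\eta}]<\infty$, Lemma~\ref{Lemma_sums} gives $\vec\Delta=(d+o(1))n$ \whp, in particular $\vec\Delta\geq dn/2$ \whp\ Combining this with the Berry--Esseen estimate of the previous paragraph furnishes, \whp\ on $\fA$,
\begin{align*}
	\sup_{U\text{ convex}}\abs{\pr_{\fA}\brk{\vec\Delta^{-1/2}(\mycheck{\vec\rho}-q^{-1}\vec\Delta\vecone)\in U}-\pr\brk{\vN\in U}}=O\bc{n^{-1/2}},
\end{align*}
which is the desired bound with $\alpha=O(n^{-1/2})=o(1)$. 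The only real subtlety is that Bentkus--type bounds for convex (rather than axis-aligned) sets are genuinely stronger than what was used in Claim~\ref{fact_boxS}; but since the summands are bounded and the dimension $q-1$ is fixed, no delicate truncation of large-degree contributions is necessary here, and the $O(n^{-1/2})$ rate is automatic.
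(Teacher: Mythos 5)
Your proof is correct, and it takes a genuinely different route from the paper's. The paper dispatches Claim~\ref{fact_box} in one line by invoking Claim~\ref{lemma_precise_R}, which identifies the law of $\mycheck{\vec\rho}$ given $\fA$ as exactly $\mathrm{Multinomial}(\vec\Delta;q^{-1},\ldots,q^{-1})$, and then appeals to Stirling's formula to obtain the Gaussian approximation; the uniformity over convex sets is absorbed into the word ``immediate'' and rests on summing the local approximation over lattice points. You instead bypass the exact multinomial formula entirely: you note that $\mycheck{\vec\rho}$ is a sum of $\vec\Delta$ i.i.d.\ bounded indicator vectors with nonsingular covariance $\cC$ (eigenvalues $q^{-2}$ and $q^{-1}$, so $\det\cC=q^{-q}\neq0$), and invoke Bentkus's multivariate Berry--Esseen bound for convex sets to get the rate $O(n^{-1/2})$ outright. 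Your observation that no truncation of large degrees is needed here — in contrast with Claim~\ref{fact_boxS}, where the conditioning on $\fS$ forces one to sum over $\vm$ blocks of unbounded size $\vk_i$ rather than $\vec\Delta$ bounded singletons — is exactly right and is the structural reason the unconditional claim is easier. The trade-off is that the paper's route is elementary and self-contained (at the cost of leaving the local-to-global step implicit), whereas yours is cleaner but outsources the hard work to a nontrivial external theorem; your version also makes the uniformity over all convex $U$ genuinely explicit, which the paper's Stirling argument delivers less transparently. As a small remark, you get away with only a second moment assumption on $\vd$, so the dependence on $\eta$ in the statement is vacuous in your argument — this is consistent with the paper, which only needed $\eta$ in the conditional Claim~\ref{fact_boxS}.
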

\begin{proof}
	This is an immediate consequence of Claim~\ref{lemma_precise_R} and Stirling's formula.
\end{proof}

Let $k_0=\min\supp\vk$.
In the case that $|\supp\vec\chi|=1$ we set $\chi_1=\cdots=\chi_{k_0}$ to the single element of $\supp\vec\chi$.
Moreover, in the case that $|\supp\vec\chi|>1$ we pick and fix any $\chi_1,\ldots,\chi_{k_0}\in\supp\vec\chi$ such that $|\{\chi_1,\ldots,\chi_{k_0}\}|>1$.
Let $\fI_0$ be the set of all $i\in[\vm]$ such that $\vk_i=k_0$ and $\vec\chi_{ij}=\chi_j$ for $j=1,\ldots,k_0$ and let $\fI_1=[\vm]\setminus\fI_0$.
Then $|\fI_0|,|\fI_1|=\Theta(n)$ \whp\
Further, set
\begin{align*}
	\vr_0(s)&=\sum_{i\in\fI_0}\sum_{j\in[\vk_i]}\vecone\cbc{\vec\xi_{ij}=s},&
	\vr_1(s)&=\sum_{i\in\fI_1}\sum_{j\in[\vk_i]}\vecone\cbc{\vec\xi_{ij}=s}&&(s\in\FF_q^*).
\end{align*}
Then $\mycheck{\vec\rho}=\vr_0+\vr_1$.

Because the vectors $\vec\xi_i=(\vec\xi_{i,1},\ldots,\vec\xi_{i,\vk_i})$ are mutually independent, so are $\vr_0=(\vr_0(s))_{s\in\FF_q^*}$ and $\vr_1=(\vr_1(s))_{s\in\FF_q^*}$.
To analyse $\vr_0$ precisely, let 
\begin{align*}
	\cS_0&=\cbc{\sigma\in\FF_q^{k_0}:\sum_{i=1}^{k_0}\chi_i\sigma_i=0}.
\end{align*}
Moreover, for $\sigma\in\cS_0$ let $\vec R_\sigma$ be the number of indices $i\in\fI_0$ such that $\vec\xi_i=\sigma$.
Then conditionally on $\fS$, we have 
\begin{align*}
	\vec r_0(s)&=\sum_{i\in\fI_0}\sum_{j\in[\vk_i]}\vecone\cbc{\vec\xi_{ij}=s}=\sum_{\sigma\in\cS_0}\sum_{j=1}^{k_0}\vecone\cbc{\sigma_j=s}\vR_\sigma\qquad\mbox{given }\fS,
\end{align*}
which reduces our task to the investigation of $\vR=(\vR_\sigma)_{\sigma\in\cS_0}$.

This is not too difficult because given  $\fS$ the random vector $\vR$ has a multinomial distribution with parameter $|\fI_0|$ and uniform probabilities $|\cS_0|^{-1}$.
In effect, the individual entries $\vR(\sigma)$, $\sigma\in\fS_0$, will typically differ by only a few standard deviations, i.e., their typically difference will be of order $O(\sqrt{\vec\Delta})$.
We require a precise quantitative version of this statement.

Recalling the sets from \eqref{eqL}--\eqref{eqL*}, for $r_*\in\fL_0$ and $0<\eps<\eps_0$ we let
\begin{align*}
	\fL_0(r_*,\eps)&=\cbc{r\in\fL_0:\|r-r_*\|_\infty<\eps\sqrt{\vec\Delta}}.
\end{align*}
Furthermore, we say that $\vR$ is {\em $t$-tame} if $|\vR_\sigma-|\cS_0|^{-1}|\fI_0||\leq t\sqrt{\vec\Delta}$ for all $\sigma\in\cS_0$.
Let $\fT(t)$ be the event that $\vR$ is $t$-tame.

\begin{lemma}\label{claim_tame3}
	\Whp\ for every $r_*\in\fL_0$ there exists $r^*\in\fL_0(r_*,\eps)$ such that
	\begin{align}\label{eq_claim_tame3}
	\pr_{\fA}\brk{\mycheck{\vec\rho}=r^*\mid\fS}&\geq\frac{1}{2|\fL_0(r_*,\eps)|}&\mbox{and}&& \pr_{\fA}\brk{\fT(-\log\eps)\mid\fS,\,\mycheck{\vec\rho}=r^*}&\geq1-\eps^4.
\end{align}
\end{lemma}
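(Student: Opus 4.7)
The plan is to combine a multinomial concentration bound that makes $\fT(-\log\eps)$ overwhelmingly likely given $\fS$ with a local limit theorem for the conditional distribution of $\mycheck{\vec\rho}$ given $\fS$, and then glue them together via a Markov-then-pigeonhole argument. All probabilistic statements will be for typical $\fA$, which is enough since $|\fI_0|,|\fI_1|=\Theta(n)$ and Claim~\ref{fact_boxS} both hold \whp.

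First I will establish multinomial concentration for $\vR$ given $\fS$. Conditionally on $\fS$, the independent vectors $\vec\xi_i$ for $i\in\fI_0$ are i.i.d.\ uniform on $\cS_0$, so $\vR=(\vR_\sigma)_{\sigma\in\cS_0}$ is a multinomial vector with $|\fI_0|$ trials and cell probabilities $|\cS_0|^{-1}$. Since $|\fI_0|=\Theta(\vec\Delta)=\Theta(n)$, a standard Chernoff--Bernstein bound gives
\[\pr_{\fA}\brk{\overline{\fT(-\log\eps)}\mid\fS}\le 2|\cS_0|\exp\bc{-c(\log\eps)^2}\le\eps^{10}\]
for some $c=c(\omega,q)>0$ and all $\eps<\eps_0$ small enough. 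Define the bad set $\cB=\cbc{r\in\fL_0:\pr_{\fA}[\overline{\fT(-\log\eps)}\mid\fS,\mycheck{\vec\rho}=r]>\eps^{4}}$. Applying Markov to the conditional expectation then yields $\pr_{\fA}\brk{\mycheck{\vec\rho}\in\cB\mid\fS}\le\eps^{6}$, so $\cB$ accounts for a negligible fraction of the conditional mass.

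In parallel, I will upgrade the Berry--Esseen estimate of Claim~\ref{fact_boxS} to a pointwise local limit theorem. Conditionally on $\fS$, $\mycheck{\vec\rho}=\sum_{i\in[\vm]}\hat{\vec\xi}_i$ is a sum of $\vm$ independent lattice-valued vectors, and by \Prop~\ref{prop_module} the supporting lattice has covolume $V_\Lambda=q^{\vecone\{|\supp\vec\chi|=1\}}$. The multivariate LLT for sums of independent lattice vectors then yields, uniformly in $r\in\fL_0(r_*,\eps)$,
\[\pr_{\fA}\brk{\mycheck{\vec\rho}=r\mid\fS}=(1+o_\eps(1))\,\frac{V_\Lambda}{(2\pi\vec\Delta)^{(q-1)/2}\sqrt{\det\cC}}\exp\bc{-Q(r)},\]
with $Q(r)$ the quadratic form induced by $\cC^{-1}$ at the scale $\sqrt{\vec\Delta}$. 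Since $\|r_*-q^{-1}\vec\Delta\vecone\|_1=O(\omega\sqrt{\vec\Delta})$ and the diameter of $\fL_0(r_*,\eps)$ is $O_\eps(\sqrt{\vec\Delta})$, the quadratic form $Q$ varies by at most $O_\eps(1)$ over $\fL_0(r_*,\eps)$, so all points in this ball carry conditional mass comparable to each other. Combined with Lemma~\ref{lemma_gridcount}, which gives $|\fL_0(r_*,\eps)|\sim(2\eps\sqrt{\vec\Delta})^{q-1}/V_\Lambda$, this pins down the per-point probability at the desired scale.

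To conclude, remove from $\fL_0(r_*,\eps)$ the exceptional set $\cB$, whose total conditional mass is at most $\eps^6$, negligible compared to the mass the LLT assigns to the full ball. On the complement, every point carries the pointwise mass predicted by the LLT, so by pigeonhole there is at least one $r^*\in\fL_0(r_*,\eps)\sm\cB$ for which $\pr_{\fA}[\mycheck{\vec\rho}=r^*\mid\fS]\ge\frac{1}{2|\fL_0(r_*,\eps)|}$ and $\pr_{\fA}[\fT(-\log\eps)\mid\fS,\mycheck{\vec\rho}=r^*]\ge 1-\eps^4$, as required. The main technical hurdle I expect is promoting Claim~\ref{fact_boxS} from an averaged Berry--Esseen bound on cubes to the pointwise local limit estimate above; this will require tracking the sub-lattice structure given by \Prop~\ref{prop_module} and a characteristic-function argument analogous to the one used by Huang~\cite{Huang} for the $d$-regular case.
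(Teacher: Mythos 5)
The first half of your plan — the Chernoff/multinomial concentration for $\vR$ given $\fS$, and the Markov argument bounding the conditional mass of the bad set $\cB$ where tameness fails — is sound and closely parallels the paper's \eqref{claim_tame} and its use of Bayes' formula. The problem lies in the second half: you propose to invoke a \emph{pointwise} local limit theorem for $\mycheck{\vec\rho}$ given $\fS$, and this is precisely what the paper cannot afford to use at this stage. Within the paper's architecture, the pointwise LLT is the content of \eqref{eq_lemma_uniformly_2} in the proof of \Lem~\ref{lemma_uniformly}; that proof rests on \Cor~\ref{cor_transform4}, which in turn rests on \Lem s~\ref{claim_tame3} and~\ref{lemma_transform}. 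So your argument is circular: you are invoking (a consequence of) the very lemma you are trying to prove. The whole point of \Lem~\ref{claim_tame3} and the subsequent combinatorial bootstrap via $\psi$ in \Lem~\ref{lemma_transform} is to get the pointwise LLT \emph{starting from} only the box-level Berry--Esseen estimate of Claim~\ref{fact_boxS}. The paper's proof therefore deliberately avoids a pointwise LLT: it uses Claim~\ref{fact_boxS} merely to lower-bound the mass $\pr_\fA\brk{\mycheck{\vec\rho}\in\fL_0(r_*,\eps)\mid\fS}=\Omega_\eps(\eps^{q-1})$, then applies Bayes and closes with a contradiction argument partitioning $\fL_0(r_*,\eps)$ into the set $\fX_0$ of points that fail the first inequality and $\fX_1$ of points that fail the second — no characteristic functions, no per-point estimates. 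You acknowledge the LLT as a "technical hurdle"; in fact it is the bulk of the remaining work and would need a genuine characteristic-function argument (in the spirit of Huang, respecting the sub-lattice from \Prop~\ref{prop_module}) that the paper explicitly constructs its proof to avoid. The paper even notes in the appendix that the standard LLT of~\cite{DavMc} is not directly applicable because of this lattice issue.

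A secondary quantitative point: you claim $\pr_\fA\brk{\mycheck{\vec\rho}\in\cB\mid\fS}\leq\eps^6$ is "negligible compared to the mass the LLT assigns to the full ball." But that ball carries only mass $\Theta_{\eps,\omega}(\eps^{q-1})$, so $\eps^6$ is \emph{not} negligible in comparison once $q\geq8$. This is easily repaired — the Chernoff bound actually gives decay faster than any power of $\eps$, so one may write $\eps^{2q}$ in place of $\eps^{10}$ — but as stated the exponents do not close, and one must be careful to thread the Bayes step through this superpolynomial decay exactly as in \eqref{claim_tame}--\eqref{claim_tame2} of the paper.
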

\begin{proof}
	Recall that the event $\{\mycheck{\rho}=r\}$ is the same as $\fR(r')$ with $r'(s)=r(s)$ for $s\in\FF_q^*$ and $r'(0) = \vec\Delta -\| r\|_1$. 
	As a first step we observe that $\vR$ given $\fS$ is reasonably tame with a reasonably high probability.
	More precisely, since $\vR$ has a multinomial distribution given $\fA$ and $\fS$, the Chernoff bound shows that \whp\ 
	\begin{align}\label{claim_tame}
		\pr_{\fA}\brk{\fT(-\log\eps)\mid\fS}\geq1-\exp(-\Omega_\eps(\log^2(\eps))).
	\end{align}
	Further, Claim~\ref{fact_boxS} implies that  $\pr_{\fA}\brk{\mycheck{\vec\rho}\in\fL_0(r_*,\eps)\mid \fS}\geq \Omega_\eps(\eps^{q-1})\geq\eps^q$ \whp, provided $\eps<\eps_0=\eps_0(\omega)$ is small enough.
	Combining this estimate with \eqref{claim_tame} and Bayes' formula, we conclude that \whp\ for every $r_*\in\fL_0$,
	\begin{align}\label{claim_tame2}
	\pr_{\fA}\brk{\fT(-\log\eps)\mid\fS,\ \mycheck{\vec\rho}\in\fL_0(r_*,\eps)}\geq1-\eps^{5}.
\end{align}

To complete the proof, assume that there does not exist $r^*\in\fL_0(r_*,\eps)$ that satisfies \eqref{eq_claim_tame3}.
	Then for every $r\in\fL_0(r_*,\eps)$ we either have
	\begin{align}\label{eqX0}
		\pr_{\fA}\brk{\mycheck{\vec\rho}=r\mid\fS}&<\frac{1}{2|\fL_0(r_*,\eps)|}&&\mbox{or}\\
		\pr_{\fA}\brk{\fT(-\log\eps)\mid\fS,\,\mycheck{\vec\rho}=r}&<1-\eps^4.\label{eqX1}
	\end{align}
	Let $\fX_0$ be the set of all $r\in\fL_0(r_*,\eps)$ for which \eqref{eqX0} holds, and let $\fX_1=\fL_0(r_*,\eps)\setminus\fX_0$.
	Then \eqref{eqX0}--\eqref{eqX1} yield
	\begin{align*}\nonumber
	\pr_{\fA}\brk{\fT(-\log\eps)\mid\fS,\ \mycheck{\vec\rho}\in\fL_0(r_*,\eps)}
		&\leq \frac{\pr_{\fA}\brk{\mycheck{\vec\rho}\in\fX_0\mid\fS}+\sum_{r\in\fX_1}\pr_{\fA}\brk{\fT(-\log\eps)\mid\fS,\,\mycheck{\vec\rho}=r}\pr_{\fA}\brk{\mycheck{\vec\rho}=r\mid\fS}}{\pr_{\fA}\brk{\fL_0(r_*,\eps)\mid\fS}}\\
		&\leq\frac{\pr_{\fA}\brk{\mycheck{\vec\rho}\in\fX_0\mid\fS}+(1-\eps^4)\pr_{\fA}\brk{\mycheck{\vec\rho}\in\fX_1\mid\fS}}{\abs{\fL_0(r_*,\eps)}}< 1-\eps^4,
	\end{align*} 
	provided that $1-\eps^4>\frac{1}{2}$, in contradiction to \eqref{claim_tame2}.
\end{proof}

Let $\fM=\fM_q(\chi_1,\ldots,\chi_{k_0})$ and let $\fb_1,\ldots,\fb_{q-1}$ be the basis of $\fM$ supplied by \Prop~\ref{prop_module}.
Let us fix vectors $\tau^{(1)},\ldots,\tau^{(q-1)}\in\cS_0$ whose frequency vectors as defined in \eqref{eqMyFreq} coincide with $\fb_,\ldots,\fb_{q-1}$, i.e., 
	$$\mycheck\tau^{(i)}=\fb_i\qquad\mbox{ for $i=1,\ldots,q-1$.}$$
Also let $\fT(r,t)$ be the event that $\mycheck{\vec\rho}=r$ and that $\vR$ is $t$-tame.
The following lemma summarises the key step of the proof of \Lem~\ref{lemma_uniformly}.

\begin{lemma}\label{lemma_transform}
	\Whp\ for any $r_*\in\fL_0$, any $1\leq t\leq\log n$ and any $r,r'\in\fL_0(r_*,\eps)$ there exists a one-to-one map $\psi:\fT(r,t)\to\fT(r',t+O_\eps(\eps))$ such that for all $(R,r_1)\in\fT(r,t)$ we have 
	\begin{align}\label{eqlemma_transform}
		\log\frac{\pr_\fA\brk{(\vR,\vr_1)=(R,r_1)\mid\fS}}{\pr_\fA\brk{(\vR,\vr_1)=\psi(R,r_1)\mid\fS}}=O_\eps(\eps(\omega+t)).
	\end{align}
\end{lemma}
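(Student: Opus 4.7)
The plan is to transport $(R, r_1) \in \fT(r,t)$ to $\fT(r', t + O_\eps(\eps))$ by modifying only a handful of coordinates of $R$ while keeping $r_1$ fixed. Since $r, r' \in \fL_0$ arise as sums of frequency vectors of check-satisfying assignments, their difference lies in the $\ZZ$-module $\fM = \fM_q(\chi_1, \ldots, \chi_{k_0})$: in the case $|\supp\vec\chi| = 1$ every check has the same constant coefficient, and the second assertion of Lemma \ref{lemma_module} guarantees that checks of all admissible degrees yield frequency vectors in $\fM_q(\chi,\chi,\chi)=\fM$; in the case $|\supp\vec\chi| > 1$ we chose $\chi_1,\ldots,\chi_{k_0}$ to be non-constant, so Lemma \ref{lemma_module'} yields $\fM = \ZZ^{\FF_q^*}$. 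Invoking Proposition \ref{prop_module}, I fix the basis $\fb_1, \ldots, \fb_{q-1}$ of $\fM$ with $\|\fb_i\|_1 \leq 3$ and determinant in $\{1,q\}$, realized by vectors $\tau^{(i)} \in \cS_0$ via zero-padding to length $k_0$ if necessary. Uniquely expanding $r' - r = \sum_{i} \lambda_i \fb_i$ and using the bounded entries of the basis together with $\|r'-r\|_1 = O_\eps(\eps\sqrt{\vec\Delta})$ will yield $|\lambda_i| = O_\eps(\eps\sqrt{\vec\Delta})$.

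Given these coefficients, define $\psi(R, r_1) := (R', r_1)$ by $R'_{\tau^{(i)}} = R_{\tau^{(i)}} + \lambda_i$ for $i\in[q-1]$, $R'_{\mathbf 0} = R_{\mathbf 0} - \sum_i \lambda_i$ where $\mathbf 0 \in \cS_0$ is the all-zero solution used to absorb the total-count imbalance (its frequency vector vanishes), and $R'_\sigma = R_\sigma$ otherwise. The $t$-tameness of $R$ combined with $t \leq \log n$ guarantees $R_\sigma = \Omega(n) \gg |\lambda_i|$, so $R' \geq 0$. By construction $\sum_\sigma R'_\sigma \hat\sigma = (r-r_1) + (r'-r) = r' - r_1$ and $\sum_\sigma R'_\sigma = |\fI_0|$, while the deviations satisfy $|R'_\sigma - |\cS_0|^{-1}|\fI_0|| \leq (t + O_\eps(\eps))\sqrt{\vec\Delta}$, so indeed $\psi(R, r_1) \in \fT(r', t + O_\eps(\eps))$. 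Negating the coefficients $\lambda_i$ gives an inverse of the same form, so $\psi$ is a bijection.

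For the probability ratio in \eqref{eqlemma_transform}, observe that the events $\{i\text{-th check is satisfied}\}$ are independent across $i$, so conditional on $\fS$ the vectors $\vR$ (from $\fI_0$-checks) and $\vr_1$ (from $\fI_1$-checks) are independent, and given $\fS$ the vector $\vR$ has the multinomial distribution with $|\fI_0|$ trials and uniform probabilities $|\cS_0|^{-1}$. Since $\psi$ fixes $r_1$ and alters $R$ in only $q$ coordinates by $O_\eps(\eps\sqrt n)$, Stirling's formula delivers
\begin{equation*}
\log\frac{\pr_{\fA}[(\vR, \vr_1) = (R, r_1) \mid \fS]}{\pr_{\fA}[(\vR, \vr_1) = \psi(R, r_1) \mid \fS]} = \sum_{i=1}^{q-1} \lambda_i \log\frac{R_{\mathbf 0}}{R_{\tau^{(i)}}} + O_\eps(\eps^2),
\end{equation*}
and $t$-tameness gives $\log(R_{\mathbf 0}/R_{\tau^{(i)}}) = O(t/\sqrt{\vec\Delta})$. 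Combined with $\sum_i|\lambda_i| = O_\eps(\eps\sqrt{\vec\Delta})$, this yields a log-ratio of $O_\eps(\eps t + \eps^2) = O_\eps(\eps(\omega + t))$, recalling that $\omega \geq 1$.

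The main technical subtlety lies in verifying the algebraic input: that $r' - r$ genuinely decomposes in $\fM$ using the controlled basis of Proposition \ref{prop_module} in both regimes of $\vec\chi$, and that each $\fb_i$ can indeed be realised as $\hat\tau^{(i)}$ for some $\tau^{(i)} \in \cS_0$ (which requires $k_0 \geq \|\fb_i\|_1$, afforded by $k_0 \geq 3$). Once these ingredients are in place, the construction of $\psi$ and the Stirling estimate are essentially routine because the non-negativity and small $\ell_1$-norm of the basis, together with tameness, leave ample room for all the required inequalities.
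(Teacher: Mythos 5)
Your proposal is correct and takes essentially the same approach as the paper: decompose $r'-r$ in the module basis $\fb_1,\ldots,\fb_{q-1}$ from Proposition~\ref{prop_module}, shift the coordinates $R_{\tau^{(i)}}$ by $\lambda_i$ while compensating at the zero solution $\mathbf 0\in\cS_0$, and then estimate the multinomial probability ratio via Stirling. The minor differences (writing $r'-r$ instead of the paper's $r-r'$, which fixes an apparent sign slip; making explicit that $\omega>1$ absorbs the $O_\eps(\eps^2)$ remainder; and noting that $k_0\geq 3\geq\|\fb_i\|_1$ allows each $\fb_i$ to be realised by a zero-padded $\tau^{(i)}\in\cS_0$) do not change the argument.
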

\begin{proof}
	Since $r,r'\in\fM$, we have $r-r'\in\fM$ \whp\
	Indeed, if $\supp\vec\chi>1$, then \Prop~\ref{prop_module} shows that $\fM=\ZZ^{\FF_q^*}$ \whp\
	Moreover, if $\supp\vec\chi=1$, then $\fM$ is a proper subset of the integer lattice $\ZZ^{\FF_q^*}$.
	Nonetheless, \Prop~\ref{prop_module} shows that the modules $$\fM_q(\underbrace{1,\ldots,1}_{\mbox{$\ell$ times}})$$ coincide for all $\ell\geq3$, and therefore $\fM$ coincides with the $\ZZ$-module generated by $\fL_0$.
	Hence, in either case there is a unique representation 
	\begin{align}\label{eqlemma_transform00}
		r-r'&=\sum_{i=1}^{q-1}\lambda_i\fb_i&&(\lambda_i\in\ZZ)
	\end{align}
	in terms of the basis vectors.
	Because $r,r'\in\fL_0(r_*,\eps)$ and 
\begin{align*}
\begin{pmatrix}\lambda_1\\\vdots\\\lambda_{q-1}\end{pmatrix}=\bc{\fb_1\ \cdots\ \fb_{q-1}}^{-1}(r-r'),
	\end{align*}
	the coefficients satisfy 
	\begin{align}\label{eqlemma_transform0}
		|\lambda_i|&=O_\eps(\eps\sqrt{\vec\Delta})\qquad\mbox{for all $i=1,\ldots,q-1$.}
	\end{align}
	Now let $\lambda_0=-\sum_{i=1}^{q-1}\lambda_i$, obtain the vector $R'$ from $R$ by amending the entry $R_0'$ corrsponding to the zero solution $0\in\cS_0$ to
	\begin{align*}
		R'_0&=R_0+\lambda_0,\qquad\mbox{and setting}&
		R_{\tau^{(i)}}'&=R_{\tau^{(i)}}+\lambda_i\qquad\mbox{for all }\sigma\not\in\{0,\tau^{(1)},\ldots,\tau^{(q-1)}\}.
	\end{align*}
	Further, define $\psi(R,r)=(R',r')$.
	Then $\psi(R,r)\in\fT(r',t+O_\eps(\eps))$ due to \eqref{eqlemma_transform00} and \eqref{eqlemma_transform0}.
	Moreover, Stirling's formula and the mean value theorem show that
	\begin{align}
		\frac{\pr_\fA\brk{(\vR,\vr_1)=(R,r_1)\mid\fS}}{\pr_\fA\brk{(\vR,\vr_1)=\psi(R,r_1)\mid\fS}}&=\binom{|\fI_0|}{R|\fI_0|}\binom{|\fI_0|}{R'|\fI_0|}^{-1}=\exp\brk{\sum_{\sigma\in\cS_0}O_\eps\bc{R_\sigma\log R_\sigma-R'_\sigma\log R'_\sigma}}\nonumber\\
																								   &=\exp\brk{O_\eps(|\fI_0|)\sum_{\sigma\in\cS_0}\abs{\int_{R'_\sigma/|\fI_0|}^{R_\sigma/|\fI_0|}\log z\dd z}}\nonumber\\
																								   &=\exp\brk{O_\eps(|\fI_0|)\sum_{\sigma\in\cS_0}\bc{\frac{R_\sigma}{|\fI_0|}-\frac{R'_\sigma}{|\fI_0|}}\log\bc{\frac1q+O_\eps\bcfr{(\omega+t)\sqrt{\vec\Delta}}{|\fI_0|}}}\nonumber\\
																								   &=\exp\brk{O_\eps(|\fI_0|)\sum_{\sigma\in\cS_0}O_\eps\bc{\frac{(\omega+t)\sqrt{\vec\Delta}}{|\fI_0|}\bc{\frac{R_\sigma}{|\fI_0|}-\frac{R'_\sigma}{|\fI_0|}}}}.\label{eqlemma_transform1}
	\end{align}
	Since $|\fI_0|=\Theta_\eps(\vec\Delta)=\Theta_\eps(n)$ \whp, \eqref{eqlemma_transform1} implies \eqref{eqlemma_transform}.
	Finally, $\psi$ is one-to-one because each vector has a unique representation with respect to the basis $(\fb_1,\ldots,\fb_{q-1})$.
\end{proof}

Roughly speaking, \Lem~\ref{lemma_transform} shows that any two tame $r,r'\in\fL_0(r_*,\eps)$ close to a conceivable $r_*\in\fL_0$ are about equally likely.
However, the map $\psi$ produces solutions that are a little less tame than the ones we start from.
The following corollary, which combines \Lem s~\ref{claim_tame3} and~\ref{lemma_transform}, remedies this issue.

\begin{corollary}\label{cor_transform4}
	\Whp\ for all $r_*\in\fL_0$ and all $r,r'\in\fL_0(r_*,\eps)$ we have
	\begin{align*}
		\pr_\fA\brk{\fT(r,-3\log\eps)\mid\fS}=(1+o_\eps(1))\pr_\fA\brk{\fT(r',-3\log\eps)\mid\fS}.
	\end{align*}
\end{corollary}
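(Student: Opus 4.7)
The plan is to apply Lemma \ref{lemma_transform} at a tameness parameter slightly smaller than $-3\log\eps$ and then show that the resulting ``annulus'' contributes negligibly. Let $t:=-3\log\eps$ and fix a constant $C$ (depending only on $\eps_0,q,\omega$) large enough that the implicit $O_\eps(\eps)$ slack in Lemma \ref{lemma_transform} is bounded by $C\eps$; set $t_-:=t-C\eps$. Then $\eps(\omega+t_-)=O(-\eps\log\eps)=o_\eps(1)$, so applying Lemma \ref{lemma_transform} at parameter $t_-$ to both orderings of $\{r,r'\}$ produces one-to-one maps $\psi:\fT(r,t_-)\to\fT(r',t_-+O_\eps(\eps))\subseteq\fT(r',t)$ and $\psi':\fT(r',t_-)\to\fT(r,t)$ whose pointwise ratios of conditional probabilities are $\exp(O_\eps(-\eps\log\eps))=1+o_\eps(1)$. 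Summing yields
\[
\pr_\fA[\fT(r,t_-)\mid\fS]\leq(1+o_\eps(1))\pr_\fA[\fT(r',t)\mid\fS],
\]
together with the symmetric inequality obtained by swapping $r$ and $r'$.

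To upgrade the tameness parameter on the left-hand side from $t_-$ to $t$, I will bound the annulus $\fT(r,t)\setminus\fT(r,t_-)$ uniformly in $r\in\fL_0(r_*,\eps)$. Because $\vR$ and $\vr_1$ are independent given $\fS$ (each check contributes its own independent random variables), the annulus probability factors as
\[
\pr_\fA[\fT(r,t)\setminus\fT(r,t_-)\mid\fS]\leq\pr_\fA[\vR\text{ is not }t_-\text{-tame}\mid\fS]\cdot\max_x\pr_\fA[\vr_1=x\mid\fS]\leq\exp(-\Omega(\log^2\eps))\cdot O(n^{(1-q)/2}),
\]
where the Chernoff bound for the multinomial $\vR\mid\fS$ (as used in the proof of Lemma \ref{claim_tame3}) supplies the first factor and a local-limit argument parallel to Claim \ref{fact_boxS} supplies the second. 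In the opposite direction, applying $\psi'$ with $r'$ replaced by the good point $r^*$ from Lemma \ref{claim_tame3} and using $\fT(r^*,-\log\eps)\subseteq\fT(r^*,t_-)$ gives
\[
\pr_\fA[\fT(r,t)\mid\fS]\geq(1-o_\eps(1))\pr_\fA[\fT(r^*,-\log\eps)\mid\fS]\geq\frac{1-o_\eps(1)}{2|\fL_0(r_*,\eps)|}=\Omega\bc{\eps^{1-q}n^{(1-q)/2}},
\]
using the lattice count $|\fL_0(r_*,\eps)|=\Theta(\eps^{q-1}n^{(q-1)/2})$ supplied by Lemma \ref{lemma_gridcount}. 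The ratio of the annulus bound to this lower bound equals $O(\eps^{q-1})\exp(-\Omega(\log^2\eps))=o_\eps(1)$. Plugging back into the decomposition $\fT(r,t)=\fT(r,t_-)\sqcup(\fT(r,t)\setminus\fT(r,t_-))$ and rearranging yields both directions of the $(1+o_\eps(1))$ equality.

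The principal technical obstacle is the annulus bound, which simultaneously requires the conditional independence of $\vR$ and $\vr_1$ given $\fS$, a Chernoff tail for the multinomial distribution of $\vR\mid\fS$, and a uniform $O(n^{(1-q)/2})$ local-limit upper bound for the pmf of $\vr_1\mid\fS$. What ultimately makes the bookkeeping close is the super-polynomial-in-$\eps$ gap between the Chernoff tail $\exp(-\Omega(\log^2\eps))$ and the merely polynomial-in-$\eps$ denominator $|\fL_0(r_*,\eps)|^{-1}$; this is what turns a naive estimate into one strong enough to yield asymptotic equality rather than just an $O(\,\cdot\,)$-comparison.
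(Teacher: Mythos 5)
Your proposal takes a genuinely different route for the annulus. The paper transfers the annulus mass to the special point $r^*$ of Lemma~\ref{claim_tame3} via the one-to-one map of Lemma~\ref{lemma_transform}, then invokes the tameness guarantee \eqref{eq_claim_tame3} directly on $r^*$; you instead factorize $\pr_\fA\brk{\fT(r,t)\setminus\fT(r,t_-)\mid\fS}$ using the conditional independence of $\vR$ and $\vr_1$ given $\fS$, and bound the two factors by a Chernoff tail and a local limit estimate for $\vr_1$. The factorization itself is valid, and the overall telescoping logic (bound the annulus, then combine with the paper's map applied at $t_-$) is sound. The gap is in the second factor.

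The estimate $\max_x\pr_\fA\brk{\vr_1=x\mid\fS}=O(n^{(1-q)/2})$ needs $\vr_1$ to be a sum of $\Theta(n)$ independent random vectors, i.e.\ $|\fI_1|=\Theta(n)$. This fails exactly when $|\supp\vec\chi|=1$ and $\vk=k_0$ is deterministic: every check then has the fixed degree $k_0$ and the fixed coefficient string $\chi_1=\cdots=\chi_{k_0}$, so $\fI_1=\emptyset$, $\vr_1\equiv 0$, and $\max_x\pr_\fA\brk{\vr_1=x\mid\fS}=1$. This is not a corner case: it is precisely the regime that arises in the proof of Corollary~\ref{thm_Q} (which fixes $\vec\chi=1$) whenever $\vk$ is constant, as in Example~\ref{ex_fixed}. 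There your annulus bound degenerates to $\exp(-\Omega(\log^2\eps))$, while your lower bound on $\pr_\fA\brk{\fT(r,t)\mid\fS}$ is $\Omega\bc{\eps^{1-q}n^{(1-q)/2}}$, so the ratio is $\exp(-\Omega(\log^2\eps))\,\eps^{q-1}n^{(q-1)/2}\to\infty$ with $n$ and the annulus is not dominated. (The paper's line ``$|\fI_0|,|\fI_1|=\Theta(n)$ \whp'' is also false in this regime, but the paper never uses the $\fI_1$ half.) To repair the direct approach you would need a local limit estimate for the lattice-valued $\vr_0(\vR)$ \emph{restricted to} the exponentially rare event $\{\vR\mbox{ not }t_-\mbox{-tame}\}$, which is considerably harder than an unconditional max-pmf bound; the paper's transfer-to-$r^*$ trick is what sidesteps exactly this. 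A minor secondary point: the local limit claim you want to invoke is parallel to Claim~\ref{lemma_rough_RS} (a genuine pointwise LLT) rather than Claim~\ref{fact_boxS}, which is a Berry--Esseen statement and does not by itself yield a uniform pmf bound.
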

\begin{proof}
	Let $r^*$ be the vector supplied by \Lem~\ref{claim_tame3}.
	Applying \Lem~\ref{lemma_transform} to $r^*$ and $r\in\fL_0(r_*,\eps)$, we see that \whp
\begin{align}\label{cor_transform}
\pr_\fA\brk{\fT(r,-2\log\eps)\mid\fS}\geq(1+O_\eps(\eps\log\eps))\pr_\fA\brk{\fT(r^*,-\log\eps)\mid\fS}\geq\frac1{3|\fL_0(r_*,\eps)|}\qquad\mbox{for all }r\in\fL_0(r_*,\eps).
\end{align}

In addition, we claim that \whp
	\begin{align}\label{cor_transform2}
		\pr_{\fA}\brk{\fT(r,-4\log\eps)\setminus\fT(r,-3\log\eps)\mid\fS}\leq\eps\pr_{\fA}\brk{\fT(r^*,-\log\eps)\mid\fS}\qquad\mbox{for all }r\in\fL_0(r_*,\eps).
	\end{align}
Indeed, applying \Lem~\ref{lemma_transform} twice to $r$ and $r^*$ and invoking \eqref{eq_claim_tame3}, we see that \whp
	\begin{align}\nonumber
		\pr_{\fA}\brk{\fT(r,-2\log\eps)\mid\fS}&\geq\exp(O_\eps(\eps\log\eps))\pr_{\fA}\brk{\fT(r^*,-3\log\eps)\mid\fS}\\&\geq\bc{1-O_\eps(\eps\log\eps)}\pr_{\fA}\brk{\hat{\vec\rho}=r^*\mid\fS},\label{eq_cor_transform2_1}\\
			\pr_{\fA}\brk{\fT(r,-4\log\eps)\setminus\fT(r,-3\log\eps)\mid\fS}&\leq\exp(O_\eps(\eps\log\eps))\pr_{\fA}\brk{\fT(r^*,-3\log\eps)\setminus\fT(r^*,-2\log\eps)\mid\fS}\nonumber\\
																			 &\leq O_\eps(\eps^4)\pr_{\fA}\brk{\hat{\vec\rho}=r^*\mid\fS}.\label{eq_cor_transform2_2}
		\end{align}
		Combining \eqref{eq_cor_transform2_1} and \eqref{eq_cor_transform2_2} yields \eqref{cor_transform2}.

		Finally, \eqref{eq_claim_tame3}, \eqref{cor_transform} and \eqref{cor_transform2} show that \whp\ 
\begin{align}\label{cor_transform3}
		\pr_\fA\brk{\fT(-3\log\eps)\mid\hat{\vec\rho}=r,\,\fS}\geq1-\sqrt\eps,\quad\pr_\fA\brk{\fT(-3\log\eps)\mid\hat{\vec\rho}=r',\,\fS}\geq1-\sqrt\eps
		\qquad\mbox{for all }r,r'\in\fL_0(r_*,\eps),
	\end{align}
	and combining \eqref{cor_transform3} with \Lem~\ref{lemma_transform} completes the proof.
\end{proof}

\begin{proof}[Proof of \Lem~\ref{lemma_uniformly}]
	We are going to show that the conditional probability $\pr_\fA\brk{\hat{\vec\rho}=r\mid\fS}$ of hitting some particular $r\in\fL_0$ coincides with the unconditional probability $\pr_\fA\brk{\hat{\vec\rho}=r}$ up to a factor of $1+o_\eps(1)$.
	Then the assertion follows from Bayes' formula.

	The unconditional probability $\pr_\fA\brk{\hat{\vec\rho}=r}$ is given precisely by Claim~\ref{lemma_precise_R}.
	Hence, recalling the $(q-1)\times(q-1)$-matrix $\Sigma=q\id^{-1}-q^{-2}\vecone$ and applying Stirling's formula, we obtain 
	\begin{align}\label{eq_lemma_uniformly_1}
		\pr_\fA\brk{\mycheck{\vec\rho}=r}&\sim\frac1{(2\pi\vec\Delta q^{-1}(1-q^{-1}))^{(q-1)/2}}
		\exp\brk{-\frac{(r-q^{-1}\vec\Delta\vecone)^\trans(q^{-1}\vecone-q^{-2}\vecone)^{-1}(r-q^{-1}\vec\Delta\vecone)}{2\vec\Delta}}
	\end{align}
	\whp\ 

	Next we will show that the conditional probability $\pr_\fA\brk{\hat{\vec\rho}=r\mid\fS}$ works out to be asymptotically the same.
	Indeed, Claim~\ref{fact_boxS} shows that for any $r\in\fL_0$ the proability that $\hat{\vec\rho}$ hits the set $\fL_0(r,\eps)$ is asymptotically equal to the probability of the event $\{\|\vN-\vec\Delta^{-1/2}(r-q^{-1}\vec\Delta\vecone)\|_\infty<\eps\}$ \whp\
	Moreover, \Cor~\ref{cor_transform4} implies that given $\hat{\vec\rho}\in\fL_0(r,\eps)$, $\hat{\vec\rho}$ is within $o_\eps(1)$ of the uniform distribution on $\fL_0(r,\eps)$.
	Furthermore, \Lem~\ref{lemma_gridcount} and \Prop~\ref{prop_module} show that the number of points in $\fL_0(r,\eps)$ satisfies
	\begin{align*}
\frac{|\fL_0(r,\eps)|}{\abs{\cbc{z\in\ZZ^{q-1}:\|z-r\|_\infty\leq\eps\sqrt{\vec\Delta}}}}\sim q^{-\vecone\{|\supp\vec\chi|=1\}}.
		\end{align*}
	Therefore, \whp\ for all $r\in\fL_0$ we have
	\begin{align}\label{eq_lemma_uniformly_2}
		\pr_\fA\brk{\mycheck{\vec\rho}=r\mid\fS}&=(1+o_\eps(1))\frac{q^{\vecone\{|\supp\vec\chi|=1\}}}{(2\pi\vec\Delta q^{-1}(1-q^{-1}))^{(q-1)/2}} \exp\brk{-\frac{(r-q^{-1}\vec\Delta\vecone)^\trans(q^{-1}\vecone-q^{-2}\vecone)^{-1}(r-q^{-1}\vec\Delta\vecone)}{2\vec\Delta}}.
	\end{align}
	Finally, we observe that
	\begin{align}\label{eq_lemma_uniformly_3}
		\pr_{\fA}\brk{\fS}\sim q^{-\vm}.
		\end{align}
	Indeed, since the $\vec\xi_{ij}$ are uniform and independent, for each $i\in[\vm]$ we have $\sum_{j=1}^{\vk_i}\chi_{i,j}\xi_{ij}=0$ with probability $1/q$ indepdenently.
	Combining \eqref{eq_lemma_uniformly_1}--\eqref{eq_lemma_uniformly_3} completes the proof.
\end{proof}

\subsection{Proof of \Lem~\ref{lemma_hit}}\label{sec_hit}
We continue to denote by $\vec\sigma\in\FF_q^n$ a uniformly random assignment and by $\vec I_{q-1}$ the $(q-1)\times(q-1)$-identity matrix.
Also recall $\rho_\sigma$ from \eqref{eqrho} and for $\rho=(\rho(s))_{s\in\FF_q}$ obtain $\hat\rho=(\rho(s))_{s\in\FF_q^*}$ by dropping the $0$-entry.
The following claim, which we prove via the local limit theorem for sums of independent random variables, determines the distribution of $\rho_{\vec\sigma}$.
Let $\bar\rho=q^{-1}\vec\Delta \vecone_{q-1}$.

\begin{claim}\label{lemma_entropy_llt}
	Let $\cC$ be the $(q-1)\times(q-1)$-matrix from \eqref{eqCmatrix}.
	Then \whp\ for all $\rho\in\fP_q$ we have
	\begin{align*}
		\pr\brk{\rho_{\vec\sigma}=\rho \mid\alg}&=\frac{q^{q/2}\fd^{q-1}}{(2\ex[\vd^2]\pi n)^{(q-1)/2}}\exp\bc{-\frac{(\hat\rho-\bar\rho)^\trans\cC^{-1}(\hat\rho-\bar\rho)}{2n\ex[\vd^2]}}+o(n^{(1-q)/2}).
	\end{align*}
\end{claim}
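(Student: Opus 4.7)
The plan is to represent $\hat\rho_{\vec\sigma}$ as a sum of independent (given $\alg$) lattice-valued random vectors and apply a multivariate local limit theorem. Since $\vec\sigma\in\FF_q^n$ is uniform and independent of $\alg$, setting
\begin{align*}
\hat{\vY}_i=\bc{\vd_i\vecone\cbc{\vec\sigma_i=s}}_{s\in\FF_q^*}\in\ZZ^{\FF_q^*}
\end{align*}
produces independent summands satisfying $\hat\rho_{\vec\sigma}=\sum_{i=1}^n\hat\vY_i$ given $\alg$. Each $\hat\vY_i$ is uniformly distributed on the $q$-element set $\{0\}\cup\{\vd_i\vec e_s:s\in\FF_q^*\}$, so a direct calculation yields $\ex[\hat\vY_i\mid\alg]=q^{-1}\vd_i\vecone_{q-1}$ and $\Cov(\hat\vY_i\mid\alg)=\vd_i^2\cC$ with $\cC=q^{-1}\vec I_{q-1}-q^{-2}\vecone_{(q-1)\times(q-1)}$. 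Summing, the conditional mean is $\bar\rho$ and the conditional covariance is $(\sum_i\vd_i^2)\cC$, which by \Lem~\ref{Lemma_sums} and the assumption $\ex[\vd^{2+\eta}]<\infty$ satisfies $\sum_i\vd_i^2=n\ex[\vd^2](1+o(1))$ \whp.

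Next I would pin down the lattice on which $\hat\rho_{\vec\sigma}$ is supported. Since $\fd$ divides every element of $\supp\vd$, every coordinate of $\hat\vY_i$ lies in $\fd\ZZ$, so $\hat\rho_{\vec\sigma}\in\fd\ZZ^{\FF_q^*}$. Conversely, since $\fd=\gcd(\supp\vd)$, \whp\ finitely many realised degrees $\vd_{i_1},\ldots,\vd_{i_r}$ already have gcd equal to $\fd$; a short Bezout-style argument then shows that the minimal sublattice of $\ZZ^{\FF_q^*}$ supporting the centred sum $\hat\rho_{\vec\sigma}-\bar\rho$ is precisely $\fd\ZZ^{\FF_q^*}$. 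Consequently the LLT density carries a factor $\fd^{q-1}$ per lattice point.

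The core step will then be a multivariate local limit theorem for independent lattice-valued random vectors (in the spirit of Bhattacharya--Rao or~\cite{DavMc}) applied to $\sum_i\hat\vY_i$, which will give, uniformly over admissible $\hat r\in\fd\ZZ^{\FF_q^*}$,
\begin{align*}
\pr\brk{\hat\rho_{\vec\sigma}=\hat r\mid\alg}=\frac{\fd^{q-1}}{(2\pi)^{(q-1)/2}\bc{\det\bc{\bc{\sum_i\vd_i^2}\cC}}^{1/2}}\exp\bc{-\frac{(\hat r-\bar\rho)^\trans\cC^{-1}(\hat r-\bar\rho)}{2\sum_i\vd_i^2}}+o(n^{(1-q)/2}).
\end{align*}
A short eigenvalue computation for $\cC$ (eigenvalue $q^{-2}$ along $\vecone_{q-1}$, eigenvalue $q^{-1}$ with multiplicity $q-2$ on its orthogonal complement) gives $\det\cC=q^{-q}$. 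Substituting this together with $\sum_i\vd_i^2\sim n\ex[\vd^2]$ converts the prefactor into $q^{q/2}\fd^{q-1}/(2\pi n\ex[\vd^2])^{(q-1)/2}$, matching the claim.

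The principal obstacle will be justifying the LLT under only the minimal moment hypothesis $\ex[\vd^{2+\eta}]<\infty$. I would handle this by mirroring the truncation strategy used in the proof of Claim~\ref{fact_boxS}: split $[n]$ into indices with $\vd_i\leq K$ and the remainder for a large constant $K$. The bounded-degree part falls into finitely many classes by grouping indices according to the value of $\vd_i$, so the classical i.i.d.-class LLT applies to each class and hence to their convolution. The unbounded part, whose total variance is at most $\alpha\cdot n\ex[\vd^2]$ with $\alpha=\alpha(K)\to0$ as $K\to\infty$, is then shown via a standard characteristic-function smoothing argument to perturb the density only by an additive $o(n^{-(q-1)/2})$ uniformly in $\hat r$, which completes the proof.
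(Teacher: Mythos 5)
The decomposition into independent summands $\hat\vY_i$, the computations of conditional mean $q^{-1}\vd_i\vecone$ and covariance $\vd_i^2\cC$, the normalisation $\sum_i\vd_i^2\sim n\ex[\vd^2]$, the determinant $\det\cC=q^{-q}$, the lattice identification $\fd\ZZ^{\FF_q^*}$ and the truncation of degrees at a constant $K$ are all correct and match the paper's setup. The gap lies in the sentence ``the classical i.i.d.-class LLT applies to each class and hence to their convolution.'' That ``hence'' is exactly the obstacle the paper flags immediately before the claim, namely that the local limit theorem of \cite{DavMc} ``does not imply Claim~\ref{lemma_entropy_llt} directly because a key assumption (that increments of vectors in each direction can be realised) is not satisfied here.'' Concretely, the class of indices with $\vd_i=\delta$ yields an LLT on the sublattice $\delta\ZZ^{\FF_q^*}$, so the classes live on \emph{different} sublattices, and the characteristic function $\phi_\delta(\theta)=q^{-1}\bc{1+\sum_{s\in\FF_q^*}\eul^{\mathrm{i}\delta\theta_s}}$ of a single summand from that class equals $1$ on the entire dual lattice $\frac{2\pi}{\delta}\ZZ^{\FF_q^*}$, a strict superset of the target dual lattice $\frac{2\pi}{\fd}\ZZ^{\FF_q^*}$ whenever $\delta>\fd$. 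To obtain an LLT on $\fd\ZZ^{\FF_q^*}$ with additive error $o(n^{(1-q)/2})$ one must show that the full product $\prod_i\phi_{\vd_i}(\theta)$ is uniformly small for $\theta$ bounded away from $\frac{2\pi}{\fd}\ZZ^{\FF_q^*}$, which means playing the different periods against one another (precisely what a Bezout relation $\fd=\sum_i a_i\delta_i$ encodes) and is not a black-box consequence of the per-class theorems; moreover, the naive Riemann-sum bound for the convolution already has error of the same order $n^{(1-q)/2}$ as the target answer, so it does not close on its own.

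The paper avoids the convolution step entirely: it proves a Berry--Esseen CLT for $\hat\rho_{\vec\sigma}$ (Claim~\ref{fact_box2}), which fixes the total mass in any $\eps\sqrt n$-cube; it constructs a bijection between nearby points of $\fd\ZZ^{\FF_q^*}$ from the Bezout coefficients (Lemma~\ref{lemma_transform_sigma}, Corollary~\ref{cor_transform4}) and shows the bijection changes point probabilities by only $1+o_\eps(1)$; and it divides by the lattice-point count of Lemma~\ref{lemma_gridcount}. This transport argument is what allows the paper to work at the level of individual lattice points without ever invoking a non-i.i.d.\ local limit theorem. Your route would need either that transport step or a genuine characteristic-function estimate of the kind just described; as written, the critical step is asserted rather than proved.
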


\noindent
The proof of Claim~\ref{lemma_entropy_llt} is based on local limit theorem techniques similar to but simpler than the ones from \Sec~\ref{sec_uniformly}.
In fact, the proof strategty is somewhat reministcent of that of the well-known local limit theorem for sums of independent random vectors from~\cite{DavMc}.
However, the local theorem from that paper does not imply Claim~\ref{lemma_entropy_llt} directly because a key assumption (that increments of vectors in each direction can be realised) is not satisfied here.
We therefore carry the details out in the appendix.

Claim~\ref{lemma_entropy_llt} demonstrates that $\rho_{\vec\sigma}$ satisfies a local limit theorem.
Hence, let $\vec N'\in\RR^{q-1}$ be a mean-zero Gaussian vector with covariance matrix $q^{-1}\id-q^{-2}\vecone$.
Moreover, fix $\eps>0$ and let $U\subset\RR^{q-1}=v+[-\eps,\eps]^{q-1}$ be a box of side length $2\eps$.
Then \whp\ we have
\begin{align}\label{eqlemma_hit_0}
	\pr_{\fA}\brk{\bc{n\ex[\vd^2]}^{-1/2}(\mycheck{\rho}_{\vec\sigma}-q^{-1}\vec\Delta\vecone)\in U}=\pr_{\fA}\brk{\vN'\in U}+o(1).
\end{align}
Indeed, Claim~\ref{lemma_entropy_llt} implies that $\mycheck{\rho}_{\vec\sigma}$ is asymptotically uniformly distributed on the lattice points of the box $U$ whose coordinates are divisible by $\fd$ \whp\
Thus, \whp\ for any $z,z'\in\vec\Delta U\cap\fd\ZZ^{\FF_q^*}$ we have
\begin{align}\label{eqlemma_hit_1}
	\pr_{\fA}\brk{\mycheck{\rho}_{\vec\sigma}=z}&=(1+o_\eps(1))\pr_{\fA}\brk{\mycheck{\rho}_{\vec\sigma}=z'}.
\end{align}
Moreover, we claim that 
\begin{align}\label{eqlemma_hit_2}
	\pr_{\fA}\brk{\mycheck{\rho}_{\vec\sigma}\in\fL_0\mid\mycheck{\rho}_{\vec\sigma}\in U}\sim\frac{\abs{U\cap\fL_0\cap\fd\ZZ^{\FF_q^*}}}{\abs{U\cap\fd\ZZ^{\FF_q^*}}}\leq\frac{\abs{U\cap\fM\cap\fd\ZZ^{\FF_q^*}}}{\abs{U\cap\fd\ZZ^{\FF_q^*}}}&\leq(1+o(1))q^{-\vecone\{|\supp\vec\chi|=1\}}. 
\end{align}
Indeed, if $|\supp\vec\chi|>1$, then \eqref{eqlemma_hit_2} is satisfied \whp\ for the trivial reason that the r.h.s.\ equals $1+o(1)$.
%\Prop~\ref{prop_module}  shows that $\fL_0=\ZZ^{\FF_q^*}$ \whp\
Hence, suppose that $|\supp\vec\chi|=1$, let $\fM\supset\fL_0$ be the module from \Prop~\ref{prop_module} and let $\fb_1,\ldots,\fb_{q-1}$ be its assorted basis.
Clearly, $\fM\cap\fd\ZZ^{\FF_q^*}\supseteq\fd\fM$.
Conversely,  Cramer's rule shows that any $y\in \fM\cap\fd\ZZ^{\FF_q^*}$ can be expressed as
	$$(\fb_1\ \cdots\ \fb_{q-1})z,\qquad\mbox{with }z_i=\frac{\det(\fb_1\cdots\fb_{i-1}\ y\ \fb_{i+1}\cdots\fb_{q-1})}{q}.$$
	In particular, all coordinates $z_i$ are divisible by $\fd$ because $y\in\fd\ZZ^{\FF_q^*}$.
	Hence, $y\in\fd\fM$ because $\fd$ and $q$ are coprime.
	\Lem~\ref{lemma_gridcount} therefore implies~\eqref{eqlemma_hit_2}.
Finally, the assertion follows from \eqref{eqlemma_hit_0}--\eqref{eqlemma_hit_2}.

\section{Proof of \Prop~\ref{prop_auxphi}}\label{sec_prop_auxphi}

\noindent
We prove \Prop~\ref{prop_auxphi} by way of a coupling argument inspired by the Aizenman-Sims-Starr scheme from spin glass theory~\cite{Aizenman}.
The proof is a close adaptation of the coupling argument used in~\cite{Maurice} to prove the approximate rank formula~\eqref{eqMaurice}.
We will therefore be able to reuse some of the technical steps from that paper.
The main difference is that we need to accommodate the extra ternary equations $t_i$.
Their presence gives rise to the second parameter $\beta$ in \eqref{eq_prop_auxphi}.

\subsection{Overview}
The basic idea behind the Aizenman-Sims-Starr scheme is to compute the expected difference $\ex[\nul\A[n+1,\eps,\delta,\Theta]]-\ex[\nul\A[n+1,\eps,\delta,\Theta]]$ of the nullity upon increasing the size of the matrix.
We then obtain \eqref{eq_prop_auxphi} by writing a telescoping sum.
In order to estimate the expected change of the nullity, we set up a coupling of $\A[n,\eps,\delta,\Theta]$ and $\A[n+1,\eps,\delta,\Theta]$.

To this end it is helpful to work with a different description of the random matrix model.
Specifically, let $\vM=(\vM_j)_{j\geq1}$, $\DELTA=(\DELTA_j)_{j\geq1}$, $\vec \lambda$ and $\vec \eta$ be Poisson variables with means
\begin{align}\label{eqPoissons}
	\Erw[\vM_j]=(1-\eps)\pr\brk{\vk=j}dn/k, \qquad \quad \Erw[\DELTA_j]=(1-\eps)\pr\brk{\vk=j}d/k, \qquad \quad \Erw\brk{\vec \lambda}=\delta n, \qquad \quad \Erw\brk{\vec \eta}= \delta.
\end{align}
All these random variables are mutually independent and independent of $\THETA$ and the $(\vd_i)_{i\geq1}$.
Further, let
\begin{align}\label{eqm}
	\vM_j^+&=\vM_j+\DELTA_j,&
	\vm_{\eps,n}&=\sum_{j\geq1}\vM_j,&
	\vm_{\eps,n}^+&=\sum_{j\geq1}\vM_j^+,& 
	\vec \lambda^+ &= \vec \lambda + \vec \eta.
\end{align}
Since  $\sum_{j\geq1}\vM_j\disteq\Po((1-\eps)dn/k)$, (\ref{eqm}) is consistent with \eqref{eqms}. 

We define a random Tanner (multi-)graph $\G\brk{n,\vM,\vec \lambda}$ with variable nodes $x_1,\ldots,x_n$ 
and check nodes $a_{i,j}$, $i\geq1$, $j\in[\vM_i]$, $t_1, \ldots, t_{\vec \lambda}$ and $p_1,\ldots,p_{\THETA}$. 
The edges between variables and the check nodes $a_{i,j}$ are induced by a random maximal matching  $\vec\Gamma\brk{n,\vM}$ of the complete bipartite graph with vertex classes
\begin{align*}
	\bigcup_{h=1}^n\{x_h\}\times[\vd_h]\quad\mbox{and}\quad\bigcup_{i\geq1}\bigcup_{j=1}^{\vM_i}\{a_{i,j}\}\times[i].
\end{align*}
Moreover, for each $j \in [\vec \lambda]$ we choose $\vec i_{j,1}, \vec i_{j,2}, \vec i_{j,3}$ uniformly and independently from $[n]$ and add edges between $x_{\vec i_{j,1}}$, $x_{\vec i_{j,2}}$, $x_{\vec i_{j,3}}$ and $t_j$.
In addition, we insert an edge between $p_i$ and $x_i$ for every $i\in[\THETA]$.

To define the random matrix $\vA\brk{n,\vM,\vec\lambda}$ to go with $\G\brk{n,\vM,\vec\lambda}$, let
\begin{align}
\vA\brk{n,\vM,\vec \lambda}_{p_i,x_h}&=\vecone\cbc{i=h}&&(i\in[\THETA],h\in[n]),\\
\vA\brk{n,\vM,\vec \lambda}_{a_{i,j},x_h}&=\vec\chi_{i, h} \sum_{\ell=1}^i\sum_{s=1}^{\vd_h} \vecone \{(x_h, s), (a_{i,j}, \ell)\}\in\vec{\Gamma}_{n, \vM}\}&&(i\ge 1, j\in[\vM_i], h\in[n]),\\
\vA\brk{n,\vM,\vec\lambda}_{t_i,x_h}&=\vec\chi_{\vm_{\eps,n}+i, h} \sum_{\ell=1}^3\vecone\{\vec i_{i,\ell} = h\} &&(i\in[\vec{\lambda}],h\in[n]).\label{eq_ternary1}
\end{align}
The Tanner graph $\G\brk{n+1, \vM^+,\vec\lambda^+}$  and its associated random matrix $\vA\brk{n+1,\vM^+,\vec\lambda^+}$ are defined analogously using $n+1$ variable nodes instead of $n$, $\vM^+$ instead of $\vM$ and $\vec \lambda^+$ instead of $\vec \lambda$.

\begin{fact}\label{Fact_ComplicatedModel}
For any $\eps, \delta>0$ we have
\begin{align*}
	\Erw[\nul\vA\brk{n,\eps,\delta}]=\Erw[\nul\vA\brk{n,\vM,\vec\lambda}],&& 
	\Erw[\nul\vA\brk{n+1,\eps,\delta}]=\Erw\brk{\nul\vA\brk{n+1,\vM^+,\vec\lambda^+}}.
\end{align*}
\end{fact}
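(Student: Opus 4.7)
The plan is to establish the stronger claim that the two random matrix models on either side of each identity are in fact identically distributed; equality of expected nullity then follows immediately. The main ingredient will be the Poisson splitting/thinning theorem.

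For the $n$-case I would start from $\vA[n,\eps,\delta,\Theta]$ and note that its number of non-unary, non-ternary checks equals $\vm_\eps \sim \Po((1-\eps)dn/k)$, and that conditional on $\vm_\eps$ the degrees $\vk_1,\ldots,\vk_{\vm_\eps}$ are i.i.d.\ copies of $\vk$. Letting $N_j$ count the checks of degree $j$, Poisson splitting yields that the $(N_j)_{j\geq 1}$ are mutually independent with $N_j\sim\Po((1-\eps)\Pr[\vk=j]\,dn/k)$, which is exactly the joint law of $(\vM_j)_{j\geq 1}$ specified in \eqref{eqPoissons}. I would then observe that the ternary count agrees, $\vm_\delta \sim \Po(\delta n) \sim \vec\lambda$, and that the unary pin count $\vec\theta\sim\unif([\Theta])$ is common to both constructions.

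The next step is to argue that, conditional on these counts, the two Tanner multigraphs are generated by identical recipes: a uniform maximum matching of variable clones to check clones for the non-unary, non-ternary checks; three variables chosen uniformly with replacement for each ternary check; one edge $p_ix_i$ for each unary pin; and independent copies of $\vec\chi$ for the nonzero matrix entries. This shows that $\vA[n,\eps,\delta,\Theta]$ and $\vA[n,\vM,\vec\lambda]$ agree in distribution, hence have the same expected nullity.

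For the $(n+1)$-identity I would reuse the same argument together with the extra observation that, by additivity of independent Poissons, $\vM_j^+=\vM_j+\DELTA_j\sim\Po((1-\eps)\Pr[\vk=j]\,d(n+1)/k)$ and $\vec\lambda^+=\vec\lambda+\vec\eta\sim\Po(\delta(n+1))$, so Poisson splitting once again produces exactly the check-count distribution of $\vA[n+1,\eps,\delta,\Theta]$. There is no substantial obstacle here; the fact is essentially bookkeeping whose real purpose is to expose the shared random skeleton $(\vM,\vec\lambda)$ common to $\vA[n,\vM,\vec\lambda]$ and $\vA[n+1,\vM^+,\vec\lambda^+]$, so that it can be fixed and coupled in the forthcoming Aizenman-Sims-Starr computation of the nullity increment.
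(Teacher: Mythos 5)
Your proof is correct and is essentially the paper's own argument, spelled out in more detail: the paper's one-line justification ("the only difference ... is the bookkeeping of the number of checks of each degree") is precisely the Poisson splitting/thinning observation you make explicit, and your additivity remark for the $(n+1)$-case matches the paper's analogous claim. Nothing is missing.
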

\begin{proof}
Because the check degrees $\vk_i$ of the random factor graph $\vec G\brk{n,\eps, \delta}$ are drawn independently,  the only difference between $\vG\brk{n,\eps,\delta}$ and $\vG\brk{n,\vM,\vec\lambda}$ is the bookkeeping of the number of checks of each degree.
The same is true of $\vG\brk{n+1,\eps,\delta}$ and $\vG\brk{n+1,\vM,\vec\lambda}$.
\end{proof}

To construct a coupling of $\vA\brk{n,\vM,\vec\lambda}]$ and $\vA\brk{n+1,\vM^+,\vec\lambda^+}$ we introduce a third, intermediate random matrix.
Hence, let $\GAMMA_i\ge0$ be the number of checks $a_{i,j}$, $j\in[\vM_i^+]$, adjacent to the last variable node $x_{n+1}$ in $\G\brk{n+1,\vM^+, \vec\lambda^+}$.
Set $\GAMMA=(\GAMMA_i)_{i\geq3}$.
Also let 
\begin{align}\label{eqlambda'}
	\lambda^- = \delta(n+1) - 3\delta\cdot\frac{n^2+n+1/3}{n^2+2n+1}
\end{align}
be the expected number of extra ternary checks of $\G\brk{n+1,\vM^+,\vec \lambda^+}$ in which $x_{n+1}$ does not appear. 
Let
\begin{equation}\label{eqminus}
	\vM_i^-=(\vM_i-\GAMMA_i)\vee 0, \qquad \qquad \text{as well as} \qquad \qquad \vec\lambda^- \sim \Po(\lambda^-).
\end{equation} 
Consider the random Tanner graph $\G'=\G\brk{n,\vM^-,\vec\lambda^-}$ induced by a random maximal matching $\vec\Gamma'=\vec\Gamma\brk{n, \vec M^-}$ of the complete bipartite graph with vertex classes
\begin{align}\label{cavities_creation}
	\bigcup_{h=1}^n\{x_h\}\times[\vd_h]\quad\mbox{and}\quad\bigcup_{i\geq1}\bigcup_{j=1}^{\vM_i^-}\{a_{i,j}\}\times[i].
\end{align}
Each matching edge $\{(x_h,s),(a_{i,j},\ell)\}\in \vec\Gamma\brk{n,\vM^-}$ induces an edge between $x_h$ and $a_{i,j}$ in the Tanner graph. 
For each $j \in [\vec \lambda^-]$ and $\vec i^-_{j,1}, \vec i^-_{j,2}, \vec i^-_{j,3}$ uniform and independent in $[n]$, we add the edges between $x_{\vec i^-_{j,1}}$, $x_{\vec i^-_{j,2}}$, $x_{\vec i^-_{j,3}}$ and $t_j$.
In addition, there is an edge between $p_i$ and $x_i$ for every $i\in[\THETA]$. 
Let $\vec A'$ denote the corresponding random matrix.

For each variable $x_i$, $i=1,\ldots,n$, let $\cC$ be the set of clones from $\bigcup_{i\in[n]}\{x_i\}\times[\vd_i]$ that $\vec\Gamma\brk{n,\vM^-}$ leaves unmatched.
We call the elements of $\mathcal{C}$ \textit{cavities}. 

From $\G'$, we finally construct two further Tanner graphs.
Obtain the Tanner graph $\G''$ from $\G'$ by adding new check nodes $a''_{i,j}$ for each $i\geq3$, $j\in[\vM_i-\vM_i^-]$ and ternary check nodes $t''_i$ for $i \in [\vec\lambda'']$, where 
\begin{align}\label{eqlambda''}
	\vec\lambda'' \sim \Po(\delta n - \lambda^-) =\Po\bc{2\delta\cdot\frac{n^2+n/2}{n^2+2n+1}}
\end{align}
The new checks $a_{i,j}''$ are joined by a random maximal matching $\vec\Gamma''$ of the complete bipartite graph on
	$$\mbox{$\cC\qquad$ and}\qquad\bigcup_{i\geq1}\bigcup_{j\in[\vM_i-\vM_i^-]}\{a_{i,j}''\}\times[i].$$
	Moreover, for each $j \in [\vec \lambda'']$ we choose $\vec i''_{j,1}, \vec i''_{j,2}, \vec i''_{j,3}\in[n]$ uniformly and independently of everything else and add the edges between $x_{\vec i''_{j,1}}$, $x_{\vec i''_{j,2}}$, $x_{\vec i''_{j,3}}$ and  $t_j''$.
Let $\vec A''$ denote the corresponding random matrix, where as before, each new edge is represented by an independent copy of $\vec \chi$.

Finally, let 
\begin{align}\label{eqlambda'''}
	\vec\lambda''' \sim \Po(\delta(n+1) - \lambda^-)=\Po\bc{ 3\delta\cdot\frac{n^2+n+1/3}{n^2+2n+1} }.
\end{align} 
	We analogously obtain $\G'''$ by adding one variable node $x_{n+1}$ as well as check nodes $a_{i,j}'''$, $i\geq1$, $j\in[\GAMMA_i]$, $b_{i,j}'''$, $i\geq1$, $j\in[\vM_i^+-\vM_i^--\GAMMA_i]$, $t_i''', i \in [\vec \lambda''']$.
The new checks $a_{i,j}'''$ and $b_{i,j}'''$ are connected to $\G'$ via a random maximal matching $\vec\Gamma'''$ of the complete bipartite graph on
 $$\mbox{ $\cC\qquad$ and }\qquad\bigcup_{i\geq1}\bc{\bigcup_{j\in[\GAMMA_i]}\{a_{i,j}'''\}\times[i-1]
			\cup\bigcup_{j\in[\vM_i^+-\vM_i^--\GAMMA_i]}\{b_{i,j}'''\}\times[i]}.$$
For each matching edge we insert the corresponding variable-check edge and in addition each of the check nodes $a_{i,j}'''$ gets connected to $x_{n+1}$ by exactly one edge. 
Then we connect each $t_i'''$ to $x_{\vec i'''_{i,1}}, x_{\vec i'''_{i,2}}$ and $x_{n+1}$, with $\vi'''_{i,1},\vi'''_{i,2}\in[n+1]$ chosen uniformly and independently. 
Once again each edge is represented by an independent copy of $\vec\chi$.
Let $\vec A'''$ denote the resulting random matrix. 

The following lemma connects $\vA'',\vA'''$ with the random matrices $\vA\brk{n,\vM, \vec \lambda}$, $\vA\brk{n+1,\vM^+, \vec \lambda^+}$ and thus, in light of Fact~\ref{Fact_ComplicatedModel}, with $\vA\brk{n,\eps,\delta}$ and $\vA\brk{n+1,\eps,\delta}$.

\begin{lemma}\label{Lemma_valid}
We have  $\Erw[\nul(\vA'')]=\Erw[\nul(\vA_{n,\vM, \lambda})]+o(1)$ and  $\Erw[\nul(\vA''')]=\Erw[\nul(\vA_{n+1,\vM^+, \lambda^+})]+o(1).$
\end{lemma}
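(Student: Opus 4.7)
The plan is to argue that $\vA''$ and $\vA'''$ reproduce the distributions of $\vA\brk{n,\vM,\vec\lambda}$ and $\vA\brk{n+1,\vM^+,\vec\lambda^+}$ respectively, modulo rare defect events in the pairing model that perturb the expected nullity by at most $o(1)$. Both identities hinge on three ingredients: matching the degree statistics, Poisson additivity of the ternary-check counts, and a two-stage sampling argument for the underlying pairing matching. Only the second identity exhibits genuine discrepancies, all stemming from coupling the neighbourhood of the new variable $x_{n+1}$ on the two sides.

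For the first identity, observe that $\vM_i^-=(\vM_i-\GAMMA_i)\vee 0 \le \vM_i$ forces $\vA''$ to contain exactly $\vM_i^- + (\vM_i - \vM_i^-) = \vM_i$ degree-$i$ checks for each $i\ge 3$, matching the target. Poisson additivity together with \eqref{eqlambda''} gives $\vec\lambda^- + \vec\lambda'' \sim \Po(\lambda^- + \delta n - \lambda^-) = \Po(\delta n)$, and the ternary-check endpoints are sampled uniformly and independently in both constructions. For the bipartite matching, a standard two-stage argument shows that first drawing the uniformly random maximum matching $\vec\Gamma'$ onto the reduced check side and then extending via an independent uniform matching $\vec\Gamma''$ of the cavities against the remaining new check clones produces a uniformly random maximum matching on the combined check side; a brief case analysis confirms that the total matched size always equals $\min(\sum_h \vd_h, \sum_i i \vM_i)$. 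Hence $\vA''$ and $\vA\brk{n,\vM,\vec\lambda}$ are equal in distribution, and the first identity holds without any error term. An analogous count for $\vA'''$ yields $\vM_i^- + \GAMMA_i + (\vM_i^+-\vM_i^--\GAMMA_i) = \vM_i^+$ degree-$i$ checks and $\vec\lambda^- + \vec\lambda''' \sim \Po(\delta(n+1))$ ternary checks (via \eqref{eqlambda'''}), and the same two-stage argument handles the pairing of the remaining check clones against the cavities and against $x_{n+1}$.

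The main obstacle is handling the rare coupling defects for the second identity. The quantity $\GAMMA_i$ is defined as the number of distinct degree-$i$ check neighbours of $x_{n+1}$ in $\G\brk{n+1,\vM^+,\vec\lambda^+}$, and by construction the $a'''_{i,j}$ reproduce these neighbours while the $b'''_{i,j}$ absorb the remaining degree-$i$ checks; however, $\GAMMA_i$ undercounts multi-edges at $x_{n+1}$, so $\vA'''$ diverges from the target whenever such a multi-edge occurs. By \Lem~\ref{Lemma_sums} together with $\Erw[\vd^{2+\eta}]<\infty$, the probability of a multi-edge at $x_{n+1}$ is $O(\vd_{n+1}^2/n)=O(1/n)$ in expectation, and each defect perturbs the rank by $O(1)$. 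A parallel argument handles ternary checks with repeated endpoints in $\{x_{n+1}\}\cup[n]$: each contributes $O(1/n)$ to the expected nullity gap, and a total of $O(1)$ such checks appears in expectation. Finally, the corner case $\GAMMA_i > \vM_i$ occurs with probability $e^{-\Omega(n)}$, since $\GAMMA_i$ has mean $O(1)$ whereas $\vM_i$ concentrates at $\Theta(n)$, and alters the nullity by at most $O(1)$. Summing these contributions yields the claimed $o(1)$ bound.
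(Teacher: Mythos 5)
The paper's proof of this lemma is essentially a one-line citation: it invokes [Maurice, Claim~5.17] for the coupling of the base models (without the ternary checks), and then notes that the extra ternary equations are added independently with matching distributions, so the coupling extends trivially. Your proposal instead attempts to re-derive the entire coupling from scratch, which is a genuinely different route. The core ideas are similar to those that underlie the cited result — two-stage exposure of a uniformly random maximal matching, Poisson additivity of the ternary counts, and bounding the rare events where the neighbourhood of $x_{n+1}$ cannot be reproduced exactly — but spelling them all out is a considerably heavier task than the paper undertakes.

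A few points in your writeup are stated with more confidence than the argument supports. First, the claim that the first identity holds ``without any error term'' is an overclaim: the two-stage matching picture requires that the variable clone side still dominates after the first stage, and there are atypical realisations of $(\vd_i)$ and $\vM$ where this fails; these contribute an $o(1)$ (indeed exponentially small) discrepancy, not zero, so the $o(1)$ is still doing work even for the first identity. Second, the ``standard two-stage argument'' for the maximal matching is asserted but not argued; this is precisely the content that [Maurice, Claim~5.17] supplies, and relying on it silently is a gap rather than a proof. Third, when a coupling defect does occur (a multi-edge at $x_{n+1}$, or $\GAMMA_i>\vM_i$, or a degenerate ternary check), the two matrices may diverge entirely in the rows touching $x_{n+1}$, so the discrepancy is not automatically ``$O(1)$ per defect''; it is bounded by $\vd_{n+1}+\vec\lambda'''+1$, and one needs a Cauchy–Schwarz argument (as the paper does in Claims~\ref{Claim_A'''3} and~\ref{Claim_A''_1}) to convert the $O(1/n)$ defect probability together with bounded second moments into an $o(1)$ nullity gap. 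Finally, the citation of \Lem~\ref{Lemma_sums} for the multi-edge probability is misplaced — that lemma is a tail bound for sums, not a bound on pairing collisions; the $O(\vd_{n+1}^2/n)$ estimate should instead come from the pairing model directly. None of these issues break the approach, but they mean the proposal as written is a sketch rather than a complete argument; the paper sidesteps all of them by citing the existing coupling and treating the ternary equations as an independent overlay.
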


\noindent
We defer the simple proof of \Lem~\ref{Lemma_valid} to \Sec~\ref{Sec_Lemma_valid}.

The core of the proof of \Prop~\ref{prop_auxphi} is to estimate the difference of the nullities of $\vA'''$ and $\vA'$ and of $\vA''$ and $\vA'$.
The following two lemmas express these differences in terms of two random variables $\vec\alpha,\vec\beta$.
Specifically, let $\ALPHA$ be the fraction of frozen cavities of $\vA'$ and let $\BETA$ be the fraction of frozen variables of $\vA'$.

\begin{lemma}\label{Lemma_A'''}
	For large enough $\Theta(\eps)>0$ and small enough $0<\delta<\delta_0$ we have
\begin{align*}
	\Erw[\nul(\vA''')-\nul(\vA')] &= \Erw\brk{\exp\bc{-3\delta \vec\beta^2} D(1-K'(\vec \alpha)/k)} + \frac{d}{k}\Erw\brk{K'(\vec\alpha)+K(\vec \alpha)} - \frac{d(k+1)}{k} - 3\delta \Erw\brk{1-\vec\beta^2}+o_{\eps}(1).
\end{align*}
\end{lemma}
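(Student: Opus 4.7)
The plan is to apply Lemma~\ref{Cor_free} to $\vA'''$ viewed as the $2\times2$ block matrix whose upper-left corner is $\vA'$, whose right column contains the zero vector stacked on the $x_{n+1}$-column $C$ of the new rows (of types $a'''_{i,j}$, $b'''_{i,j}$ and $t'''_i$), and whose bottom-left corner is the matrix $B$ of these rows' entries in the columns $x_1,\dots,x_n$. A preliminary invocation of Proposition~\ref{Prop_Alp} on the pinned matrix $\vA'$ guarantees that with probability at least $1-\delta$ it is $(\delta,\ell)$-free for every fixed $\ell$ up to a large constant, which together with the tail bound Lemma~\ref{Lemma_sums} on the bounded total support of the new rows ensures the hypothesis of Lemma~\ref{Cor_free} holds \whp. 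Consequently $\nul(\vA''')-\nul(\vA')=1-\rk(B_*\ C)+o_\eps(1)$, where $B_*$ is obtained from $B$ by zeroing the columns indexed by $\fF(\vA')$.

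To compute $\Erw[\rk(B_*\ C)]$ I would proceed by pivoting on the $C$-column: if at least one $a'''$ or $t'''$ row has a non-zero $B_*$-entry, such a row is chosen as pivot to eliminate $C$ from the other $C$-carrying rows, consuming a rank of $1$; otherwise the $C$-column contributes only $\mathbf 1\{\exists\ a'''\text{ or }t'''\}$ and all non-pivot $a'''/t'''$ rows reduce to $(0,\chi)$ and collapse. Generic linear independence of the non-pivot $B_*$-rows modulo $\vA'$ follows from the $(\delta,\ell)$-freeness of $\vA'$ combined with the randomness of the $\vec\chi_{ij}$. Case-by-case: a $b'''_{i,j}$ row of degree $i$ contributes $1$ unless all $i$ cavity neighbours are frozen (probability $\vec\alpha^i$); a non-pivot $a'''_{i,j}$ row contributes $1$ unless all $i-1$ cavity neighbours are frozen (probability $\vec\alpha^{i-1}$), in which case after pivoting it becomes a scalar multiple of the pivot's $B_*$-part and contributes nothing; a $t'''_i$ row contributes $1$ unless both of its $[n]$-neighbours are frozen (probability $\vec\beta^2$, up to $O(1/n)$ corrections from collisions $\vi'''_{i,j}=n+1$).

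Taking expectations via the Poisson and multinomial structure of $(\vec\Gamma_i,\vM_i^+-\vM_i^--\vec\Gamma_i,\vec\lambda''')$ in \eqref{eqPoissons}--\eqref{eqlambda'''}---specifically, that $\vec\Gamma_i$ is approximately a size-biased multinomial sampling of the $\vd_{n+1}$ clones of $x_{n+1}$ among checks of degree $\vk$, that $\vM_i^+-\vM_i^--\vec\Gamma_i\sim\Po((1-\eps)\Pr[\vk=i]d/k)$, and that $\vec\lambda'''\to\Po(3\delta)$---produces, after routine algebra,
\begin{equation*}
\Erw[\rk(B_*\ C)]=1-\Erw[\exp(-3\delta\vec\beta^2)D(1-K'(\vec\alpha)/k)]+\frac{d(k+1)}{k}-\frac{d}{k}\Erw[K'(\vec\alpha)+K(\vec\alpha)]+3\delta\Erw[1-\vec\beta^2]+o_\eps(1).
\end{equation*}
Here $\Erw[\exp(-3\delta\vec\beta^2)D(1-K'(\vec\alpha)/k)]$ is the probability that \emph{no} $a'''$ or $t'''$ row has an unfrozen $B_*$-neighbour: by independence of $\vd_{n+1}$ and $\vec\lambda'''$, Poisson thinning of the ternary checks at rate $3\delta\vec\beta^2$ gives the exponential factor while averaging $(1-K'(\vec\alpha)/k)^{\vd_{n+1}}$ gives $D(1-K'(\vec\alpha)/k)$. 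The terms $d(k+1)/k$ and $3\delta$ sum the expected total numbers of new rows ($d$ from $a'''$, $d/k$ from $b'''$, $3\delta$ from $t'''$), and the subtracted $K(\vec\alpha),K'(\vec\alpha),\vec\beta^2$ factors count the all-frozen-neighbourhood redundancies. Substituting into $1-\rk$ yields the claimed expansion.

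The main obstacle will be the generic-independence step: conditional on $(\vec\alpha,\vec\beta)$ and on the $(\delta,\ell)$-freeness of $\vA'$, one must rule out accidental rank deficit among the non-pivot low-weight random rows beyond the explicit all-frozen redundancies, at $o_\eps(1)$ cost. This generalises the rank-additivity computation of~\cite{Maurice} to accommodate the additional ternary-check type. A secondary difficulty is handling the $O(1/n)$-probability ternary-check collisions (where $\vi'''_{i,1}$ or $\vi'''_{i,2}$ equals $n+1$), which must be shown to contribute only $o_\eps(1)$ without distorting the Poisson exponent $3\delta\vec\beta^2$.
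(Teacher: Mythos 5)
Your overall architecture is the same as the paper's: apply Lemma~\ref{Cor_free} to the block decomposition of $\vA'''$ with $\vA'$ in the top-left corner, reduce to computing $\Erw[\rk(\vB_*\ \vC)]$, and then average using the conditional degree distribution and the Poisson structure of $\vM,\DELTA,\vec\lambda'''$. The final formula you state for $\Erw[\rk(\vB_*\ \vC)]$ is correct and, after substitution, reproduces the lemma. However, the rank bookkeeping you describe in the middle does \emph{not} actually produce that formula; there are two compensating slips that conceal a genuine gap.

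First, the pivoting accounting. Write $\cA\cup\cT$ for the rows carrying $\vC$-entries, let $m=|\cA\cup\cT|$, and let $Z$ be the number of those rows whose $\vB_*$-part is all-zero (``all-frozen''). You pivot on a row with \emph{non-zero} $\vB_*$-part and claim that a non-pivot all-frozen row, which becomes $(-c_j/c_1\cdot\vB_*^1,\ 0)$ after the elimination, ``contributes nothing''. That is false. Such a row is a scalar multiple of $(\vB_*^1,0)$, which is \emph{not} in the span of the pivot row $(\vB_*^1,c_1)$: any dependence would force the $\vC$-coefficient of the pivot to be $0$. So the all-frozen non-pivot rows collectively contribute rank exactly $\vecone\{Z>0\}$, not $0$. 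Your case split therefore produces an expected $\cA\cup\cT$-contribution of $(m-Z)+\vecone\{Z=m,\,m>0\}$, whereas the correct value is $(m-Z)+\vecone\{Z>0\}$, an undercount by $\pr[0<Z<m]$. (A toy example with two degree-$3$ $a'''$-checks gives your accounting $2-2\alpha^2+\alpha^4$ against the correct $2-\alpha^4$; the deficit is $2\alpha^2(1-\alpha^2)=\pr[Z=1]$.) The clean way to avoid this, and what the paper does in Claim~\ref{Claim_A'''6}, is to note $\rk(\vQ_{\cA\cup\cT})=(m-Z)+\vecone\{Z>0\}$ directly: the non-all-frozen rows are generically independent, and if at least one all-frozen row exists then $\vC$ is not redundant. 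Alternatively, if you insist on a pivot argument, the pivot must be taken on an all-frozen row whenever one exists.

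Second, you gloss $\Erw[\exp(-3\delta\vec\beta^2)D(1-K'(\vec\alpha)/k)]$ as ``the probability that \emph{no} $a'''$ or $t'''$ row has an unfrozen $B_*$-neighbour'', i.e.\ $\pr[Z=m]$. But your own derivation (Poisson thinning at rate $3\delta\vec\beta^2$, averaging $(1-K'(\vec\alpha)/k)^{\vd_{n+1}}$) computes $\pr[Z=0]$, the probability that \emph{every} such row has at least one unfrozen neighbour; note $1-K'(\vec\alpha)/k=1-\Erw[\vec\alpha^{\hat\vk-1}]$ is the per-check probability of \emph{not} being all-frozen. The two events are not complements of each other, and $\pr[Z=m]$ would not equal this quantity. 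With the correct reading $1-\pr[Z=0]=\pr[Z>0]$, this term is precisely the expected $\vecone\{Z>0\}$ that your pivot accounting dropped, and the algebra closes.

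In short, the framework, the use of Lemma~\ref{Cor_free}, the per-row frozen probabilities $\vec\alpha^i,\vec\alpha^{i-1},\vec\beta^2$, and the final Poisson averaging are all sound and match the paper. The gap is the rank-additivity step for the $\vC$-carrying block: you must account for the fact that the all-frozen $\cA\cup\cT$-rows contribute $\vecone\{Z>0\}$, and this is exactly where the extra term $1-(1-\BETA^2)^{\vec\lambda'''}\prod_i(1-\ALPHA^{i-1})^{\GAMMA_i}$ enters. Once you correct the pivot choice (or, better, replace it with a direct count) and the interpretation of the exponential--generating-function term, the argument goes through.
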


\begin{lemma}\label{Lemma_A''}
	For large enough $\Theta(\eps)>0$ and small enough $0<\delta<\delta_0$ we have
$$\Erw[\nul(\vA'')-\nul(\vA')] = - d + \frac{d}{k} \Erw\brk{\vec\alpha K'(\vec \alpha)} - 2\delta \Erw\brk{1-\vec\beta^3} + o_{\eps}(1). $$
\end{lemma}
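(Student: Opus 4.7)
The plan is to enumerate the rows that distinguish $\vA''$ from $\vA'$ and apply Lemma~\ref{Cor_free} to each in succession. Recall that $\vA''$ arises from $\vA'$ by attaching $\vM_i-\vM_i^-$ additional checks of each degree $i\geq3$ through the cavity-matching $\vec\Gamma''$ and appending $\vec\lambda''$ fresh ternary checks whose three neighbours are sampled uniformly and independently from $[n]$; every new $\vec\chi$-entry is an independent copy of $\vec\chi$ and hence non-zero. Invoking Proposition~\ref{Prop_Alp} exactly as in~\cite{Maurice}, pinning of at most $\Theta$ coordinates ensures that $\vA'$ is $(\delta,\ell)$-free for every bounded $\ell$ with probability $1-o_\eps(1)$, so Lemma~\ref{Cor_free} applies to every added row: apart from a negligible event, each row reduces the nullity by exactly one unless all of its non-zero positions fall on frozen variables of $\vA'$.

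For the checks of degree $i$ attached through $\vec\Gamma''$, observe that $\vM_i-\vM_i^-=\min(\vM_i,\GAMMA_i)$, where $\GAMMA_i$ counts degree-$i$ checks incident with $x_{n+1}$ in $\G[n+1,\vM^+,\vec\lambda^+]$. The pairing description together with Lemma~\ref{Lemma_sums} shows that each of the $\vd_{n+1}$ clones of $x_{n+1}$ lands on a degree-$i$ check with probability asymptotically equal to the size-biased weight $i\pr[\vk=i]/k$, whence $\ex[\vM_i-\vM_i^-]\to di\pr[\vk=i]/k$. Conditionally on $\vA'$, the $i$ cavities matched to each such check form an almost-uniform sample from the cavity set $\cC$; because $|\cC|=\Theta(n)$ \whp\ thanks to $\ex[\vd^{2+\eta}]<\infty$ and only checks of bounded degree contribute to leading order, sampling without replacement deviates from sampling with replacement by $O(1/n)$ per check, so the probability that all $i$ chosen cavities correspond to frozen variables equals $\vec\alpha^i+o(1)$. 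Summing over $i\geq3$ and using $zK'(z)=\sum_i i\pr[\vk=i]z^i$ this part of the nullity drop contributes
\[-\sum_{i\geq3}\frac{di\pr[\vk=i]}{k}\ex[1-\vec\alpha^i]+o_\eps(1)=-d+\frac{d}{k}\ex[\vec\alpha K'(\vec\alpha)]+o_\eps(1).\]
For the $\vec\lambda''$ ternary checks, the three neighbours are drawn uniformly and independently from $[n]$, so conditionally on $\vA'$ the probability that all three fall on frozen variables is $\vec\beta^3+O(1/n)$; combined with $\ex[\vec\lambda'']\to2\delta$ this yields $-2\delta\ex[1-\vec\beta^3]+o_\eps(1)$, and adding the two contributions gives the claim.

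The main obstacle is the asymptotic independence required to replace the joint probability that $i$ cavities sampled without replacement are all frozen by $\ex[\vec\alpha^i]$: cavities of distinct underlying variables are not exchangeable, their frozen status is a deterministic function of $\vA'$, and $(\delta,\ell)$-freeness is only recovered after the pinning step. These are precisely the obstacles surmounted in the Aizenman--Sims--Starr argument underlying the proof of~\eqref{eqMaurice} in~\cite{Maurice}, so the same machinery transfers verbatim to all rows of $\vA''$ of degree $\geq3$. The genuinely new ingredient is the ternary-check term, which is straightforward because those checks pick their neighbours uniformly and independently in $[n]$, reducing the computation to the identity $\pr[\text{all three frozen}\mid\vA']=\vec\beta^3$ without any further combinatorial input.
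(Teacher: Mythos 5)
Your proposal is correct and follows essentially the same route as the paper's proof: it isolates the new block of rows, controls the short proper relations of $\vA'$ via the pinning machinery so that Lemma~\ref{Cor_free} applies, computes the expected rank of the added block by the probability that a degree-$i$ check (resp.\ ternary check) lands entirely on frozen cavities (resp.\ frozen variables), and replaces the conditional degree statistics by their size-biased limits exactly as in~\cite{Maurice}. The only difference is presentational: the paper packages these steps into the explicit good event $\cE\cap\cE'\cap\cE''\cap\cE'''$ and the conditional computation of Claim~\ref{Claim_A''_6}, whereas you argue more informally by invoking the transfer of the coupling argument; the substance, including the identification of the new $-2\delta\ex[1-\vec\beta^3]$ term, is the same.
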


\noindent
After some preparations in \Sec~\ref{sec_prep} we will prove \Lem s~\ref{Lemma_A'''} and~\ref{Lemma_A''} in \Sec s~\ref{Sec_A'''} and \ref{Sec_A''}.

\begin{proof}[Proof of \Prop~\ref{prop_auxphi}]
	The proposition is an immediate consequence of Fact~\ref{Fact_ComplicatedModel}, \Lem~\ref{Lemma_valid}, \Lem~\ref{Lemma_A'''} and \Lem~\ref{Lemma_A''}.
\end{proof}

\subsection{Preparations}\label{sec_prep}
To facilitate the proofs of \Lem s~\ref{Lemma_A'''} and \ref{Lemma_A''} we establish a few basic statements about the coupling.
Some of these are immediate consequence of statements from~\cite{Maurice}, where a similar coupling was used.
Let us begin with the following lower bound on the likely number of cavities.

\begin{lemma}\label{Lemma_cavityCount}
\Whp\ we have $|\cC|\geq\eps dn/2$.
\end{lemma}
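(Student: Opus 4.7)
The plan is to observe that because the matching $\vec\Gamma'$ is a maximal (hence maximum, on a complete bipartite graph) matching, every unmatched variable clone is a cavity, and conversely. Therefore
\[
	|\cC| = \Bigl(\sum_{i=1}^n \vd_i - \sum_{i\geq 1} i\vM_i^-\Bigr)_+.
\]
So the statement reduces to showing that \whp
\[
	\sum_{i=1}^n \vd_i \geq \bigl(1-\tfrac{\eps}{2}\bigr) dn \qquad \text{and} \qquad \sum_{i\geq 1} i\vM_i^- \leq \bigl(1-\tfrac{\eps}{2}\bigr) dn.
\]

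For the first bound, $\sum_{i=1}^n \vd_i$ is a sum of $n$ independent copies of $\vd$, which has a finite $(2+\eta)$-th moment. \Lem~\ref{Lemma_sums} therefore yields $\sum_{i=1}^n\vd_i = dn + o(n)$ \whp

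For the second bound, I would first note that $\vM_i^- \leq \vM_i$ for every $i$, so it suffices to upper bound $\sum_{i \geq 1} i\vM_i$. By construction and \eqref{eqPoissons}, the random variables $(\vM_i)_{i\geq 1}$ are independent Poisson with $\Erw[\vM_i] = (1-\eps)\Pr[\vk=i] dn/k$, so $\sum_{i\geq 1}i\vM_i$ has the same distribution as $\sum_{j=1}^{\vm_{\eps,n}}\vk_j$ where $\vm_{\eps,n}\sim\Po((1-\eps)dn/k)$ is independent of the i.i.d.\ sequence $(\vk_j)_{j\geq 1}$ (by Poisson thinning, since conditional on $\vm_{\eps,n}$ each degree is an independent copy of $\vk$). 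Standard Poisson concentration gives $\vm_{\eps,n}=(1-\eps)dn/k+o(n)$ \whp, and since $\Erw[\vk^{2+\eta}]<\infty$, another application of \Lem~\ref{Lemma_sums} with $s = \vm_{\eps,n} = \Theta(n)$ yields
\[
	\sum_{j=1}^{\vm_{\eps,n}}\vk_j = k\,\vm_{\eps,n} + o(n) = (1-\eps)dn+o(n) \quad \text{\whp}
\]

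Combining the two concentration estimates gives $|\cC| \geq \eps dn - o(n) \geq \eps dn / 2$ \whp, for $n$ large enough. The argument has no real obstacle: both sums are classical sums of independent random variables with a finite $(2+\eta)$-th moment, and the structural content is just the observation that the cavity count equals the difference of total clones on each side of the maximal matching.
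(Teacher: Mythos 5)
Your proof is correct, but it takes a genuinely different route from the paper's. The paper simply notes that the extra ternary checks $t_1,\ldots,t_{\vec\lambda^-}$ do not touch the pairing between variable clones and the $a_{i,j}$-checks, and therefore the cavity count has exactly the same distribution as in \cite{Maurice}, whence it cites \cite[Lemma~5.5]{Maurice}. You instead give a self-contained argument from scratch. Your two main ingredients are sound: first, a maximal matching of a complete bipartite graph is maximum, so the number of unmatched clones on the variable side equals $\bigl(\sum_{i=1}^n\vd_i-\sum_{i\geq1}i\vM_i^-\bigr)_+$; second, concentration of both sums via \Lem~\ref{Lemma_sums} (the representation of $\sum_i i\vM_i$ as $\sum_{j\leq\vm_{\eps,n}}\vk_j$ by Poisson thinning is the right way to deal with the compound-Poisson structure). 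What the paper's citation buys is brevity and reuse; what your version buys is that the reader sees exactly where the factor $\eps$ enters — the check side is deliberately thinned to mean $(1-\eps)dn/k$, leaving an $\eps dn$ surplus of variable clones in expectation.

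One small slip: you state the "reduction" as showing $\sum_i\vd_i\geq(1-\eps/2)dn$ and $\sum_i i\vM_i^-\leq(1-\eps/2)dn$, which together only give $|\cC|\geq0$. What you actually prove is stronger — $\sum_i\vd_i=dn+o(n)$ and $\sum_i i\vM_i\leq(1-\eps)dn+o(n)$ — and that does give $|\cC|\geq\eps dn-o(n)\geq\eps dn/2$ for large $n$. So the executed argument is fine; just rephrase the intermediate targets (e.g.\ $\sum_i\vd_i\geq(1-\eps/4)dn$ and $\sum_i i\vM_i^-\leq(1-3\eps/4)dn$) so that they actually imply the claim. You should also briefly remark that the application of \Lem~\ref{Lemma_sums} with the random stopping index $\vm_{\eps,n}$ is legitimate after conditioning on the high-probability event that $\vm_{\eps,n}=(1-\eps)dn/k+o(n)$.
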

\begin{proof}
	Apart from the extra ternary check nodes $t_1, \ldots t_{\vec \lambda'}$, the construction of $\G'$ coincides with that of the Tanner graph from~\cite{Maurice}.
	Because the presence of $t_1, \ldots t_{\vec \lambda'}$ does not affect the number of cavities, the assertion therefore follows from \cite[Lemma 5.5]{Maurice}.
\end{proof}

As a next step we show that \whp\ the random matrix $\vA'$ does not have very many short linear relations.
Specifically, if we choose a bounded number of variables and a bounded number of cavities randomly, then it is quite unlikely that the chosen coordinates form a proper relation.
Formally, let $\cR(\ell_1,\ell_2)$ be the set of all sequences $(i_1,\ldots,i_{\ell_1})\in[n]^{\ell_1}$, $(u_1,j_1),\ldots,(u_{\ell_2},j_{\ell_2})\in\cC$ such that $(i_1,\ldots,i_{\ell_1},u_1,\ldots,u_{\ell_2})$ is a proper relation of $\vA'$.
Furthermore, let $\fR(\zeta,\ell)$ be the event that $|\cR(\ell_1,\ell_2)|\leq\zeta n^{\ell_1}|C|^{\ell_2}$ for all $0\leq \ell_1,\ell_2\leq\ell$.

\begin{lemma}\label{jointfactorlemma}
	For any $\zeta> 0$, $\ell>0$ exist $\Theta_0=\Theta_0(\eps,\zeta,\ell)>0$ and $n_0>0$ such that for all $n \geq n_0$, $\Theta\geq\Theta_0$ we have $\pr\brk{\fR(\zeta,\ell)}>1-\zeta$.
\end{lemma}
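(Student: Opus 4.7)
The plan is to reduce the statement to Proposition \ref{Prop_Alp} (the pinning lemma) and then translate the resulting bound on proper relations into the desired bound on cavity-indexed sequences.

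\emph{Step 1: applying the pinning lemma.} The construction of $\vec G'$ already appends $\THETA$ uniformly random pinning rows $p_1,\ldots,p_{\THETA}$; letting $B$ denote the matrix obtained from $\vA'$ by removing these rows, we have $\vA' = B[\THETA]$ in the sense of Definition \ref{Def_pin}. For any $\delta>0$, applying Proposition \ref{Prop_Alp} separately for each $k\in\{2,\ldots,2\ell\}$ and taking a union bound, we obtain that for $\Theta$ sufficiently large (depending only on $\delta,\ell$), the matrix $\vA'$ is simultaneously $(\delta,k)$-free for every $2\leq k\leq 2\ell$ with probability at least $1-O(\ell\delta)$.

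\emph{Step 2: controlling the cavities.} The principal subtlety is that cavities in $\cC$ are not indexed by $[n]$: a single variable $u$ may contribute up to $\vd_u$ cavities, and a small set of very high-degree variables could in principle dominate $|\cC|$. I handle this via the moment assumption $\ex[\vd^{2+\eta}]<\infty$: pick $D=D(\zeta,\ell,\eps)$ so large that $\ex\brk{\vd\vecone\{\vd>D\}}<\xi$ for a parameter $\xi=\xi(\zeta,\ell,\eps)>0$ to be chosen. Applying Lemma \ref{Lemma_sums} to the truncated variables $\vd_i\vecone\{\vd_i>D\}$ shows that \whp\ the set $H=\{u\in[n]:\vd_u>D\}$ satisfies $\sum_{u\in H}\vd_u\leq 2\xi n$, so the subset $\cC_H=\{(u,j)\in\cC:u\in H\}$ of high-degree cavities has size at most $2\xi n$. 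Lemma \ref{Lemma_cavityCount} additionally yields $|\cC|\geq\eps dn/2$ \whp, whence $|\cC|=\Theta(\eps n)$.

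\emph{Step 3: bounding $|\cR(\ell_1,\ell_2)|$.} On the intersection of the events above I split sequences in $\cR(\ell_1,\ell_2)$ according to whether any cavity position $u_s$ lies in $H$. Sequences with at least one cavity in $\cC_H$ contribute at most $\ell_2\cdot|\cC_H|\cdot n^{\ell_1}|\cC|^{\ell_2-1}\leq 2\ell_2\xi n^{\ell_1+1}|\cC|^{\ell_2-1}$ by a union bound over the bad position. For sequences whose cavities all lie outside $\cC_H$, every cavity variable coordinate has degree at most $D$; if the underlying set is a proper relation $I$ of size $k\in\{2,\ldots,\ell_1+\ell_2\}$, the number of admissible sequences with underlying set contained in $I$ is at most $|I|^{\ell_1}\bc{\sum_{u\in I\setminus H}\vd_u}^{\ell_2}\leq k^{\ell_1}(kD)^{\ell_2}$. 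Summing over $k$ and using $(\delta,k)$-freeness gives a total contribution of $O_\ell(\delta D^{\ell_2})n^{\ell_1+\ell_2}$.

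\emph{Step 4: choosing the constants.} Dividing by $n^{\ell_1}|\cC|^{\ell_2}$ and using $|\cC|=\Theta(\eps n)$, the ratio $|\cR(\ell_1,\ell_2)|/(n^{\ell_1}|\cC|^{\ell_2})$ is bounded by $O(\xi/\eps)+O_\ell(\delta D^{\ell_2}/\eps^{\ell_2})$. I then pick the parameters in cascade: first $\xi$ small enough (depending on $\eps,\zeta,\ell$) that the first term is at most $\zeta/3$, which via the moment condition determines $D$; then $\delta=\delta(\eps,\zeta,\ell,D)$ small enough that the second term is at most $\zeta/3$ and also $O(\ell\delta)\leq\zeta/3$; finally $\Theta_0=\Theta_0(\eps,\zeta,\ell)$ via Proposition \ref{Prop_Alp} applied with this $\delta$. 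The main technical obstacle is precisely the cavity bookkeeping of Step 3: Proposition \ref{Prop_Alp} controls only the count of proper relations and is blind to how cavities distribute across variables, and without the degree truncation enabled by the $(2+\eta)$-moment assumption a handful of extreme-degree variables could inflate $|\cR(\ell_1,\ell_2)|$ above the required $\zeta n^{\ell_1}|\cC|^{\ell_2}$.
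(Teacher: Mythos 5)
Your proposal follows essentially the same route as the paper: pin via Proposition~\ref{Prop_Alp}, truncate high-degree variables using the $(2+\eta)$-moment assumption, lower-bound $|\cC|$ via \Lem~\ref{Lemma_cavityCount}, and split $\cR(\ell_1,\ell_2)$ according to whether some cavity sits at a high-degree variable. Your bookkeeping in Steps 3--4 is in fact a little more explicit than the paper's (you apply Proposition~\ref{Prop_Alp} separately for each set size $k\le 2\ell$ and bound the number of admissible sequences inside each proper relation by $k^{\ell_1}(kD)^{\ell_2}$, and you invoke \Lem~\ref{Lemma_sums} for the truncation where the paper simply uses Markov), and this is fine.

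There is, however, a small but genuine slip in Step~1. You assert that ``$\vA' = B[\THETA]$ in the sense of Definition~\ref{Def_pin}''. This is not literally true: in the construction of $\G'$ the check $p_\ell$ is adjacent to $x_\ell$, so $p_1,\ldots,p_{\vec\theta}$ pin the \emph{fixed} columns $1,\ldots,\vec\theta$, whereas the pinning operation of Definition~\ref{Def_pin} adds rows that pin $\vec\theta$ \emph{uniformly random} columns. The two matrices are therefore different random objects, and Proposition~\ref{Prop_Alp} applies directly only to the latter. The missing observation is that the distribution of $\vA'$ is invariant under permutations of the columns, so that the event of being $(\delta,k)$-free has the same probability for $\vA'$ as for $B[\vec\theta]$. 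The paper spells this exchangeability argument out explicitly, and without it Step~1 has a gap. Once that sentence is added, your proof is complete and matches the paper's.
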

\begin{proof}
	Fix any $\ell_1,\ell_2\leq\ell$ such that $\ell_1+\ell_2>0$ and let $\fR(\zeta,\ell_1,\ell_2)$ be the event that $|\cR(\ell_1,\ell_2)|<\zeta n^{\ell_1}|C|^{\ell_2}$.
	Then it suffices to show that $\pr\brk{\fR(\zeta,\ell_1,\ell_2)}>1-\zeta$ as we can just replace $\zeta$ by $\zeta/(\ell+1)^2$ and apply the union bound.
	To this end we may assume that $\zeta<\zeta_0(\eps,\ell)$ for a small enough $\zeta_0(\eps,\ell)>0$.

	We will actually estimate $|\cR(\ell_1,\ell_2)|$ on a certain likely event.
	Specifically, due to Lemma \ref{Lemma_cavityCount} we have $|\cC|\geq \eps n/2$ \whp\
	In addition, let $\cA$ be the event that $\vA'$ is $(\zeta^4/L^\ell,\ell)$-free.
	Then \Lem~\ref{Prop_Alp} shows that $\pr\brk\cA>1-\zeta/3$, provided that $n\geq n_0$ for a large enough $n_0=n_0(\zeta,\ell)$.
	To see this, consider the matrix $\vB$ obtained from $\vA'$ by deleting the rows representing the unary checks $p_i$.
	Then \Lem~\ref{Prop_Alp} shows that the matrix $\vB[\vec\theta]$ obtained from $\vB$ via the pinning operation is $(\zeta^4,L^\ell)$-free with probability $1-\zeta/3$, provided that $\Theta$ is chosen sufficiently large.
	The only difference between $\vB[\vec\theta]$ and $\vA'$ is that in the former random matrix we apply the pinning operation to $\vec\theta$ random coordinates, while in $\vA'$ the unary checks $p_i$ pin the first $\vec\theta$ coordinates.
	However, the distribution of $\vA'$ is actually invariant under permutations of the columns.
	Therefore, the matrices $\vA'$ and $\vB[\vec\theta]$ are $(\zeta^4,L^\ell)$-free with precisely the same probability.
	Hence, \Lem~\ref{Prop_Alp} implies that $\pr\brk\cA>1-\zeta/3$.

	Further, Markov's inequality shows that for any $L>0$,
\begin{align*}
	\pr\brk{\sum_{i=1}^n \vec d_i\vecone\{\vec d_i> L\}\geq\frac{\eps\zeta^2 n}{16\ell}} \leq \frac{16 \ell \Erw\brk{\vec d\vecone\{\vec d> L\}}}{\eps \zeta^2}.
\end{align*}
Therefore, since $\Erw\brk{\vec d} = O_\eps(1)$ we can choose $L=L(\eps, \zeta,\ell)>0$ big enough such that the event $$\cL=\cbc{\sum_{i=1}^n \vec d_i\vecone\{\vec d_i> L\}< \frac{\eps\zeta^2 n}{16\ell}}$$ has probability at least $1-\zeta/3$.
	Thus, the event $\cE=\cA\cap\cL\cap\{|\cC|\geq \eps n/2\}$ satisfies $ \pr\brk{\cE}>1-\zeta.  $
Hence, suffices to show that
\begin{align}\label{Prob_to_expect}
	|\cR(\ell_1,\ell_2)|<\zeta n^{\ell_1}|\cC|^{\ell_2}&&\mbox{ if the event $\cE$ occurs.}
\end{align}

To bound $\cR(\ell_1,\ell_2)$ on $\cE$ we need to take into consideration that the cavities are degree-weighted.
Hence, let $\cR'(\ell_1,\ell_2)$ be the set of all sequences $(i_1,\ldots,i_{\ell_1},(u_1,j_1),\ldots,(u_{\ell_2},j_{\ell_2}))\in\cR(\ell_1,\ell_2)$ such that the degree of some variable node $u_i$ exceeds $L$.
Assuming $\ell_2>0$, on $\cE$ we have
\begin{align}\label{Prob_to_expect_2}
	|\cR'(\ell_1,\ell_2)|\leq n^{\ell_1+1}|\cC|^{\ell_2-1}\cdot\frac2\eps\sum_{i=1}^n \vec d_i\vecone\{\vec d_i> L\}\leq n^{\ell_1}|\cC|^{\ell_2}\cdot\frac2\eps\cdot\frac{\zeta^2n}{16\ell_2}<\frac{\zeta}2,
\end{align}
provided that $\zeta>0$ is small enough.

Finally, we bound the size of $\cR''(\ell_1,\ell_2)=\cR(\ell_1,\ell_2)\setminus\cR'(\ell_1,\ell_2)$.
Since for any $(i_1,\ldots,i_{\ell_1},(u_1,j_1),\ldots,(u_{\ell_2},j_{\ell_2}))\in\cR''(\ell_1,\ell_2)$ the sequence $(i_1,\ldots,i_{\ell_1},u_1,\ldots,u_{\ell_2})$ is a proper relation and since there are no more than $L^{\ell_2}$ ways of choosing the indices $j_1,\ldots,j_{\ell_2}$, on the event $\cE$ we have
\begin{align}
	|\cR''(\ell_1,\ell_2)|&\leq \frac{\zeta^4}{L^\ell}\cdot L^{\ell_2}n^\ell&&\mbox{[because $\vA'$ is $\zeta^4/L^\ell,\ell)$-free]}\nonumber\\
						&\leq \zeta^4\bcfr2\eps^{\ell_2}\cdot n^{\ell_1}|\cC|^{\ell_2}&&\mbox{[because $|\cC|>\eps n/2$]}\nonumber\\
						&<\frac\zeta2n^{\ell_1}|\cC|^{\ell_2},\label{Prob_to_expect_3}
\end{align}
provided that $\zeta<\zeta_0(\eps,\ell)$ is sufficiently small.
Thus, \eqref{Prob_to_expect} follows from \eqref{Prob_to_expect_2} and \eqref{Prob_to_expect_3}.
\end{proof}

Let $(\hat{\vk}_i)_{i \geq 1}$ be a sequence of copies of $\hat{\vk}$, mutually independent and independent of everything else. 
Also let 
\begin{align*}
	\hat{\vec \gamma}_j = \sum_{i=1}^{\vd_{n+2}} \vecone\cbc{\hat{\vk}_i = j},&&
\hat{\vec \gamma} = (\hat{\vec \gamma}_j)_{j \geq 1}.
\end{align*}
Additionally, let $(\hat{\vec\Delta}_j)_{j \geq 3}$ be a family of independent random variables with distribution
\begin{align}\label{eqhatDelta}
\hat{\vec \Delta}_j = \Po\bc{(1-\eps) \Pr\brk{\vk=j}d/k}.
\end{align}
Further, let $\Sigma'$ be the $\sigma$-algebra generated by $\vG',\vA',\vec\theta,\vec\lambda^-,\vM^-,\vec \Gamma_{n,M^-}, (\vec \chi_{i,j,h}')_{i,j,h \geq 1}$ and $(\vd_i)_{i \in [n]}$.
In particular, $\ALPHA$ and $\vec\beta$ are $\Sigma'$-measurable.

\begin{lemma}\label{Lem_cond_distr}
	With probability $1-\mathrm{exp}\bc{-\Omega_\eps(1/\eps)}$, we have   
	$$\dTV{\bc{\Pr\brk{\cbc{\vec\gamma \in \cdot \ } \vert \Sigma'}, \hat{\vec \gamma}}} + \dTV{\bc{\Pr\brk{\cbc{\vec\Delta \in \cdot \ } \vert \Sigma'}, \hat{\vec \Delta}}} = O_\eps(\sqrt{\eps}).$$
\end{lemma}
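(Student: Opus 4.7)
The strategy is to view the construction in reverse: the graph $\G[n+1,\vM^+,\vec\lambda^+]$ is obtained from $\G'$ by inserting $x_{n+1}$ together with $\vec\gamma$ checks adjacent to it and $\vec\Delta$ fresh checks not adjacent to it, plus a few extra ternary checks. The lemma then says that given $\Sigma'$ the added data $(\vec\gamma,\vec\Delta)$ behaves almost like $(\hat{\vec\gamma},\hat{\vec\Delta})$. As a preliminary, Markov's inequality applied to $\ex[\sum_j\vec\Delta_j]=(1-\eps)d/k=O_\eps(1)$ and $\ex[\sum_j\vec\gamma_j]\le d=O_\eps(1)$, together with standard Chernoff bounds on the Poisson variables, confines $\sum_j(\vec\gamma_j+\vec\Delta_j)$ to a set of size $C_\eps$ outside an event of probability $\exp(-\Omega_\eps(1/\eps))$; all subsequent arguments take place on this good event.

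For $\vec\gamma$: the pairing model allows me to expose $\Sigma'$ while leaving the matching of $x_{n+1}$'s $\vd_{n+1}$ clones undetermined. Conditional on this exposure, those clones are matched to a uniformly random $\vd_{n+1}$-subset of the remaining check-clones. Each such check-clone lies in a degree-$j$ check with probability $j\vM_j^+/\sum_i i\vM_i^+$, which by Poisson concentration of the denominator equals $\pr[\hat\vk=j]+O_\eps(n^{-1/2})$. Since $\vd_{n+1}$ is independent of $\Sigma'$ (the $\sigma$-algebra only records $(\vd_i)_{i\in[n]}$), this couples the degrees of the matched checks to an i.i.d.\ sequence of $\hat\vk$-samples with per-draw TV error $O_\eps(\sqrt\eps)$. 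Summing the per-draw errors over the at most $C_\eps$ draws and comparing with the definition $\hat{\vec\gamma}_j=\sum_{i=1}^{\vd_{n+2}}\vecone\{\hat\vk_i=j\}$ yields $\dTV(\pr[\vec\gamma\in\cdot\mid\Sigma'],\hat{\vec\gamma})=O_\eps(\sqrt\eps)$. The increment $\vec\lambda^+-\vec\lambda^-$ in the number of ternary checks is handled analogously because each such check picks its three variables uniformly from $[n+1]$, independently of $\Sigma'$.

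For $\vec\Delta$: by the independence of the Poissons $\vM_j$ and $\vec\Delta_j$, and the fact that $\vM_j^-=\vM_j-\vec\gamma_j$ depends on $\vec\Delta$ only through $\vec\gamma$, the conditional law of $\vec\Delta$ given $(\Sigma',\vec\gamma)$ is exactly the unconditional law $\hat{\vec\Delta}$. Marginalising over $\vec\gamma$ via the previous TV bound and a union over the $C_\eps$ relevant coordinates gives the second estimate. The main obstacle in making this precise is controlling the rare event $\{\vM_j<\vec\gamma_j\}$ where the $({}\vee 0)$ clipping in the definition of $\vM^-$ destroys the clean Poisson-difference structure and hence the conditional independence argument above. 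Fortunately, whenever $\pr[\vk=j]>0$ one has $\vM_j=\Omega_\eps(n)$ with overwhelming probability, while $\vec\gamma_j=O_\eps(1)$ on the good event, so the bad event is absorbed into the $\exp(-\Omega_\eps(1/\eps))$ failure probability. Combining the two estimates gives the claimed bound.
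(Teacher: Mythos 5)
The paper's own proof of this lemma is a single sentence: since the extra ternary checks $t_i$ attach to uniformly random variables independently of everything else (and never to cavities), the Tanner graph $\G'$ has the same distribution as the Tanner graph of~\cite[Lemma 5.8]{Maurice} apart from decorations that do not influence $\vec\gamma$ or $\vec\Delta$; hence the assertion is literally~\cite[Lemma 5.8]{Maurice}. Your proposal instead attempts a self-contained derivation. It captures several of the right ideas (truncation to a good event, reverse exposure, comparison with an i.i.d.\ $\hat\vk$-sample, the role of the $\vee 0$ clipping), but there are three genuine gaps.

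First, the ``reverse exposure'' step conflates two distinct matchings. The quantity $\GAMMA_i$ is defined via the matching that builds $\G[n+1,\vM^+,\vec\lambda^+]$, whereas $\Sigma'$ records the \emph{fresh} matching $\vec\Gamma_{n,\vM^-}$ that builds $\G'$; the latter is drawn from scratch given $\vM^-$ and is \emph{not} the restriction of the former to $[n]$. Consequently, given $\Sigma'$ all information about $(\vec\gamma,\vec\Delta,\vM)$ flows exclusively through $\vM^-$ (everything else in $\Sigma'$ is independent of them given $\vM^-$), and the conditional law you must analyse is $\pr[\vec\gamma\in\cdot\mid\vM^-]$, not a conditional law of a matching you have ``left undetermined''. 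Your sketch implicitly assumes the two graph constructions coincide in distribution, which is precisely the kind of statement that \Lem~\ref{Lemma_valid} has to prove (and only at the level of nullities, not laws).

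Second, the claimed source of the $\sqrt\eps$ error is wrong. Poisson concentration of $\sum_i i\vM_i^+$ yields an $O(n^{-1/2})$ deviation, which in the paper's $O_\eps(\cdot)$-convention (limit $n\to\infty$ first, then $\eps\to0$) contributes $o_\eps(1)$, not $O_\eps(\sqrt\eps)$. The actual $\sqrt\eps$ bound reflects the fact that the cavity set, whose cardinality is $\Theta(\eps n)$, is a genuinely biased sample of clones; you never quantify that bias nor show that it survives the per-draw comparison and the summation over draws with the advertised exponent. Relatedly, your per-draw error $O_\eps(\sqrt\eps)$ multiplied by $C_\eps$ draws need not be $O_\eps(\sqrt\eps)$ unless $C_\eps=O(1)$, and you do not argue that.

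Third, the conditional-independence claim for $\vec\Delta$ is asserted but not justified. The variable $\GAMMA_j$ is drawn with knowledge of $\vM_j^+=\vM_j+\vec\Delta_j$, so observing $(\vM_j^-,\GAMMA_j)$ in principle leaks information about $\vec\Delta_j$. That leakage is asymptotically negligible because $\vM_j=\Theta(n)$ while $\vec\Delta_j=O_\eps(1)$, but this is a quantitative estimate that needs to be made, not an exact conditional-independence identity as your phrase ``exactly the unconditional law'' suggests. If you want to avoid re-proving~\cite[Lemma 5.8]{Maurice} from scratch, the cleanest route is the one the paper takes: observe that the ternary checks are grafted on independently and never touch the cavity structure, so the lemma reduces to the already-established case.
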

\begin{proof}
	Because $\G'$ is distributed the same as the Tanner graph from~\cite{Maurice}, apart from the extra ternary checks $t_i$, which do not affect the random vector $\vec\gamma$, the assertion follows from \cite[Lemma 5.8]{Maurice}.
\end{proof}

Let $\ell_*=\lceil\exp(1/\eps^4)\rceil$ and $\delta_*=\exp(-1/\eps^4)$ and consider the event
\begin{align}\label{eqE}
	\cE&=\fR(\delta_*,\ell_*).
\end{align}
Further, consider the event
\begin{align}\label{eqE'}
\cE'=\cbc{|\cC|\geq\eps dn/2\wedge \max_{i\leq n}\vd_i\leq n^{1/2}}.
\end{align}

\begin{corollary}\label{Cor_EE'}
For sufficiently large $\Theta=\Theta(\eps)>0$ we have $\pr\brk{\vA'\in\cE}>\exp(-1/\eps^4)$.
Moreover, $\pr\brk{\cE'}=1-o(1)$.
\end{corollary}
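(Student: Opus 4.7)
The plan is to deduce both statements essentially for free from results already established in the excerpt, namely Lemma~\ref{jointfactorlemma}, Lemma~\ref{Lemma_cavityCount}, and the moment assumption $\Erw[\vd^{2+\eta}] < \infty$.

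For the first claim, I would apply Lemma~\ref{jointfactorlemma} with the parameter choice $\zeta = \delta_* = \exp(-1/\eps^4)$ and $\ell = \ell_* = \lceil\exp(1/\eps^4)\rceil$. The lemma supplies a threshold $\Theta_0 = \Theta_0(\eps, \delta_*, \ell_*) = \Theta_0(\eps)$ such that for all $\Theta \geq \Theta_0$ and sufficiently large $n$, we have $\pr[\fR(\delta_*,\ell_*)] > 1 - \delta_* = 1 - \exp(-1/\eps^4)$. For $\eps$ small enough this lower bound is comfortably larger than $\exp(-1/\eps^4)$, yielding the claimed lower bound on $\pr[\vA' \in \cE]$.

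For the second claim, I would treat the two constituent events separately and apply a union bound. The cavity count bound $|\cC| \geq \eps dn/2$ holds \whp\ by Lemma~\ref{Lemma_cavityCount}. For the degree tail bound, Markov's inequality together with $\Erw[\vd^{2+\eta}] < \infty$ gives $\pr[\vd_i > \sqrt n] \leq \Erw[\vd^{2+\eta}] \cdot n^{-(2+\eta)/2}$; a union bound over $i \in [n]$ then yields $\pr[\max_{i \leq n} \vd_i > \sqrt n] = O(n^{-\eta/2}) = o(1)$. Combining these two bounds gives $\pr[\cE'] = 1 - o(1)$.

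Since both parts reduce directly to cited lemmas, there is no genuine obstacle here; the only point requiring a sentence of care is that the constant $\Theta_0(\eps)$ produced by Lemma~\ref{jointfactorlemma} when invoked with the $\eps$-dependent parameters $(\delta_*, \ell_*)$ is itself a function of $\eps$ alone, which is consistent with the corollary's quantifier structure ``for sufficiently large $\Theta = \Theta(\eps)$''.
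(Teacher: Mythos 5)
Your proposal is correct and follows essentially the same route as the paper: both halves reduce to the cited lemmas, with the first claim obtained by feeding $\zeta=\delta_*$ and $\ell=\ell_*$ into Lemma~\ref{jointfactorlemma} and the cavity count handled by Lemma~\ref{Lemma_cavityCount}. For the maximum-degree bound the paper's proof points to Lemma~\ref{Lemma_sums} together with the Poisson parameter choices in~\eqref{eqPoissons}, whereas your Markov/union-bound argument using $\Erw[\vd^{2+\eta}]<\infty$ is more direct and is, if anything, the cleaner way to establish $\max_{i\leq n}\vd_i\leq n^{1/2}$ \whp; the two routes are interchangeable here. One small remark: your invocation of Lemma~\ref{jointfactorlemma} actually yields the much stronger bound $\pr[\vA'\in\cE]\geq 1-\exp(-1/\eps^4)$, of which the stated lower bound $\exp(-1/\eps^4)$ is a weakening; this is fine, but you could note that explicitly rather than phrasing it as a comparison between $1-\exp(-1/\eps^4)$ and $\exp(-1/\eps^4)$.
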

\begin{proof}
	The first statement follows from \Lem~\ref{jointfactorlemma}.
	The second statement follows from the choice of the parameters in~\eqref{eqPoissons},  \Lem~\ref{Lemma_sums} and \Lem~\ref{Lemma_cavityCount}.
\end{proof}

\noindent
With these preparations in place we are ready to proceed to the proofs of \Lem s~\ref{Lemma_A'''} and~\ref{Lemma_A''}.

\subsection{Proof of \Lem~\ref{Lemma_A'''}}\label{Sec_A'''} % this subsection copied from \cite{Maurice}
Let
	\begin{align*}
		\vX&=\sum_{i\geq1}\DELTA_i,& \vY&=\sum_{i\geq1}i\DELTA_i,& \vY'&=\sum_{i\geq1}i\GAMMA_i.
	\end{align*}
Then the total number of new non-zero entries upon going from $\vA'$ to $\vA'''$ is bounded by $\vY+\vY'+3\vec\lambda'''$.
Let $$\cE''=\cbc{\vX\vee \vY\vee \vY'\vee\vec\lambda'''\leq1/\eps}.$$

\begin{claim}\label{Claim_A'''2a}
We have $\pr\brk{\cE''}=1-O_{\eps}(\eps)$.
\end{claim}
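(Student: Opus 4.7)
The plan is to bound each of the four variables by $1/\eps$ separately via Markov's inequality and conclude by a union bound. To this end it suffices to verify that each of $\ex[\vX]$, $\ex[\vY]$, $\ex[\vY']$ and $\ex[\vec\lambda''']$ is $O_\eps(1)$, where the implicit constant may depend on $d$, $k$, $\delta_0$ and the moments of $\vk$ but not on $\eps$ or $n$.

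The variables $\vX$, $\vY$ and $\vec\lambda'''$ are straightforward. By Poisson superposition applied to \eqref{eqPoissons} we have $\vX\sim\Po((1-\eps)d/k)$, and hence
\[
\ex[\vX]=(1-\eps)\frac{d}{k}=O_\eps(1),\qquad \ex[\vY]=\sum_{i\geq 1}i\cdot(1-\eps)\Pr[\vk=i]\frac{d}{k}=(1-\eps)d=O_\eps(1).
\]
Moreover, the definition \eqref{eqlambda'''} immediately gives $\ex[\vec\lambda''']\leq 3\delta\leq 3\delta_0=O_\eps(1)$.

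The one genuinely non-trivial term is $\vY'=\sum_i i\GAMMA_i$, where $\GAMMA_i$ counts the checks of degree $i$ adjacent to $x_{n+1}$ in $\G[n+1,\vM^+,\vec\lambda^+]$. I would argue as follows: condition on $\vd_{n+1}$ and on $(\vM_i^+)_{i\geq 1}$. Each of the $\vd_{n+1}$ clones of $x_{n+1}$ is matched to a uniformly random check clone, and the total pool of check clones has size $\sum_{i\geq 1}i\vM_i^+$; hence the chance that a given clone of $x_{n+1}$ attaches to a specific check of degree $i$ is $i/\sum_j j\vM_j^+$. Summing over checks and weighting by degree yields
\[
\ex[\vY'\mid\vd_{n+1},(\vM_i^+)_i]\ \leq\ \vd_{n+1}\cdot\frac{\sum_i i^2\vM_i^+}{\sum_j j\vM_j^+}.
\]
Taking expectation, using that $\ex[\vM_i^+]=(1-\eps+O(1/n))\Pr[\vk=i]dn/k$ by \eqref{eqPoissons}--\eqref{eqm}, Lemma~\ref{Lemma_sums} to control the denominator, and the assumption $\ex[\vk^{2+\eta}]<\infty$ to control the numerator, one obtains
\[
\ex[\vY']=d\cdot\frac{\ex[\vk^2]}{\ex[\vk]}+o(1)=O_\eps(1).
\]
The main (mild) obstacle is this last bound on $\ex[\vY']$, which requires the finite second-moment hypothesis on $\vk$; the remaining steps are immediate.

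Finally, Markov's inequality applied to each of the four non-negative variables yields $\pr[\vX>1/\eps]$, $\pr[\vY>1/\eps]$, $\pr[\vY'>1/\eps]$, $\pr[\vec\lambda'''>1/\eps]$, each bounded by $O_\eps(\eps)$, and a union bound delivers $\pr[\cE'']\geq 1-O_\eps(\eps)$, as desired.
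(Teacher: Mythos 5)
Your proposal is correct and follows essentially the same route as the paper: the paper also reduces the claim to showing that each of $\ex[\vX]$, $\ex[\vY]$, $\ex[\vY']$, $\ex[\vec\lambda''']$ is $O_\eps(1)$, bounds the first two and the last directly from \eqref{eqPoissons} and \eqref{eqlambda'''}, bounds $\ex[\vY']$ via the adjacency probability of a given check node to $x_{n+1}$ (which, like your conditional-expectation computation, ultimately rests on $\ex[\vk^2]<\infty$), and then invokes Markov's inequality. Your extra detail on the pairing-model conditioning for $\vY'$ is a slightly more explicit version of the paper's one-line estimate, but it is the same argument.
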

\begin{proof}
	Apart from the additional ternary checks the argument is similar to~\cite[Proof of Claim~5.9]{Maurice}.
	The construction~\eqref{eqPoissons} ensures that $\Erw[\vX],\Erw[\vY]=O_{\eps}(1)$.
	Therefore,  $\pr\brk{\vX>1/\eps}=O_{\eps}(\eps)$, $\pr\brk{\vY>1/\eps}=O_{\eps}(\eps)$ by Markov's inequality.
	Further, a given check node of degree $i$ is adjacent to $x_{n+1}$ with probability at most $i\vd_{n+1}/\sum_{i=1}^n\vd_i\geq n\leq i\vd_{n+1}/n$.
	Consequently,
	\begin{align*}
		\Erw\brk{\vY'}=\Erw\sum_{i\geq1}i\GAMMA_i\leq\Erw\sum_{i\in[\vm_{\eps,n}^+]}\vk_i^2\vd_{n+1}/n=O_{\eps}(1).
	\end{align*}
	Moreover, \eqref{eqlambda'''} shows that $\ex[\vec\lambda''']=O_\eps(1)$.
	Thus, the assertion follows from Markov's inequality.
\end{proof}

We obtain $\G'''$ from $\G'$ by adding checks $a_{i,j}'''$, $i\geq1$, $j\in[\GAMMA_i]$, $b_{i,j}'''$, $i\geq1$, $j\in[\vM_i^+-\vM_i^--\GAMMA_i]$ and $t_i'''$, $i\in[\vec\lambda''']$.
Let 
$$\cX'''=\bc{\bigcup_{i\geq1}\bigcup_{j=1}^{\GAMMA_i}\partial a_{i,j}'''\setminus\{x_{n+1}\}}\cup\bc{\bigcup_{i\geq1}\bigcup_{j\in[\vM_i^+-\vM_i^--\GAMMA_i]}\partial b_{i,j}'''}\cup\bigcup_{i=1}^{\vec\lambda'''}\partial t_i'''\setminus\{x_{n+1}\}$$
be the set of variable neighbours of these new checks among $x_1,\ldots,x_n$.
Further, let 
	$$\cE'''=\cbc{|\cX|=Y+\sum_{i\geq1}(i-1)\GAMMA_i+\vec\lambda'''}$$
be the event that the variables of $\G'$ where the new checks connect are pairwise distinct.

\begin{claim}\label{Claim_A'''2b}
	We have $\pr\brk{\cE'''\mid\cE'\cap\cE''}=1-o(1).$
\end{claim}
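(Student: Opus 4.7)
The plan is to bound the probability of the complementary event by a simple union bound over pairs of new variable-endpoints, using that on $\cE''$ there are only $O_\eps(1)$ many such endpoints while on $\cE'$ the configuration is sufficiently spread out.

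First I would enumerate the ``new'' variable-endpoints that $\cX'''$ is tracking. Each check $a_{i,j}'''$ contributes $i-1$ endpoints to cavities, each check $b_{i,j}'''$ contributes $i$ endpoints to cavities, and each ternary check $t_i'''$ contributes the two endpoints $\vi_{i,1}''',\vi_{i,2}'''\in[n+1]$ (those equal to $n+1$ can be ignored since they do not lie in $\{x_1,\ldots,x_n\}$). On $\cE''$ the total number $T$ of such endpoints is bounded by $\vY+\vY'+2\vec\lambda'''\leq 4/\eps$, hence $T=O_\eps(1)$ deterministically on $\cE''$. Thus $\cE'''$ fails iff at least one of the $\binom{T}{2}=O_\eps(1)$ pairs of endpoints coincides at some variable in $[n]$.

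Next I would estimate the probability that any fixed pair of endpoints collides, distinguishing three cases according to their origin. If both endpoints come from the matching $\vec\Gamma'''$ to the cavities $\cC$, then writing $\vd_j^{\mathrm{cav}}$ for the number of cavities at $x_j$ we have $\sum_j\vd_j^{\mathrm{cav}}=|\cC|$ and, on $\cE'$, $\vd_j^{\mathrm{cav}}\leq\vd_j\leq\sqrt n$ for every $j$. Hence the probability that a uniformly random pair of distinct cavities lies at the same variable is at most
\[
\frac{\sum_j\vd_j^{\mathrm{cav}}(\vd_j^{\mathrm{cav}}-1)}{|\cC|(|\cC|-1)}\leq\frac{\sqrt n\cdot|\cC|}{|\cC|(|\cC|-1)}=O\bcfr{1}{\eps\sqrt n}.
\]
If both endpoints come from ternary checks $t_i'''$, they are independent uniform elements of $[n+1]$, so the collision probability is $O(1/n)$. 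In the mixed case, the ternary endpoint is uniform on $[n]$ (ignoring $x_{n+1}$) while the cavity-endpoint lies at variable $j$ with probability $\vd_j^{\mathrm{cav}}/|\cC|\leq\sqrt n/|\cC|=O(1/(\eps\sqrt n))$, which by the law of total probability again gives a collision bound of $O(1/(\eps\sqrt n))$.

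Finally, I would collect these pairwise bounds. The union bound over the $\binom{T}{2}=O_\eps(1)$ pairs yields
\[
\pr\brk{\neg\cE'''\mid\cE'\cap\cE''}=O_\eps(1/\sqrt n)=o(1),
\]
as desired. The only conceptual point to be careful about is the conditioning: both the random matching $\vec\Gamma'''$ and the ternary indices $\vi_{i,\ell}'''$ are drawn independently of $\vA'$ given the sizes $\GAMMA_i$, $\vM_i^+-\vM_i^-$ and $\vec\lambda'''$, all of which are determined by the conditioning sigma-algebra, so the above uniform-distribution arguments apply verbatim under the conditional law. No step of this plan appears to present a genuine obstacle; the sole quantitative input needed is the deterministic bound $\max_i\vd_i\leq\sqrt n$ from $\cE'$, which ensures that cavities are not concentrated at a single variable.
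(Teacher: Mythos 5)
Your proposal is correct and follows essentially the same strategy as the paper: both arguments combine the cavity count $|\cC|=\Omega_\eps(n)$ and the maximum degree bound $\max_i\vd_i\leq\sqrt n$ from $\cE'$ with the $O_\eps(1)$ bound on the number of new half-edges from $\cE''$, and then apply the birthday-paradox/union bound over pairs of endpoints. You merely spell out the case analysis (cavity--cavity, ternary--ternary, mixed) that the paper compresses into the phrase ``by the birthday paradox'', and your observation that $\vec\Gamma'''$ and the indices $\vi_{i,\ell}'''$ are independent of $\Sigma''$ is exactly the justification needed for the conditional estimate.
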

\begin{proof}
	By the same token as in~\cite[proof of Claim~5.10]{Maurice}, given that $\cE'$ occurs the total number of cavities comes to $\Omega(n)$.
	At the same time, the maximum variable node degree is of order $O(\sqrt n)$.
	Moreover, given the event $\cE''$ no more than $Y+Y'=O_{\eps}(1/\eps)$ random cavities are chosen as neighbours of the new checks $a_{i,j}''',b_{i,j}'''$.
	Thus, by the birthday paradox the probability that the checks $a_{i,j}''',b_{i,j}'''$ occupy more than one cavity of any variable node is $o(1)$.
	Furthermore, the additional ternary nodes $t_i'''$ choose their two neighbours among $x_1,\ldots,x_n$ mutually independently and independently of the $a_{i,j}''',b_{i,j}'''$.
	Since $\vec\lambda'''$ is bounded given $1/\eps$, the overall probability of choosing the same variable twice is $o(1)$.
\end{proof}

The following claim shows that the unlikely event that $\cE\cap\cE'\cap\cE''\cap\cE'''$ does not occur does not contributed significantly to the expected change in nullity.

\begin{claim}\label{Claim_A'''3}
We have $\Erw\brk{\abs{\nul(\vA''')-\nul(\vA')}(1-\vecone\cE\cap\cE'\cap\cE''\cap\cE''')}=o_{\eps}(1)$.
\end{claim}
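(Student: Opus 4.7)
The plan is to combine a crude deterministic bound on $|\nul(\vA''')-\nul(\vA')|$ by a random variable of uniformly bounded second moment with the fact that the probability of the complementary event $\cE\cap\cE'\cap\cE''\cap\cE'''$ is $o_\eps(1)$, and then to apply Cauchy--Schwarz.

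First, I would observe that $\vA'''$ is obtained from $\vA'$ by adjoining one new variable column (for $x_{n+1}$) and a batch of new rows (the checks $a'''_{i,j}$, $b'''_{i,j}$, $t'''_i$). Since appending a column changes the nullity by at most one and each appended row changes it by at most one, the deterministic bound
\begin{align*}
|\nul(\vA''')-\nul(\vA')|\leq 1+\vec r
\end{align*}
holds, where $\vec r$ is the total number of newly added rows. Inspecting the construction of $\G'''$ gives $\sum_i\GAMMA_i\leq\vd_{n+1}$ (since all $a'''$-checks are neighbors of $x_{n+1}$), $\sum_i(\vM_i^+-\vM_i^--\GAMMA_i)\leq\vX$ (via the case split $\vM_i\gtreqless\GAMMA_i$ using $\vM_i^-=(\vM_i-\GAMMA_i)\vee 0$), and the number of $t'''$-checks equals $\vec\lambda'''$. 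Hence $\vec r\leq \vd_{n+1}+\vX+\vec\lambda'''$. The hypothesis $\Erw[\vd^{2+\eta}]<\infty$ yields $\Erw[\vd_{n+1}^2]=O(1)$, while \eqref{eqPoissons} gives $\vX\sim\Po((1-\eps)d/k)$ and \eqref{eqlambda'''} gives $\Erw[\vec\lambda''']\to 3\delta$, so the Poisson second moments are $O_\eps(1)$ uniformly in $n$. Consequently $\Erw[(1+\vec r)^2]=O_\eps(1)$.

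Second, I would estimate $\pr[(\cE\cap\cE'\cap\cE''\cap\cE''')^c]$ by a union bound using the probability estimates already in hand: Corollary~\ref{Cor_EE'} supplies $\pr[\cE^c]\leq\exp(-1/\eps^4)=o_\eps(1)$ and $\pr[\cE'^c]=o(1)$; Claim~\ref{Claim_A'''2a} gives $\pr[\cE''^c]=O_\eps(\eps)$; and Claim~\ref{Claim_A'''2b} gives $\pr[\cE'''^c\mid\cE'\cap\cE'']=o(1)$, so $\pr[\cE'''^c\cap\cE'\cap\cE'']=o(1)$. Adding these yields $\pr[(\cE\cap\cE'\cap\cE''\cap\cE''')^c]=o_\eps(1)$. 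Cauchy--Schwarz then closes the argument:
\begin{align*}
\Erw\brk{|\nul(\vA''')-\nul(\vA')|\bc{1-\vecone_{\cE\cap\cE'\cap\cE''\cap\cE'''}}}\leq \Erw[(1+\vec r)^2]^{1/2}\pr[(\cE\cap\cE'\cap\cE''\cap\cE''')^c]^{1/2}=o_\eps(1).
\end{align*}
No genuine obstacle arises here: this is a routine truncation-and-moment estimate, whose only slight subtlety is the correct accounting of $\vec r$ in terms of $\vd_{n+1}$, $\vX$ and $\vec\lambda'''$ via the definitions of $\GAMMA_i$, $\vM_i^-$, and $\vM_i^+$.
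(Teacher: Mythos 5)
Your proof is correct and takes essentially the same approach as the paper: both rest on the deterministic bound $|\nul(\vA''')-\nul(\vA')|\leq 1+\vX+\vd_{n+1}+\vec\lambda'''$ (which has uniformly bounded second moment), on $\pr[(\cE\cap\cE'\cap\cE''\cap\cE''')^c]=o_\eps(1)$ from \Cor~\ref{Cor_EE'} and Claims~\ref{Claim_A'''2a}--\ref{Claim_A'''2b}, and on Cauchy--Schwarz. The only (cosmetic) difference is that the paper peels off the complementary event in stages ($\cE''^c$ first via Cauchy--Schwarz, then $\cE''\setminus\cE$, $\cE''\setminus\cE'$, $\cE''\cap\cE'\setminus\cE'''$ by direct bounds), whereas you apply a single Cauchy--Schwarz against the full union-bounded complement, which is cleaner and equally valid.
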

\begin{proof}
	We modify the proof of~\cite[Claim~5.11]{Maurice} to accommodate the extra ternary nodes.
	Since $\vA'''$ results from $\vA'$ by adding one column and no more than $\vX+\vd_{n+1}+\vec\lambda'''$ rows, we have $\abs{\nul(\vA''')-\nul(\vA')}\leq X+\vd_{n+1}+\vec\lambda'''+1$.
	Because $\vX,\vd_{n+1}^2,{\vec\lambda'''}$ have bounded second moments, the Cauchy-Schwarz inequality therefore yields the estimate
\begin{align}
\Erw\brk{\abs{\nul(\vA''')-\nul(\vA')}(1-\vecone\cE'')}&\leq
		\Erw\brk{(X+\vd_{n+1}+\vec\lambda'''+1)^2}^{1/2}\bc{1-\pr\brk{\cE''}}^{1/2}=o_{\eps}(1).
	\label{eqClaim_A'''3_1}
\end{align}
Moreover, combining \Cor~\ref{Cor_EE'} and Claims~\ref{Claim_A'''2a}--\ref{Claim_A'''2b}, we obtain
\begin{align}	\label{eqClaim_A'''3_2}
\Erw&\brk{\abs{\nul(\vA''')-\nul(\vA')}\vecone\cE''\setminus\cE}	
	\leq O_{\eps}(\eps^{-1})\exp(-1/\eps^4)=o_{\eps}(1),\\
\Erw&\brk{\abs{\nul(\vA''')-\nul(\vA')}\vecone\cE''\setminus\cE'},\Erw\brk{\abs{\nul(\vA''')-\nul(\vA')}\vecone\cE''\cap\cE'\setminus\cE'''}=o(1).\label{eqClaim_A'''3_3}
\end{align}
The assertion follows from \eqref{eqClaim_A'''3_1}--\eqref{eqClaim_A'''3_3}.
\end{proof}

%We obtain $\G'''$ by adding checks $a_{i,j}'''$, $t_i'''$ adjacent to $x_{n+1}$, further checks $b_{i,j}'''$ not adjacent to $x_{n+1}$.
Recall that $\vec\alpha$ denotes the fraction of frozen cavities and $\BETA$ the fraction of frozen variables of $\vA'$.
Further, let $\Sigma''\supset\Sigma'$ be the $\sigma$-algebra generated by $\vec\theta$, $\G'$, $\A'$, $\vM_-$, $(\vd_i)_{i\in[n+1]}$, $\GAMMA$, $\vM$, $\DELTA$, $\vec\lambda^-,\vec\lambda'''$.
Then $\ALPHA,\BETA$ as well as $\cE,\cE',\cE''$ are $\Sigma''$-measurable but $\cE'''$ is not.
%Indeed, given $\Sigma''$ the specific cavities of $\G'$ that the new checks $a_{i,j}''',b_{i,j}'''$ join are still random, as are the variables chosen by the ternary checks $t_i'''$.

\begin{claim}\label{Claim_A'''6}
	On the event $\cE\cap\cE'\cap\cE''$ we have
	\begin{align*}
		\Erw\brk{\bc{\nul(\vA''')-\nul(\vA')}\vecone\cE'''\mid\Sigma''}=o_{\eps}(1)&+(1-\BETA^2)^{\vec\lambda'''}\prod_{i\geq1}(1-\ALPHA^{i-1})^{\GAMMA_i}-\sum_{i\geq1}(1-\vec\alpha^{i-1})\GAMMA_i-\vec\lambda'''(1-\BETA^2)\\
		&-\sum_{i\geq1}(1-\vec\alpha^{i})(\vM_i^+-\vM_i^--\GAMMA_i).
	\end{align*}
\end{claim}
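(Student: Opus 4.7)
\medskip

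\textbf{Proof proposal for Claim~\ref{Claim_A'''6}.} The plan is to apply Lemma~\ref{Cor_free} to compare $\nul(\vA''')$ and $\nul(\vA')$. We obtain $\vA'''$ from $\vA'$ by adding one column (for $x_{n+1}$) and three families of new rows (the checks $a_{i,j}'''$, $b_{i,j}'''$, $t_i'''$). Writing this as $\begin{pmatrix} \vA' & 0 \\ B & C \end{pmatrix}$ with $C \in \FF_q^{m' \times 1}$ the column for $x_{n+1}$, Lemma~\ref{Cor_free} yields $\nul(\vA''') - \nul(\vA') = 1 - \rk(B_* \ C)$ unless the index set $I \subset [n]$ of non-zero columns of $B$ is a proper relation of $\vA'$. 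On $\cE \cap \cE''$ this index set has bounded size (of order $O_\eps(1/\eps)$) and is essentially a uniformly random selection of cavities and $[n]$-variables, so the $\fR(\delta_*, \ell_*)$-freeness guarantees that $I$ is a proper relation with conditional probability $o_\eps(1)$; combined with the deterministic bound $|\nul(\vA''') - \nul(\vA')| = O_\eps(1/\eps)$ on $\cE''$, this bad case contributes only $o_\eps(1)$ to the expectation.

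On the event $\cE'''$ the $[n]$-supports of all new rows are pairwise disjoint. Introduce the counts
\begin{align*}
Y_b &= \#\{b_{i,j}''' : \text{not all $i$ cavities frozen}\}, & Y_a &= \#\{a_{i,j}''' : \text{not all $i-1$ cavities frozen}\}, \\
Z_a &= \#\{a_{i,j}''' : \text{all $i-1$ cavities frozen}\}, & Y_t &= \#\{t_i''' : \text{not both $[n]$-variables frozen}\}, \\
& & Z_t &= \#\{t_i''' : \text{both $[n]$-variables frozen}\}.
\end{align*}
I claim that $\rk(B_* \ C) = Y_b + Y_a + Y_t + \vecone\{Z_a + Z_t \geq 1\}$. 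Indeed, each of the $Y_b$ surviving $b$-rows has a non-zero entry in some non-frozen column of $[n]$, these supports are pairwise disjoint and have no $x_{n+1}$-component, giving rank $Y_b$. The $Y_a + Y_t$ surviving $a$- and $t$-rows each have non-zero $x_{n+1}$-entry together with a disjoint non-frozen $[n]$-support (disjoint also from the $Y_b$ supports), hence contribute another $Y_a + Y_t$ dimensions. Finally, the $Z_a + Z_t$ fully-frozen rows are all scalar multiples of $e_{x_{n+1}}$; since any non-trivial linear combination of the previous $Y_b + Y_a + Y_t$ rows has non-zero $[n]$-support, $e_{x_{n+1}}$ is not in their span, so the $Z$-rows add exactly one dimension if $Z_a + Z_t \geq 1$.

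Taking conditional expectations given $\Sigma''$, I use that $\vec\alpha$ is the fraction of frozen cavities in $\cC$, $\vec\beta$ the fraction of frozen variables, $|\cC| = \Omega_\eps(n)$ on $\cE'$, and only $O_\eps(1/\eps)$ cavities/variables are selected by the new checks on $\cE''$. Hypergeometric-versus-binomial estimates therefore give independence of the relevant frozenness events up to $o_\eps(1)$, yielding
\begin{align*}
\Erw[Y_b \mid \cE''', \Sigma''] &= \sum_{i \geq 1}(1 - \vec\alpha^i)(\vM_i^+ - \vM_i^- - \GAMMA_i) + o_\eps(1), \\
\Erw[Y_a \mid \cE''', \Sigma''] &= \sum_{i \geq 1}(1 - \vec\alpha^{i-1})\GAMMA_i + o_\eps(1), \quad \Erw[Y_t \mid \cE''', \Sigma''] = \vec\lambda'''(1-\vec\beta^2) + o_\eps(1), \\
\pr[Z_a + Z_t = 0 \mid \cE''', \Sigma''] &= (1-\vec\beta^2)^{\vec\lambda'''}\prod_{i \geq 1}(1 - \vec\alpha^{i-1})^{\GAMMA_i} + o_\eps(1).
\end{align*}
Claim~\ref{Claim_A'''2b} gives $\pr[\cE''' \mid \Sigma''] = 1 - o(1)$ on $\cE \cap \cE' \cap \cE''$, so $\Erw[(\nul(\vA''') - \nul(\vA'))\vecone\cE''' \mid \Sigma''] = 1 - \Erw[Y_b] - \Erw[Y_a] - \Erw[Y_t] - (1 - \pr[Z_a + Z_t = 0]) + o_\eps(1)$, which after cancellation produces exactly the stated expression.

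The main technical obstacle is the rank computation in the middle paragraph: one must verify that $e_{x_{n+1}}$ genuinely fails to lie in the span of the partially-frozen rows, which relies crucially on the disjoint non-frozen $[n]$-supports furnished by $\cE'''$. The approximate independence between frozenness events across different new checks is delicate but standard, following from the bounded selection size combined with $|\cC| = \Omega_\eps(n)$.
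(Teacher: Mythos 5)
Your proof follows essentially the same route as the paper's: apply Lemma~\ref{Cor_free}, handle the proper-relation event as a negligible error, then compute $\rk(\vB_*\ \vC)$ by counting surviving rows and observing that the fully-frozen $\cA\cup\cT$-rows collapse to scalar multiples of $e_{x_{n+1}}$, contributing one more dimension iff at least one survives in frozen form. The only organisational difference is that you compute $\rk(\vB_*\ \vC) = Y_b + Y_a + Y_t + \vecone\{Z_a+Z_t\geq 1\}$ in one shot, whereas the paper first decomposes $\rk(\vQ)=\rk(\vQ_\cB)+\rk(\vQ_{\cA\cup\cT})$ using the disjoint-support structure and then treats the $x_{n+1}$-column only within the second block; the two bookkeeping schemes give identical formulas.

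One small imprecision worth flagging: you write that the proper-relation event has conditional probability $o_\eps(1)$ and then multiply by the deterministic $O_\eps(1/\eps)$ bound on $|\nul(\vA''')-\nul(\vA')|$ to conclude the contribution is $o_\eps(1)$. As stated this does not follow, since $o_\eps(1)\cdot O_\eps(1/\eps)$ need not be $o_\eps(1)$. The argument does go through, but only because the $\fR(\delta_*,\ell_*)$-freeness with $\delta_*=\exp(-1/\eps^4)$ in fact gives a probability bound of order $O_\eps(\exp(-1/\eps^4))$, which is small enough to absorb the $1/\eps$ factor; this is why those particular parameters were chosen in \eqref{eqE}. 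Making the $\exp(-1/\eps^4)$ explicit would tighten the argument.
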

\begin{proof}
	We modify the proof of~\cite[Claim~5.12]{Maurice} by taking the additional ternary checks into consideration.
Let
\begin{align*}
	\cA&=\cbc{a_{i,j}''':i\geq1,\ j\in[\GAMMA_i]},&
	\cB&=\cbc{b_{i,j}''':i\geq1,\ j\in[\vM_i^+-\vM_i^--\GAMMA_i]},&
	\cT&=\cbc{t_i:i\in[\vec\lambda''']}.
\end{align*}
We set up a random matrix $\vB$ with rows indexed by $\cA\cup\cB\cup\cT$ and columns indexed by $V_n=\{x_1,\ldots,x_n\}$.
For a check $a\in\cA\cup\cB\cup\cT$ and a variable $x\in V_n$ the $(a,x)$-entry of $\vB$ equals zero unless $x\in\partial_{\G'''} a$.
Further, the non-zero entries of $\vB$ are independent copies of $\vec\chi$.
Additionally, obtain $\vB_*$ from $\vB$ by zeroing out the $x$-column for every variable $x\in\fF(\vA')$.
Finally, let $\vC\in\FF^{\cA\cup\cB\cup\cT}$ be a random vector whose entries $\vC_a$, $a\in\cA\cup\cT$, are independent copies of $\vec\chi$, while $\vC_b=0$ for all $b\in\cB$.

If $\cE'''$ occurs, $\vB$ has row full rank because there is at most one non-zero entry in every column and at least one non-zero entry in every row.
Hence,
\begin{align*}
 \rk(\vB)=|\cA\cup\cB\cup\cT|=\sum_{i\geq1}\vM_i^+-\vM_i^-+\vec\lambda'''.
\end{align*}
Furthermore, since the rank is invariant under row and column permutations, given $\cE\cap\cE'\cap\cE''\cap\cE'''$ we have
\begin{align*}
\nul\vA'''&=\nul\begin{pmatrix}\vA'&0\\\vB&\vC\end{pmatrix}.
\end{align*}
Moreover, given $\cE'$ the set $\cX'''$ of all non-zero columns of $\vB$ satisfies $|\cX'''|\leq Y+Y'+\vec\lambda'''\leq3/\eps$ while $|\cC|\geq\eps dn/2$.
Therefore, the set of cavities that $\vec\Gamma'''$ occupies is within total variation distance $o(1)$ of a commensurate number of cavities drawn independently, i.e., with replacement.
Furthermore, the variables where the checks from $\cT$ attach are chosen uniformly at random from $x_1,\ldots,x_n$.
Therefore, on $\cE\cap\cE'\cap\cE''$ the conditional probability given $\cE'''$ that $\cX'''$ forms a proper relation is bounded by $O_{\eps}(\exp(-1/\eps^4))$.
Consequently, \Lem~\ref{Cor_free} implies that on the event $\cE\cap\cE'\cap\cE''$,
\begin{align}\label{eqClaim_A'''6_2}
	\Erw\brk{\bc{\nul(\vA''')-\nul(\vA')}\vecone\cE'''\mid\Sigma''}&=1-\Erw\brk{\rk\bc{\vB_*\ \vC}\mid\Sigma''}+o_{\eps}(1).
\end{align}

We are thus left to calculate the rank of $\vQ=\bc{\vB_*\ \vC}$.
Given $\cE'''$ this block matrix decomposes into the $\cA\cup\cT$-rows $\vQ_{\cA\cup\cT}$ and the $\cB$-rows $\vQ_{\cB}$ such that $\rk(\vQ)=\rk(\vQ_{\cA\cup\cT})+\rk(\vQ_{\cB})$.
Therefore, it suffices to prove that
\begin{align}\label{eqQ'}
\Erw\brk{\rk\bc{\vQ_{\cB}}\mid\Sigma''}&=\sum_{i\geq1}\bc{1-\ALPHA^i}(\vM_i^+-\vM_i^--\GAMMA_i)+o(1),\\
\Erw\brk{\rk(\vQ_{\cA\cup\cT})\mid\Sigma''}&=\lambda'''(1-\BETA^2)+\sum_{i\geq1}\bc{1-\ALPHA^{i-1}}\GAMMA_i+1-(1-\BETA^2)^{\vec\lambda'''}\prod_{i\geq1}\bc{1-\ALPHA^{i-1}}^{\GAMMA_i}+o(1).\label{eqQ''}
\end{align}

Towards \eqref{eqQ'} consider a check $b\in\cB$ whose corresponding row sports $i$ non-zero entries.
Since we may pretend (up to $o(1)$ in total variation) that these entries are drawn uniformly and independently from the set of cavities, the probability that they are all frozen comes to $\vec\alpha^i+o(1)$.
Since there are $\vM_i^+-\vM_i^--\GAMMA_i$ such checks $b\in\cB$, we obtain \eqref{eqQ'}.

Moving on to \eqref{eqQ''}, consider $a\in\cA$ whose corresponding row has $i-1$ non-zero entries.
By the same token as in the previous paragraph, the probability that all entries in the $a$-row correspond to frozen cavities comes to $\ALPHA^{i-1}+o(1)$.
Hence, the expected rank of the $\cA\times V_n$-minor works out to be $\sum_{i\geq1}\bc{1-\ALPHA^{i-1}}\GAMMA_i+o(1)$, which is the second summand in \eqref{eqQ''}.
Similarly, a $t\in\cT$-row adds to the rank unless both the variables in the corresponding check are frozen.
The latter event occurs with probability $\vec\beta^2$. 
Hence the first summand.
Finally, the $\vC$-column adds to the rank if none of the $\cA\cup\cT$-rows become all-zero, which occurs with probability $(1-\BETA^2)^{\vec\lambda'''}\prod_{i\geq1}\bc{1-\ALPHA^{i-1}}^{\GAMMA_i}+o(1)$.
\end{proof}

\begin{proof}[Proof of \Lem~\ref{Lemma_A'''}]
Letting $\fE=\cE\cap\cE'\cap\cE''\cap\cE'''$ and combining Claims~\ref{Claim_A'''2a}--\ref{Claim_A'''6}, we obtain
\begin{align}\nonumber
	\Erw&\Big|\Erw\brk{\nul(\vA''')-\nul(\vA')\mid\Sigma''}-\Big((1-\BETA^2)^{\vec\lambda'''}\prod_{i\geq1}(1-\ALPHA^{i-1})^{\GAMMA_i} -\sum_{i\geq1}(1-\vec\alpha^{i-1})\GAMMA_i \\
		&\qquad-\sum_{i\geq1}(1-\vec\alpha^{i})(\vM_i^+-\vM_i^--\GAMMA_i)-\vec\lambda'''(1-\BETA^2)\Big)\vecone\fE\Big| =o_{\eps}(1).\label{eqLemma_A'''_0}
\end{align}
On $\fE$ all $i$ with $\vM_i^+-\vM_i^--\GAMMA_i>0$ are bounded.
Moreover, \whp\ we have $\vM_i\sim\Erw[\vM_i]=\Omega(n)$ for all bounded $i$ by Chebyshev's inequality.
Hence, \eqref{eqminus} implies that $\vM_i^-=\vM_i-\GAMMA_i$ \whp\
Consequently, \eqref{eqLemma_A'''_0} becomes
\begin{align}\nonumber
	\Erw\Big|\Erw\brk{\nul(\vA''')-\nul(\vA')\mid\Sigma''}&-\Big((1-\BETA^2)^{\vec\lambda'''}\prod_{i\geq1}(1-\ALPHA^{i-1})^{\GAMMA_i} -\sum_{i\geq1}(1-\vec\alpha^{i-1})\GAMMA_i \\
		&-\sum_{i\geq1}(1-\vec\alpha^{i})\vec\Delta_i-\vec\lambda'''(1-\BETA^2)\Big)\vecone\fE\Big| =o_{\eps}(1).\label{eqLemma_A'''_1}
\end{align}

We proceed to estimate the various terms on the r.h.s.\ of \eqref{eqLemma_A'''_1} separately.
Since $\pr\brk\fE=1-o_{\eps}(1)$ by \Cor~\ref{Cor_EE'} and Claims \ref{Claim_A'''2a} and \ref{Claim_A'''2b}, \Lem~\ref{Lem_cond_distr} yield
\begin{align}\nonumber
	\ex\brk{ \vecone\fE\cdot(1-\BETA^2)^{\vec\lambda'''}\prod_{i\geq1}(1-\ALPHA^{i-1})^{\GAMMA_i}\mid\Sigma''}&=\ex\brk{(1-\BETA^2)^{\vec\lambda'''}\prod_{i\geq1}(1-\vec\alpha^{i-1})^{\hat{\vec\gamma_i}}\mid\Sigma''}+o_{\eps}(1)\\
																											  &=\exp(-3\delta\vec\beta^2)D(1-K'(\vec\alpha)/k)&\mbox{[by \eqref{eqSizeBiasd} and \eqref{eqlambda'''}]}.
																											  \label{eqseparate1}
\end{align}
Moreover, since $\sum_{i\geq1}\GAMMA_i\leq\vd_{n+1}$ and $\vd_{n+1}$ has a bounded second moment, \Lem~\ref{Lem_cond_distr} implies that
\begin{align}
	\ex\brk{\vecone\fE\cdot\sum_{i\geq1}(1-\vec\alpha^{i-1})\GAMMA_i\mid\Sigma''}&=\ex\brk{\sum_{i\geq1}(1-\vec\alpha^{i-1})\hat\GAMMA_i\mid\Sigma''}+o_{\eps}(1)=d-\frac dkK'(\vec\alpha)+o_{\eps}(1).\label{eqseparate2}
\end{align}
Further, by Claim~\ref{Claim_A'''2a}, \Lem~\ref{Lem_cond_distr} and \eqref{eqhatDelta},
\begin{align}\label{eqseparate3}
	\Erw\brk{\vecone\fE\cdot\sum_{i\geq1}(1-\vec\alpha^{i})\DELTA_i\mid\Sigma''}&=\Erw\brk{\sum_{i\geq1}(1-\vec\alpha^{i})\DELTA_i\mid\Sigma''}+o_{\eps}(1)=o_{\eps}(1)+\frac dk-\frac dk\Erw[K(\ALPHA)].
\end{align}
Finally, \eqref{eqlambda'''} yields
\begin{align}\label{eqseparate4}
	\Erw\brk{\vecone\fE\cdot\vec\lambda'''(1-\vec\beta^2)\mid\Sigma''}&=3\delta(1-\vec\beta^2)+o_{\eps}(1).
\end{align}
Thus, the assertion follows from \eqref{eqLemma_A'''_1}--\eqref{eqseparate4}.
\end{proof}

\subsection{Proof of \Lem~\ref{Lemma_A''}}\label{Sec_A''}
We proceed similarly as in the proof of \Lem~\ref{Lemma_A'''}; actually matters are a bit simpler because we only add checks, while in the proof of \Lem~\ref{Lemma_A'''} we also had to deal with the extra variable node $x_{n+1}$.
Let $\cE,\cE'$ be the events from \eqref{eqE} and \eqref{eqE'} and let $\cE''=\cbc{\vd_{n+1}+\vec\lambda''\leq1/\eps}$.
As a direct consequence of the assumption $\Erw[\vd_{n+1}^2]=O_{\eps,n}(1)$ and of \eqref{eqlambda''}, we obtain the following.

\begin{fact}\label{Claim_A''_0}
We have $\pr\brk{\cE''}=1-O_{\eps}(\eps^2).$
\end{fact}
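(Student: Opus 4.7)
The plan is to prove Fact~\ref{Claim_A''_0} by a direct second-moment Markov bound. Observe that $\cE''^c = \{\vd_{n+1} + \vec\lambda'' > 1/\eps\}$, so it suffices to show that $\ex[(\vd_{n+1}+\vec\lambda'')^2] = O_{\eps}(1)$, i.e. bounded uniformly in $n$ (it may depend on $\delta$, but $\delta < \delta_0$ is fixed before we let $\eps \to 0$). Then Markov's inequality applied to the square yields
\begin{align*}
\pr\brk{\cE''^c} \leq \eps^2\,\ex\brk{(\vd_{n+1}+\vec\lambda'')^2} = O_{\eps}(\eps^2),
\end{align*}
which is the claim.

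For the bound on the second moment, first note that by assumption $\ex[\vd^{2+\eta}]<\infty$, so in particular $\ex[\vd_{n+1}^2]$ is a finite constant independent of $n$. Next, by \eqref{eqlambda''} we have $\vec\lambda'' \sim \Po(\mu_n)$ with $\mu_n = 2\delta(n^2 + n/2)/(n^2+2n+1)$, which satisfies $\mu_n \leq 2\delta$ for all $n$. Hence $\ex[(\vec\lambda'')^2] = \mu_n + \mu_n^2 = O(\delta + \delta^2)$, again bounded uniformly in $n$. Finally, since $\vd_{n+1}$ and $\vec\lambda''$ are independent, $(\vd_{n+1}+\vec\lambda'')^2 \leq 2\vd_{n+1}^2 + 2(\vec\lambda'')^2$ implies $\ex[(\vd_{n+1}+\vec\lambda'')^2] = O_\eps(1)$, and the fact follows.

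The proof is entirely routine: there is no real obstacle beyond checking that both summands have bounded second moments, which is immediate from the standing assumption $\ex[\vd^{2+\eta}]<\infty$ and the explicit Poisson parametrisation~\eqref{eqlambda''}.
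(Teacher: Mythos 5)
Your proof is correct and is essentially the argument the paper intends: the paper simply declares the Fact a direct consequence of $\ex[\vd_{n+1}^2]=O(1)$ and the Poisson parametrisation~\eqref{eqlambda''}, and your Markov-on-the-square step makes that explicit. One tiny remark: the independence of $\vd_{n+1}$ and $\vec\lambda''$ is not actually needed, since the inequality $(a+b)^2\leq 2a^2+2b^2$ already gives the second-moment bound.
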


Let
	$$\cX''=\bigcup_{i\geq1}\bigcup_{j\in[\vM_i-\vM_i^-]}\partial_{\G''}a_{i,j}''\cup\bigcup_{i=1}^{\vec\lambda''}\partial t_i''$$ 
	be the set of variable nodes where the new checks that we add upon going from $\vA'$ to $\vA''$ attach.
Let $\cE'''$ be the event that in $\G''$ no variable from $\cX''$ is connected with the checks $\{a_{i,j}'':i\geq1,j\in[\vM_i-\vM_i^-]\}\cup\{t_i'':i\in[\vec\lambda'']\}$ by more than one edge.

\begin{claim}\label{Claim_A''_E'''}
We have $\pr\brk{\cE'''\mid\cE'\cap\cE''}=1-o(1).$
\end{claim}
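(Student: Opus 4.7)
\medskip

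\noindent\textbf{Proof proposal for Claim~\ref{Claim_A''_E'''}.} The plan is a birthday-type argument analogous to (but simpler than) the one used in Claim~\ref{Claim_A'''2b}. Working conditionally on $\cE' \cap \cE''$, I will first bound the total number of ``pins'' placed by the new checks into the variable set $\{x_1,\ldots,x_n\}$, and then show that with probability $1-o(1)$ no two pins fall on the same variable, which is precisely the event $\cE'''$.

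First I would observe that on $\cE''$ the number of added non-unary rows satisfies $\sum_{i\geq 1}(\vM_i-\vM_i^-)+\vec\lambda'' \leq \sum_{i\geq 1}\GAMMA_i + \vec\lambda'' \leq \vd_{n+1} + \vec\lambda'' \leq 1/\eps$. Since each $a_{i,j}''$-check has a deterministic degree $i$ that is ${\fA}$-measurable, and since on $\cE''$ only boundedly many values of $i$ contribute (with bounded multiplicities — else $\vd_{n+1}$ would exceed $1/\eps$), we may further assume that every row $a''_{i,j}$ that appears has degree at most some $i_{\max}=i_{\max}(\eps)$. Hence the total number of edges emanating from the new checks into $\{x_1,\ldots,x_n\}$ is bounded by $T:=i_{\max}\cdot\vd_{n+1}+3\vec\lambda'' = O_\eps(1)$.

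Next I would analyze collisions among the three types of ``pin placements'': (i) the cavities chosen by the matching $\vec\Gamma''$, (ii) the variable neighbors chosen uniformly and independently from $[n]$ by the ternary checks $t''_i$, and (iii) cross-collisions between (i) and (ii). On $\cE'$ we have $|\cC|\geq\eps dn/2$ and $\max_i \vd_i\leq\sqrt n$, so each variable contributes at most $\sqrt n$ cavities out of $\Omega(n)$. Because $T=O_\eps(1)$, standard coupling against sampling with replacement (as in Claim~\ref{Claim_A'''2b}) shows that the matching $\vec\Gamma''$ selects its cavities within total variation $o(1)$ of uniform independent cavity samples. A union bound over the $\binom{T}{2}=O_\eps(1)$ pairs of pin placements then yields:
\begin{align*}
	\pr\brk{\mbox{two cavity pins land at the same variable}\mid \cE'\cap\cE''}
	&\leq \binom{T}{2}\cdot \frac{\max_i \vd_i}{|\cC|} = O_\eps(n^{-1/2}),\\
	\pr\brk{\mbox{two ternary pins coincide or cross-collide}\mid \cE'\cap\cE''}
	&\leq \binom{T}{2}\cdot\frac{2\max_i\vd_i}{n}=O_\eps(n^{-1/2}).
\end{align*}
Summing these contributions gives $\pr\brk{\neg\cE'''\mid\cE'\cap\cE''}=o(1)$, which is the claim.

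The only mildly delicate point, and the one I would take most care with, is the first step: justifying that on $\cE''$ the relevant check degrees $i$ with $\vM_i-\vM_i^->0$ stay bounded, so that $i_{\max}(\eps)$ exists. This uses $\vM_i-\vM_i^-\leq\GAMMA_i$ together with the fact that on $\cE''$ one has $\vd_{n+1}\leq 1/\eps$ so the total degree-weighted count $\sum_i i\,\GAMMA_i$ is controlled by the degrees of the checks incident to $x_{n+1}$, which are bounded in expectation and can be truncated at the cost of an $o(1)$-term, exactly as in Claim~\ref{Claim_A'''2a}. Everything else is a direct application of the birthday paradox.
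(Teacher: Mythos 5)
Your birthday-paradox strategy is the same as the paper's (which simply points back to Claim~\ref{Claim_A'''2b}), but the central bound $T=O_\eps(1)$ has a genuine gap. On $\cE''=\cbc{\vd_{n+1}+\vec\lambda''\le 1/\eps}$ the \emph{number} of rows $a''_{i,j}$ added is bounded by $\vd_{n+1}\le 1/\eps$, but their \emph{degrees} $i$ are not: your parenthetical ``else $\vd_{n+1}$ would exceed $1/\eps$'' conflates ``boundedly many indices $i$'' with ``bounded indices $i$.'' A single check of arbitrarily large degree incident to $x_{n+1}$ is perfectly consistent with $\vd_{n+1}\le 1/\eps$, so no constant $i_{\max}(\eps)$ exists on $\cE''$ and the bound $T=i_{\max}\cdot\vd_{n+1}+3\vec\lambda''=O_\eps(1)$ is not justified. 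Your closing remark flags this and proposes to ``truncate at the cost of an $o(1)$-term, exactly as in Claim~\ref{Claim_A'''2a}'', but that claim's conclusion is $\pr\brk{\cE''}=1-O_\eps(\eps)$: truncating $\sum_i i\GAMMA_i$ at a constant level $K(\eps)$ by Markov costs $O_\eps(\eps)$, a constant in $n$, whereas the statement here asserts $1-o(1)$ with $o(1)$ referring to $n\to\infty$ at fixed $\eps$, per the paper's convention.

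The fix is mild. Your own birthday bound $O\bc{T^2\max_{i\le n}\vd_i/|\cC|}=O_\eps(T^2 n^{-1/2})$ only requires $T=o(n^{1/4})$, not $T=O_\eps(1)$. Since $\sum_i i(\vM_i-\vM_i^-)\le\sum_i i\GAMMA_i$ and $\Erw\brk{\sum_i i\GAMMA_i}=O_\eps(1)$ (the computation inside Claim~\ref{Claim_A'''2a}), Markov gives $\pr\brk{\sum_i i\GAMMA_i>n^{1/8}}=O_\eps(n^{-1/8})=o(1)$; conditioning additionally on $\cbc{\sum_i i\GAMMA_i\le n^{1/8}}$ then yields $T\le n^{1/8}+3/\eps$ and a collision probability of $O(n^{-1/4})=o(1)$. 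With the truncation level growing slowly in $n$ rather than being a constant in $\eps$, your argument is sound and matches the intent behind the paper's terse reference to Claim~\ref{Claim_A'''2b}.
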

\begin{proof}
	This follows from the ``birthday paradox'' (see the proof of Claim~\ref{Claim_A'''2b}).
\end{proof}

\begin{claim}\label{Claim_A''_1}
We have $\Erw\brk{\abs{\nul(\vA'')-\nul(\vA')}(1-\vecone\cE\cap\cE'\cap\cE''\cap\cE''')}=o_{\eps}(1)$.
\end{claim}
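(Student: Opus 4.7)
The plan is to mimic the argument used for Claim~\ref{Claim_A'''3}, since the setting here is strictly simpler: $\vA''$ is obtained from $\vA'$ by adding rows only (no new variable), so $|\nul\vA'' - \nul\vA'|$ is at most the total number of newly inserted checks, namely $\sum_{i\geq1}(\vM_i-\vM_i^-) + \vec\lambda''$. Using $\vM_i^- = (\vM_i - \GAMMA_i)\vee 0$, one has $\vM_i - \vM_i^- \leq \GAMMA_i$, and since each check counted by $\GAMMA_i$ must use at least one clone of $x_{n+1}$ in $\G\brk{n+1,\vM^+,\vec\lambda^+}$, we obtain $\sum_{i\geq1}\GAMMA_i \leq \vd_{n+1}$. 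Consequently, the deterministic bound
\begin{align*}
	|\nul\vA'' - \nul\vA'| \leq \vd_{n+1} + \vec\lambda''
\end{align*}
holds throughout. The random variables $\vd_{n+1}$ and $\vec\lambda''$ have uniformly bounded second moments by the $(2+\eta)$-moment assumption on $\vd$ and the Poisson parameter in \eqref{eqlambda''}.

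The first step is to dispose of the event $(\cE'')^c$ by Cauchy--Schwarz: Fact~\ref{Claim_A''_0} gives $1-\pr[\cE''] = O_\eps(\eps^2)$, and so
\begin{align*}
	\Erw\brk{|\nul\vA'' - \nul\vA'|(1-\vecone\cE'')} \leq \Erw\brk{(\vd_{n+1}+\vec\lambda'')^2}^{1/2}(1-\pr[\cE''])^{1/2} = o_\eps(1).
\end{align*}
On $\cE''$ we have the much stronger deterministic bound $|\nul\vA'' - \nul\vA'|\leq 2/\eps$, which lets one handle the remaining bad events by brute force.

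The second step is to estimate the three remaining contributions. Using \Cor~\ref{Cor_EE'}, \eqref{eqE}, and Claim~\ref{Claim_A''_E'''}, we obtain
\begin{align*}
	\Erw\brk{|\nul\vA''-\nul\vA'|\vecone(\cE''\setminus\cE)} &\leq \frac{2}{\eps}\pr[\cE^c] \leq \frac{2}{\eps}\exp(-1/\eps^4) = o_\eps(1),\\
	\Erw\brk{|\nul\vA''-\nul\vA'|\vecone(\cE''\cap\cE\setminus\cE')} &\leq \frac{2}{\eps}\pr[(\cE')^c] = o(1),\\
	\Erw\brk{|\nul\vA''-\nul\vA'|\vecone(\cE''\cap\cE\cap\cE'\setminus\cE''')} &\leq \frac{2}{\eps}\pr[(\cE''')^c\mid\cE'\cap\cE''] = o(1).
\end{align*}
Summing the four contributions yields the claim.

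The proof is essentially a bookkeeping exercise, and no genuine obstacle arises: the key inputs (Cauchy--Schwarz, the quantitative bounds on $\pr[\cE^c]$, $\pr[(\cE')^c]$, $\pr[(\cE'')^c]$, $\pr[(\cE''')^c \mid \cE'\cap\cE'']$) are all already in hand, and the only new ingredient one needs over Claim~\ref{Claim_A'''3} is the trivial observation that adding rows can change the nullity by at most the number of added rows.
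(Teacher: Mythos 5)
Your proposal is correct and follows essentially the same approach as the paper: the deterministic bound $|\nul\vA''-\nul\vA'|\leq\vd_{n+1}+\vec\lambda''$, Cauchy--Schwarz on $(\cE'')^c$ via Fact~\ref{Claim_A''_0}, and the crude bound $O(1/\eps)$ on $\cE''$ combined with \Cor~\ref{Cor_EE'} and Claim~\ref{Claim_A''_E'''} to handle the remaining bad events. Your explicit justification of $\sum_i(\vM_i-\vM_i^-)\leq\vd_{n+1}$ via $\GAMMA_i$ and the nested-difference decomposition are slightly more careful than the paper's terse version, but the substance is identical.
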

\begin{proof}
We have $\abs{\nul(\vA'')-\nul(\vA')}\leq \vd_{n+1}+\vec\lambda''$ as we add at most $\vd_{n+1}+\vec\lambda''$ rows.
Because $\Erw[(\vd_{n+1}+\vec\lambda'')^2]=O_{\eps}(1)$ by~\eqref{eqlambda''}, Claim~\ref{Claim_A''_0} and the Cauchy-Schwarz inequality yield
\begin{align}\label{eqClaim_A''_1_1}
	\Erw\brk{\abs{\nul(\vA'')-\nul(\vA')}(1-\vecone\cE'')}&\leq
	\Erw\brk{(\vd_{n+1}+\vec\lambda'')^2}^{1/2}(1-\pr\brk{\cE})^{1/2}=o_{\eps}(1).
\end{align}
Moreover, \Cor~\ref{Cor_EE'} and Claim~\ref{Claim_A''_E'''} show that
\begin{align}\label{eqClaim_A''_1_2}
	\Erw\brk{\abs{\nul(\vA'')-\nul(\vA')}\vecone\cE''\setminus\cE}, \Erw\brk{\abs{\nul(\vA'')-\nul(\vA')}\vecone\cE''\setminus\cE'}, \Erw\brk{\abs{\nul(\vA'')-\nul(\vA')}\vecone\cE''\setminus\cE'''} =o_{\eps}(1).
\end{align}
The assertion follows from \eqref{eqClaim_A''_1_1} and\eqref{eqClaim_A''_1_2}.
\end{proof}

The matrix $\vA''$ results from $\vA'$ by adding checks $a_{i,j}''$, $i\geq1$, $j\in[\vM_i-\vM_i^-]$ that are connected to random cavities of $\A'$.

Moreover, as before let $\Sigma''\supset\Sigma'$ be the $\sigma$-algebra generated by $\vec\theta$, $\G'$, $\A'$, $\vM_-$, $(\vd_i)_{i\in[n+1]}$, $\GAMMA$, $\vM$, $\DELTA$, $\vec\lambda^-,\vec\lambda'''$.
Then $\cE,\cE',\cE''$ are $\Sigma''$-measurable, but $\cE'''$ is not.

\begin{claim}\label{Claim_A''_6}
	On $\cE\cap\cE'\cap\cE''$ we have
	$$\Erw\brk{(\nul(\vA'')-\nul(\vA'))\vecone\cE'''\mid{\Sigma''}}=o_{\eps}(1)-\sum_{i\geq1}(1-\vec\alpha^{i})(\vM_i-\vM_i^-)-\vec\lambda''(1-\BETA^3).$$
\end{claim}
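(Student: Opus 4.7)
The proof will mirror that of Claim~\ref{Claim_A'''6} with simplifications, since in going from $\vA'$ to $\vA''$ we only add checks, without introducing a new variable column $x_{n+1}$. Specifically, let $\cA''=\{a''_{i,j}:i\geq1,\,j\in[\vM_i-\vM_i^-]\}$ and $\cT''=\{t''_i:i\in[\vec\lambda'']\}$, and set up the $|\cA''\cup\cT''|\times n$ random matrix $\vB$ whose $(a,x)$-entry vanishes unless $x\in\partial_{\G''}a$, with the non-zero entries being independent copies of $\vec\chi$. Let $\vB_*$ be obtained from $\vB$ by zeroing every column $x\in\fF(\vA')$. On the event $\cE'''$ every column of $\vB$ contains at most one non-zero entry and every row contains at least one, so $\vB$ has full row rank equal to $|\cA''\cup\cT''|$.

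On $\cE\cap\cE'\cap\cE''$, the set $\cX''$ of variable neighbours of the new checks satisfies $|\cX''|=O_\eps(1/\eps)$, while $\vA'$ is $(\delta_*,\ell_*)$-free with $\ell_*\gg 1/\eps$ and $\delta_*=\exp(-1/\eps^4)$. Together with $|\cC|\geq\eps dn/2$ from $\cE'$, a total variation argument (as in the proof of Claim~\ref{Claim_A'''6}) shows that the random set of cavities struck by the $a''_{i,j}$ is within $o(1)$ of a uniform sample drawn with replacement, and the variables hit by the $t''_i$ are chosen uniformly from $[n]$; hence the conditional probability given $\cE'''$ that $\cX''$ is a proper relation of $\vA'$ is $O_\eps(\exp(-1/\eps^4))=o_\eps(1)$. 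Applying Lemma~\ref{Cor_free} with no new columns (i.e.\ $n'=0$) therefore yields
\begin{align*}
\Erw\brk{(\nul\vA''-\nul\vA')\vecone\cE'''\mid\Sigma''}=-\Erw\brk{\rk(\vB_*)\mid\Sigma''}+o_\eps(1).
\end{align*}

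Because the rows of $\vB$ have disjoint supports on $\cE'''$, the rank of $\vB_*$ simply counts the rows which are not entirely zeroed out. For a check $a''_{i,j}$ of degree $i$, the row survives unless all $i$ of its cavity neighbours lie in frozen coordinates; since these cavities are (up to $o(1)$ in total variation) uniform and independent, this occurs with probability $1-\vec\alpha^i+o(1)$. Summed over $i$ and $j\in[\vM_i-\vM_i^-]$ this contributes $\sum_{i\geq1}(1-\vec\alpha^i)(\vM_i-\vM_i^-)+o_\eps(1)$. For a ternary check $t''_i$, the three variable neighbours are uniform and independent elements of $[n]$, so the row survives with probability $1-\vec\beta^3+o(1)$, contributing $\vec\lambda''(1-\vec\beta^3)+o_\eps(1)$ in total. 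Combining these two contributions and negating gives the claim.

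The main obstacle, as in Claim~\ref{Claim_A'''6}, is ensuring that the probability of $\cX''$ forming a proper relation of $\vA'$ is negligible; this is exactly where the $(\delta_*,\ell_*)$-freeness encoded by $\cE$ is used, allowing us to invoke Lemma~\ref{Cor_free} cleanly. The remaining computation is a routine rank count that factorises thanks to the disjoint-support property furnished by $\cE'''$.
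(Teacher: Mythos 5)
Your proposal is correct and mirrors the paper's own proof of this claim essentially step by step: you set up the same auxiliary matrix $\vB$ over the new checks $\cA''\cup\cT''$, invoke Lemma~\ref{Cor_free} with $n'=0$ after using $\cE$ and $\cE'$ to rule out $\cX''$ being a proper relation, and then count the surviving rows of $\vB_*$ exactly as the paper does. No substantive differences.
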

\begin{proof}
	Let $\cA=\{a_{i,j}'':i\geq1,\ j\in[\vM_i-\vM_i^-]\}$.
	Moreover, let $\cT$ be the set of new ternary checks $t_i''$, $i\in[\vec\lambda'']$.
	Let $\vB$ be the $\FF_q$-matrix whose rows are indexed by $\cA\cup\cT$ and whose columns are indexed by $V_n=\{x_1,\ldots,x_n\}$.
	The $(a,x)$-entry of $\vB$ is zero unless $a,x$ are adjacent in $\G''$, in which case the entry is an independent copy of $\vec\chi$.
	Given $\cE'''$ the matrix $\vB$ has full row rank $\rk(\vB)=|\cA|=\vec\lambda''+\sum_{i\geq1}\vM_i^+-\vM_i$, because no column contains two non-zero entries and each row has at least one non-zero entry.
	Further, obtain $\vB_*$ from $\vB$ by zeroing out the $x$-column of every $x\in\fF(\vA')$.

	On $\cE\cap\cE'\cap\cE''\cap\cE'''$ we see that
	\begin{align}\label{eqClaim_A''6_1}
		\nul\vA''&=\nul\begin{pmatrix}\vA'\\\vB\end{pmatrix}.
	\end{align}
	Moreover, let $\cI$ be the set of non-zero columns of $\vB$.
	Then on $\cE'\cap\cE''$ we have $|\cI|\leq \vd_{n+1}+\vec\lambda''\leq1/\eps$.
	Hence, on $\cE\cap\cE'\cap\cE''\cap\cE'''$ the probability that $\cI$ forms a proper relation is bounded by $\exp(-1/\eps^4)$.
	Hence, \Lem~\ref{Cor_free} shows
	\begin{align}\label{eqClaim_A''6_2}
		\Erw\brk{\bc{\nul(\vA'')-\nul(\vA')}\vecone\cE'''\mid\Sigma''}&=o_{\eps}(1)-\Erw\brk{\rk\bc{\vB_*}\mid{\Sigma''}}.
	\end{align}

	We are thus left to calculate the rank of $\vB_*$.
	Recalling that $\ALPHA$ stands for the fraction of frozen cavities, we see that for $a\in\cA$ of degree $i$ the $a$-row is all-zero in $\vB_*$ with probability $\ALPHA^i+o(1)$.
	Similarly, for $a\in\cT$ the $a$-row of $\vB$ gets zeroed out with probability $\BETA^3$.
	Hence, we conclude that
	\begin{align}\label{eqClaim_A''6_3}
		\Erw\brk{\rk\bc{\vB_*}\mid\Sigma''}&=o_{\eps}(1)+\vec\lambda''(1-\BETA^3)+\sum_{i\geq1}\bc{1-\ALPHA^i}(\vM_i-\vM_i^-).
	\end{align}
	Combining \eqref{eqClaim_A''6_2} and \eqref{eqClaim_A''6_3} completes the proof.
\end{proof}

\begin{proof}[Proof of \Lem~\ref{Lemma_A''}]
	Let $\fE=\cE\cap\cE'\cap\cE''\cap\cE'''$.
	Combining Claims~\ref{Claim_A''_1}--\ref{Claim_A''_6}, we see that
	\begin{align}\label{eqAddingVar01110101}
		\Erw\abs{\Erw[\nul(\vA'')-\nul(\vA')\mid{\Sigma''}]+\bc{\vec\lambda''(1-\BETA^3)+\sum_{i\geq1}(1-\vec\alpha^{i})(\vM_i-\vM_i^-)}\vecone\fE}	&=o_{\eps}(1).
	\end{align}
	On $\fE$ all degrees $i$ with $\vM_i^+-\vM_i^->0$ are bounded.
	Moreover, $\vM_i^-=\Omega(n)$ \whp\ for every bounded $i$ by Chebyshev's inequality.
	Therefore, \eqref{eqminus} shows that $\vM_i-\vM_i^-=\GAMMA_i$ for all $i$ with $\vM_i^+-\vM_i^->0$ \whp\
	Hence, \eqref{eqAddingVar01110101} turns into
	\begin{align}\label{eqAddingVar011}
		\Erw\abs{\Erw[\nul(\vA'')-\nul(\vA')\mid{\Sigma''}]+\bc{\vec\lambda''(1-\BETA^3)+\sum_{i\geq1}(1-\vec\alpha^{i})\GAMMA_i}\vecone\fE}	&=o_{\eps}(1).
	\end{align}

	We now estimate the two parts of the last expression separately.
	Since $\pr\brk{\fE}=1-o_{\eps}(1)$ by \Cor~\ref{Cor_EE'}, Fact~\ref{Claim_A''_0} and Claim \ref{Claim_A''_E'''}, the definition \eqref{eqlambda''} of $\vec\lambda''$ yields
	\begin{align}\label{eqAddingVar011}
		\Erw\abs{\vec\lambda''(1-\BETA^3)\vecone\fE}&=2\delta(1-\ex[\BETA^3])+o_{\eps}(1).
	\end{align}
	Moreover, because $\sum_{i\geq1}\GAMMA_i\leq\vd_{n+1}$, $\Erw[\vd_{n+1}]=O_{\eps}(1)$,
	\begin{align}
		\Erw\brk{\sum_{i\geq1}(1-\vec\alpha^{i})\GAMMA_i\vecone\fE}&=
		\Erw\brk{\sum_{i\geq1}(1-\vec\alpha^{i})\hat\GAMMA_i\vecone\cbc{\vec\lambda''+\sum_{i\geq1}\hat\GAMMA_i\leq\eps^{-1/4}}}+o_{\eps}(1)
																   &&\mbox{[by \Lem~\ref{Lem_cond_distr} and Claim~\ref{Claim_A''_0}]}\nonumber\\
																   &=d\Erw[1-\ALPHA^{\hat\vk}]+o_{\eps}(1)=d-d\Erw[\vec\alpha K'(\vec\alpha)]/k+o_{\eps}(1)
																   &&\mbox{[by \eqref{eqSizeBiasd}].} \label{eqAddingVar012}
	\end{align}
	Combining \eqref{eqAddingVar011} and~\eqref{eqAddingVar012} completes the proof.
\end{proof}

\subsection{Proof of \Lem~\ref{Lemma_valid}}\label{Sec_Lemma_valid}
The proof is relatively straightforward, not least because once again we can reuse some technical statements from~\cite{Maurice}.
Let us deal with $\vA''$ and $\vA'''$ separately.

\begin{claim}\label{Claim_Lemma_valid_1}
We have  $\Erw[\nul(\vA'')]=\Erw[\nul(\vA_{n,\vM, \lambda})]+o(1)$.
\end{claim}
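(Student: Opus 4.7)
The plan is to couple $\vA''$ and $\vA\brk{n,\vM,\vec\lambda}$ so that they coincide on an event of probability $1-o(1/n)$, and then use the deterministic bound $\nul\leq n$ to absorb any residual discrepancy.

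First, I will match the ternary parts. From \eqref{eqlambda'} and \eqref{eqlambda''}, the sum of the independent Poissons satisfies
\begin{align*}
\Erw[\vec\lambda^-+\vec\lambda''] = \delta(n+1) - 3\delta\cdot\frac{n^2+n+1/3}{n^2+2n+1} + 2\delta\cdot\frac{n^2+n/2}{n^2+2n+1} = \delta n,
\end{align*}
so $\vec\lambda^-+\vec\lambda''\sim\Po(\delta n)\stackrel{d}{=}\vec\lambda$. Because both constructions pick the three variable-neighbors of each ternary check independently and uniformly from $[n]$, I can couple the ternary sub-structure identically after relabeling the ternary checks.

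Second, I will handle the pairing of variable clones against non-ternary check clones. In $\vA''$ the total check profile added on top of the unary and ternary rows is $\vM_i^- + (\vM_i-\vM_i^-) = \vM_i$ for every $i$, matching the profile of $\vA\brk{n,\vM,\vec\lambda}$. Define the good event $\cF=\{\sum_{i\geq 1} i\vM_i \leq \sum_{j=1}^n \vd_j\}$. On $\cF$, both matchings saturate the check side. The key observation is the exchangeability of uniform random maximum matchings of a complete bipartite graph: revealing the matching edges incident to a prescribed subset of check clones first leaves a conditionally uniform maximum matching on the remaining edges. Hence the two-stage pairing $\vec\Gamma' \cup \vec\Gamma''$ has the same distribution as a single uniform random maximum matching of all $\sum_i i \vM_i$ check clones against all $\sum_j \vd_j$ variable clones. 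Combined with the first paragraph this yields $\vA''\stackrel{d}{=}\vA\brk{n,\vM,\vec\lambda}$ on $\cF$, and I can realize this on a common probability space so that the two matrices literally coincide.

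Finally, I will verify $\pr[\cF^c]=o(1/n)$. Since $\Erw[\sum_i i\vM_i]=(1-\eps)dn$ lies strictly below $\Erw[\sum_j \vd_j]=dn$ and both sums have finite $(2+\eta)$-th moments by the standing assumption on $\vd$ and $\vk$, \Lem~\ref{Lemma_sums} applied to $\sum_i i \vM_i - \sum_j \vd_j$ yields $\pr[\cF^c]=o(1/n)$. The coupling then gives $|\Erw[\nul\vA'']-\Erw[\nul\vA\brk{n,\vM,\vec\lambda}]|\leq n\cdot\pr[\cF^c]=o(1)$, as required. The only conceptual point that needs care is the two-stage exchangeability of uniform random matchings; the rest is a direct Poisson computation together with an application of \Lem~\ref{Lemma_sums} that the paper has already set up for precisely such tail estimates.
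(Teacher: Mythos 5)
Your proposal is correct and substantially more self-contained than the paper's own treatment, which simply invokes the coupling from \cite[Claim 5.17]{Maurice} and observes that the ternary checks are attached independently in both models, so the same Poisson variable $\vec\lambda^-+\vec\lambda''\stackrel{d}{=}\vec\lambda$ can be reused. You instead rebuild the coupling from scratch. The key ideas in your argument are the right ones: (i) the ternary Poisson bookkeeping $\vec\lambda^-+\vec\lambda''\sim\Po(\delta n)$, which you verify correctly from \eqref{eqlambda'}--\eqref{eqlambda''}; and (ii) the exchangeability of a uniform random maximum matching — revealing the matching partners of any prescribed subset of check clones leaves the remainder conditionally uniform on the residual bipartite graph, so $\vec\Gamma'\cup\vec\Gamma''$ (conditioned on $\cF$) has the law of a single uniform maximum matching of all $\sum_i i\vM_i$ check clones against all $\sum_j\vd_j$ variable clones. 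This is indeed the content that the paper black-boxes via the reference to~\cite{Maurice}. Your version has the pedagogical advantage of exposing the exact mechanism; the paper's version is terser and assumes the reader tracks the external reference. The only loose point is the final tail bound: Lemma~\ref{Lemma_sums} is stated for a sum of a \emph{deterministic} number $s=\Theta(n)$ of i.i.d.\ copies, so it does not literally apply to the difference $\sum_i i\vM_i - \sum_j \vd_j$, since $\sum_i i\vM_i$ is (after Poisson thinning) a sum of $\vm_{\eps,n}\sim\Po((1-\eps)dn/k)$ i.i.d.\ copies of $\vk$. You should apply the lemma to $\sum_j\vd_j$ and, after conditioning on $\vm_{\eps,n}$ (using a Poisson tail bound to control its fluctuations), to $\sum_{j\leq \vm_{\eps,n}}\vk_j$, then combine by the union bound. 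This is routine and costs nothing; with that repair your proof is complete.
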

\begin{proof}
The matrix models $\vA_{n,\vM, \lambda}$ and $\vA''$ coincide with the corresponding models from \cite[Claim 5.17]{Maurice}, except that here we add extra ternary checks.
Because these extra checks are added independently, the coupling from \cite[Claim 5.17]{Maurice} directly induces a coupling of the enhanced models by attaching the same number $\vec\lambda''$ of ternary equations to the same neighbors.
\end{proof}

\begin{claim}\label{Claim_Lemma_valid_2}
We have  $\Erw[\nul(\vA''')]=\Erw[\nul(\vA_{n+1,\vM^+, \lambda^+})]+o(1)$.
\end{claim}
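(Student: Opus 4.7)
The strategy mirrors Claim~\ref{Claim_Lemma_valid_1}: I would invoke the analog of the coupling from~\cite[Claim~5.18]{Maurice} for the non-ternary portions of the two graphs, and then overlay an independent coupling of the ternary checks. The one subtlety compared with Claim~\ref{Claim_Lemma_valid_1} is that $\vA_{n+1,\vM^+,\vec\lambda^+}$ lives on $n+1$ variable nodes while $\vA'''$ stacks an extra variable $x_{n+1}$ onto the $n$-variable matrix $\vA'$, so the ternary checks have to be sorted by whether or not they touch $x_{n+1}$.

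First, I would remove all ternary rows and apply (the straightforward adaptation to $n+1$ of) the coupling in~\cite[Claim~5.17]{Maurice} that establishes Claim~\ref{Claim_Lemma_valid_1}. This couples the non-ternary part of $\vA'''$ (consisting of $\vA'$ plus the rows $a_{i,j}'''$ and $b_{i,j}'''$) with the non-ternary part of $\vA_{n+1,\vM^+,\vec\lambda^+}$ so that they coincide with probability $1-o(1)$.

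Second, I would couple the ternary parts via Poisson thinning. In $\vA_{n+1,\vM^+,\vec\lambda^+}$ each of the $\vec\lambda^+\sim\Po(\delta(n+1))$ ternary checks picks its three variable neighbours i.i.d.\ uniformly from $[n+1]$. Splitting these checks by whether or not $x_{n+1}$ appears among the three chosen indices yields two independent Poisson counts with means $\delta(n+1)(n/(n+1))^3=\lambda^-$ and $\delta(n+1)-\lambda^-$. A direct calculation (done before stating \eqref{eqlambda'''}) shows these equal the parameters of $\vec\lambda^-$ and $\vec\lambda'''$, respectively. The checks of the first type have neighbours i.i.d.\ uniform on $[n]$, which matches the $\vec\lambda^-$ ternary checks already built into $\vA'$ exactly. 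The checks of the second type have, conditional on hitting $x_{n+1}$, a joint neighbour distribution that agrees with the construction of $t_i'''$ (one forced slot equal to $x_{n+1}$ and the remaining two slots i.i.d.\ uniform on $[n+1]$) up to the rare event that two or more of the three slots coincide with $x_{n+1}$. Since that event has probability $O(1/n)$ per check, and since $\Erw[\vec\lambda''']=O_\eps(1)$, the total-variation cost of this step is $O_\eps(1/n)=o(1)$.

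Third, I would combine the two couplings to obtain a joint distribution under which $\vA'''$ and $\vA_{n+1,\vM^+,\vec\lambda^+}$ coincide with probability $1-o(1)$. On the $o(1)$ failure event, the nullity difference is trivially bounded by the sum of the total numbers of rows and columns of either matrix, namely $\vd_{n+1}+\vec\lambda^++\sum_{i\ge 1}(i+1)\vM_i^+$, which has bounded second moment by the moment assumptions on $\vd,\vk$ together with \eqref{eqPoissons}--\eqref{eqm}. Cauchy--Schwarz then gives $\Erw|\nul(\vA''')-\nul(\vA_{n+1,\vM^+,\vec\lambda^+})|=o(1)$, as required.

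The main obstacle will be book-keeping the two different ways of sampling the ternary neighbour pattern around $x_{n+1}$, but the discrepancy is purely a $1/n$-order birthday-paradox effect and disappears against the bounded expectation of $\vec\lambda'''$. Everything else is a routine transcription of the Maurice coupling to the $(n+1)$-variable setting.
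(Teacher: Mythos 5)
Your strategy matches the paper's: invoke the coupling of \cite[Claim 5.17]{Maurice} on the non-ternary rows, then split the $\vec\lambda^+$ ternary checks of $\vA\brk{n+1,\vM^+,\vec\lambda^+}$ by whether they touch $x_{n+1}$ via Poisson thinning and couple the two halves to the $\vec\lambda^-$ and $\vec\lambda'''$ ternary rows of $\vA'''$. Your identification of $\lambda^-$ with $\delta(n+1)(n/(n+1))^3$ and the $O(1/n)$ per-check total-variation discrepancy are both correct. The problem is the closing Cauchy--Schwarz step. The quantity $\vd_{n+1}+\vec\lambda^++\sum_{i\ge1}(i+1)\vM_i^+$ does \emph{not} have bounded second moment: $\vec\lambda^+\sim\Po(\delta(n+1))$ satisfies $\Erw[(\vec\lambda^+)^2]=\Theta(n^2)$, and $\sum_{i\ge1}(i+1)\vM_i^+$ likewise scales like $\Theta(n)$ in mean, so its second moment is $\Theta(n^2)$. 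Even if the expression were bounded in $L^2$, Cauchy--Schwarz against a coupling that fails with probability $o(1)$ (but not $o(n^{-2})$) would give a bound of order $n\cdot o(1)^{1/2}$, which is not $o(1)$, so the final sentence does not follow.

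What rescues the argument is controlling the discrepancy \emph{row by row} rather than by total matrix size. Your check-by-check thinning coupling already does this implicitly: the ternary rows of the two matrices can be made to agree except on the $\Po(\delta(n+1)-\lambda^-)$-many checks touching $x_{n+1}$, each of which mismatches with probability $O(1/n)$, so the expected number of differing ternary rows is $O_\eps(1/n)$. Since adding or removing a single row changes $\nul$ by at most one, the expected contribution of the ternary rows to the nullity difference is $O_\eps(1/n)=o(1)$ directly, with no second-moment estimate required. Stacked on top of the bound from~\cite[Claim 5.17]{Maurice} for the non-ternary rows, which already yields $o(1)$ expected discrepancy, this gives the claim. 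The idea is sound; the final estimate should be organised around the per-row discrepancy rather than a global success/failure dichotomy for the whole matrix.
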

\begin{proof}
The matrix models $\vA_{n+1,\vM^+, \lambda^+}$ and $\vA''''$ coincide with the corresponding models from \cite[Section 5.5]{Maurice} plus the extra independent ternary equations.
Hence, the coupling from \cite[Claim 5.17]{Maurice} yields a coupling of the enhanced models just as in Claim~\ref{Claim_Lemma_valid_1}.
\end{proof}

\begin{proof}[Proof of \Lem~\ref{Lemma_valid}]
	The lemma is an immediate consequence of Claims~\ref{Claim_Lemma_valid_1} and~\ref{Claim_Lemma_valid_2}.
\end{proof}

\section{Appendix}

%Given $\omega>0$ (from \eqref{eqL}) we choose $\eps_0=\eps_0(\omega,q)$ sufficiently small and let $0<\eps<\eps_0$. 
%Moreover, r
\noindent
In this appendix we give a self-contained proof of \Lem~\ref{lemma_entropy_llt}, the local limit theorem for sums of independent vectors.
We employ a simplified version of the strategy of the proof of \Lem~\ref{lemma_uniformly}.
Recall that we assume the existence of a constant $\eta>0$ such that $\Erw[\vec d^{2+\eta}] + \Erw[\vec k^{2+\eta}] < \infty$.

Given $\omega>0$, we choose $\eps_0=\eps_0(\omega,q)$ sufficiently small and let $0<\eps<\eps_0$. With these parameters, we set
\begin{align}\label{def_normal}
s_n := \sqrt{\sum_{i=1}^n \vec d_i^2}.
\end{align}
As in the proof of \Lem~\ref{lemma_entropy_llt}, given $\omega>0$, we choose $\eps_0=\eps_0(\omega,q)$ sufficiently small and let $0<\eps<\eps_0$. With these parameters, we set
\begin{align*}
	\fL_0&=\cbc{r\in \ZZ^{\FF_q^\ast}: \Pr_\fA\bc{\mycheck{\rho}_{\vec \sigma} =r}>0 \text{ and }  \left\| r- \frac{\vec \Delta}{q} \vecone \right\|_1<\omega n^{-1/2} \vec\Delta} \quad \text{ and }\\
	\fL_0(r_*,\eps)&=\cbc{r\in\fL_0:\|r-r_*\|_\infty<\eps s_n}.
\end{align*} 
Then \begin{align*}\fL_0 \subseteq \fd \ZZ^{\FF_q^\ast}.\end{align*}

We begin by observing that the vector $\mycheck{\rho}_{\vec \sigma}$ is asymptotically normal given $\fA$.
As before we let $\vec I_{q-1}$ the $(q-1)\times(q-1)$-identity matrix and let $\vN\in\RR^{\FF_q^*}$ be a Gaussian vector with zero mean and covariance matrix 
	\begin{align}\label{eqCmatrix}
		\cC=q^{-1}\vec I_{q-1}-q^{-2}\vecone_{(q-1)\times(q-1)}.
	\end{align}

\begin{claim}\label{fact_box2}
	There exists a function $\iota=\iota_{\eta,q}(n)=o(1)$ such that for all axis-aligned cubes $U\subset\RR^{\FF_q^*}$ we have
	$$\ex\abs{\pr_{\fA}\brk{\frac{\mycheck{\rho}_{\vec \sigma}-q^{-1}\vec\Delta\vecone}{s_n}\in U}-\pr\brk{\vN\in U}}\leq \iota.$$
\end{claim}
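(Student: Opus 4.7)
The plan is to mimic the proof of Claim~\ref{fact_boxS}, exploiting the fact that $\hat\rho_{\vec\sigma}$ decomposes as a sum of independent random vectors. Writing $\vX_i = \vd_i\,(\vecone\{\vec\sigma_i = s\})_{s \in \FF_q^*}$, we have $\hat\rho_{\vec\sigma} = \sum_{i=1}^n \vX_i$ with the $\vX_i$ independent given $\fA$. A direct calculation gives $\ex_\fA[\vX_i] = \vd_i q^{-1}\vecone$ and $\Cov_\fA(\vX_i) = \vd_i^2 \cC$, so $\ex_\fA[\hat\rho_{\vec\sigma}] = q^{-1}\vec\Delta\vecone$ and $\Cov_\fA(\hat\rho_{\vec\sigma}) = s_n^2 \cC$, matching the covariance of $s_n\vN$. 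What remains is a quantitative multivariate central limit theorem.

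The main technical point is that the $\vX_i$ need not have bounded third moments, so Berry--Esseen cannot be applied directly. I handle this by truncation, exactly as in the proof of Claim~\ref{fact_boxS}. Given $\iota > 0$ small, pick $K = K(q,\eta,\iota)$ large enough that $\ex[\vd^2\vecone\{\vd > K\}] < \iota^8 \ex[\vd^2]$ and split $\hat\rho_{\vec\sigma} = \hat\rho' + \hat\rho''$ according to whether $\vd_i \leq K$ or $\vd_i > K$. Correspondingly, $s_n^2 = (s_n')^2 + (s_n'')^2$ with $(s_n'')^2 = \sum_{i:\vd_i > K} \vd_i^2$. Lemma~\ref{Lemma_sums} applied to $\vd^2\vecone\{\vd > K\}$ and to $\vd^2$ yields w.h.p.\ $(s_n'')^2 \leq \iota^8 s_n^2$ and $s_n^2 = \Theta(n)$.

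Next I apply the multivariate Berry--Esseen theorem of Rai\v c~\cite{Raic} to the bounded summands of $\hat\rho'$, obtaining w.h.p.\
\begin{align*}
    \sup_V \abs{\Pr_\fA\brk{\frac{\hat\rho' - \ex_\fA[\hat\rho']}{s_n'} \in V} - \Pr[\vN \in V]} = O_K(n^{-1/2}),
\end{align*}
where $V$ ranges over axis-aligned cubes. For the residual piece, a coordinatewise Chebyshev bound together with $(s_n'')^2 \leq \iota^8 s_n^2$ gives w.h.p.\
\begin{align*}
    \Pr_\fA\brk{\norm{\frac{\hat\rho'' - \ex_\fA[\hat\rho'']}{s_n}}_\infty > \iota^2} < \iota^2.
\end{align*}

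Finally, writing $(\hat\rho_{\vec\sigma} - q^{-1}\vec\Delta\vecone)/s_n = (s_n'/s_n)\cdot(\hat\rho' - \ex_\fA[\hat\rho'])/s_n' + (\hat\rho'' - \ex_\fA[\hat\rho''])/s_n$, using $|1 - s_n'/s_n| = O(\iota^8)$, and invoking the bounded density of $\vN$ (to absorb the mild rescaling of the first term and the $O(\iota^2)$-shift from the second into an $O(\iota)$-error for axis-aligned cubes $U$), I would conclude that w.h.p.\
\begin{align*}
    \abs{\Pr_\fA\brk{\frac{\hat\rho_{\vec\sigma} - q^{-1}\vec\Delta\vecone}{s_n} \in U} - \Pr[\vN \in U]} \leq \iota
\end{align*}
uniformly over axis-aligned cubes $U$. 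Letting $\iota \to 0$ slowly as $n \to \infty$ yields the desired function $\iota_{\eta,q}(n) = o(1)$. The only non-routine step is the truncation; everything else is a direct application of Berry--Esseen and Chebyshev, exactly in parallel to Claim~\ref{fact_boxS} (the sole difference being that there the independence structure and the moment hypothesis refer to $\vk$ rather than $\vd$).
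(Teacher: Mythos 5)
Your proposal is correct and follows essentially the same route as the paper's proof: decompose $\hat\rho_{\vec\sigma}$ as a sum of independent vectors, compute the mean and covariance (which indeed match $q^{-1}\vec\Delta\vecone$ and $s_n^2\cC$), truncate the degrees at a level depending on the target error, apply the multivariate Berry--Esseen theorem of Rai\v c to the bounded part, control the tail via Chebyshev using $\ex[\vd^{2+\eta}]<\infty$, and let the truncation parameter go to zero slowly with $n$. The only cosmetic differences are that the paper invokes Markov's inequality rather than Lemma~\ref{Lemma_sums} to control $(s_n'')^2$, and that you spell out slightly more explicitly how the bounded density of $\vN$ absorbs the rescaling error $|1-s_n'/s_n|$ for axis-aligned cubes.
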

\begin{proof}
	The mean of each entry $\hat\rho_{\vec\sigma}(\tau)$  clearly equals $\vec\Delta/q$ for every $\tau\in\FF_q^*$.
	Concerning the covariance matrix, for distinct $s\neq t$ we obtain 
	\begin{align*}
		\ex_{\fA}[\hat{\rho}_{\vec\sigma}^2(s)]&=\sum_{i,j\in[n]:i\neq j}\frac{\vd_i\vd_j}{q^2}+\sum_{i=1}^n\frac{\vd_i^2}q=\sum_{i,j=1}^n\frac{\vd_i\vd_j}{q^2}+\sum_{i=1}^n\frac{\vd_i^2}q\bc{1-\frac1q},\\
		\ex_{\fA}[\hat{\rho}_{\vec\sigma}(s)\hat{\rho}_{\vec\sigma}(t)]&=\sum_{i,j\in[n]:i\neq j}\frac{\vd_i\vd_j}{q^2}=\sum_{i,j=1}^n\frac{\vd_i\vd_j}{q^2}-\sum_{i=1}^n\frac{\vd_i^2}{q^2}.
	\end{align*}
	Hence, the means and covariances of $(\hat\rho_{\vec\sigma}-q^{-1}\vec\Delta\vecone)/s_n$ and $\vN$ match.

	We are thus left to prove that $(\hat\rho_{\vec\sigma}-q^{-1}\vec\Delta\vecone)/s_n$ is asymptotically normal, with the required uniformity.
	Thus, given a small $\xi>0$ we pick $\fD=\fD(q,\eta,\xi)>0$ and $n_0=n_0(\fD)$ sufficiently large.
	Suppose $n>n_0$ and let
	\begin{align*}
		\vd_i'&=\vecone\{\vd_i\leq\fD\}\vd_i,&\vd_i''&=\vd_i-\vd_i',\\
		\hat\rho_{\vec\sigma}'(s)&=\sum_{i=1}^n\vecone\{\vec\sigma_i=s\}\vd_i',&
		\hat\rho_{\vec\sigma}''(s)&=\sum_{i=1}^n\vecone\{\vec\sigma_i=s\}\vd_i'',\\
		{s_n'}^2&=\sum_{i=1}^n{\vd_i'}^2,&{s_n''}^2&=\sum_{i=1}^n{\vd_i''}^2,\\
		\vec\Delta'&=\sum_{i=1}^n\vd_i',&\vec\Delta''&=\sum_{i=1}^n\vd_i''.
	\end{align*}
	Since $\ex[\vd^{2+\eta}]<\infty$, by Markov's inequality and by construction we have \whp
	\begin{align}\label{eqpedestrian3}
		\vec\Delta''&<\xi^8 n,&\vec\Delta&=\vec\Delta'+\vec\Delta'',&
		{s_n''}^2&<\xi^8 n,& {s_n'}^2&<\fD^2n,&s_n^2&={s_n'}^2+{s_n''}^2,
	\end{align}
	providing $\fD$ is large enough.
	Hence, the multivariate Berry--Esseen theorem (e.g., \cite{Raic}) shows that \whp\ for all $U$,
	\begin{align}\label{eqpedestrian4}
		\pr_{\fA}\brk{\frac{\mycheck{\rho}'_{\vec \sigma}-q^{-1}\vec\Delta'\vecone}{s_n'}\in U}-\pr\brk{\vN\in U}&=O(n^{-1/2}).
	\end{align}
	Furthermore, combining \eqref{eqpedestrian3} with Chebyshev's inequality, we see that \whp
	\begin{align}\label{eqpedestrian5}
		\pr_{\fA}\brk{\abs{\frac{\mycheck{\rho}''_{\vec \sigma}-q^{-1}\vec\Delta''\vecone}{s_n}}>\xi^2}&<\xi^2.
	\end{align}
	Thus, combining \eqref{eqpedestrian4} and \eqref{eqpedestrian5}, we conclude that \whp
	\begin{align}\label{eqpedestrian6}\abs{\pr_{\fA}\brk{\frac{\mycheck{\rho}_{\vec \sigma}-q^{-1}\vec\Delta\vecone}{s_n}\in U}-\pr\brk{\vN\in U}}\leq \xi.\end{align}
	Finally, the assertion follows from \eqref{eqpedestrian6} by taking the limit $\xi\to0$ slowly enough as $n\to\infty$.
\end{proof}

There exist $g \in \NN$, $a_1, \ldots, a_g \in \ZZ$ and $\delta_1, \ldots, \delta_g$ in the support of $\vd$ such that the greatest common divisor of the support can be linearly combined as
	\begin{align}
	\fd = \sum_{i=1}^g a_i \delta_i.
	\end{align}
We next count how many variables there are with degree $\delta_i$. For $i \in [g]$, let $\fI_i$ denote the set of all $j \in [n]$ with $\vd_j=\delta_i$ (the set of all variables that appear in $\delta_i$ equations). Set $\fI_0 = [n] \setminus \bc{\fI_1 \cup \ldots \cup \fI_g}$. Then 
\begin{align*}
\fI_0 \cup \ldots \cup \fI_g = [n]
\end{align*}
and
$|\fI_0|,|\fI_1|, \ldots, |\fI_g|=\Theta(n)$ \whp\ (if $\fI_0$ is non-empty).
We further count how many entries of value $s \in \FF_q^{\ast}$ all variables of degree $\delta_i$ generate under the assignment $\vec \sigma$, and the contribution from the rest, yielding
\begin{align*}
	\vr_0(s)&=\sum_{j\in\fI_0} \vd_j \vecone\cbc{\vec \sigma_j = s},&
	\vr_i(s)&= \sum_{j\in\fI_i} \vecone\cbc{\vec \sigma_j = s}. &&(i \in [g], s\in\FF_q^*)
\end{align*}
Then summing the contributions, we get back $\mycheck{\rho}_{\vec\sigma}=\vr_0 + \sum_{i=1}^g \delta_i \vr_i$, where  $\vr_i=(\vr_i(s))_{s\in\FF_q^*}$.

%Let $k_0=\min\supp\vk$.
%In the case that $|\supp\vec\chi|=1$ we set $\chi_1=\cdots=\chi_{k_0}$ to the single element of $\supp\vec\chi$.
%Moreover, in the case that $|\supp\vec\chi|>1$ we pick and fix any $\chi_1,\ldots,\chi_{k_0}\in\supp\vec\chi$ such that $|\{\chi_1,\ldots,\chi_{k_0}\}|>1$.
%Let $\fI_0$ be the set of all $i\in[\vm]$ such that $\vk_i=k_0$ and $\vec\chi_{ij}=\chi_j$ for $j=1,\ldots,k_0$ and let $\fI_1=[\vm]\setminus\fI_0$.
%Then $|\fI_0|,|\fI_1|=\Theta(n)$ \whp\
%Further, set
%\begin{align*}
%	\vr_0(s)&=\sum_{i\in\fI_0}\sum_{j\in[\vk_i]}\vecone\cbc{\vec\xi_{ij}=s},&
%	\vr_1(s)&=\sum_{i\in\fI_1}\sum_{j\in[\vk_i]}\vecone\cbc{\vec\xi_{ij}=s}&&(s\in\FF_q^*).
%\end{align*}
%Then $\mycheck{\vec\rho}=\vr_0+\vr_1$.

Because $\vec\sigma_1, \ldots, \vec\sigma_n$ are mutually independent given $\fA$, so are $\vr_0, \vr_1, \ldots, \vr_g$.
%Because the vectors $\vec\xi_i=(\vec\xi_{i,1},\ldots,\vec\xi_{i,\vk_i})$ are mutually independent, so are $\vr_0=(\vr_0(s))_{s\in\FF_q^*}$ and $\vr_1=(\vr_1(s))_{s\in\FF_q^*}$.
%To analyse $\vr_0$ precisely, let 
%\begin{align*}
%	\cS_0&=\cbc{\sigma\in\FF_q^{k_0}:\sum_{i=1}^{k_0}\chi_i\sigma_i=0}.
%\end{align*}
%Moreover, for $\sigma\in\cS_0$ let $\vec R_\sigma$ be the number of indices $i\in\fI_0$ such that $\vec\xi_i=\sigma$.
%Then conditionally on $\fS$, we have 
%\begin{align*}
%	\vec r_0(s)&=\sum_{i\in\fI_0}\sum_{j\in[\vk_i]}\vecone\cbc{\vec\xi_{ij}=s}=\sum_{\sigma\in\cS_0}\sum_{j=1}^{k_0}\vecone\cbc{\sigma_j=s}\vR_\sigma\qquad\mbox{given }\fS,
%\end{align*}
%which reduces our task to the investigation of $\vR=(\vR_\sigma)_{\sigma\in\cS_0}$.
Moreover, given $\fA$, for $i \in [g]$, $\vec r_i$ has a multinomial distribution with parameter $|\fI_i|$ and uniform probabilities $q^{-1}$.
In effect, the individual entries $\vec r_i(s)$, $s \in \FF_q^{\ast}$, will typically differ by only a few standard deviations, i.e., their typical difference will be of order $O(\sqrt{|\fI_i|})$.
We require a precise quantitative version of this statement.

%In effect, the individual entries $\vR(\sigma)$, $\sigma\in\fS_0$, will typically differ by only a few standard deviations, i.e., their typically difference will be of order $O(\sqrt{\vec\Delta})$.
%We require a precise quantitative version of this statement.

Furthermore, we say that $\vec r_i$ is {\em $t$-tame} if $|\vec r_i(s)-q^{-1}|\fI_i||\leq t\sqrt{|\fI_i|}$ for all $s \in \FF_q^\ast$.
Let $\fT(t)$ be the event that $\vec r_1, \ldots, \vec r_g$ are $t$-tame.

%Furthermore, we say that $\vR$ is {\em $t$-tame} if $|\vR_\sigma-|\cS_0|^{-1}|\fI_0||\leq t\sqrt{\vec\Delta}$ for all $\sigma\in\cS_0$.
%Let $\fT(t)$ be the event that $\vR$ is $t$-tame.

\begin{lemma}\label{claim_tame3_sigma}
	\Whp\ for every $r_*\in\fL_0$ there exists $r^*\in\fL_0(r_*,\eps)$ such that
	\begin{align}\label{eq_claim_tame3_sigma}
	\pr_{\fA}\brk{\mycheck{\rho}_{\vec \sigma}=r^*}&\geq\frac{1}{2|\fL_0(r_*,\eps)|}&\mbox{and}&& \pr_{\fA}\brk{\fT(-\log\eps)\mid \mycheck{\rho}_{\vec \sigma}=r^*}&\geq1-\eps^4.
\end{align}
\end{lemma}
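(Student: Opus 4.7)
The proof will closely parallel that of \Lem~\ref{claim_tame3}, with the auxiliary event $\fS$ replaced by no conditioning at all. The basic structure is: first show that $(\vec r_1, \ldots, \vec r_g)$ is tame with overwhelming probability; then show that any box $\fL_0(r_*, \eps)$ near the center is hit by $\mycheck{\rho}_{\vec\sigma}$ with probability at least $\eps^q$; combine these via Bayes to get that conditional on landing in the box, we are still tame with probability $\geq 1-\eps^5$; finally, extract a good point $r^*$ by an averaging/pigeonhole argument.

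\emph{Step 1 (tameness).} Given $\fA$, each $\vec r_i$, $i\in[g]$, is multinomial with parameter $|\fI_i|$ and uniform cell probabilities $q^{-1}$. Since $\Erw[\vd^{2+\eta}]<\infty$ and $\delta_1,\ldots,\delta_g\in\supp\vd$ are fixed, $|\fI_i| = \Theta(n)$ \whp, while $s_n = \Theta(\sqrt n)$. The standard Chernoff bound for multinomials and a union bound over $i\in[g]$ and $s\in\FF_q^*$ yield
\begin{align*}
\Pr_{\fA}\brk{\fT(-\log\eps)} \geq 1 - \exp\bc{-\Omega_\eps(\log^2\eps)} \qquad \text{\whp.}
\end{align*}

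\emph{Step 2 (hitting the box).} Fix $r_*\in\fL_0$. In the normalised coordinates $z = (r_*-q^{-1}\vec\Delta\vecone)/s_n$, the definition of $\fL_0$ gives $\|z\|_\infty \leq \omega n^{-1/2}\vec\Delta/s_n = O(\omega)$ (using $\vec\Delta=\Theta(n)$, $s_n=\Theta(\sqrt n)$). Hence the Gaussian density $\phi_{\cC}(z)$ is bounded below by a positive constant depending only on $\omega$ and $q$, so $\Pr[\vN \in z + [-\eps,\eps]^{q-1}] \geq \Omega_\omega(\eps^{q-1})$. Claim~\ref{fact_box2} therefore gives, \whp,
\begin{align*}
\Pr_{\fA}\brk{\mycheck{\rho}_{\vec\sigma} \in \fL_0(r_*,\eps)} \geq \Omega_\omega(\eps^{q-1}) - \iota \geq \eps^q,
\end{align*}
provided $\eps<\eps_0(\omega)$ is small enough. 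Combining this with Step 1 via Bayes' formula yields
\begin{align*}
\Pr_{\fA}\brk{\fT(-\log\eps) \mid \mycheck{\rho}_{\vec\sigma}\in\fL_0(r_*,\eps)} \geq 1 - \eps^{-q}\exp\bc{-\Omega_\eps(\log^2\eps)} \geq 1 - \eps^5 \qquad \text{\whp.}
\end{align*}

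\emph{Step 3 (extracting $r^*$).} Arguing by contradiction exactly as in the proof of \Lem~\ref{claim_tame3}, suppose no $r^*\in\fL_0(r_*,\eps)$ satisfies both conditions of \eqref{eq_claim_tame3_sigma}. Let $\fX_0$ be the set of $r\in\fL_0(r_*,\eps)$ failing the first bound (so $\Pr_\fA[\mycheck{\rho}_{\vec\sigma}=r]<\tfrac{1}{2|\fL_0(r_*,\eps)|}$) and $\fX_1 = \fL_0(r_*,\eps)\setminus\fX_0$. Then
\begin{align*}
\Pr_\fA\brk{\fT(-\log\eps)\mid \mycheck{\rho}_{\vec\sigma}\in\fL_0(r_*,\eps)} \leq \frac{\Pr_\fA\brk{\mycheck{\rho}_{\vec\sigma}\in\fX_0}+(1-\eps^4)\Pr_\fA\brk{\mycheck{\rho}_{\vec\sigma}\in\fX_1}}{\Pr_\fA\brk{\mycheck{\rho}_{\vec\sigma}\in\fL_0(r_*,\eps)}} < 1 - \eps^4,
\end{align*}
contradicting the bound established in Step 2. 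Hence a suitable $r^*$ exists.

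\emph{Main obstacle.} The only slightly delicate point is Step 2: we need the box $\fL_0(r_*,\eps)$ to be genuinely non-negligible under the (approximate) Gaussian law of $\mycheck\rho_{\vec\sigma}/s_n$. This rests on the fact that $r_*\in\fL_0$ forces $r_*$ to lie within $O(\omega)$ standard deviations of the mean $q^{-1}\vec\Delta\vecone$ (in the $s_n$-normalised scale), which in turn uses $\vec\Delta = \Theta(n)$ and $s_n = \Theta(\sqrt n)$ \whp. One must also ensure that the CLT error $\iota$ from Claim~\ref{fact_box2} is much smaller than $\eps^{q-1}$; this is guaranteed by taking $n$ large relative to $\eps$. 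Steps 1 and 3 are purely formal adaptations of the original arguments.
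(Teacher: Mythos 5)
Your proposal is correct and mirrors the paper's own proof step by step: Chernoff tameness of the multinomials $\vr_1,\ldots,\vr_g$, a lower bound $\geq\eps^q$ on the box-hitting probability via Claim~\ref{fact_box2}, Bayes to get the conditional tameness bound $1-\eps^5$, and the same pigeonhole contradiction to extract $r^*$. The one place where you are slightly more explicit than the paper is Step 2, where you justify the $\Omega_\omega(\eps^{q-1})$ bound by observing that $r_*\in\fL_0$ forces $(r_*-q^{-1}\vec\Delta\vecone)/s_n$ to lie within $O(\omega)$ of the origin so that the Gaussian density is bounded below there; the paper simply asserts the bound, so this is a welcome addition rather than a divergence. (You also implicitly correct a typo in the paper's display~\eqref{claim_tame2_sigma}, which has a comma where a conditioning bar should be, as the analogous~\eqref{claim_tame2} makes clear.)
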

\begin{proof}
	%Recall that the event $\{\mycheck{\rho}=r\}$ is the same as $\fR(r')$ where $r'(s)=r(s), \forall s\in\FF_q^*$ and $r'(0) = \Delta -\| r\|_1$. 
	%As a first step we observe that for each $i \in [g]$, $\delta_i^{-1} \vec r_i$ given $\fA$ is reasonably tame with a reasonably high probability.
	%More precisely, 
	Since $ \vec r_i$ has a multinomial distribution given $\fA$ the Chernoff bound shows that for a large enough $c=c(q)$ \whp\ 
	\begin{align}\label{claim_tame_sigma}
		\pr_{\fA}\brk{\fT(-\log\eps)}\geq1-\exp(-\Omega_\eps(\log^2(\eps))).
	\end{align}
	Further, Claim~\ref{fact_box2} implies that \whp\ $\pr_{\fA}\brk{\mycheck{\rho}_{\vec \sigma}\in\fL_0(r_*,\eps)}\geq \Omega_\eps(\eps^{q-1})\geq\eps^q$, provided $\eps<\eps_0=\eps_0(\omega)$ is small enough.
	Combining this estimate with \eqref{claim_tame_sigma} and Bayes' formula, we conclude that \whp\ for every $r_*\in\fL_0$,
	\begin{align}\label{claim_tame2_sigma}
	\pr_{\fA}\brk{\fT(-\log\eps),\ \mycheck{\rho}_{\vec \sigma}\in\fL_0(r_*,\eps)}\geq1-\eps^{5}.
\end{align}

To complete the proof, assume that there does not exist $r^*\in\fL_0(r_*,\eps)$ that satisfies \eqref{eq_claim_tame3_sigma}.
	Then for every $r\in\fL_0(r_*,\eps)$ we either have
	\begin{align}\label{eqX0_sigma}
		\pr_{\fA}\brk{\mycheck{\rho}_{\vec\sigma}=r}&<\frac{1}{2|\fL_0(r_*,\eps)|}&&\mbox{or}\\
		\pr_{\fA}\brk{\fT(-\log\eps) \vert \mycheck{\rho}_{\vec\sigma}=r}&<1-\eps^4.\label{eqX1_sigma}
	\end{align}
	Let $\fX_0$ be the set of all $r\in\fL_0(r_*,\eps)$ for which \eqref{eqX0_sigma} holds, and let $\fX_1=\fL_0(r_*,\eps)\setminus\fX_0$.
	Then \eqref{eqX0_sigma}--\eqref{eqX1_sigma} yield
	\begin{align*}\nonumber
	\pr_{\fA}\brk{\fT(-\log\eps)\mid \mycheck{\rho}_{\vec \sigma}\in\fL_0(r_*,\eps)}
		&\leq \frac{\pr_{\fA}\brk{\mycheck{\rho}_{\vec \sigma}\in\fX_0}+\sum_{r\in\fX_1}\pr_{\fA}\brk{\fT(-\log\eps) \vert\mycheck{\rho}_{\vec\sigma}=r}\pr_{\fA}\brk{\mycheck{\rho}_{\vec \sigma}=r}}{\pr_{\fA}\brk{\fL_0(r_*,\eps)}}\\
		&\leq\frac{\pr_{\fA}\brk{\mycheck{\rho}_{\vec\sigma}\in\fX_0}+(1-\eps^4)\pr_{\fA}\brk{\mycheck{\rho}_{\vec\sigma}\in\fX_1}}{\abs{\fL_0(r_*,\eps)}}< 1-\eps^4,
	\end{align*} 
	provided that $1-\eps^4>\frac{1}{2}$, in contradiction to \eqref{claim_tame2_sigma}.
\end{proof}

%Let $\fM=\fM_q(\chi_1,\ldots,\chi_{k_0})$ and let $\fb_1,\ldots,\fb_{q-1}$ be the basis of $\fM$ supplied by \Prop~\ref{prop_module}.
%Let us fix vectors $\tau^{(1)},\ldots,\tau^{(q-1)}\in\cS_0$ whose frequency vectors as defined in \eqref{eqMyFreq} coincide with $\fb_,\ldots,\fb_{q-1}$, i.e., 
%	$$\mycheck\tau^{(i)}=\fb_i\qquad\mbox{ for $i=1,\ldots,q-1$.}$$
Also let $\fT(r,t)$ be the event that $\mycheck{\rho}_{\vec \sigma}=r$ and that  $\vec r_1, \ldots, \vec r_g$ are $t$-tame. %$\vR$ is $t$-tame.
We write $(r_0, \ldots, r_g)\in\fT(r,t)$ if $r_0 + \sum_{i=1}^g \delta_i r_i =r$ and $|r_i(s)-q^{-1}|\fI_i||\leq t\sqrt{|\fI_i|}$ for all $s \in \FF_q^\ast$.
The following lemma summarises the key step of the proof of \Lem~\ref{lemma_uniformly}.

\begin{lemma}\label{lemma_transform_sigma}
	\Whp\ for any $r_*\in\fL_0$, any $1\leq t\leq\log n$ and any $r,r'\in\fL_0(r_*,\eps)$ there exists a one-to-one map $\psi:\fT(r,t)\to\fT(r',t+O_\eps(\eps))$ such that for all $(r_0, \ldots, r_g)\in\fT(r,t)$ we have 
	\begin{align}\label{eqlemma_transform}
		\log\frac{\pr_\fA\brk{(\vr_0, \ldots,\vr_g)=(r_0, \ldots, r_g)}}{\pr_\fA\brk{(\vr_0,, \ldots, \vr_r)=\psi(r_0,\ldots,r_g)}}=O_\eps(\eps(\omega+t)).
	\end{align}
\end{lemma}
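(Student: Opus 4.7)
The proof mirrors that of Lemma~\ref{lemma_transform}, but the algebraic input here is divisibility rather than Proposition~\ref{prop_module}. The key observation is that since every $\vd_i$ is a multiple of $\fd=\gcd\supp(\vd)$, the empirical frequency $\mycheck{\rho}_{\vec\sigma}(s)=\sum_i\vd_i\vecone\{\vec\sigma_i=s\}$ takes values in $\fd\ZZ$, so $\fL_0\subseteq\fd\ZZ^{\FF_q^*}$. Consequently, for $r,r'\in\fL_0(r_*,\eps)$ the vector $v:=(r-r')/\fd$ lies in $\ZZ^{\FF_q^*}$, and the triangle inequality together with $s_n=\Theta(\sqrt n)$ (since $\ex[\vd^2]<\infty$) and $|\fI_i|=\Theta(n)$ whp yields $\|v\|_\infty=O_\eps(\eps\sqrt n)=O_\eps(\eps\sqrt{|\fI_i|})$. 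Using the Bezout-style representation $\fd=\sum_{i=1}^g a_i\delta_i$ that enters the statement via the choice of the $\delta_i$, I would define
\[
\psi(r_0,r_1,\ldots,r_g)=(r_0,\,r_1-a_1 v,\,\ldots,\,r_g-a_g v),
\]
with the understood adjustment of the implicit 0-coordinate $|\fI_i|-\sum_{s\in\FF_q^*}r_i(s)$. Then $\psi$ is visibly a bijection (the inverse sends $v\mapsto-v$) and preserves $\mycheck\rho$-values as needed:
\[
r_0+\sum_{i=1}^g\delta_i(r_i-a_i v)=r-\Bigl(\sum_i a_i\delta_i\Bigr)v=r-\fd v=r'.
\]
Non-negativity of the new coordinates and $(t+O_\eps(\eps))$-tameness are immediate from $\|a_i v\|_\infty=O_\eps(\eps\sqrt{|\fI_i|})$ combined with the $t$-tameness of the input.

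Second, I would compute the probability ratio. Since $\vr_0,\vr_1,\ldots,\vr_g$ are mutually independent given $\fA$ and $\psi$ leaves $r_0$ untouched, the ratio factorises:
\[
\frac{\pr_\fA[(\vr_0,\ldots,\vr_g)=(r_0,\ldots,r_g)]}{\pr_\fA[(\vr_0,\ldots,\vr_g)=\psi(r_0,\ldots,r_g)]}
=\prod_{i=1}^g\binom{|\fI_i|}{(r_i(s))_{s\in\FF_q}}\binom{|\fI_i|}{((r_i-a_i v)(s))_{s\in\FF_q}}^{-1}.
\]
Stirling's formula applied as in Lemma~\ref{lemma_transform} converts each factor into $\exp[|\fI_i|(H(r_i/|\fI_i|)-H((r_i-a_iv)/|\fI_i|))+O(\log n)]$. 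Writing $r_i(s)/|\fI_i|=1/q+u_s/\sqrt{|\fI_i|}$ with $\|u\|_\infty=O_\eps(\omega+t)$ (tameness) and $(r_i-a_iv)(s)/|\fI_i|=1/q+u'_s/\sqrt{|\fI_i|}$ with $\|u-u'\|_\infty=O_\eps(\eps)$, a second-order Taylor expansion of $H$ around $(1/q,\ldots,1/q)$ gives
\[
|\fI_i|\bigl[H(r_i/|\fI_i|)-H((r_i-a_iv)/|\fI_i|)\bigr]=-\tfrac{q}{2}(u-u')\cdot(u+u')+O\Bigl(\tfrac{(\omega+t)^3}{\sqrt{|\fI_i|}}\Bigr)=O_\eps(\eps(\omega+t)).
\]
Summing the constant number $g$ of factors yields the claimed bound.

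The main subtlety is the cancellation of the leading-order linear term in the Taylor expansion. This linear term is proportional to $\sum_{s\in\FF_q}(u_s-u'_s)$, which vanishes precisely because the transformation $r_i\mapsto r_i-a_i v$ preserves the total count $\sum_{s\in\FF_q}r_i(s)=|\fI_i|$ for each $i$. Without this cancellation one would only obtain a bound of order $\omega+t$ rather than $\eps(\omega+t)$, which would be too weak for the downstream application in Corollary~\ref{cor_transform4}. The divisibility $\fL_0\subseteq\fd\ZZ^{\FF_q^*}$, which is what permits $v$ to be an integer vector in the first place, is therefore not merely a technical assumption but the reason the construction is available at all; the coprimality of $q$ and $\fd$ elsewhere in the paper ensures this sublattice is nontrivially intersected by the solution lattice.
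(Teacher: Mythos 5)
Your proposal is correct and takes essentially the same approach as the paper. The map $\psi$ you define via the integer vector $v=(r-r')/\fd$ and B\'ezout coefficients $a_i$ is exactly the paper's construction (up to the harmless sign $\lambda=-v$; incidentally, the paper's displayed formula $r_i'=r_i+\frac{a_i}{\fd}\lambda$ with $\lambda=\fd^{-1}(r'-r)$ double-applies the factor $1/\fd$---the consistent version is $r_i'=r_i+a_i\lambda$, which is precisely your $r_i-a_iv$). Your second-order Taylor expansion of the entropy is a minor variant of the paper's Stirling-plus-mean-value-theorem computation, and the cancellation you highlight---that the linear term vanishes because $\psi$ preserves $\sum_{s\in\FF_q}r_i(s)=|\fI_i|$ in each block---is exactly why the paper's $\log(1/q)$ contribution drops out, though the paper leaves that step implicit.
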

\begin{proof}
	Since $r,r'\in \fL_0(r_*,\eps)$, we have $r-r'\in\fd \ZZ^{\FF_q^\ast}$.
	%Indeed, if $\supp\vec\chi>1$, then \Prop~\ref{prop_module} shows that $\fM=\ZZ^{\FF_q^*}$ \whp\
	%Moreover, if $\supp\vec\chi=1$, then $\fM$ is a proper subset of the integer lattice $\ZZ^{\FF_q^*}$.
	%Nonetheless, \Prop~\ref{prop_module} shows that the modules $$\fM_q(\underbrace{1,\ldots,1}_{\mbox{$\ell$ times}})$$ coincide for all $\ell\geq3$, and therefore $\fM$ coincides with the $\ZZ$-module generated by $\fL_0$.
	Hence, with $e_1, \ldots, e_{q-1}$ denoting the standard basis of $\RR^{\FF_q^{\ast}}$, there is a unique representation 
	\begin{align}\label{eqlemma_transform00_sigma}
		r'-r&=\sum_{i=1}^{q-1}\lambda_i\fd e_i
	\end{align}
	with $\lambda_1, \ldots, \lambda_{q-1} \in\ZZ$.
	Because $r,r'\in\fL_0(r_*,\eps)$ and 
\begin{align*}
\lambda:=\begin{pmatrix}\lambda_1\\\vdots\\\lambda_{q-1}\end{pmatrix}=\fd^{-1}(r'-r),
	\end{align*}
	the coefficients satisfy 
	\begin{align}\label{eqlemma_transform0_sigma}
		|\lambda_i|&=O_\eps\bc{\eps s_n}\qquad\mbox{for all $i=1,\ldots,q-1$.}
	\end{align}
	Now recall $g \in \NN$, $a_1, \ldots, a_g \in \ZZ$ and $\delta_1, \ldots, \delta_g$ in the support of $\vd$ with
	\begin{align*}
	\fd = \sum_{i=1}^g a_i \delta_i.
	\end{align*}
	%let $\lambda_0=-\sum_{i=1}^{q-1}\lambda_i$, obtain the vector $R'$ from $R$ by amending the entry $R_0'$ corrsponding to the zero solution $0\in\cS_0$ to
	Fir $i \in [g]$, we set
	\begin{align*}
		r_i'&=  r_i+\frac{a_i }{\fd} \lambda %_0,\qquad\mbox{and setting}&
		%R_{\tau^{(i)}}'&=R_{\tau^{(i)}}+\lambda_i\qquad\mbox{for all }\sigma\not\in\{0,\tau^{(1)},\ldots,\tau^{(q-1)}\}.
	\end{align*}
	as well as $ r_0' = r_0$. Further, define $\psi(r_0, \ldots, r_g)=(r_0', \ldots, r_g')$.
	Then clearly
	\begin{align}\label{eqlemma_transform000_sigma}
	r_0 + \sum_{i=1}^g \delta_i  r_i' = r + \sum_{i=1}^g \frac{a_i \delta_i}{\fd} \lambda = r  + r' - r =r'.
		\end{align}
	and due to  \eqref{eqlemma_transform0_sigma}, we have $\psi(r_0,\ldots,r_g)\in\fT(r',t+O_\eps(\eps))$. Finally, for $i \in [g]$ set
	\begin{align*}
	r_i(0) = |\fI_i|-\sum_{s\in \FF_q^\ast}r_i(s), \qquad r_i'(0) = |\fI_i|-\sum_{s\in \FF_q^\ast}r_i'(s).
			\end{align*}
	Moreover, Stirling's formula and the mean value theorem show that
	\begin{align}
		\frac{\pr_\fA\brk{(\vr_0,\ldots,\vr_g)=(r_0, \ldots, r_g)}}{\pr_\fA\brk{(\vr_0, \ldots,\vr_g)=\psi(r_0, \ldots, r_g)}}&=\frac{\binom{|\fI_1|}{(r_1(0), r_1)} \cdot \ldots \cdot \binom{|\fI_g|}{ (r_g(0), r_g) }}{\binom{|\fI_1|}{(r_1'(0),r_1')} \cdot \ldots \cdot \binom{|\fI_g|}{(r_g'(0), r_g')} }=\exp\brk{\sum_{i=1}^g\sum_{s \in \FF_q}O_\eps\bc{r_i'(s)\log r'_i(s)-r_i(s)\log r_i(s)}}\nonumber\\
																								   &=\exp\brk{\sum_{i=1}^gO_\eps(|\fI_i|)\sum_{s \in \FF_q}\abs{\int_{r_i(s)/|\fI_i|}^{r_i'(s)/|\fI_i|}\log z\dd z}}\nonumber\\
																								   &=\exp\brk{\sum_{i=1}^gO_\eps(|\fI_i|)\sum_{s \in \FF_q} \bc{\frac{r_i'(s)}{|\fI_i|}-\frac{r_i(s)}{|\fI_i|}}\log\bc{\frac1q+O_\eps\bcfr{(\omega+t)s_n}{|\fI_i|}}}\nonumber\\
																								   &=\exp\brk{ \sum_{i=1}^gO_\eps(|\fI_i|)\sum_{s \in \FF_q} O_\eps\bc{\frac{(\omega+t)s_n}{|\fI_i|}\bc{\frac{r_i'(s)}{|\fI_i|}-\frac{r_i(s)}{|\fI_i|}}}}.\label{eqlemma_transform1}
	\end{align}
	Since $|\fI_1|, \ldots, |\fI_g|=\Theta_\eps(n)$ \whp, \eqref{eqlemma_transform1} implies \eqref{eqlemma_transform}.
	Finally, $\psi$ is one-to-one because each vector has a unique representation with respect to the basis $(e_1,\ldots,e_{q-1})$.
\end{proof}

Roughly speaking, \Lem\ref{lemma_transform} shows that any two tame $r,r'\in\fL_0(r_*,\eps)$ close to a conceivable $r_*\in\fL_0$ are about equally likely.
However, the map $\psi$ produces solutions that are a little less tame than the ones we start from.
The following corollary, which combines \Lem s~\ref{claim_tame3} and~\ref{lemma_transform}, remedies this issue.

\begin{corollary}\label{cor_transform4}
	\Whp\ for all $r_*\in\fL_0$ and all $r,r'\in\fL_0(r_*,\eps)$ we have
	\begin{align*}
		\pr_\fA\brk{\fT(r,-3\log\eps)}=(1+o_\eps(1))\pr_\fA\brk{\fT(r',-3\log\eps)}.
	\end{align*}
\end{corollary}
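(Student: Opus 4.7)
The plan is to mirror the argument used for the earlier Corollary \ref{cor_transform4} (the one stated in Section~\ref{sec_uniformly}), replacing every conditional probability $\pr_\fA[\,\cdot\mid\fS]$ by the unconditional $\pr_\fA[\,\cdot\,]$ and replacing the inputs \Lem~\ref{claim_tame3} and \Lem~\ref{lemma_transform} by their unconditioned analogues \Lem~\ref{claim_tame3_sigma} and \Lem~\ref{lemma_transform_sigma}. The logical skeleton therefore carries over verbatim; the only thing to verify is that the numerical accounting of tameness loss still works out the same way.

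The first step is to invoke \Lem~\ref{claim_tame3_sigma} to obtain, for each $r_*\in\fL_0$, a reference point $r^*\in\fL_0(r_*,\eps)$ with $\pr_\fA[\mycheck\rho_{\vec\sigma}=r^*]\geq (2|\fL_0(r_*,\eps)|)^{-1}$ and $\pr_\fA[\fT(-\log\eps)\mid\mycheck\rho_{\vec\sigma}=r^*]\geq 1-\eps^4$. Next, for any $r\in\fL_0(r_*,\eps)$, apply \Lem~\ref{lemma_transform_sigma} to the event $\fT(r^*,-\log\eps)$; this produces a one-to-one image inside $\fT(r,-\log\eps+O_\eps(\eps))\subset\fT(r,-2\log\eps)$ and yields
\begin{align}\label{eqAppendixStep2}
\pr_\fA[\fT(r,-2\log\eps)]\geq (1+O_\eps(\eps\log\eps))\pr_\fA[\fT(r^*,-\log\eps)]\geq \frac{1}{3|\fL_0(r_*,\eps)|}.
\end{align}

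The next step is the ``tame tail'' bound. Apply \Lem~\ref{lemma_transform_sigma} in both directions to compare $\fT(r,-4\log\eps)\setminus\fT(r,-3\log\eps)$ with $\fT(r^*,-3\log\eps)\setminus\fT(r^*,-2\log\eps)$; the weight loss in each direction is a factor $\exp(O_\eps(\eps\log\eps))$, so together with \eqref{eq_claim_tame3_sigma} this gives
\begin{align}\label{eqAppendixStep3}
\pr_\fA[\fT(r,-4\log\eps)\setminus\fT(r,-3\log\eps)]\leq O_\eps(\eps^4)\,\pr_\fA[\mycheck\rho_{\vec\sigma}=r^*]\leq \eps\,\pr_\fA[\fT(r^*,-\log\eps)].
\end{align}
Combining \eqref{eqAppendixStep2} and \eqref{eqAppendixStep3} yields $\pr_\fA[\fT(-3\log\eps)\mid\mycheck\rho_{\vec\sigma}=r]\geq 1-\sqrt\eps$, and the same holds for $r'$. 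A final application of \Lem~\ref{lemma_transform_sigma} directly between $r$ and $r'$ then transports $\fT(r,-3\log\eps)$ into $\fT(r',-3\log\eps+O_\eps(\eps))\subset\fT(r',-3\log\eps)$ with weight ratio $1+O_\eps(\eps\log\eps)$, and symmetrically in the other direction, which gives the asserted $(1+o_\eps(1))$-equivalence.

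The only delicate point — and the main bookkeeping obstacle — is to check that the numerical constants ($-\log\eps$, $-2\log\eps$, $-3\log\eps$, $-4\log\eps$) remain compatible once one accounts for the $O_\eps(\eps)$ slack that \Lem~\ref{lemma_transform_sigma} introduces each time the map $\psi$ is applied. Since $O_\eps(\eps)\ll\log(1/\eps)$, each application of $\psi$ costs a negligible amount of tameness, so all four tameness levels remain well-separated and the chain of inequalities closes. This is exactly the same bookkeeping that worked for the conditioned version, and no new ideas are required; the unconditioned probability space even simplifies matters slightly because one does not have to re-prove independence across checks given $\fS$.
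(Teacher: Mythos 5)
Your proof follows the paper's argument step for step: fix the reference point $r^*$ from \Lem~\ref{claim_tame3_sigma}, use the transport map of \Lem~\ref{lemma_transform_sigma} to obtain the lower bound on $\pr_\fA[\fT(r,-2\log\eps)]$, bound the tame-tail probability by transporting it to $r^*$ and invoking \eqref{eq_claim_tame3_sigma}, and close with a final application of the transport map between $r$ and $r'$. This is exactly the paper's proof in its unconditioned form, so there is nothing to add.
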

\begin{proof}
	Let $r^*$ be the vector supplied by \Lem~\ref{claim_tame3_sigma}.
	Applying \Lem~\ref{lemma_transform_sigma} to $r^*$ and $r\in\fL_0(r_*,\eps)$, we see that \whp
\begin{align}\label{cor_transform_sigma}
\pr_\fA\brk{\fT(r,-2\log\eps)}\geq(1+O_\eps(\eps\log\eps))\pr_\fA\brk{\fT(r^*,-\log\eps)}\geq\frac1{3|\fL_0(r_*,\eps)|}\qquad\mbox{for all }r\in\fL_0(r_*,\eps).
\end{align}

In addition, we claim that \whp
	\begin{align}\label{cor_transform2_sigma}
		\pr_{\fA}\brk{\fT(r,-4\log\eps)\setminus\fT(r,-3\log\eps)}\leq\eps\pr_{\fA}\brk{\fT(r^*,-\log\eps)}\qquad\mbox{for all }r\in\fL_0(r_*,\eps).
	\end{align}
Indeed, applying \Lem~\ref{lemma_transform} twice to $r$ and $r^*$ and invoking \eqref{eq_claim_tame3}, we see that \whp
	\begin{align}\nonumber
		\pr_{\fA}\brk{\fT(r,-2\log\eps)}&\geq\exp(O_\eps(\eps\log\eps))\pr_{\fA}\brk{\fT(r^*,-3\log\eps)}\\&\geq\bc{1-O_\eps(\eps\log\eps)}\pr_{\fA}\brk{\hat{\rho}_{\vec\sigma}=r^*},\label{eq_cor_transform2_1_sigma}\\
			\pr_{\fA}\brk{\fT(r,-4\log\eps)\setminus\fT(r,-3\log\eps)}&\leq\exp(O_\eps(\eps\log\eps))\pr_{\fA}\brk{\fT(r^*,-3\log\eps)\setminus\fT(r^*,-2\log\eps)}\nonumber\\
																			 &\leq O_\eps(\eps^4)\pr_{\fA}\brk{\hat{\rho}_{\vec\sigma}=r^*}.\label{eq_cor_transform2_2_sigma}
		\end{align}
		Combining \eqref{eq_cor_transform2_1_sigma} and \eqref{eq_cor_transform2_2_sigma} yields \eqref{cor_transform2_sigma}.

		Finally, \eqref{eq_claim_tame3}, \eqref{cor_transform_sigma} and \eqref{cor_transform2_sigma} show that \whp\ 
\begin{align}\label{cor_transform3_sigma}
		\pr_\fA\brk{\fT(-3\log\eps)\mid\hat{\rho}_{\vec\sigma}=r}\geq1-\sqrt\eps,\quad\pr_\fA\brk{\fT(-3\log\eps)\mid\hat{\rho}_{\vec \sigma}=r'}\geq1-\sqrt\eps
		\qquad\mbox{for all }r,r'\in\fL_0(r_*,\eps),
	\end{align}
	and combining \eqref{cor_transform3_sigma} with \Lem~\ref{lemma_transform_sigma} completes the proof.
\end{proof}

\begin{proof}[Proof of \Lem~\ref{lemma_entropy_llt}]
	%We are going to show that the conditional probability $\pr_\fA\brk{\hat{\vec\rho}=r\mid\fS}$ of hitting some particular $r\in\fL_0$ coincides with the unconditional probability $\pr_\fA\brk{\hat{\vec\rho}=r}$ up to a factor of $1+o_\eps(1)$.
	%Then the assertion follows from Bayes' formula.

	%The unconditional probability $\pr_\fA\brk{\hat{\vec\rho}=r}$ is given precisely by Claim~\ref{lemma_precise_R}.
	%Hence, recalling the $(q-1)\times(q-1)$-matrix $\Sigma=q\id^{-1}-q^{-2}\vecone$ and applying Stirling's formula, we obtain 
	%\begin{align}\label{eq_lemma_uniformly_1}
	%	\pr_\fA\brk{\mycheck{\vec\rho}=r}&\sim\frac1{(2\pi\vec\Delta q^{-1}(1-q^{-1}))^{(q-1)/2}}
	%	\exp\brk{-\frac{(r-q^{-1}\vec\Delta\vecone)^\trans(q^{-1}\vecone-q^{-2}\vecone)^{-1}(r-q^{-1}\vec\Delta\vecone)}{2\vec\Delta}}
	%\end{align}
	%\whp\ 

	%Next we will show that the conditional probability $\pr_\fA\brk{\hat{\vec\rho}=r\mid\fS}$ works out to be asymptotically the same.
	%Indeed, 
	Claim~\ref{fact_box2} shows that for any $r\in\fL_0$ and $\vec N \sim \mathcal{N}(0, \mathcal C)$
	\begin{align*}
	\pr_\fA\bc{\mycheck{\rho}_{\vec\sigma} \in \fL_0(r,\eps)} = \pr_\fA\bc{\norm{ \vN - \frac{r - \vec\Delta \vecone/q}{s_n }}_{\infty} < \eps} + o(1).
	\end{align*}
	 %the proability that $\hat{\vec\rho}$ hits the set $\fL_0(r,\eps)$ is asymptotically equal to the probability of the event $\{\|\vN-\vec\Delta^{-1/2}(r-q^{-1}\vec\Delta\vecone)\|_\infty<\eps\}$ \whp\
	Moreover, \Cor~\ref{cor_transform4} implies that given $\hat{\rho}_{\vec\sigma}\in\fL_0(r,\eps)$, $\hat{\rho}_{\vec\sigma}$ is within $o_\eps(1)$ of the uniform distribution on $\fL_0(r,\eps)$.
	Furthermore, \Lem~\ref{lemma_gridcount} %and \Prop~\ref{prop_module} 
	shows that the number of points in $\fL_0(r,\eps)$ satisfies
	\begin{align*}
\frac{|\fL_0(r,\eps)|}{\abs{\cbc{z\in\ZZ^{q-1}:\|z-r\|_\infty\leq\eps s_n}}}\sim \fd^{1-q}.
		\end{align*}
	Finally, the eigenvalues of the matrix $\cC$ are $q^{-2}$ (once) and $q^{-1}$ ($(q-2)$ times).
	Hence, $\det\cC=q^{-q}$. Therefore, \whp\ for all $r\in\fL_0$ we have
	\begin{align}\label{eq_lemma_uniformly_2_sigma}
		\pr_\fA\brk{\mycheck{\rho}_{\vec\sigma}=r}&=(1+o_\eps(1))\frac{q^{q/2}\fd^{q-1}}{(2\pi \sum_{i=1}^n\vec d_i^2 )^{(q-1)/2}} \exp\brk{-\frac{(r-q^{-1}\vec\Delta\vecone)^T\mathcal C^{-1}(r-q^{-1}\vec\Delta\vecone)}{2 \sum_{i=1}^n\vec d_i^2}}.
	\end{align}
	%Finally, we observe that
	%\begin{align}\label{eq_lemma_uniformly_3}
	%	\pr_{\fA}\brk{\fS}\sim q^{-\vm}.
	%	\end{align}
	Finally, since $\Erw\brk{\vec d^2} < \infty$, $\sum_{i=1} \vec d_i^2 \sim n \Erw\brk{\vec d^2}$ and the claim follows. 
	%I ndeed, since the $\vec\xi_{ij}$ are uniform and independent, for each $i\in[\vm]$ we have $\sum_{j=1}^{\vk_i}\chi_{i,j}\xi_{ij}=0$ with probability $1/q$ indepdenently.
	%Combining \eqref{eq_lemma_uniformly_1}--\eqref{eq_lemma_uniformly_3} completes the proof.
\end{proof}


\begin{thebibliography}{99}
\bibitem{AMc} D.\ Achlioptas, F.\ McSherry: Fast computation of low-rank matrix approximations. Journal of the ACM {\bf 54} (2007) \#9.
\bibitem{AchlioptasMolloy} D.\ Achlioptas, M.\ Molloy: The solution space geometry of random linear equations.  Random Structures and Algorithms {\bf46} (2015) 197--231.
\bibitem{AM} D.\ Achlioptas, C.\ Moore: Random $k$-SAT: two moments suffice to cross a sharp threshold. SIAM Journal on Computing {\bf 36} (2006) 740--762.
\bibitem{ANP} D.\ Achlioptas, A.\ Naor, Y.\ Peres: Rigorous location of phase transitions in hard optimization problems. Nature {\bf 435} 759--764.
\bibitem{Aizenman} M.\ Aizenman, R.\ Sims, S.\ Starr: An extended variational principle for the SK spin-glass model.
Phys.\ Rev.\ B {\bf68}  (2003) 214403.
%\bibitem{AlaminoSaad}
%R.\ Alamino, D.\ Saad: Typical kernel size and number of sparse random matrices over Galois fields: a statistical physics approach.
%Physical Review E {\bf77} (2008) 061123.
%\bibitem{AldousSteele}
%D.\ Aldous , M.\ Steele: The objective method: probabilistic combinatorial optimization and local weak convergence (2003).
%In: H.\ Kesten (ed.): Probability on discrete structures, Springer 2004.
%\bibitem{AlonSpencer}
%N.\ Alon, J.\ Spencer: The probabilistic method. 2nd edition. Wiley (2000).
\bibitem{Ayre} P.\ Ayre, A.\ Coja-Oghlan, P.\ Gao, N.\ M\"uller: The satisfiability threshold for random linear equations.  Combinatorica {\bf40} (2020) 179--235.
%\bibitem{Balakin1}
%G.\ Balakin: On random matrices. Theory Probab.\ Appl.\ {\bf12} (1967) 346--353.
\bibitem{Balakin2} G.\ Balakin: The distribution of random matrices over a finite field.  Theory Probab.\ Appl.\ {\bf13} (1968) 631--641.
%\bibitem{Victor}
%V.\ Bapst, A.\ Coja-Oghlan: Harnessing the Bethe free energy.
%Random Structures and Algorithms {\bf 49} (2016) 694--741.
%\bibitem{bayati}
%M.~Bayati, D.~Gamarnik, P.~Tetali:
%Combinatorial approach to the interpolation method and scaling limits in sparse random graphs.
%Annals of Probability {\bf41} (2013) 4080--4115.
\bibitem{BKW} J.\ Bl\"omer, R.\ Karp, E.\ Welzl: The rank of sparse random matrices over finite fields.
Random Structures and Algorithms {\bf 10} (1997) 407--419.
%\bibitem{BB} B.~Bollob\'{a}s: Random graphs. Cambridge University Press (2001).
%\bibitem{Bollobas} B.~Bollob\'{a}s: The evolution of sparse graphs. Graph theory and combinatorics (Cambridge) (1983) 35--57.
\bibitem{BLS} C.\ Bordenave, M.\ Lelarge, J.\ Salez: The rank of diluted random graphs. Ann.\ Probab.\ {\bf 39} (2011) 1097--1121.
%\bibitem{BLS2}
%C.\ Bordenave, M.\ Lelarge, J.\ Salez: 
%Matchings on infinite graphs.
%Probability Theory and Related Fields {\bf 157} (2013) 183--208.
%\bibitem{Bordenave}
%C.\ Bordenave: A new proof of Friedman's second eigenvalue Theorem and its extension to random lifts.
%Annales scientifiques de l'\'ecole normale sup\'erieure, in press.
%\bibitem{BVM}
%J.\ Bourgain, V.\ Vu, P.\ Wood:
%On the singularity probability of discrete random matrices.
%Journal of Functional Analysis {\bf258} (2010) 559--603.
%\bibitem{BKLM}
%D.\ Burshtein, M.\ Krivelevich, S.\ Litsyn, G.\ Miller:
%Upper bounds on the rate of LDPC codes. 
%IEEE Transactions on Information Theory {\bf 48} (2002), 2437--2449.  
%\bibitem{Cain} J. Cain, N. Wormald: Encores on cores. Electronic journal of combinatorics, 13(1), 81 (2006).
%\bibitem{Chvatal} V.~Chv\'{a}tal:  Almost all graphs with $1.44n$ edges are 3-colorable. Random Structures Algorithms 2(1) (1991) 11--28.
%\bibitem{CDMM}
%S.\ Cocco, O.\ Dubois, J.\ Mandler, R.\ Monasson:
%Rigorous decimation-based construction of ground pure states for spin glass models on random lattices.
%Phys.\ Rev.\ Lett.\ {\bf90} (2003) 047205.
\bibitem{Maurice} A.~Coja-Oghlan, A.~Erg\"ur, P.~Gao, S.~Hetterich, M.~Rolvien: The rank of sparse random matrices. Proc.\ 31st SODA (2020) 579--591.
\bibitem{CKPZ} A.\ Coja-Oghlan, F.\ Krzakala, W.\ Perkins,  L.\ Zdeborova: Information-theoretic thresholds from the cavity method.  Advances in Mathematics {\bf 333} (2018) 694--795.
\bibitem{Kosta} A.\ Coja-Oghlan, K.\ Panagiotou: The asymptotic $k$-SAT threshold. Advances in Mathematics {\bf 288} (2016) 985--1068.
\bibitem{KostaNAE} A.\ Coja-Oghlan, K.\ Panagiotou: Catching the $k$-NAESAT threshold. Proc.~44th STOC (2012) 899--908.
%\bibitem{BetheLattices}
%A. Coja-Oghlan, W. Perkins: Spin systems on Bethe lattices.  arXiv:1808.03440 (2018).
\bibitem{Jean} A. Coja-Oghlan, N.\ M\"uller, J.\ Ravelomanana: Belief Propagation on the random $k$-SAT model. Annals of Applied Probability, in press.
%\bibitem{Conrad} K. Conrad: https://kconrad.math.uconn.edu/blurbs/linmultialg/modulesoverPID.pdf
%\bibitem{Cooper04} C.~Cooper: The cores of random hypergraphs with a given degree sequence. Random Structures and Algorithms 25(4) (2004) 353--375.
%\bibitem{Cooper} C.\ Cooper: On the rank of random matrices.  Random Structures and Algorithms {\bf16} (2000) 209--232.
\bibitem{CFP} C.\ Cooper, A.\ Frieze, W.\ Pegden: On the rank of a random binary matrix.  Electron.\ J.\ Comb.\ {\bf26} (2019) P4.12.
\bibitem{costello2008rank} K.~Costello, V.~Vu: The rank of random graphs.  Random Structures and Algorithms {\bf 33} (2008) 269--285.
\bibitem{costello2010rank} K.~Costello, V.~Vu:  On the rank of random sparse matrices.  Combinatorics, Probability and Computing {\bf 19} (2010)  321--342.
\bibitem{DavMc} B.\ Davis, D.\ McDonald: An elementary proof of the local limit theorem.  Journal of Theoretical Probability {\bf8} (1995) 693--701.
%\bibitem{Dembo}
%A.~Dembo, E.~Lubetzky:
%Empirical spectral distributions of sparse random graphs.
%	arXiv:1610.05186  (2016).
\bibitem{Dietzfelbinger} M.\ Dietzfelbinger, A.\ Goerdt, M.\ Mitzenmacher, A.\ Montanari, R.\ Pagh, M.\ Rink: Tight thresholds for cuckoo hashing via XORSAT.  Proc.\ 37th ICALP (2010) 213--225.
\bibitem{DSS3} J.~Ding, A.~Sly, N.~Sun: Proof of the satisfiability conjecture for large $k$.  Proc.\ 47th STOC (2015) 59--68.
\bibitem{DuboisMandler} O.\ Dubois, J.\ Mandler: The 3-XORSAT threshold. Proc.\  43rd FOCS (2002) 769--778.
%\bibitem{Durrett}
%R.\ Durrett: Probability theory and examples, 3rd edition (2005).
%\bibitem{ERmatrices}
%P.\ Erd\"os, A.\ R\'enyi: On random matrices.
%Magyar Tud.\ Akad.\ Mat.\ Kutat\'o Int.\ K\"ozl.\ {\bf 8} (1963) 455--461.
\bibitem{Ferber} A.\ Ferber, M.\ Kwan, A.\ Sah, M.\ Sawhney: Singularity of the k-core of a random graph. arXiv:2106.05719 
%\bibitem{FR} D. Fernholz, V. Ramachandran: The giant $k$-core of a random graph with a specified degree sequence. Manuscript, 2003.
%\bibitem{FR2} D. Fernholz, V. Ramachandran:  Cores and connectivity in sparse random graphs. The University of Texas at Austin, Department of Computer Sciences, technical report TR-04-13 (2004).
%\bibitem{FranzLeone}
%S.\ Franz, M.\ Leone: Replica bounds for optimization problems and diluted spin systems.
% J.\ Stat.\ Phys.\ {\bf111} (2003) 535--564.
%\bibitem{Friedman}
%J.~Friedman: A proof of Alon's second eigenvalue conjecture and related problems.
%Memoirs of the AMS (2008).
%\bibitem{Fulman}
%J.~Fulman, L.~Goldstein: Stein's method and the rank distribution of random matrices over finite fields. Annals of Probability {\bf43} (2015) 1274--1314.
%\bibitem{Gallager}
%R.\ Gallager: Low-density parity check codes.
%IRE Trans.\ Inform.\ Theory {\bf 8} (1962) 21--28.
%\bibitem{GaoMolloy}
%P.\ Gao, M.\ Molloy: The stripping process can be slow: part I.
%arXiv:1501.02695 (2015).
%\bibitem{GMU}
%A.~Giurgiu, N.~Macris, R.~Urbanke: Spatial coupling as a proof technique and three applications.
%IEEE Transactions on Information Theory {\bf62} (2016) 5281--5295.
\bibitem{GoerdtFalke} A.\ Goerdt, L.\ Falke: Satisfiability thresholds beyond $k$-XORSAT.  Proc.\ 7th International Computer Science Symposium in Russia (2012) 148--159.
%\bibitem{Guerra}
%F.\ Guerra: Broken replica symmetry bounds in the mean field spin glass model. Communications in Mathematical Physics {\bf233} (2003) 1--12.
\bibitem{Huang} J.\ Huang: Invertibility of adjacency matrices for random d-regular graphs.		arXiv:1807.06465.
\bibitem{Ibrahimi} M.\ Ibrahimi, Y.\ Kanoria, M.\ Kraning, A.\ Montanari: The set of solutions of random XORSAT formulae.
Annals of Applied Probability {\bf 25} (2015) 2743--2808.
%\bibitem{Janson} S.~Janson, M.~Luczak: A simple solution to the $k$-core problem. Random Structures Algorithms 30(1-2) (2007) 50--62.
%\bibitem{JLR} S.~Janson, T.~Luczak, A.~Rucinski: Random graphs. Wiley (2000).
%\bibitem{KabashimaSaad}
%Y.\ Kabashima, D.\ Saad: Statistical mechanics of error correcting codes. Europhys.\ Lett.\ {\bf 45}  (1999) 97--103.  
\bibitem{KKS} J.\ Kahn; J.\ Komlo\'s, E.\ Szemer\'edi: On the probability that a random $\pm1$-matrix is singular.  Journal of the AMS {\bf 8} (1995) 223--240.
%\bibitem{Kim} J.~Kim: Poisson cloning model for random graphs. International Congress of Mathematicians. Vol. III, Eur. Math. Soc., Z\"{u}rich (2006) 873--897.
\bibitem{Kolchin} V.~Kolchin: Random graphs and systems of linear equations in finite fields.  Random Structures and Algorithms {\bf 5} (1995) 425--436.
\bibitem{Kolchin1} V.\ Kolchin, V.\ Khokhlov: On the number of cycles in a random non-equiprobable graph. Discrete Math.\ Appl.\ {\bf2} (1992) 109--118.
\bibitem{Kolchin2} V.\ Kolchin: Consistency of a system of random congruences.  Discrete Math.\ Appl.\ {\bf3} (1993) 103--113.
\bibitem{Komlos} J.\ Koml\'os: On the determinant of (0,1) matrices. Studia Sci.\ Math.\ Hungar.\ {\bf2} (1967) 7--21.
\bibitem{Kovalenko} I.~Kovalenko: On the limit distribution of the number of solutions of a random system of linear equations in the class of Boolean functions.  Theory Probab.\ Appl.\ {\bf 12} (1967) 51--61.
\bibitem{Kovalenko2} I.~Kovalenko, A.\ Levitskaya, M.\ Savchuk: Selected problems of probabilistic combinatorics.  Naukova Dumka,  Kiev (1986).
%\bibitem{redemption}
%F.\ Krzakala, C.\ Moore, E.\ Mossel, J.\ Neeman, A.\ Sly, L.\ Zdeborov\'a, P.\ Zhang:
%Spectral redemption in clustering sparse networks.
%Proc.~National Academy of Sciences {\bf110} (2013) 20935--20940.
%\bibitem{KYMP}
%S.\ Kumar, A.\ Young, N.\ Macris, H.\ Pfister:
%Threshold saturation for spatially-coupled LDPC and LDGM Codes on BMS channels.
%IEEE Transactions on Information Theory {\bf60} (2014) 7389--7415
\bibitem{Lenstra} H.~Lenstra: Lattices.  In J.~Buhler, P.~Stevenhagen (eds.): Algorithmic number theory: lattices, number fields, curves and cryptography.  Cambridge University Press (2008) 127--181.
%\bibitem{Lelarge} M.\ Lelarge: Bypassing correlation decay for matchings with an application to XORSAT.  Proc.\ IEEE Information Theory Workshop (2013) 1--5.
%\bibitem{LelargeInterpolation}
%M.\ Lelarge, M.\ Oulamara: Replica bounds by combinatorial interpolation for diluted spin systems.
%Journal of Statistical Physics (2018) 1--24.
\bibitem{Lev1} A.~Levitskaya: Theorems on invariance for the systems of random linear equations over an arbitrary finite ring. Soviet Math.\ Dokl.\ {\bf263} (1982) 289--291.
\bibitem{Lev2} A.~Levitskaya: The probability of consistency of a system of random linear equations over a finite ring. Theory Probab.\ Appl.\ {\bf30} (1985) 339--350.
%\bibitem{Luczak91} T. Luczak: Size and connectivity of the k-core of a random graph. Discrete Math., 91(1) (1991) 61--68.
%\bibitem{Luczak92} T. Luczak: Sparse random graphs with a given degree sequence. Random graphs, Vol. 2 (Pozna\'{n}, 1989), Wiley-Intersci. Publ. Wiley, New York, (1992) 165--182.
%\bibitem{Luh}
%K.~Luh, S.~Meehan, H.~Nguyen:  Random matrices over finite fields: methods and results. 	arXiv:1907.02575 (2019).
%\bibitem{MMU} C.\ M\'easson, A.\ Montanari, R.\ Urbanke: Maxwell's constructions: the hidden bridge between maximum-likelihood and iterative decoding.  IEEE Transactions on Information Theory {\bf54} (2008) 5277--5307.
%\bibitem{Mehta}
%M.~Mehta: Random matrices.Academic press (2004).
\bibitem{MM} M.~M\'ezard, A.~Montanari: Information, physics and computation.  Oxford University Press~2009.
\bibitem{MRTZ} M.\ M\'ezard, F.\ Ricci-Tersenghi, R.\ Zecchina: Two solutions to diluted $p$-spin models and XORSAT problems.  Journal of Statistical Physics {\bf 111} (2003) 505--533.
\bibitem{MillerCohen} G.\ Miller,  G.\ Cohen: The rate of regular LDPC codes.  IEEE Transactions on Information Theory {\bf49} (2003) 2989--2992.
%\bibitem{molloy2005cores}
%M.\ Molloy: Cores in random hypergraphs and Boolean formulas.
%Random Structures and Algorithms {\bf 27} (2005) 124--135.
\bibitem{Montanari} A.\ Montanari: Estimating random variables from random sparse observations. European Transactions on Telecommunications {\bf19}(4) (2008) 385--403.
%\bibitem{MontanariBounds}
%A.\ Montanari: Tight bounds for LDPC and LDGM codes under MAP decoding.
% IEEE Transactions on Information Theory {\bf51} (2005) 3221-3246.
%\bibitem{Panchenko}
%D.\ Panchenko:
%Spin glass models from the point of view of spin distributions.
%Annals of Probability {\bf 41}  (2013) 1315--1361.
%\bibitem{Pittel} B. Pittel, J. Spencer, N. Wormald: Sudden emergence of a giant $k$-core in a random graph. J. Combin. Theory Ser. B,
%67(1) (1996) 111--151.
\bibitem{PittelSorkin} B.\ Pittel, G.\ Sorkin:  The satisfiability threshold for $k$-XORSAT.  \CPC\ {\bf25} (2016) 236--268.
\bibitem{Raic} M.~Rai\v c: A multivariate Berry–Esseen theorem with explicit constants. Bernoulli {\bf25} (2019) 2824--2853.
\bibitem{Raghavendra} P.\ Raghavendra, N.\ Tan: Approximating CSPs with global cardinality constraints using SDP hierarchies. Proc.\ 23rd SODA (2012) 373--387.
\bibitem{RichardsonUrbanke} T.\ Richardson, R.\ Urbanke: Modern coding theory. Cambridge University Press (2008).
%\bibitem{Riordan} O. Riordan: The $k$-core and branching processes. Combin. Probab. Comput.17(1) (2008) 111--136.
\bibitem{TaoVu} T.\ Tao, V.\ Vu: On the singularity probability of random Bernoulli matrices.  Journal of the AMS {\bf20} (2007) 603--628.
\bibitem{Tikhomirov} K.~Tikhomirov: Singularity of random Bernoulli matrices. Annals of Mathematics {\bf 191} (2020) 593--634.
\bibitem{Vu} V.\ Vu: Combinatorial problems in random matrix theory.  Proc.\ International Congress of Mathematicians IV (2014) 489--508.
\bibitem{Vu2} V.\ Vu: Recent progress in combinatorial random matrix theory. arXiv:2005.02797. 
\bibitem{Maneva} M.\ Wainwright, E.\ Maneva, E.\ Martinian: Lossy source compression using low-density generator matrix codes: analysis and algorithms. IEEE Trans.\ Inf.\ Theory {\bf56} (2010) 1351--1368.
%\bibitem{Wigner}
%E.~Wigner: Characteristic vectors of bordered matrices with infinite dimensions.
%Annals of Mathematics {\bf62} (1955) 548--564.
%\bibitem{Regular}
%N.~Wormald: Models of random regular graphs. London Mathematical Society Lecture Note Series (1999) 239--298.
%\bibitem{LF}
%L.\ Zdeborov\'a, F.\ Krzakala: Statistical physics of inference: thresholds and algorithms. Advances in Physics {\bf 65} (2016) 453--552.
\end{thebibliography}
\end{document}